\definecolor{darkgreen}{rgb}{0,0.35,0}
\definecolor{darkblue}{rgb}{0,0,0.6}
\newtheorem*{tuttestrianglelemma}{Tutte's Triangle Lemma}
\newtheorem*{bixbycoullardinequality}{Bixby-Coullard Inequality}
\newtheorem*{bixbyslemma}{Bixby's Lemma}
\newtheorem*{tutteslinkingtheorem}{Tutte's Linking Theorem}
\newtheorem*{tutteslinkingtheorem2}{Tutte's Linking Theorem, Version 2}
\newtheorem*{fundamentaltheoremofprojectivegeometry}{Fundamental Theorem of Projective Geometry}
\declaretheorem[numberwithin=section]{theorem}
\declaretheorem[numberlike=theorem]{lemma}
\declaretheorem[numberlike=theorem]{conjecture}
\declaretheorem[numberlike=theorem]{proposition}
\newtheoremstyle{parentheses}{\topsep}{\topsep}{\itshape}{}{}{}{ }{\thmnumber{(#2)}}
\theoremstyle{parentheses}
\newtheorem{claim}{}
\makeatletter\@addtoreset{claim}{theorem}\makeatother
\renewcommand{\d}{\backslash}
\DeclareMathOperator{\si}{si}
\DeclareMathOperator{\cl}{cl}
\DeclareMathOperator{\PG}{PG}
\DeclareMathOperator{\AG}{AG}
\DeclareMathOperator{\F}{\mathbb{F}}
\newcommand{\defn}[1]{\textbf{#1}}
\newcommand{\footremember}[2]{%
   \footnote{#2}
    \newcounter{#1}
    \setcounter{#1}{\value{footnote}}%
}
\newcommand{\footrecall}[1]{%
    \footnotemark[\value{#1}]%
}
\title{Representation of matroids with a modular plane\thanks{This research was partially supported by a grant from the Office of Naval Research [N00014-10-1-0851].}}
\author{Jim Geelen\footremember{co}{Department of Combinatorics and Optimization, University of Waterloo, 200 University Ave W, Waterloo, Ontario N2L 3G1, Canada} \and Rohan Kapadia\footrecall{co} \thanks{This author was partially supported by an NSERC postgraduate scholarship. E-mail address: rohanfkapadia@gmail.com}}
\begin{document}

\maketitle
\begin{abstract}
We prove that if $M$ is a vertically $4$-connected matroid with a modular flat $X$ of rank at least three, then every representation of $M | X$ over a finite field $\mathbb{F}$ extends to a unique $\mathbb{F}$-representation of $M$.
A corollary is that when $\mathbb{F}$ has order $q$, any vertically $4$-connected matroid with a $\PG(2, \mathbb{F})$-restriction is either $\mathbb{F}$-representable or has a $U_{2, q^2+1}$-minor.
We also show that no excluded minor for the class of $\mathbb{F}$-representable matroids has a $\PG(2, \mathbb{F})$-restriction.
\end{abstract}

\section{Introduction} \label{sec:introduction}

We call a restriction $N$ of a matroid $M$ \defn{modular} if, for every flat $F$ of $M$,
\[ r_M(F) + r(N) = r_M(F \cap E(N)) + r_M(F \cup E(N)). \]
We prove the following theorem (recall that $\PG(r-1, \F)$ is the rank-$r$ projective geometry over the finite field $\F$).

\begin{restatable}{theorem}{mainmodularpptheorem} \label{thm:mainmodularpptheorem}
For any finite field $\F$, every vertically $4$-connected matroid with a modular $\PG(2, \F)$-restriction is $\F$-representable.
\end{restatable}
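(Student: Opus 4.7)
My plan is to proceed by induction on $|E(M) \setminus X|$, with the base case $E(M) = X$ being immediate since $\PG(2,\F)$ is $\F$-representable by construction. Fix an $\F$-representation $A$ of $N = M|X$. Each element $e \in \cl_M(X) \setminus X$ is parallel to some $f \in X$ (applying modularity to the parallel class $\cl_M(\{e\})$ forces $r(\cl_M(\{e\}) \cap X) = 1$), so $v_e$ is a scalar multiple of $A(f)$ and is handled trivially; I therefore assume that the element $e$ removed in each inductive step satisfies $e \notin \cl_M(X)$. Both $M \setminus e$ and $M/e$ then preserve modularity of $X$, because for any flat $F$ of the smaller matroid the $M$-closure $\cl_M(F)$ is a flat of $M$ of the same rank, so the modularity equation descends; and both preserve $N$ on $X$ since $r(X) = 3$ is unchanged. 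To preserve vertical $4$-connectivity I would appeal to Bixby's Lemma together with the Bixby-Coullard inequality, producing some such $e$ for which $M \setminus e$ or $M/e$ remains vertically $4$-connected.

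By the inductive hypothesis the chosen smaller matroid has an $\F$-representation $B$ extending $A$, and the task is to find a column $v_e \in \F^{r(M)}$ so that $(B \mid v_e)$ represents $M$. Modularity constrains $v_e$ sharply: every line $L$ of $M$ through $e$ not contained in $X$ meets $X$ in at most one point, since $r(L) + r(X) = r(L \cap X) + r(L \cup X)$ with $r(L) = 2$ and $r(X) = 3$ forces $r(L \cap X) \leq 1$. More usefully, every rank-$3$ flat $P$ of $M$ through $e$ with $r(P \cap X) \geq 2$ must meet $X$ in a full line of $N$ of $|\F|+1$ points, so $P$ is a rank-$3$ matroid containing a long line of $\PG(2,\F)$, and must itself be a restriction of $\PG(2,\F)$. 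Each such plane $P$ therefore forces $v_e$ into an explicit $3$-dimensional subspace of $\F^{r(M)}$ determined by $B$; intersecting these constraints across the pencil of planes through $e$ -- produced in sufficient abundance by using Tutte's Linking Theorem to link $e$ to $X$ using vertical $4$-connectivity -- pins $v_e$ down up to scaling.

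Finally, to verify that $(B \mid v_e)$ does represent $M$, every circuit of $M$ through $e$ must correspond to a linear dependence; the Fundamental Theorem of Projective Geometry is the source of rigidity here, since any two $\F$-representations of $\PG(2,\F)$ are related by a collineation, making the construction canonical up to projective equivalence. The hardest step is expected to be handling circuits of $M$ through $e$ that are otherwise disjoint from $X$, as modularity does not directly constrain these. The planned resolution is to pass to rank-$4$ minors of $M$ containing $X$, $e$, and a witness for each such circuit, verify that the hypotheses descend to the minor, and handle the rank-$4$ case in advance as the base of a secondary induction on $r(M)$: in rank $4$, $X$ is a modular hyperplane, every line of $M$ meets $X$, and the structure of $M$ is constrained tightly enough by $\PG(2,\F)$ to force the extension unambiguously.
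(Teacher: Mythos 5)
Your proposal takes a genuinely different route from the paper (a one-element-at-a-time induction with a column-extension argument), but it has two gaps that I do not see how to fill, and each is essentially the central difficulty the paper's machinery exists to overcome. The first is the connectivity step. There is no analogue of Bixby's Lemma for vertical $4$-connectivity: Bixby's Lemma and the Bixby--Coullard inequality only guarantee that one of $M \d e$, $M/e$ is \emph{internally $3$-connected}, and it is simply not true that a vertically $4$-connected matroid always has an element $e \notin \cl_M(X)$ whose deletion or contraction stays vertically $4$-connected. The paper's introduction exhibits a $3$-connected rank-$4$ matroid with a modular $\PG(2,\F_q)$-restriction that is not $\F_q$-representable (glue $\PG(2,\F_p)$ to $\PG(2,\F_q)$ along a line), so any induction that lets the connectivity degrade even to $3$-connectivity is doomed. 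The paper does not try to preserve vertical $4$-connectivity; it passes to a minor-minimal $3$-connected non-representable minor containing $N_0$, shows this minor satisfies $\lambda(E(N_0)) \geq 3$ (Lemma 8.2), and then spends Sections 4--7 (the duality construction, deletion pairs, the Geelen--Whittle stabilizer theorem for $\PG(2,\F)$, and the strand analysis) proving that a minimal counterexample must instead have $\lambda(E(N_0)) = 2$, a contradiction. Nothing in your plan substitutes for that work.

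The second gap is that the extension step assumes the conclusion. The assertion that a rank-$3$ flat $P$ of $M$ containing $e$ with $r(P \cap X) = 2$ ``must itself be a restriction of $\PG(2,\F)$'' is false in general: in the rank-$4$ example above, the planes on the $\PG(2,\F_p)$ side contain a full $(q+1)$-point line of $X$ yet are not restrictions of $\PG(2,\F_q)$. Showing that the flats through $e$ are $\F$-representable compatibly with the chosen representation of $X$ is essentially equivalent to the theorem being proved, so the argument is circular at exactly the point where the real content should appear. For the same reason the proposed rank-$4$ ``base case of a secondary induction'' is not a base case one can dispatch in advance: rank $4$ with $X$ a modular hyperplane is precisely where the known $3$-connected counterexamples live, so it already requires the full strength of vertical $4$-connectivity and, with it, the bulk of the paper's argument.
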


We will prove the following stronger theorem as a consequence of \autoref{thm:mainmodularpptheorem}.

\begin{restatable}{theorem}{generalform} \label{thm:generalform}
 If $M$ is a vertically $4$-connected matroid with a modular restriction $N$ of rank at least three, then every representation of $N$ over a finite field $\F$ extends to an $\F$-representation of $M$. Moreover, such an $\F$-representation of $M$ is unique up to row operations and column scaling.
\end{restatable}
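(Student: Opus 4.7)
The plan is to use the given $\F$-representation of $N$ as an embedding of $E(N)$ into $\PG(r-1,\F)$, where $r = r(N) \geq 3$, and then to extend $M$ to a larger matroid $M^+$ that has the full projective geometry as a modular restriction. Since $N$ is a modular restriction of $M$ and is also (automatically) a modular restriction of $\PG(r-1,\F)$, I would take $M^+$ to be the generalized parallel connection of $M$ and $\PG(r-1,\F)$ along $E(N)$. Standard properties of this construction give $M^+|E(M) = M$, $M^+|E(\PG(r-1,\F)) = \PG(r-1,\F)$, and $\PG(r-1,\F)$ is a modular restriction of $M^+$.

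To apply \autoref{thm:mainmodularpptheorem}, I need a modular $\PG(2,\F)$-restriction of $M^+$. A short rank calculation shows that modularity composes: if $X$ is a modular flat of a modular restriction $N'$ of a matroid $M'$, then $X$ is itself a modular restriction of $M'$. Since every plane of $\PG(r-1,\F)$ is a modular flat of $\PG(r-1,\F)$, every such plane is a modular $\PG(2,\F)$-restriction of $M^+$.

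The main obstacle will be verifying that $M^+$ is vertically $4$-connected. Every element of $E(M^+) \setminus E(M)$ lies in $\cl_{M^+}(E(N))$, and I would use the rank formula for the generalized parallel connection, together with the modularity of $N$ and $r(N) \geq 3$, to translate any putative vertical $3$-separation of $M^+$ into a vertical $3$-separation of $M$, contradicting the hypothesis. With this step handled, \autoref{thm:mainmodularpptheorem} produces an $\F$-representation of $M^+$; because $\PG(r-1,\F)$ is uniquely $\F$-representable, this matrix can be normalized by row operations and column scaling so that its columns on $E(\PG(r-1,\F))$ realize the canonical representation, forcing its columns on $E(N)$ to agree with the given representation.

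For uniqueness, suppose $A_1$ and $A_2$ are $\F$-representations of $M$ extending the given representation of $N$. Each extends canonically to an $\F$-representation of $M^+$ by appending the unique $\F$-representation of $\PG(r-1,\F)$ on the added columns, and the two extensions then agree on $E(\PG(r-1,\F))$. Modularity of $\PG(r-1,\F)$ in $M^+$ together with $r(\PG(r-1,\F)) \geq 3$ then invokes the standard principle that a matroid containing a uniquely $\F$-representable modular restriction of rank at least three is itself uniquely $\F$-representable up to row operations and column scaling: each $e \in E(M^+) \setminus E(\PG(r-1,\F)) = E(M) \setminus E(N)$ has its column pinned down up to a nonzero scalar by its rank relations with the flats of $\PG(r-1,\F)$. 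Restricting back to $E(M)$ yields the claimed uniqueness.
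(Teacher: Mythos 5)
Your construction is the same as the paper's: form the modular sum $M^+ = M \oplus_m N'$ with $N' \cong \PG(r(N)-1,\F)$ glued along $E(N)$, check that $N'$ is modular in $M^+$ and that $M^+$ is vertically $4$-connected, use transitivity of modularity to obtain a modular $\PG(2,\F)$-restriction, apply \autoref{thm:mainmodularpptheorem}, and normalize the resulting representation using the unique representability of projective geometries. Two small remarks on this half: modularity of $N'$ in $M^+$ is not among the listed properties of the generalized parallel connection, so it does need an argument (the paper derives it from \autoref{prop:equivalentmodularitydefinition}), but the verification is short; and your sketch of the vertical-connectivity step is exactly the paper's.

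The uniqueness step, however, has a genuine gap. The ``standard principle'' you invoke --- that a matroid with a uniquely $\F$-representable modular restriction of rank at least three is itself uniquely $\F$-representable --- is false without a connectivity hypothesis: the direct sum of $\PG(2,\F)$ with any non-uniquely-representable matroid is a counterexample, since a direct summand is always a modular restriction. More to the point, your proposed mechanism fails here: because $r(N') = r(N)$, the adjoined geometry does \emph{not} span $M^+$ (indeed $r(M^+) = r(M)$), so any element $e \in E(M) \setminus \cl_M(E(N))$ is skew to much of $E(N')$, and its column is not determined, even up to a nonzero scalar, by its rank relations with the flats of $N'$. What actually pins down the representation is $3$-connectivity: $M^+$ is simple and vertically $4$-connected, hence $3$-connected, and has a $\PG(2,\F)$-restriction, so the Geelen--Whittle stabilizer theorem (\autoref{thm:modularflatstabilizes}) shows that any two representations of $M^+$ inducing the same representation of that plane are equivalent. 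This is how the paper concludes; without that theorem, or some equivalent global argument exploiting connectivity, the uniqueness claim does not follow from modularity alone.
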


The main focus of this paper is to prove \autoref{thm:mainmodularpptheorem}, and we will derive \autoref{thm:generalform} from it in \autoref{sec:generalform}.
We also prove the following corollary (for a prime power $q$, we denote by $\F_q$ the field of order $q$).

\begin{restatable}{corollary}{representableorline} \label{cor:representableorline}
 For any prime power $q$, any vertically $4$-connected matroid with a $\PG(2, \F_q)$-restriction is either $\F_q$-representable or has a $U_{2, q^2+1}$-minor.
\end{restatable}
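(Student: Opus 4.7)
The plan is to argue the contrapositive: suppose $M$ has no $U_{2, q^2+1}$-minor, and show that $M$ is $\F_q$-representable. By \autoref{thm:mainmodularpptheorem}, it suffices to prove that the $\PG(2, \F_q)$-restriction $N$ is modular in $M$. I would proceed by contradiction: if $N$ is not modular in $M$, choose a flat $F$ of $M$ of minimum rank with $r_M(F) + 3 > r_M(F \cap E(N)) + r_M(F \cup E(N))$, and extract a $U_{2, q^2+1}$-minor from $F$.

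Set $k = r_N(F \cap E(N)) \in \{0, 1, 2, 3\}$ (recall $F \cap E(N)$ is a flat of $N$). Direct rank computations rule out $k = 3$ (equality) and $k = 2$ (the inequality forces $E(N) \subseteq F$, collapsing to $k = 3$). Also, if $k = 0$ and $r_{M/F}(E(N)) = 1$, then the closure of a basis of $F$ with one element removed gives a proper sub-flat $F'$ of $F$ with $r_M(F') = r_M(F) - 1$ and $r_M(F' \cup E(N)) \leq r_M(F \cup E(N)) = r_M(F') + 2 < r_M(F') + 3$, so $F'$ also witnesses non-modularity, contradicting minimality. The remaining cases are $k = 1$ with $r_{M/F}(E(N)) = 1$, and $k = 0$ with $r_{M/F}(E(N)) = 2$. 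In each, I would pick a set $X$ so that $\cl_M(X) \cap E(N) = \emptyset$ and $r_{M/X}(E(N)) = 2$: take $X = F$ when $k = 0$; when $k = 1$ with $\{p\} = F \cap E(N)$, let $X$ be a basis of $F$ containing $p$ with $p$ removed.

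Within $M/X$, each parallel class meeting $E(N)$ is then a flat of $N$ of rank at most $2$ -- either a singleton or a line with $q + 1$ points. The key geometric observation is that at most one such parallel class can be a line of $N$: two distinct lines of $\PG(2, \F_q)$ always share a point, which would force them into a common parallel class, yet two distinct lines together span $E(N)$ and so have rank $2$ in $M/X$, which is inconsistent with lying in a single rank-$1$ parallel class. Hence the number of parallel classes is at least $q^2 + q + 1 - q = q^2 + 1$, so the simplification of $M/X|E(N)$ contains $U_{2, q^2+1}$ as a minor, contradicting our assumption and completing the proof. The main obstacle is the bookkeeping in the minimality argument and in the rank computations for $X$; the essential geometric input is the projective-plane axiom that any two distinct lines of $\PG(2,\F_q)$ meet.
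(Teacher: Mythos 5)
Your proposal is correct and takes essentially the same route as the paper: invoke \autoref{thm:mainmodularpptheorem} to reduce to showing that a non-modular $\PG(2,\F_q)$-restriction forces a $U_{2,q^2+1}$-minor, produce a contraction in which $E(N)$ has rank two with no loops, and then count parallel classes using the fact that any two lines of a projective plane meet, so at most one line can collapse. The only difference is one of packaging: the paper obtains the relevant contraction by citing \autoref{prop:equivalentmodularitydefinition} (a single non-parallel element $e$ spanned by $E(N)$, which is then contracted), whereas you re-derive the same configuration directly from a rank-minimal flat violating modularity.
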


The converse of \autoref{thm:mainmodularpptheorem} is well-known: a restriction isomorphic to a projective geometry over $\F$ is modular in any $\F$-representable matroid (see \cite[Corollary 6.9.6]{Oxley}).
For the two-element field $\F_2$, a much stronger result than \autoref{thm:mainmodularpptheorem} holds.

\begin{theorem}[Seymour, \cite{Seymour:onminorsofnonbinarymatroids}] \label{thm:triangle}
Every $3$-connected matroid with a modular $\PG(1, \F_2)$-restriction is binary.
\end{theorem}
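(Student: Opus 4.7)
My plan is to apply Tutte's excluded-minor characterization of binary matroids — $U_{2,4}$ is the unique excluded minor — and show that a $3$-connected matroid $M$ with a modular triangle $T = \{a,b,c\}$ has no $U_{2,4}$-minor. The first step is a local-geometry lemma: for every $e \in E(M) \setminus T$ (with $M$ simple, which we may assume), the flat $P_e := \cl_M(T \cup \{e\})$ has rank $3$, and applying modularity of $T$ to any line $L \subseteq P_e$ gives
\[ r(L) + r(T) = r(L \cap T) + r(L \cup T), \]
which forces $r(L \cap T) \geq 1$ because $r(L \cup T) \leq 3$. Hence every line inside $P_e$ meets $T$, and $P_e$ decomposes as the union $L_a \cup L_b \cup L_c$ of the three concurrent lines through $e$ meeting $T$ at $a$, $b$, $c$.

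Second, I would rule out $4$-point lines in $M$. Modularity of $T$ constrains any line $L$ of $M$: the rank of $L \cap T$ is $0$, $1$, or $2$, and the case $L = T$ is immediate since $|T|=3$. For a $4$-point line $L = \{a, e, f, g\}$ with $L \cap T = \{a\}$, the local lemma puts $L = L_a$ inside $P_e$, and I aim to exhibit a $2$-separation of $M$: writing $Y := E(M) \setminus L$, we have $a \in \cl_M(\{b,c\}) \subseteq \cl_M(Y)$, so if $e \notin \cl_M(Y)$ then $L$ meets $\cl_M(Y)$ in only the point $\{a\}$, giving $r_M(Y) = r(M)-1$ and hence a $2$-separation $(L, Y)$, contradicting $3$-connectivity. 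The remaining subcase, in which $L \subseteq \cl_M(Y)$, and the skew case, in which $L$ is disjoint from $T$ as flats, would both be handled by contracting a carefully chosen element outside $T \cup L$ and reducing to the meeting case in a smaller $3$-connected instance.

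Third, to promote the absence of a $U_{2,4}$-restriction to the absence of a $U_{2,4}$-minor, I would induct on $|E(M)|$. Given an element $z \in E(M) \setminus T$, I would verify when $T$ remains a modular triangle in $M \setminus z$ or $\si(M/z)$ and when $3$-connectivity is preserved, and apply the induction hypothesis. The main technical obstacle is precisely the modularity-preservation step: if $z \notin T$ but the plane $P_z$ contains elements beyond $T \cup \{z\}$, then $T$ need not be a flat of $M/z$, so modularity of $T$ may be destroyed by contraction. Choosing a good $z$ — perhaps via a Tutte's Triangle Lemma-type argument to secure both the flatness of $T$ in the minor and the $3$-connectivity after removal — will be the crux of the argument.
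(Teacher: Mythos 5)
Your proposal is a plan rather than a proof, and the parts you leave open are exactly where the mathematical content lies. In your third step you acknowledge that preserving both the modularity of $T$ and $3$-connectivity when passing to $M \d z$ or $\si(M/z)$ is ``the crux,'' but you give no argument for it; similarly, in your second step the subcases $L \subseteq \cl_M(Y)$ and $L$ skew to $T$ are deferred to ``contracting a carefully chosen element,'' with no indication of how to choose it or why the smaller instance remains $3$-connected with $T$ modular. Contraction genuinely can destroy the flatness of $T$ (hence its modularity) and can create $2$-separations, so these are not routine verifications. What you are implicitly attempting is to reprove, by bare-hands induction, a statement essentially equivalent to Seymour's theorem that every $3$-connected non-binary matroid has a $U_{2,4}$-minor whose ground set contains any two prescribed elements --- and that theorem is the hard input here, not something one recovers in a few lines.

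The paper's proof is a short deduction from that result of Seymour: if $M$ is $3$-connected and non-binary, it has a $U_{2,4}$-minor containing $a$ and $b$, and since $c \in \cl_M(\{a,b\})$ one may take the minor to have ground set $\{a,b,c,d\}$. Unwinding this gives a set $X$ with $d \in \cl_{M/X}(\{a,b\})$ yet $d$ not a loop of, nor parallel in $M/X$ to, any of $a,b,c$; so $\sqcap_M(\{a,b\}, X \cup \{d\}) = 1$ while $\cl_M(X \cup \{d\})$ misses $\{a,b,c\}$, contradicting modularity of the triangle. To repair your write-up, either cite Seymour's theorem and finish as above, or supply complete arguments for the deferred contraction steps --- the latter would amount to a new proof of a nontrivial known theorem and is not something to leave as a sketch.
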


\autoref{thm:triangle} follows from Seymour's result that if $M$ is a $3$-connected non-binary matroid and $a, b \in E(M)$, then $M$ has a $U_{2, 4}$-minor whose ground set contains $a$ and $b$ \cite{Seymour:onminorsofnonbinarymatroids}.
Let $M$ be a $3$-connected matroid with a restriction $N \cong \PG(1, \F_2)$ such that $E(N) = \{a,b,c\}$. If $M$ is non-binary, then it has a $U_{2,4}$-minor containing $a$ and $b$, and since $c \in \cl_M(\{a,b\})$ it has a $U_{2,4}$-minor with ground set $\{a,b,c,d\}$, for some $d \in E(M)$.
Then $E(M)$ contains a set $X$ such that $d \in \cl_{M / X}(\{a, b\})$ but $d$ is not a loop or parallel to $a, b$ or $c$ in $M / X$. This means that $r_M(\{a, b\}) + r_M(X \cup \{d\}) - r_M(\{a,b\} \cup X \cup \{d\}) = 1$ but $r_M(\{a,b,c\} \cap \cl_M(X \cup \{d\})) = 0$, a contradiction to the modularity of $N$.
So $M$ is a binary matroid.

When $\F$ is a finite field of order greater than two, however, to show that a matroid $M$ with a modular $\PG(2, \F)$-restriction is $\F$-representable we need $M$ to be vertically $4$-connected rather than just $3$-connected. 
For any prime power $q > 2$, we can pick a prime $p$ less than $q$ such that $p$ does not divide $q$.
There is a rank-$4$ matroid obtained from $\PG(2, \F_q)$ and $\PG(2, \F_p)$ by identifying the elements of a line in $\PG(2, \F_p)$ with $p+1$ collinear elements in $\PG(2, \F_q)$ that is $3$-connected and has a modular $\PG(2, \F_q)$-restriction, but is not $\F_q$-representable.

To prove \autoref{thm:mainmodularpptheorem}, we first prove the following main lemma, whose proof consists of Sections~\ref{sec:duality} to \ref{sec:connectivity}.

\begin{restatable}{lemma}{keylemma} \label{lem:keylemma}
For any finite field $\F$, if $M_0$ is a $3$-connected, non-$\F$-representable matroid with a modular restriction $N_0 \cong \PG(2,\F)$, then $M_0$ has a $3$-connected, non-$\F$-representable minor $M$ such that $N_0$ is a restriction of $M$ and $\lambda_M(E(N_0)) = 2$.
\end{restatable}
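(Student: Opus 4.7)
The plan is to choose $M$ among 3-connected, non-$\F$-representable minors of $M_0$ having $N_0$ as a restriction, with $|E(M)|$ as small as possible. Since $N_0$ itself is $\F$-representable, $E(M) \setminus E(N_0) \ne \emptyset$, so by 3-connectivity and $r_M(E(N_0)) = 3$ we have $\lambda_M(E(N_0)) \in \{2,3\}$. I claim this value is $2$; suppose for contradiction that $\lambda_M(E(N_0)) = 3$, i.e.\ $E(M) \setminus E(N_0)$ spans $M$.

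Apply Seymour's splitter theorem to the pair $(M, N_0)$, which is valid because $N_0 \cong \PG(2,\F)$ has at least $|\F|^2 + |\F| + 1 \geq 7$ elements and is therefore neither a wheel nor a whirl. This yields an element $e \in E(M) \setminus E(N_0)$ such that one of $\si(M/e)$ or $\co(M \setminus e)$ is 3-connected with $N_0$ as a minor. To guarantee that $N_0$ is actually preserved as a restriction (not merely as a minor) I verify two compatibility conditions. First, because $\PG(2,\F)$ is simple, no triangle of $M$ through $e$ has both remaining vertices in $E(N_0)$, so the simplification of $M/e$ can be chosen so as to discard only elements outside $E(N_0)$. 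Second, the modularity of $N_0$ rules out triads $\{e, f, g\}$ of $M$ with $\{f, g\} \subseteq E(N_0)$: applying the modular identity to the hyperplane $F = \cl_M(E(M) \setminus \{e, f, g\})$, together with $r_M(F) = r(M) - 1$, $r_M(F \cap E(N_0)) = 3$ (since removing two points from $\PG(2,\F)$ preserves rank), and $r_M(F \cup E(N_0)) = r(M)$ (because $F \cup E(N_0) = E(M) \setminus \{e\}$ and a 3-connected $M$ has no coloops), yields $(r(M) - 1) + 3 = 3 + r(M)$, a contradiction. Hence the cosimplification of $M \setminus e$ can likewise be chosen to preserve all of $E(N_0)$.

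Let $M'$ denote the resulting proper 3-connected minor of $M$ with $N_0$ as a restriction. By the minimality of $|E(M)|$, the matroid $M'$ must be $\F$-representable. The main obstacle, and the technical heart of the lemma which I expect to be developed across Sections~\ref{sec:duality}--\ref{sec:connectivity}, is the ``extension'' step deriving the desired contradiction: to show that, under the hypothesis $\lambda_M(E(N_0)) = 3$, the $\F$-representation of $M'$ lifts to an $\F$-representation of $M$, violating the non-$\F$-representability of $M$. The intuition is that the modularity of $N_0$ supplies enough projective incidences between $e$ and $E(N_0)$ to rigidify the position of $e$ in any representation extending that of $N_0$; invoking the Fundamental Theorem of Projective Geometry (to normalize the representation of $N_0$), together with Tutte's Linking Theorem and Bixby's Lemma / the Bixby--Coullard Inequality to control the connectivity of auxiliary minors and to exhibit sufficiently many dependencies between $e$ and $E(N_0)$, should allow one to carry out the lifting in both the deletion and contraction cases.
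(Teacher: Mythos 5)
Your setup is fine: take a minor-minimal $3$-connected, non-$\F$-representable minor $M$ with $N_0$ as a restriction, observe $\lambda_M(E(N_0))\in\{2,3\}$, and aim for a contradiction when $\lambda_M(E(N_0))=3$. Your auxiliary observations are also essentially correct (no triangle through $e$ meets $E(N_0)$ twice because $E(N_0)$ is a \emph{flat} of $M$ --- this follows from modularity plus simplicity, not merely from the simplicity of $\PG(2,\F)$; and no triad meets $E(N_0)$, by the cocircuit/hyperplane computation you give). But the proof stops exactly where the theorem begins. The entire content of the lemma is the derivation of a contradiction from the minimal counterexample with $\lambda_M(E(N_0))=3$, and your proposal replaces that derivation with the statement that a representation of the one-element reduction ``should'' lift to $M$ because modularity ``rigidifies'' the position of $e$. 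That is not an argument, and there is no known route to the contradiction through a single element: knowing that $M\d e$ (or $\si(M/e)$) is $\F$-representable gives you no mechanism for producing a column for $e$, and no mechanism for refuting its existence either.

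The paper's actual strategy is structurally different and is built around a \emph{pair} of elements rather than one. It first shows (Sections~\ref{sec:duality} and \ref{sec:deletionpair}, using a modular-sum ``dualizing'' construction to convert contraction pairs into deletion pairs) that a minimal counterexample can be taken to have a deletion pair $x,y\notin E(N_0)$ with $M\d x,y$ having at most one series pair. The pair is essential because the stabilizer machinery (\autoref{thm:modularflatstabilizes} together with \autoref{lem:uniquerepresentablematroid1} and \autoref{lem:uniquerepresentablematroid2}) then produces a \emph{unique} $\F$-representable matroid $M'$ with $M'\d x=M\d x$ and $M'\d y=M\d y$; since $M$ is not representable, $M\neq M'$, and the contradiction is extracted by comparing $M$ and $M'$. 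That comparison occupies Sections~\ref{sec:strands} and \ref{sec:connectivity}: one finds ``strands'' (sets $S$ with $\sqcap_M(S,E(N_0))=1$) on which $M$ and $M'$ disagree, uses modularity of $N_0$ to multiply these disagreements, and then a long connectivity analysis of the elements where $M$ and $M'$ differ forces an impossible configuration. None of this is foreshadowed, let alone carried out, in your proposal, so the proof has a genuine and large gap at its central step.
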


In the final section of this paper, we consider the excluded minors of the classes of matroids representable over each finite field.
Geelen, Gerards, and Whittle made the following conjecture about them.

\begin{conjecture}[Geelen, Gerards, Whittle, \cite{GeelenGerardsWhittle:onrotas}] \label{conj:noexcludedminorhasapp}
 For each finite field $\F$, no excluded minor for the class of $\F$-representable matroids has a $\PG(2, \F)$-minor.
\end{conjecture}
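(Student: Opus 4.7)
The plan is to reduce the $\PG(2,\F)$-minor case of this conjecture to the $\PG(2,\F)$-restriction case, which is among the consequences of Theorem~\ref{thm:mainmodularpptheorem} promised in the abstract. Suppose for contradiction that some excluded minor $M$ for the class of $\F$-representable matroids has a $\PG(2, \F)$-minor, and choose disjoint $C, D \subseteq E(M)$ so that $(M / C) \setminus D$ has a $\PG(2, \F)$-restriction on a set $X$, with $|C| + |D|$ minimum. The aim is to force $C = D = \emptyset$, so that $M$ itself has a $\PG(2, \F)$-restriction, contradicting the restriction version.

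The deletion side is straightforward. For any $d \in D$, the restriction of $(M/C) \setminus (D \setminus \{d\})$ to $X$ equals the restriction of $M/C$ to $X$, namely $\PG(2,\F)$, since the matroid structure on $X$ does not depend on whether $d$ is deleted. Hence $(C, D \setminus \{d\})$ is a smaller valid choice, contradicting minimality. So $D = \emptyset$, and $M/C$ itself has $X$ as a $\PG(2,\F)$-restriction; when $C \neq \emptyset$, the matroid $M/C$ is a proper minor of $M$ and is therefore $\F$-representable.

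The contraction side is the main obstacle. Fix $c \in C$. Minimality of $|C|$ implies that $c$ is not a loop of $M/(C \setminus \{c\})$ and that $X$ is not a $\PG(2,\F)$-restriction of $M/(C \setminus \{c\})$, so the projective plane structure on $X$ is produced only after contracting $c$. There is no general way to convert a $\PG(2,\F)$-minor into a $\PG(2,\F)$-restriction of the ambient matroid — many $\F$-representable matroids have a $\PG(2,\F)$-minor but no $\PG(2,\F)$-restriction — so the easy reduction breaks. A natural but speculative route is to exploit Theorem~\ref{thm:generalform}: each proper minor $M/c$ with $c \in C$ is $\F$-representable and contains $X$ as a $\PG(2,\F)$-minor (via further contraction of $C \setminus \{c\}$), so one might try to patch together the uniquely determined representations of $M/c$ for varying $c \in C$ into a single $\F$-representation of $M$. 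I expect the hard part to be synchronising these representations across different contractions, because Theorem~\ref{thm:generalform} demands that the modular restriction be present outright and that the ambient matroid be vertically $4$-connected, rather than merely that a modular minor exists in a $3$-connected matroid; closing this gap in full generality is precisely what the conjecture asks for, and is why it remains open.
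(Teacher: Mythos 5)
The statement you were asked to prove is stated in the paper as a \emph{conjecture} of Geelen, Gerards and Whittle; the paper does not prove it, and to the best of my knowledge it remains open. What the paper establishes is only the special case in which the $\PG(2,\F)$ occurs as a restriction rather than a minor (\autoref{cor:noexcludedminorhasapp}). Your proposal is therefore correct in its ultimate assessment: the deletion side of your minimality argument goes through ($D=\emptyset$ is forced, since deleting elements outside $X$ does not change $M|X$), but the contraction side --- upgrading a $\PG(2,\F)$-minor obtained by contracting a nonempty set $C$ to a $\PG(2,\F)$-restriction of the excluded minor itself --- is precisely the missing idea, and you rightly decline to claim it. Your speculative route via \autoref{thm:generalform} faces exactly the obstruction you name: that theorem needs the modular flat to be present as a restriction and the ambient matroid to be vertically $4$-connected, neither of which an excluded minor with only a $\PG(2,\F)$-minor is guaranteed to supply.

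One correction to your framing of the restriction case: the paper does not derive \autoref{cor:noexcludedminorhasapp} from \autoref{thm:mainmodularpptheorem} or \autoref{thm:generalform}, but from \autoref{lem:keylemma} directly, precisely to avoid any connectivity hypothesis beyond $3$-connectivity. The argument is: if the $\PG(2,\F)$-restriction $N_0$ of the excluded minor $M$ is not modular, then \autoref{prop:equivalentmodularitydefinition} yields a $U_{2,q^2+1}$-minor, which is itself non-$\F$-representable, contradicting excluded-minor minimality; so $N_0$ is modular, $M$ is $3$-connected (the class is closed under direct sums and $2$-sums), \autoref{lem:keylemma} forces $\lambda_M(E(N_0))=2$, and then $M$ decomposes as a modular sum of $N_0$ with the $\F$-representable matroid $M\setminus e$, making $M$ $\F$-representable --- a contradiction. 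If you pursue the full conjecture, this is the template to emulate: the known partial result (that no excluded minor has a $\PG(n,\F)$-minor for some large $n$, from \cite{GeelenGerardsWhittle:onrotas}) likewise works directly with minors and excluded-minor minimality rather than attempting to convert minors into restrictions.
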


They were able to prove that for each finite field $\F$, there is an integer $n$ so that no excluded minor for the class of $\F$-representable matroids has a $\PG(n, \F)$-minor \cite{GeelenGerardsWhittle:onrotas}.
On the other hand, as a corollary of \autoref{lem:keylemma}, we prove the following special case of \autoref{conj:noexcludedminorhasapp}.

\begin{restatable}{corollary}{noexcludedminorhasapp} \label{cor:noexcludedminorhasapp}
 For any finite field $\F$, no excluded minor for the class of $\F$-representable matroids has a $\PG(2, \F)$-restriction.
\end{restatable}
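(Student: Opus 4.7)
Assume for contradiction that $M^*$ is an excluded minor for the class of $\F$-representable matroids with a $\PG(2, \F)$-restriction $N_0$; so $M^*$ is $3$-connected and not $\F$-representable, while every proper minor of $M^*$ is $\F$-representable. Write $X := E(M^*) \setminus E(N_0)$. The plan is to verify that $N_0$ is modular in $M^*$, apply \autoref{lem:keylemma} to obtain a $3$-connected, non-$\F$-representable minor $M$ of $M^*$ with $\lambda_M(E(N_0)) = 2$, and rule out both $M \neq M^*$ and $M = M^*$ to obtain a contradiction.

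The technical heart is verifying modularity of $N_0$ in $M^*$. I would check the modular law at each flat $F$ of $M^*$ by transferring the question to a suitable $\F$-representable proper minor and invoking the fact (Oxley, Corollary 6.9.6) that every $\PG(r-1, \F)$-restriction of an $\F$-representable matroid is modular. When $X \not\subseteq F$, any $e \in X \setminus F$ works: modularity of $N_0$ in $M^* \setminus e$ gives modularity at $F$ in $M^*$ directly, since ranks agree on subsets avoiding $e$. When $X \subseteq F$ with $Y := F \cap E(N_0)$ a non-empty proper flat of $N_0$, apply the modular law in $M^*/Y$ --- whose restriction $N_0/Y$ has simplification $\PG(2 - r(Y), \F)$ --- at the flat $F \setminus Y$ and rearrange ranks to recover modularity at $F$. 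The case $F = E(M^*)$ is trivial. The only delicate subcase would be $F = X$ with $X$ a flat, equivalently $\cl_{M^*}(X) \cap E(N_0) = \emptyset$; I would rule this out by observing that $3$-connectivity forces $r(X) = r(M^*) - 1$, so for any $e \in X$ the set $X \setminus \{e\}$ is a flat of $M^*/e$ of rank $r(M^*/e) - 1$ with the same disjointness, and modularity of $N_0$ in the $\F$-representable $M^*/e$ at this flat then predicts the impossible equality $r(M^*/e) - 1 + 3 = r(M^*/e)$.

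With modularity in hand, apply \autoref{lem:keylemma} to $M^*$ to produce a $3$-connected, non-$\F$-representable minor $M$ of $M^*$ with $N_0$ a restriction and $\lambda_M(E(N_0)) = 2$. If $M$ is a proper minor of $M^*$ then $M$ is $\F$-representable by the excluded-minor hypothesis, contradicting the lemma. Otherwise $M = M^*$, so $\lambda_{M^*}(E(N_0)) = 2$; then $r(X) = r(M^*) - 1$, the flat $H := \cl_{M^*}(X)$ is a hyperplane of $M^*$, and modularity of $N_0$ at $H$ forces $L := H \cap E(N_0)$ to be a line of $N_0$. A standard generalized parallel connection decomposition then identifies $M^*$ with $P_L(N_0, M^*|(X \cup L))$ along the modular line $L$. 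Both factors are $\F$-representable --- $N_0$ trivially, and $M^*|(X \cup L)$ as a proper minor of $M^*$ since $|E(N_0) \setminus L| = q^2 \geq 1$ --- so $M^*$ would be $\F$-representable, the final contradiction.

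The main obstacle is the isolated-flat subcase of the modularity step: it cannot be handled by a single deletion and requires the contraction argument above to extract a numerical impossibility from Oxley's result inside $M^*/e$.
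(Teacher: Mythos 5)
Your skeleton matches the paper's (establish modularity of $N_0$, apply \autoref{lem:keylemma} to force $\lambda_{M^*}(E(N_0)) = 2$, then decompose $M^*$ as a modular sum and derive representability), but your final step has a genuine gap. You glue $N_0$ and $M^*|(X \cup L)$ along the line $L \cong U_{2,q+1}$ and conclude that $M^*$ is $\F$-representable because both factors are. Brylawski's result (\autoref{prop:modularsumpreservesrepresentability}) requires the common restriction to be \emph{uniquely} $\F$-representable, and $U_{2,q+1}$ is not uniquely representable over $\F_q$ once $q \geq 5$: the two factors induce labelings of the $q+1$ points of $\PG(1,\F)$ that may differ by a permutation not realized by any element of $P\Gamma L_2(q)$ (compare $|P\Gamma L_2(5)| = 120$ with $6! = 720$), and then no choice of representations of the factors agrees on $L$. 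This incompatibility across a line is exactly the phenomenon behind the non-representable $3$-connected examples mentioned in \autoref{sec:introduction}. The paper sidesteps it by deleting a single point $e \in E(N_0) \setminus L$ and writing $M^* = (M^* \d e) \oplus_m N_0$ with common part $N_0 \d e$, which contains $\AG(2,\F)$ and is therefore uniquely representable; your argument needs this (or an equivalent) fix.

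Two smaller remarks on the modularity step, which you verify flat-by-flat rather than via the paper's route (\autoref{prop:equivalentmodularitydefinition} plus the observation that a failure of modularity yields a $U_{2,q^2+1}$-minor, which is a proper non-representable minor). Your case analysis is workable, but the contraction subcase silently assumes that $(M^*/e)|E(N_0) \cong \PG(2,\F)$ for $e \in X$, i.e.\ that $e \notin \cl_{M^*}(E(N_0))$. This requires showing $E(N_0)$ is a flat of $M^*$, which is true but needs its own argument (a point of $\cl_{M^*}(E(N_0)) \setminus E(N_0)$ forces a $U_{2,q+2}$- or $U_{2,q^2+q+1}$-minor, again a proper non-representable minor). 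Once you are proving facts of that kind anyway, the paper's single application of \autoref{prop:equivalentmodularitydefinition} is both shorter and subsumes all of your cases.
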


\section{Definitions}

We recall some matroid connectivity terminology. We define the \defn{connectivity function}, $\lambda_M$, on subsets of the ground set of a matroid $M$ by
\[ \lambda_M(X) = r(X) + r(E(M) \setminus X) - r(M). \]
This is equivalent to $\lambda_M(X) = r(X) + r^*(X) - |X|$.

A partition $(A, B)$ of $E(M)$ is called an \defn{$\ell$-separation} if $|A|,|B| \geq \ell$ and $\lambda_M(A) < \ell$ (note that $\lambda_M(A) = \lambda_M(B)$),
and a matroid is \defn{$k$-connected} if it has no $\ell$-separation for any $\ell < k$.

We call a $2$-separation $(A,B)$ of $M$ an \defn{internal $2$-separation} if $|A|, |B| \geq 3$, and we say that a matroid is \defn{internally $3$-connected} if it is connected and has no internal $2$-separations. Equivalently, a connected matroid is internally $3$-connected if for every $2$-separation $(A,B)$, either $A$ or $B$ is a parallel or series pair.

An $\ell$-separation $(A, B)$ of $M$ is called a \defn{vertical $\ell$-separation} if $r_M(A) < r(M)$ and $r_M(B) < r(M)$.
A matroid $M$ is \defn{vertically $4$-connected} if it has no vertical $\ell$-separations with $\ell < 4$.
% for every $\ell$-separation $(A, B)$ of $M$ with $\ell < 4$, at least one of $r_M(A)$ and $r_M(B)$ is equal to $r(M)$. 
If $M$ is vertically $4$-connected, then the simplification of $M$, $\si(M)$, is $3$-connected.

We define the \defn{local connectivity} of two sets $A, B \subseteq E(M)$ in a matroid $M$ to be
\[ \sqcap_M(A, B) = r_M(A) + r_M(B) - r_M(A \cup B), \]
and two sets $A$ and $B$ are called \defn{skew} if $\sqcap_M(A, B) = 0$.

\section{Modular sums} \label{sec:modularsum}

% We call a restriction $N$ of a matroid $M$ \emph{modular} if, for every flat $F$ of $M$,
% \[ r_M(F) + r(N) = r_M(F \cap E(N)) + r_M(F \cup E(N)). \]
% Equivalently, $N$ is modular if $\sqcap_M(E(N), F) = r_M(F \cap E(N))$ for every flat $F$ of $M$.

Suppose that the ground sets of two matroids $M_1$ and $M_2$ intersect on a set $T$ such that $M_1|T$ is a modular restriction of $M_1$. We can then define a certain matroid on $E(M_1) \cup E(M_2)$ that generalizes the notions of the direct sum and the $2$-sum. The following construction was introduced by Brylawski in 1975.

\begin{proposition}[Brylawski, \cite{Brylawski}] \label{prop:modularsumexists}
Let $M_1$ and $M_2$ be matroids and let $T = E(M_1) \cap E(M_2)$. If $M_1|T = M_2|T$ and $M_1|T$ is modular in $M_1$, then there is a unique matroid $M$ such
that $E(M) = E(M_1) \cup E(M_2)$, $M | E(M_1) = M_1$, $M | E(M_2) = M_2$, and $r(M) = r(M_1) + r(M_2) - r(M_1|T)$.
Moreover, $F \subseteq E(M)$ is a flat of $M$ if and only if $F \cap E(M_1)$ is a flat of $M_1$ and $F \cap E(M_2)$ is a flat of $M_2$, and the rank of a flat $F$ is
\[ r_M(F) = r_{M_1}(F \cap E(M_1)) + r_{M_2}(F \cap E(M_2)) - r_{M_1}(F \cap T). \]
\end{proposition}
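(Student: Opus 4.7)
The plan is to define $M$ via its intended family of flats and then verify the flat axioms. Let
\[
\mathcal{F} = \{ F \subseteq E(M_1) \cup E(M_2) : F \cap E(M_i) \text{ is a flat of } M_i \text{ for } i=1,2 \}.
\]
Then $E(M_1) \cup E(M_2) \in \mathcal{F}$, and $\mathcal{F}$ is closed under intersection because intersection commutes with intersecting with $E(M_i)$. Define $\cl(A) = \bigcap\{F \in \mathcal{F} : A \subseteq F\}$; the task reduces to showing that $\cl$ is the closure operator of a matroid $M$ with the stated rank and restriction properties.

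The central step is to describe $\cl$ constructively. Writing $A_i = A \cap E(M_i)$, I would form the monotone chain of subsets of $T$ given by $B_1 = T \cap \cl_{M_2}(A_2)$, $B_{2k} = T \cap \cl_{M_1}(A_1 \cup B_{2k-1})$, and $B_{2k+1} = T \cap \cl_{M_2}(A_2 \cup B_{2k})$. Since $T$ is finite this chain stabilizes at some $B \subseteq T$, and $\cl(A) = \cl_{M_1}(A_1 \cup B) \cup \cl_{M_2}(A_2 \cup B)$. The crucial consequence of modularity of $M_1|T$ in $M_1$ is the identity that for every $X \subseteq E(M_1)$ and every flat $Y$ of $M_1|T$ containing $\cl_{M_1}(X) \cap T$, one has $\cl_{M_1}(X \cup Y) \cap T = Y$. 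This is obtained by applying the modular rank equation $r_{M_1}(F) + r(M_1|T) = r_{M_1}(F \cap T) + r_{M_1}(F \cup T)$ to the flats $F = \cl_{M_1}(X)$ and $F = \cl_{M_1}(X \cup Y)$ and comparing; equivalently, modularity forces the local connectivity $\sqcap_{M_1}(\cl_{M_1}(X), Y)$ to equal $r_{M_1}(\cl_{M_1}(X) \cap T)$.

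With $\cl$ understood, the exchange axiom follows from a case analysis on whether the exchanged elements lie in $E(M_1) \setminus T$, $T$, or $E(M_2) \setminus T$: exchange in $M_1$ and $M_2$ handles the cases where both lie on one side, while the modular identity above handles the mixed cases. The rank formula for $F \in \mathcal{F}$ is then proved by constructing a basis of $F$ in $M$ from a basis $B_0$ of $F \cap T$ in $M_1|T$ extended on each side to a basis of $F \cap E(M_i)$ in $M_i$; submodularity combined with the restriction property $M|E(M_i) = M_i$ (which is immediate from $E(M_i) \in \mathcal{F}$ and the closure description) forces equality in the count. Setting $F = E(M_1) \cup E(M_2)$ gives the total rank. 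Uniqueness is automatic: any $M$ satisfying the hypotheses has all its flats in $\mathcal{F}$ by the restriction property, and the rank condition then forces $\mathcal{F}$ to be exactly the family of flats. The main obstacle is verifying exchange, which is the only step where the full force of modularity enters; everything else is bookkeeping given the closure description.
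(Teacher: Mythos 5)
The paper does not prove this proposition; it is stated as a citation to Brylawski's 1975 paper, so there is no in-paper argument to compare against. Your sketch is, in outline, the standard construction of the generalized parallel connection (it is essentially the route taken in Brylawski's original paper and in Oxley's treatment): declare $\mathcal{F}$ to be the sets whose traces on $E(M_1)$ and $E(M_2)$ are flats, observe that $\mathcal{F}$ is an intersection-closed family containing the ground set, and verify that the induced closure operator is a matroid closure operator. Your alternating-closure description of $\cl(A)$ is correct (and does not itself use modularity), and the identity you isolate --- that for a flat $Y$ of $M_1|T$ containing $\cl_{M_1}(X)\cap T$ one has $\cl_{M_1}(X\cup Y)\cap T=Y$, equivalently $\sqcap_{M_1}(\cl_{M_1}(X),Y)=r_{M_1}(\cl_{M_1}(X)\cap T)$ --- is exactly the consequence of modularity that makes the exchange (or covering) axiom go through in the mixed cases; note that it is needed only on the $M_1$ side, matching the asymmetric hypothesis. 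Uniqueness also works as you say: any $M$ satisfying the four conditions has all its flats in $\mathcal{F}$, hence is a quotient of the constructed matroid, and equal rank forces equality.

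One point where the sketch understates the work: for the rank formula, submodularity together with $M|E(M_i)=M_i$ yields only the inequality $r_M(F)\le r_{M_1}(F_1)+r_{M_2}(F_2)-r_{M_1}(F\cap T)$. The reverse inequality --- equivalently, the independence in $M$ of your assembled set $B_0\cup B_1'\cup B_2'$, equivalently $\sqcap_M(F\cap E(M_1),F\cap E(M_2))=r(F\cap T)$ --- is not automatic and is a second place where modularity must be invoked quantitatively (e.g.\ via the closure description, using that the stabilized trace on $T$ of any subset of $F$ lies in $F\cap T=\cl_{M_2}(B_0)\cap T$, or by checking that your rank expression increases by exactly one along covering relations in $\mathcal{F}$). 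This is fixable with the tools you already have, so I would call it an under-specified step rather than a wrong turn, but as written ``submodularity forces equality in the count'' proves only half of the equality.
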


We call the matroid $M$ obtained as in \autoref{prop:modularsumexists} the \defn{modular sum} of $M_1$ and $M_2$ and denote it by $M_1 \oplus_m M_2$.
We note that the matroid $M_1 \oplus_m M_2$ is often called the \emph{generalized parallel connection} \cite{Brylawski}.

When $M_1$ and $M_2$ are matroids on disjoint ground sets, $M_1 \oplus_m M_2$ is equal to the direct sum $M_1 \oplus M_2$. When $M_1$ and $M_2$ are matroids whose ground sets intersect in a single element $e$, then the $2$-sum of $M_1$ and $M_2$ is $M_1 \oplus_2 M_2 = (M_1 \oplus_m M_2) \d e$.

We remark that the rank function for $M$ given in \autoref{prop:modularsumexists} tells us that for any $e \in E(M_2) \setminus T$, $M \d e = M_1 \oplus_m (M_2 \d e)$, and for any $e \in E(M_2) \setminus \cl_{M_2}(T)$, $M / e = M_1 \oplus_m (M_2 / e)$.

We now state four facts about modular sums; the first was proved by Brylawski and concerns their representability. A matroid $M$ is called \defn{uniquely representable} over a field $\F$ if it is $\F$-representable and any $\F$-representation of $M$ can be transformed into any other by row operations, scaling columns, and applying an automorphism of $\F$ to all its entries.

\begin{proposition}[Brylawski, \cite{Brylawski}] \label{prop:modularsumpreservesrepresentability}
If $M = M_1 \oplus_m M_2$ is the modular sum of two matroids $M_1$ and $M_2$ that are representable over a field $\mathbb{F}$ and $M|(E(M_1) \cap E(M_2))$ is uniquely representable over $\mathbb{F}$, then $M$ is representable over $\mathbb{F}$.
\end{proposition}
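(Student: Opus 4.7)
The plan is to construct an explicit $\F$-representation of $M$ by amalgamating representations of $M_1$ and $M_2$ along $T = E(M_1) \cap E(M_2)$. Let $r_i = r(M_i)$ and $t = r(M_1|T)$. I would start from any $\F$-representations $A_1$ of $M_1$ and $A_2$ of $M_2$, and after a sequence of row operations bring each $A_i$ into the block form
\[ A_i = \begin{pmatrix} B_i & C_i \\ 0 & D_i \end{pmatrix}, \]
where $B_i$ is a $t \times |T|$ matrix representing $M_1|T$ and the columns of $A_i$ indexed by $T$ are supported in the first $t$ rows.

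Next I would invoke the unique representability of $M|T$. Since $B_1$ and $B_2$ are both $\F$-representations of $M_1|T = M_2|T$, there is by hypothesis a sequence of row operations on $B_2$, column scalings, and an entrywise automorphism of $\F$ transforming $B_2$ into $B_1$. Each such operation lifts naturally to $A_2$: row operations act on the first $t$ rows (the last $r_2 - t$ entries of each $T$-column are already zero and remain so), column scalings of $T$-columns scale the corresponding entire columns of $A_2$, and the field automorphism is applied entrywise. Each preserves the matroid represented by $A_2$, which remains $M_2$. After performing them, I may assume $B_1 = B_2 = B$.

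I would then form the $(r_1 + r_2 - t) \times |E(M)|$ block matrix
\[ A = \begin{pmatrix} B & C_1 & C_2 \\ 0 & D_1 & 0 \\ 0 & 0 & D_2 \end{pmatrix}, \]
with columns indexed by $T$, $E(M_1) \setminus T$, and $E(M_2) \setminus T$, and let $N$ be the matroid it represents over $\F$. Deleting the columns indexed by $E(M_2) \setminus T$ leaves $A_1$ with added zero rows, so $N|E(M_1) = M_1$; by symmetry $N|E(M_2) = M_2$. To finish I would invoke the uniqueness clause of Proposition \ref{prop:modularsumexists}: any matroid on $E(M)$ with these two restrictions and rank $r_1 + r_2 - t$ must equal $M_1 \oplus_m M_2$.

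It therefore suffices to verify that $r(N) = r_1 + r_2 - t$. The inequality $r(N) \leq r_1 + r_2 - t$ is immediate from the row count of $A$. For the reverse inequality I would pick a basis $J$ of $M_1|T$, extend it to bases $J_1$ of $M_1$ and $J_2$ of $M_2$, and show that the $r_1 + r_2 - t$ columns of $A$ indexed by $J_1 \cup (J_2 \setminus T)$ are linearly independent: projecting any dependence onto the last $r_2 - t$ rows forces a dependence among the $D_2$-columns indexed by $J_2 \setminus T$, which must be trivial because $J_2 \cap T = J$ is already a basis of $M_2|T$; the residual dependence then lives inside $A_1$ and is trivial since $J_1$ is independent in $M_1$. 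The one delicate point in all of this is the lifting step in the second paragraph: one must check that the operations witnessing unique representability of $M|T$ really can be performed on $A_2$ without disturbing either $M_2$ or the triangular form already arranged. Everything after that identification is a routine block-matrix verification combined with the uniqueness of the modular sum.
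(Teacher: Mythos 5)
The paper states this proposition without proof, citing Brylawski, so there is no in-paper argument to compare against; your proof is correct and is the standard amalgamation argument. You correctly identify and resolve the only delicate point: the row operations, column scalings, and field automorphism witnessing unique representability of $M|T$ act respectively on the top $t$ rows, on the $T$-columns (which are supported in those rows), and on all entries of $A_2$, so each lifts to a representation-preserving transformation of $A_2$; the rank computation and the appeal to the uniqueness clause of \autoref{prop:modularsumexists} then finish the job. Your block matrix is exactly the form the paper itself writes down when it applies this proposition, e.g.\ the representation of $R \oplus_m M_0$ in the proof of \autoref{lem:preservesrepresentability}.
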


\autoref{prop:modularsumpreservesrepresentability} is especially useful along with the following theorem, sometimes called the Fundamental Theorem of Projective Geometry (for a proof, see \cite[Theorem 5.4.8]{Berger}).

\begin{fundamentaltheoremofprojectivegeometry}
For each finite field $\F$ and integer $n \geq 3$, the projective geometry $\PG(n-1, \F)$ is uniquely representable over $\F$.
\end{fundamentaltheoremofprojectivegeometry}

The next three facts concern connectivity properties of modular sums.

\begin{proposition} \label{prop:connectivitypropertiesofmodularsum}
If $M = M_1 \oplus_m M_2$ is the modular sum of matroids $M_1$ and $M_2$ with $T = E(M_1) \cap E(M_2)$, then
\begin{enumerate}[(i)]
\item for any $X \subseteq E(M_1) \setminus T$, $r_M^*(X) = r_{M_1}^*(X)$ and $\lambda_M(X) = \lambda_{M_1}(X)$, and
\item for any $X \subseteq E(M_2) \setminus T$, $r_M^*(X) = r_{M_2}^*(X)$ and $\lambda_M(X) = \lambda_{M_2}(X)$.
\end{enumerate}
\end{proposition}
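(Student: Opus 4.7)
The two parts are symmetric, so I would only handle the first and obtain the second by swapping the roles of $M_1$ and $M_2$. The plan is to compute $r_M^*(X)$ directly from its definition using the rank formula in \autoref{prop:modularsumexists}, and then derive the connectivity statement via the identity $\lambda_M(X) = r_M(X) + r_M^*(X) - |X|$ mentioned in the definitions section.

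Fix $X \subseteq E(M_1) \setminus T$. Since $M|E(M_1) = M_1$ and $X \subseteq E(M_1)$, we immediately have $r_M(X) = r_{M_1}(X)$. The nontrivial content is computing the complementary rank $r_M(E(M) \setminus X)$. The remark following \autoref{prop:modularsumexists} says that $M \setminus e = M_1 \oplus_m (M_2 \setminus e)$ whenever $e \in E(M_2) \setminus T$, and by the symmetric statement (deleting an element of $E(M_1) \setminus T$ from $M_1$ in the modular sum), iterating over the elements of $X$ gives $M \setminus X = (M_1 \setminus X) \oplus_m M_2$. Applying the rank formula $r(M_1 \oplus_m M_2) = r(M_1) + r(M_2) - r(M_1|T)$ to both $M$ and $M \setminus X$ then yields
\[ r_M(E(M) \setminus X) = r_{M_1}(E(M_1) \setminus X) + r(M_2) - r(M_1|T) \]
and
\[ r(M) = r(M_1) + r(M_2) - r(M_1|T). \]
Subtracting and adding $|X|$ gives $r_M^*(X) = |X| + r_M(E(M) \setminus X) - r(M) = |X| + r_{M_1}(E(M_1) \setminus X) - r(M_1) = r_{M_1}^*(X)$, which is the first equality.

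For the second equality I would simply combine $r_M(X) = r_{M_1}(X)$ with the identity just derived:
\[ \lambda_M(X) = r_M(X) + r_M^*(X) - |X| = r_{M_1}(X) + r_{M_1}^*(X) - |X| = \lambda_{M_1}(X). \]
Part (ii) is obtained by the same argument with $M_1$ and $M_2$ interchanged, using that the hypothesis of \autoref{prop:modularsumexists} is symmetric in form (even though its premise of modularity of $M_1|T$ is not, the \emph{conclusion}—in particular the rank formula and the fact that $M \setminus e$ decomposes—applies symmetrically, so iterating deletions of elements of $X \subseteq E(M_2) \setminus T$ still gives $M \setminus X = M_1 \oplus_m (M_2 \setminus X)$).

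There is no real obstacle here; the whole proof is a bookkeeping exercise. The only point worth a moment's care is justifying that one may iterate the single-element deletion remark to obtain the decomposition of $M \setminus X$, since one must check that the intermediate matroids still satisfy the hypotheses of the modular sum construction (the restriction to $T$ is unaffected by the deletions, so modularity of $M_1|T$ in $M_1 \setminus X$ is inherited from modularity in $M_1$).
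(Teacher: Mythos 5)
Your proof is correct and is essentially the paper's argument: both compute $r_M^*(X)$ from the rank of the complement using the identity $r(M \d X) = r(M_1 \d X) + r(M_2) - r(M_1|T)$ and then conclude via $\lambda_M(X) = r_M(X) + r_M^*(X) - |X|$. You are merely more explicit about justifying that identity (via iterated deletions and the persistence of modularity of $M_1|T$), a step the paper leaves implicit.
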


\begin{proof}
We let $X \subseteq E(M_1) \setminus T$ and compute the corank of $X$ in $M$.
We see that $r_M^*(X) = |X| - r(M) + r(M \d X)$ is equal to
\[ |X| - (r(M_1) + r(M_2) - r(M_1|T)) + (r(M_1 \d X) + r(M_2) - r(M_1|T)), \]
which is equal to $r_{M_1}^*(X)$.
Hence $\lambda_M(X) = r_M(X) + r_M^*(X) - |X| = r_{M_1}(X) + r_{M_1}^*(X) - |X| = \lambda_{M_1}(X)$.
The same proof shows that a set $X \subseteq E(M_2) \setminus T$ satisfies $r_M^*(X) = r_{M_2}^*(X)$ and $\lambda_M(X) = \lambda_{M_2}(X)$.
\end{proof}

This next fact is due to Brylawski.

\begin{proposition}[Brylawski, \cite{Brylawski}] \label{prop:separatorismodularsum}
 If $M$ is a matroid with a modular restriction $N$ and $M / E(N)$ is not connected, then $M$ is a modular sum of two proper restrictions whose ground sets meet in $E(N)$.
\end{proposition}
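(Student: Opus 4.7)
The plan is to use the disconnection of $M/T$, where $T = E(N)$, to produce a partition of $E(M) \setminus T$ into non-empty sets $A_1, A_2$, set $M_i = M|(A_i \cup T)$, and then apply \autoref{prop:modularsumexists} to conclude $M = M_1 \oplus_m M_2$. Because $A_1$ and $A_2$ are non-empty and disjoint, the two $M_i$ will automatically be proper restrictions of $M$ whose ground sets meet in $T$, and both agree on $T$ by inheriting $N$.

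Since $M/T$ is disconnected, I can pick a partition $(A_1, A_2)$ of $E(M) \setminus T$ into non-empty sets with $r_{M/T}(A_1) + r_{M/T}(A_2) = r(M/T)$. Translating via $r_{M/T}(X) = r_M(X \cup T) - r_M(T)$ gives $r_M(A_1 \cup T) + r_M(A_2 \cup T) = r(M) + r_M(T)$, which is exactly the rank identity characterizing the modular sum in \autoref{prop:modularsumexists}. So once the remaining hypotheses of that proposition are verified, its uniqueness clause identifies $M$ with $M_1 \oplus_m M_2$.

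The only non-trivial hypothesis to check, and the main obstacle, is that $N$ is modular inside $M_1$ (and symmetrically inside $M_2$), since flats of $M_1$ need not be flats of $M$. My approach is the standard closure device: for any flat $F'$ of $M_1$, the set $F = \cl_M(F')$ is a flat of $M$ with $F \cap E(M_1) = F'$, because any element of $\cl_M(F') \cap E(M_1)$ already lies in $\cl_{M_1}(F') = F'$. Since $T \subseteq E(M_1)$, this yields $F \cap T = F' \cap T$ and $r_M(F \cup T) = r_{M_1}(F' \cup T)$ (using $F \cup T \subseteq \cl_M(F' \cup T)$), while $r_M(F) = r_{M_1}(F')$ is immediate from closure preserving rank. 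The modularity identity for $N$ in $M$ applied to $F$ then translates term-by-term into the modularity identity for $N$ in $M_1$ applied to $F'$. With this in hand, \autoref{prop:modularsumexists} delivers $M = M_1 \oplus_m M_2$ and completes the proof.
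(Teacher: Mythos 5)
Your argument is correct and complete. The paper itself gives no proof of this proposition (it is quoted from Brylawski), so there is nothing to compare against; but every step checks out: the separation of $M/E(N)$ translates directly into the rank identity $r(M)=r(M_1)+r(M_2)-r(N)$ required by \autoref{prop:modularsumexists}, the closure device $F=\cl_M(F')$ correctly transfers the modularity of $N$ from $M$ to each $M_i$ (since $\cl_M(F')\cap E(M_1)=\cl_{M_1}(F')=F'$ gives $F\cap E(N)=F'\cap E(N)$ and $r_M(F\cup E(N))=r_{M_1}(F'\cup E(N))$), and the uniqueness clause of \autoref{prop:modularsumexists} then forces $M=M_1\oplus_m M_2$.
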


Finally, we have a converse to \autoref{prop:separatorismodularsum}.

\begin{proposition} \label{prop:contractingseparates}
If $M = M_1 \oplus_m M_2$ is the modular sum of matroids $M_1$ and $M_2$ with $T = E(M_1) \cap E(M_2)$, then $(E(M_1) \setminus T, E(M_2) \setminus T)$ is a $1$-separation of $M / T$.
\end{proposition}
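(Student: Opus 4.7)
The plan is to verify the $1$-separation directly by computing the relevant ranks in $M/T$ using the rank formula for modular sums given in \autoref{prop:modularsumexists}. Write $A = E(M_1) \setminus T$ and $B = E(M_2) \setminus T$. Since $\lambda_{M/T}(A) = r_{M/T}(A) + r_{M/T}(B) - r(M/T)$, the proposition will follow once I show these three quantities add up correctly.

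First I would compute $r(M/T)$: using \autoref{prop:modularsumexists},
\[ r(M/T) = r(M) - r_M(T) = \bigl(r(M_1) + r(M_2) - r(M_1|T)\bigr) - r(M_1|T) = r(M_1) + r(M_2) - 2\,r(M_1|T). \]
Next I would compute $r_{M/T}(A)$. Note that $A \cup T = E(M_1)$, and by \autoref{prop:modularsumexists} we have $M|E(M_1) = M_1$, so $r_M(A \cup T) = r(M_1)$ and $r_M(T) = r(M_1|T)$. Hence
\[ r_{M/T}(A) = r_M(A \cup T) - r_M(T) = r(M_1) - r(M_1|T). \]
Symmetrically, using $M_2|T = M_1|T$, we get $r_{M/T}(B) = r(M_2) - r(M_1|T)$.

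Adding the last two expressions and comparing with the formula for $r(M/T)$ shows $r_{M/T}(A) + r_{M/T}(B) = r(M/T)$, so $\lambda_{M/T}(A) = 0$. Provided $A$ and $B$ are both nonempty (which is the interesting case; otherwise the partition is trivial), this is a $1$-separation of $M/T$. There is no real obstacle here; the proof is essentially a direct substitution into the modular-sum rank formula, and the only thing to be careful about is the identification $r_M(T) = r_{M_1}(T) = r(M_1|T)$, which is immediate since $M|T = M_1|T$.
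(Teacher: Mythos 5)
Your proof is correct and follows essentially the same route as the paper's: a direct rank computation from the formula $r(M) = r(M_1) + r(M_2) - r(M_1|T)$ together with $M|E(M_1) = M_1$ and $M|E(M_2) = M_2$. The only difference is that the paper verifies additivity of $r_{M/T}$ for arbitrary subsets of the two sides, whereas you check it just for the full partition, which is all that the definition of a $1$-separation requires.
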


\begin{proof}
Let $X_1 \subseteq E(M_1) \setminus T$. Then $r_{M / T}(X_1) = r_{M_1}(X_1 \cup T) - r_M(T)$. Similarly, for $X_2 \subseteq E(M_2) \setminus T$, we have $r_{M / T}(X_2) = r_{M_2}(X_2 \cup T) - r_M(T)$.
Also, $r_{M / T}(X_1 \cup X_2) = r_M(X_1 \cup X_2 \cup T) - r_M(T)$, and this is equal to $r_{M_1}(X_1 \cup T) + r_{M_2}(X_2 \cup T) - 2r_M(T)$, which is $r_{M/T}(X_1) + r_{M/T}(X_2)$.

Hence for any $X \subseteq E(M / T)$, $r_{M / T}(X) = r_{M / T}(X \cap E(M_1)) + r_{M / T}(X \cap E(M_2))$.
\end{proof}

\section{Duality} \label{sec:duality}

A \defn{deletion pair} in a $3$-connected matroid $M$ is a pair of elements $\{x, y\}$ such that $M \d x$ and $M\d y$ are $3$-connected and $M \d x, y$ is internally $3$-connected.
A \defn{contraction pair} in $M$ is a deletion pair in the dual, $M^*$.

The proofs in Sections~\ref{sec:strands} and \ref{sec:connectivity} will require a counterexample to \autoref{lem:keylemma} that has a deletion pair.
However, we will be able to prove, in \autoref{sec:deletionpair}, only that it contains either a deletion pair or a contraction pair. We would therefore like a way to show that if a counterexample with a contraction pair exists, then there is another one with a deletion pair.
In this section, we describe a useful matroid construction involving modular sums that will let us prove this fact in \autoref{sec:deletionpair}.

We note that when $B$ is a basis of a matroid $M$, any representation of $M$ can be transformed by row operations into one where the columns indexed by $B$ are an identity matrix. This is called a representation in \defn{standard form with respect to $B$}.
% When $A$ is a representation of $M$ in standard form with respect to $B$, we index the rows of $A$ by the elements of $B$, so that $A_{bb} = 1$ for each $b \in B$.
We recall that if we take any $\F$-representation of a matroid $M$ in standard form $(I \; A)$, then $(A^T \; I)$ is an $\F$-representation of its dual, $M^*$ (see \cite[Theorem 2.2.8]{Oxley}).

We fix a finite field $\F$ and let $N_0$ and $N_1$ be matroids isomorphic to $\PG(2, \F)$ on disjoint ground sets. We let $\varphi : E(N_0) \rightarrow E(N_1)$ be an isomorphism between $N_0$ and $N_1$.

We choose some basis $B_0$ of $N_0$ and let $B_1^* = E(N_1) \setminus \varphi(B_0)$; so $B_1^*$ is a basis of $N_1^*$.
We choose $A$ to be a matrix such that $(I \; A)$ is an $\F$-representation of $\PG(2, \F)$ in standard form with respect to $B_0$; note that $(A^T \; I)$ is an $\F$-representation of $N_1^*$ in standard form with respect to the basis $B_1^*$ . We define the $\F$-matrix $C$ with columns indexed by $E(N_0) \cup E(N_1)$ by

\[ C = \bordermatrix{& B_0 & E(N_0) \setminus B_0 & E(N_1) \setminus B_1^* & B_1^* \cr & I & A & I & 0 \cr & 0 & 0 & A^T & I}. \]

We denote by $R = M_{\F}(C)$ the matroid represented by $C$ over $\F$.
We observe that $R \d E(N_1) = N_0$ and $R / E(N_0) = N_1^*$.
Furthermore, since $R$ is $\F$-representable, $N_0$ is a modular restriction of $R$.

We can now state the main result of this section.

\begin{proposition} \label{prop:dualledmatroid}
If $M_0$ is a $3$-connected, non-$\mathbb{F}$-representable matroid with $N_0$ as a restriction and $\lambda_{M_0}(E(N_0)) = 3$, then $M_1 = ((R \oplus_m M_0) \d E(N_0))^*$ is internally $3$-connected with all parallel pairs containing an element of $E(N_1)$, $M_1$ is non-$\mathbb{F}$-representable, $M_1$ has $N_1$ as a restriction, and $\lambda_{M_1}(E(N_1)) = 3$. Moreover, $N_1$ is modular in $M_1$ if and only if $N_0$ is modular in $M_0$.
\end{proposition}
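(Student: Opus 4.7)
Let $M := R \oplus_m M_0$ and $Q := M \d E(N_0)$, so $M_1 = Q^*$ by construction. The plan is to first establish some rank and connectivity preliminaries, then verify each claim in order.

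Reading off the matrix $C$, I would observe that $r(R) = |E(N_0)|$, that $R | E(N_1)$ is the free matroid (the columns of $C$ indexed by $E(N_1)$ form a square invertible submatrix), and $R | E(N_0) = N_0$. In particular $E(N_1)$ spans $E(R)$ in $R$ and hence spans $E(N_0)$ in $M$, giving $r(Q) = r(M) = |E(N_0)| + r(M_0) - 3$. A direct computation then yields $\lambda_{M_1}(E(N_1)) = \lambda_Q(E(N_1)) = 3$ using $\lambda_{M_0}(E(N_0)) = 3$ and the rank formula for modular sums.

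To show $N_1$ is a restriction of $M_1$, I would rewrite $M_1 | E(N_1) = (Q / (E(M_0) \setminus E(N_0)))^*$, swap the order of the disjoint deletion and contraction, and analyze $M / (E(M_0) \setminus E(N_0))$. Iteratively applying the modular-sum contraction identity $M / e = R \oplus_m (M_0 / e)$ for $e \in E(M_0) \setminus \cl_{M_0}(E(N_0))$, and handling the elements of $\cl_{M_0}(E(N_0)) \setminus E(N_0)$ separately, the result turns $E(N_0)$ into loops (since $E(M_0) \setminus E(N_0)$ spans $M_0$) while leaving the $E(N_1)$ part as $N_1^*$; deleting $E(N_0)$ and dualising then yields $N_1$.

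For non-$\F$-representability and the modularity equivalence, I would exploit a symmetry of the construction: swapping $(N_0, R)$ with $(N_1, R^*)$, and noting that $R^*$ is $\F$-representable with $R^* | E(N_1) = N_1$, $R^* / E(N_1) = N_0^*$, and $N_1$ modular in $R^*$, one verifies that the construction is an involution up to duality, i.e., $((R^* \oplus_m M_1) \d E(N_1))^* = M_0$. If $M_1$ were $\F$-representable, then $R^* \oplus_m M_1$ would be too by \autoref{prop:modularsumpreservesrepresentability} combined with the Fundamental Theorem of Projective Geometry applied to $N_1$, and hence $M_0$ would be, contradicting the hypothesis. The same involution gives that $N_1$ is modular in $M_1$ if and only if $N_0$ is modular in $M_0$.

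Finally, for internal $3$-connectedness with all parallel pairs inside $E(N_1)$: a $2$-separation of $M_1 = Q^*$ is a $2$-separation of $Q$, and by \autoref{prop:connectivitypropertiesofmodularsum} we have $\lambda_Q(X) = \lambda_{M_0}(X)$ for any $X \subseteq E(M_0) \setminus E(N_0)$, so $3$-connectedness of $M_0$ forbids $2$-separations of $Q$ missing $E(N_1)$, and the free structure of $R | E(N_1)$ confines the remaining cases to parallel pairs lying inside $E(N_1)$. The main obstacle I expect is verifying the symmetry $((R^* \oplus_m M_1) \d E(N_1))^* = M_0$ underlying the non-representability and modularity claims; a secondary subtlety is handling elements of $\cl_{M_0}(E(N_0)) \setminus E(N_0)$ in the rank computations of the restriction step.
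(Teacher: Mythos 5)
Your architecture largely tracks the paper's: the rank computations, the identification $M_1^* / (E(M_0)\setminus E(N_0)) = R/E(N_0) = N_1^*$ giving the $N_1$-restriction, the involution $((R^*\oplus_m M_1)\d E(N_1))^* = M_0$ combined with \autoref{prop:modularsumpreservesrepresentability} and the Fundamental Theorem of Projective Geometry for non-representability, and \autoref{prop:connectivitypropertiesofmodularsum} for the connectivity claims. Two caveats: the involution itself, which you correctly flag as the main obstacle, is not a one-liner in the paper (\autoref{lem:involution} needs a rank count identifying $(R^*\oplus_m(R'\oplus_m M_0')^*)^*$ as a modular sum and an explicit matrix computation showing $(R^*\oplus_m R'^*)^*/E(N_1)$ is $N_0$ with parallel elements added); and your internal-3-connectedness sketch glosses over the case of a $2$-separation splitting $E(N_1)$, which the paper resolves by using that $M_1^*/(E(M_0)\setminus E(N_0)) = N_1^*$ is $3$-connected to push all but one element of $E(N_1)$ to one side.

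The genuine gap is the modularity equivalence. The involution cannot, by itself, transfer modularity: it only says that $M_0$ and $M_1$ determine each other, and (via the symmetry $R\leftrightarrow R^*$, $N_0\leftrightarrow N_1$) that once the operation is known to preserve modularity in one direction it does so in both. You must still prove one direction outright, and your proposal contains no argument for it. This is the hardest part of the paper's proof, precisely because $M_1$ is built through a dual and modularity is \emph{not} a duality-invariant property, so one cannot chase flats of $M_0$ into flats of $M_1$ directly. The paper's \autoref{lem:preservesmodularity} circumvents this by introducing the ``subjugation'' relation, proving it is self-dual (\autoref{prop:dualityofwm}), showing that $E(N_1)$ subjugates $M_1^*$ using the modularity of $N_0$ in $M_0$ and the representability of $R$, transferring this to $M_1$ by the duality of subjugation, and only then recovering modularity of $N_1$ in $M_1$ from the coindependence of $E(N_1)$. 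Some substitute for this machinery is required before the ``moreover'' clause of the proposition can be claimed.
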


We prove \autoref{prop:dualledmatroid} through a sequence of lemmas.
First, we show that the matroid $M_1$ has $N_1$ as a restriction, then we show that $M_1$ is not $\F$-representable, and finally we prove the required connectivity properties.

For a set $S$ in a matroid $M$ and sets $X \subseteq E(M) \setminus S$, $Y \subseteq S$, we say that \defn{$Y$ subjugates $X$ relative to $S$ in $M$} if 
\[ \sqcap_M(X,S) = \sqcap_M(E(M) \setminus S, Y) = \sqcap_M(X, Y). \]
If for all $X \subseteq E(M) \setminus S$ there is a set $Y \subseteq S$ that subjugates $X$ relative to $S$ in $M$, then we say that $S$ \defn{subjugates} $M$.
Whenever $N$ is a modular restriction of a matroid $M$, the set $E(N)$ subjugates $M$. In particular, for any $X \subseteq E(M) \setminus E(N)$, $\cl_M(X) \cap E(N)$ subjugates $X$ relative to $E(N)$. However, unlike modularity, the property of subjugating a matroid is invariant under matroid duality, as we now show.

\begin{proposition} \label{prop:dualityofwm}
Let $M$ be a matroid and $S \subseteq E(M)$. For any $X \subseteq E(M) \setminus S$ and $Y \subseteq S$, if $Y$ subjugates $(E(M) \setminus S) \setminus X$
relative to $S$ in $M$, then $S \setminus Y$ subjugates $X$ relative to $S$ in $M^*$.
\end{proposition}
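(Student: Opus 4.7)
The approach is to translate the three quantities $\sqcap_{M^*}(X, S)$, $\sqcap_{M^*}(E(M) \setminus S, S \setminus Y)$, and $\sqcap_{M^*}(X, S \setminus Y)$ appearing in the conclusion into expressions involving only $r_M$, and then derive the two desired equalities from the two rank identities implicit in the hypothesis.

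The key tool is the following duality formula: for any disjoint subsets $A, B \subseteq E(M)$,
\[
\sqcap_{M^*}(A, B) = \sqcap_M(E(M) \setminus A, E(M) \setminus B) - r_M(E(M) \setminus (A \cup B)).
\]
This is a direct expansion using $r_M^*(T) = |T| - r(M) + r_M(E(M) \setminus T)$, together with the observation that $(E(M) \setminus A) \cup (E(M) \setminus B) = E(M)$ whenever $A$ and $B$ are disjoint. Writing $X' = (E(M) \setminus S) \setminus X$ and $Y' = S \setminus Y$, I would apply this identity to each of the pairs $(X, S)$, $(E(M) \setminus S, Y')$, and $(X, Y')$, so that each $\sqcap_{M^*}$-value becomes an $r_M$-combination of the sets $X' \cup S$, $X \cup X' = E(M) \setminus S$, $X' \cup Y$, $E(M) \setminus Y' = (E(M) \setminus S) \cup Y$, and the singletons $X'$, $S$, $Y$, $Y \cup X'$.

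Next I would unpack the hypothesis. The equality $\sqcap_M(X', S) = \sqcap_M(X', Y)$ is equivalent, after cancelling $r_M(X')$ and rearranging, to
\[
r_M(X' \cup S) - r_M(S) = r_M(X' \cup Y) - r_M(Y),
\]
expressing that $X'$ has the same rank relative to $Y$ as to $S$. Similarly, $\sqcap_M(E(M) \setminus S, Y) = \sqcap_M(X', Y)$ is equivalent to
\[
r_M(E(M) \setminus Y') - r_M(E(M) \setminus S) = r_M(X' \cup Y) - r_M(X'),
\]
expressing that $Y$ has the same rank relative to $X'$ as to $E(M) \setminus S$.

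With these two rank identities and the duality expressions from step one in hand, both equalities fall out by direct cancellation: $\sqcap_{M^*}(X, S) = \sqcap_{M^*}(X, Y')$ reduces, after substituting the duality formula, to the second rank identity alone, while $\sqcap_{M^*}(X, S) = \sqcap_{M^*}(E(M) \setminus S, Y')$ requires both identities in tandem. The only nontrivial step is isolating the duality formula at the start; after that, the argument is pure bookkeeping and I anticipate no serious obstacle.
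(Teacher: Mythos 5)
Your proposal is correct and takes essentially the same approach as the paper: your duality formula $\sqcap_{M^*}(A,B) = \sqcap_M(E(M)\setminus A,\, E(M)\setminus B) - r_M(E(M)\setminus(A\cup B))$ for disjoint $A,B$ is a rearrangement of the paper's key claim that $\lambda_M(A) = \sqcap_M(A,B) + \sqcap_{M^*}(A,C)$ for a partition $(A,B,C)$, and the remaining cancellations (which do go through as you describe, with the first conclusion equality following from your second rank identity and the other requiring both) mirror the paper's chain of equalities.
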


\begin{proof}
We start with the following claim:

\begin{claim} \label{clm:dualityofwm-1}
If $(A,B,C)$ is a partition of $E(M)$, $\lambda_M(A) = \sqcap_M(A,B) + \sqcap_{M^*}(A, C)$.
\end{claim}

$\sqcap_M(A,B) + \sqcap_{M^*}(A,C)$ is equal to
\begin{align*}
& r_M(A) + r_M(B) - r_M(A \cup B) + r_{M^*}(A) + r_{M^*}(C) - r_{M^*}(A \cup C) \\
%&= \lambda_M(A) + |A| + r_M(B) - r_M(A \cup B) + r_{M^*}(C) - r_{M^*}(A \cup C) \\
&= \lambda_M(A) + |A| - r_{M / B \d C}(A) - r_{M^* / C \d B}(A) \\
&= \lambda_M(A) + |E(M/B\d C)| - r(M / B \d C) - r((M / B \d C)^*) \\
&= \lambda_M(A).
\end{align*}

Let $X \subseteq E(M) \setminus S$ and $Y \subseteq S$ such that $Y$ subjugates $(E(M) \setminus S) \setminus X$ relative to $S$ in $M$. Then
\[ \sqcap_M((E(M) \setminus S) \setminus X, S) = \sqcap_M(E(M) \setminus S, Y) = \sqcap_M((E(M) \setminus S) \setminus X, Y). \]

By (\ref*{clm:dualityofwm-1}) we have $\sqcap_M((E(M) \setminus S) \setminus X, S) = \lambda_M(S) - \sqcap_{M^*}(X, S)$ and $\sqcap_M(E(M) \setminus S, Y) =
\lambda_M(E(M) \setminus S) - \sqcap_{M^*}(E(M) \setminus S, S \setminus Y)$, implying that $\sqcap_{M^*}(X, S) = \sqcap_{M^*}(E(M) \setminus S, S \setminus
Y)$.

Similarly, from the equality $\sqcap_M(E(M) \setminus S, Y) = \sqcap_M((E(M) \setminus S) \setminus X, Y)$ and (\ref*{clm:dualityofwm-1}) we have $\lambda_M(Y) -
\sqcap_{M^*}(Y, S \setminus Y) = \lambda_M(Y) - \sqcap_{M^*}((S \setminus Y) \cup X, Y)$. From this we have $- r_{M^*}(S \setminus Y) + r_{M^*}(S) = -
r_{M^*}((S \setminus Y) \cup X) + r_{M^*}(S \cup X)$ and hence $\sqcap_{M^*}(X, S) = \sqcap_{M^*}(X, S \setminus Y)$. 
This proves that $S \setminus Y$ subjugates $X$ relative to $S$ in $M^*$.
\end{proof}

Now we can show that the matroid $M_1$ has $N_1$ as a modular restriction.

\begin{lemma} \label{lem:preservesmodularity}
If $M_0$ is a matroid with $N_0$ as a restriction and $\lambda_{M_0}(E(N_0)) = 3$, then $M_1 = ((R \oplus_m M_0) \d E(N_0))^*$ has $N_1$ as a restriction.
Moreover, if $N_0$ is modular in $M_0$ then $N_1$ is modular in $M_1$.
\end{lemma}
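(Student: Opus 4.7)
The plan is to handle the two conclusions separately. For the restriction claim, I would rewrite $M_1 | E(N_1)$ using matroid-duality identities, and for the modularity claim, I would check the modular flat equation in $M_1$ by reducing it to modularity of $N_0$ in the modular sum $M' := R \oplus_m M_0$.

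For the restriction, set $G := E(M_0) \setminus E(N_0)$, so that $E(M_1) = E(N_1) \cup G$. Using $(M \d X)^* = M^* / X$ and $(M / X)^* = M^* \d X$,
\[
M_1 | E(N_1) = M_1 \d G = \bigl( M_1^* / G \bigr)^* = \bigl( (M' \d E(N_0)) / G \bigr)^* = \bigl( (M'/G) \d E(N_0) \bigr)^*.
\]
The hypothesis $\lambda_{M_0}(E(N_0)) = 3$ together with $r_{M_0}(E(N_0)) = 3$ forces $r_{M_0}(G) = r(M_0)$, so $G$ spans $M_0$ and hence $E(N_0) \subseteq \cl_{M'}(G)$. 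Applying \autoref{prop:contractingseparates} to the $1$-separation $(E(N_1), G)$ of $M'/E(N_0)$, for every $Y \subseteq E(N_1)$ one obtains
\[
r_{M'}(Y \cup G \cup E(N_0)) = r_R(Y \cup E(N_0)) + r(M_0) - 3,
\]
and this equals $r_{M'}(Y \cup G)$ because $E(N_0) \subseteq \cl_{M'}(G)$. Therefore
\[
r_{(M'/G)\d E(N_0)}(Y) = r_{M'}(Y \cup G) - r_{M'}(G) = r_R(Y \cup E(N_0)) - 3 = r_{R/E(N_0)}(Y),
\]
so $(M'/G)\d E(N_0) = R / E(N_0) = N_1^*$ by construction of $R$, and dualizing gives $M_1 | E(N_1) = N_1$.

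For the modularity claim, suppose $N_0$ is modular in $M_0$. Since $R$ is $\F$-representable with a $\PG(2,\F)$-restriction, $N_0$ is modular in $R$. Using the flat characterization in \autoref{prop:modularsumexists}, the modularity equation for $E(N_0)$ in $M' = R \oplus_m M_0$ unpacks via the modular-sum rank formula into the corresponding equations for $N_0$ in $R$ and in $M_0$, so $N_0$ is modular in $M'$ as well. To transfer this modularity to $M_1$, I would verify $r_{M_1}(F) + 3 = r_{M_1}(F \cap E(N_1)) + r_{M_1}(F \cup E(N_1))$ for every flat $F$ of $M_1$, rewriting everything through the corank formula $r_{M_1}(X) = |X| - r(M') + r_{M'}((E(N_1) \cup G) \setminus X)$.

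The main obstacle will be the term $r_{M'}(Y_1 \cup Y_G)$ with $Y_1 := E(N_1) \setminus F$ and $Y_G := G \setminus F$: unlike $r_{M'}(Y_1 \cup G)$, it cannot be simplified just by the observation that $G$ spans $M_0$, because $Y_G$ need not span $E(N_0)$. Here I would invoke \autoref{prop:dualityofwm}: modularity of $E(N_0)$ in $M'$ implies that $E(N_0)$ subjugates $M'$, and the duality of subjugation transfers this to subjugation of $E(N_0)$ in $(M')^*$. After contracting $E(N_0)$ in $(M')^*$ to obtain $M_1$, the resulting local-connectivity equalities deliver precisely the rank identity needed to conclude that $N_1$ is modular in $M_1$. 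The delicate step will be selecting, at each application of \autoref{prop:dualityofwm}, the correct subjugator $Y \subseteq E(N_0)$ — namely the projection $\cl_{M'}(Y_1 \cup Y_G) \cap E(N_0)$ of the "complementary" set — and verifying that the resulting identities survive the passage from $(M')^*$ down to its minor $M_1$.
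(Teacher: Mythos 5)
Your argument for the first conclusion (that $N_1$ is a restriction of $M_1$) is correct and is essentially the paper's own computation, just written out in more detail. The second conclusion is where the trouble lies.

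The gap is in the final step of the modularity argument, which is asserted rather than proved, and the tool you invoke points at the wrong set. Unwinding your corank formula, the modularity equation for a flat $F$ of $M_1$ reduces (writing $Y_1 = E(N_1) \setminus F$ and $Y_G = G \setminus F$, and using that $E(N_1)$ is a basis of $R$) to the single identity $\sqcap_{M'}(Y_1, Y_G) = \sqcap_{M'}(Y_1, G) = \sqcap_R(Y_1, E(N_0))$, which in turn amounts to showing that the flat $\cl_R(Y_1) \cap E(N_0)$ of $N_0$ is contained in $\cl_{M_0}(Y_G)$. This is a statement about how a subset of $E(N_1)$ and a subset of $G$ interact \emph{through} $E(N_0)$. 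Subjugation of $M'$ (or of $(M')^*$) by $E(N_0)$ cannot deliver it: that property only produces subjugators $Y \subseteq E(N_0)$, i.e.\ it controls how sets attach to $E(N_0)$ itself, and $E(N_0)$ is exactly the set that is deleted (resp.\ contracted) in forming $M_1^*$ (resp.\ $M_1$). After that deletion the subjugators you propose are gone, and nothing in your sketch converts them into the sets inside $E(N_1)$ that the modularity equation for $N_1$ requires.

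The missing ingredient is a property of $R$ relating $E(N_1)$ to $E(N_0)$, namely that $E(N_1)$ subjugates $R$ — equivalently, that every flat of $N_0$ arises as $\cl_R(Y) \cap E(N_0)$ for some $Y \subseteq E(N_1)$ with the matching local connectivities. This is obtained from the modularity of $N_1 \cong \PG(2,\F)$ in the representable matroid $R^*$ together with \autoref{prop:dualityofwm}, a fact your proposal never uses. With it, one takes $X \subseteq G$, passes to the flat $\cl_{M_0}(X) \cap E(N_0)$ (using modularity of $N_0$ in $M_0$), lifts that flat to a set $Y \subseteq E(N_1)$ via the subjugation of $R$, and checks the three local-connectivity equalities in $M_1^* = M' \d E(N_0)$; this shows $E(N_1)$ subjugates $M_1^*$, hence (by \autoref{prop:dualityofwm} again) $M_1$, from which the modularity equation for flats of $M_1$ follows. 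Your framework of directly verifying the rank identity is workable, but without the subjugation of $R$ by $E(N_1)$ there is no way to produce the containment $\cl_R(Y_1) \cap E(N_0) \subseteq \cl_{M_0}(Y_G)$, so as written the proof of the second conclusion is incomplete.
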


\begin{proof}
The fact that $\lambda_{M_0}(E(N_0)) = 3$ means that $N_1^* = R / E(N_0) = M_1^* /  (E(M_0) \setminus E(N_0))$, so $N_1 = M_1 | E(N_1)$ is a restriction of
$M_1$. We now assume that $N_0$ is modular in $M_0$.

We observe that $E(N_1)$ subjugates $R^*$, for $R^*$ is $\F$-representable so $R^* | E(N_1) = N_1 \cong \PG(2,\F)$ is modular in $R^*$. By
\autoref{prop:dualityofwm}, $E(N_1)$ also subjugates $R$.
Then the fact that $E(N_0)$ is coindependent in $R$ implies that for a set $X \subseteq E(M_0) \setminus E(N_0)$, there is a set $Y \subseteq E(N_1)$ with
$\cl_R(Y) \cap E(N_0) = \cl_{M_0}(X) \cap E(N_0)$. Let $r = r_{N_0}(\cl_R(Y) \cap E(N_0))$.
The modularity of $N_0$ in $M_0$ and in $R$ implies that $\sqcap_{M_0}(X, E(N_0)) = r$ and $\sqcap_R(Y, E(N_0)) = r$.
By \autoref{prop:contractingseparates}, $(E(M_0) \setminus E(N_0), E(N_1))$ is a separation of $(R \oplus_m M_0) / E(N_0)$.
This fact, with the coindependence of $E(N_0)$ in $M_0$ and $R$, implies that
\[ \sqcap_{(R \oplus_m M_0) \d E(N_0)}(X, E(N_1)) = \sqcap_{M_0}(X, E(N_0)) = r, \text{and} \]
\[ \sqcap_{(R \oplus_m M_0) \d E(N_0)}(Y, E(M_0) \setminus E(N_0)) = \sqcap_R(Y, E(N_0)) = r. \]
Moreover we have $r \leq \sqcap_{M_1^*}(X, Y) \leq \sqcap_{M_1^*}(X, E(N_1)) = r$.
This proves that $E(N_1)$ subjugates $M_1^*$, and then \autoref{prop:dualityofwm} implies that $E(N_1)$ subjugates $M_1$.

Let $F$ be a flat of $M_1$, let $X = F \setminus E(N_1)$, and let $Y' \subseteq E(N_1)$ be a set that subjugates $X$ relative to $E(N_1)$ in $M_1$. As $E(N_1)$
is independent in $R$, it is coindependent in $M_1$. Therefore, $\sqcap_{M_1}(E(M_1) \setminus E(N_1), Y') = r_{M_1}(Y')$, so $\sqcap_{M_1}(X, Y') =
r_{M_1}(Y')$ and $\sqcap_{M_1}(X, E(N_1)) = r_{M_1}(Y')$.
The first equation implies that $Y' \subseteq \cl_{M_1}(X)$, which with the second implies that $\sqcap_{M_1}(X, E(N_1)) \leq r_{M_1}(\cl_{M_1}(X) \cap
E(N_1))$.
Then as $F$ is closed, we have $\sqcap_{M_1}(X, E(N_1)) \leq r_{M_1}(\cl_{M_1}(X) \cap F \cap E(N_1)) \leq \sqcap_{M_1}(X, F \cap E(N_1))$. Thus
$\sqcap_{M_1}(X, E(N_1)) = \sqcap_{M_1}(X, F \cap E(N_1))$, implying that $r_{M_1}(E(N_1)) - r_{M_1}(F \cup E(N_1)) = r_{M_1}(F \cap E(N_1)) -
r_{M_1}(F)$. This proves that $N_1$ is a modular restriction of $M_1$.
\end{proof}

Next, we show that for a matroid $M_0$ with $N_0$ as a restriction, the operation $M_0 \mapsto ((R \oplus_m M_0) \d E(N_0))^*$ is an involution, in the
following sense.

\begin{lemma} \label{lem:involution}
If $M_0$ is a matroid with $N_0$ as a restriction, $\lambda_{M_0}(E(N_0)) = 3$, and $M_1 = ((R \oplus_m M_0) \d E(N_0))^*$, then $M_0 = ((R^* \oplus_m
M_1) \d E(N_1))^*$.
\end{lemma}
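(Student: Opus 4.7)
The plan is to prove the equivalent identity $(R^* \oplus_m M_1) \d E(N_1) = M_0^*$, from which the lemma follows by taking duals. First, $R^* \oplus_m M_1$ is a valid modular sum: $R^* | E(N_1) = (R / E(N_0))^* = (N_1^*)^* = N_1$; $M_1 | E(N_1) = N_1$ by Lemma~\ref{lem:preservesmodularity}; and $N_1 \cong \PG(2, \F)$ is modular in the $\F$-representable matroid $R^*$ by \cite[Corollary 6.9.6]{Oxley}.

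The key structural observation is that $C$ has a self-dual form. After reordering the columns of $C$ so that the basis $B_0 \cup B_1^*$ comes first and the non-basis columns are placed in the order $\varphi(B_0), E(N_0) \setminus B_0$, $R$ is represented by $(I \mid D)$ with
\[ D = \begin{pmatrix} I & A \\ A^T & 0 \end{pmatrix}, \]
which is symmetric. Consequently $R^*$ is represented by $(D \mid I)$, which has the same block structure as $R$'s representation but with the labels in $E(N_0)$ and $E(N_1)$ interchanged via $\varphi$. Thus $\varphi$ (combined with $\varphi^{-1}$ on $E(N_1)$) is a matroid isomorphism from $R$ to $R^*$ swapping $E(N_0)$ and $E(N_1)$; in particular, for any matroid $M$ with $M | E(N_0) = N_0$, we have $R^* \oplus_m \varphi(M) = \varphi(R \oplus_m M)$.

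The main work is a rank comparison on $E(M_0)$. For $X \subseteq E(M_0)$, let $F = \cl_{R^* \oplus_m M_1}(X)$ and $Y = F \cap E(N_1)$. Applying Proposition~\ref{prop:modularsumexists} to the flat $F$ gives
\[ r_{R^* \oplus_m M_1}(X) = r_{R^*}((F \cap E(N_0)) \cup Y) + r_{M_1}((F \cap (E(M_0) \setminus E(N_0))) \cup Y) - r_{N_1}(Y). \]
Using $M_1 = ((R \oplus_m M_0) \d E(N_0))^*$, the $r_{M_1}$-term is a corank in $R \oplus_m M_0$, and by Proposition~\ref{prop:connectivitypropertiesofmodularsum} this further reduces to a corank in $M_0$. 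The symmetry of the previous paragraph identifies $Y$, via $\varphi$, with $\cl_{M_0}(X) \cap E(N_0)$, so the $r_{R^*}$- and $r_{N_1}$-terms collapse into a rank in $M_0$. Assembling the three terms should yield $|X| - r(M_0) + r_{M_0}(E(M_0) \setminus X) = r_{M_0^*}(X)$. The main obstacle will be correctly identifying $Y = F \cap E(N_1)$ from the data $(M_0, X)$ and carefully tracking closures, since Proposition~\ref{prop:modularsumexists} provides the rank formula only for flats.
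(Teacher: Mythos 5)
Your setup is fine, and the observation that the non-basis block $D$ of the standard-form representation of $R$ is symmetric, so that $R$ is self-dual under the relabelling that swaps $E(N_0)$ and $E(N_1)$ via $\varphi$, is correct and genuinely useful. But the proof stops exactly where the work begins: the entire content of the lemma is the verification that $r_{R^* \oplus_m M_1}(X) = r_{M_0^*}(X)$ for every $X \subseteq E(M_0)$, and you do not carry this out — you end by naming the ``main obstacle'' (identifying $Y = F \cap E(N_1)$ and tracking closures) rather than resolving it. Moreover, one step of the sketch is unjustified as stated: you invoke \autoref{prop:connectivitypropertiesofmodularsum} to convert $r_{M_1}\bigl((F \cap (E(M_0) \setminus E(N_0))) \cup Y\bigr)$ into a corank in $M_0$, but that proposition applies only to sets contained in one side of the modular sum and disjoint from the join set, whereas your set contains $Y \subseteq E(N_1)$ together with elements of $E(M_0) \setminus E(N_0)$ and so straddles both sides of $R \oplus_m M_0$. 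In addition, $r_{M_1}(\cdot)$ is a corank in $(R \oplus_m M_0) \d E(N_0)$, not in $R \oplus_m M_0$ itself; passing between the two uses $E(N_0) \subseteq \cl_R(E(N_1))$ and is not automatic for sets meeting $E(N_1)$.

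For comparison, the paper avoids computing ranks of general subsets altogether. It introduces a disjoint copy $N_0'$ of $N_0$, rewrites $((R^* \oplus_m M_1) \d E(N_1))^*$ as $((R^* \oplus_m R'^*)^* \oplus_m M_0') \d E(N_0') / E(N_1)$ using the uniqueness clause of \autoref{prop:modularsumexists} (so that only the two restrictions and the total rank need to be checked), and then shows by an explicit matrix calculation that $(R^* \oplus_m R'^*)^* / E(N_1)$ is $N_0'$ with each element of $E(N_0)$ added in parallel to its copy, from which $M_2 = M_0$ follows at once. Your self-duality observation could plausibly shorten that matrix step, but on its own it does not substitute for the missing verification, so as it stands the argument has a genuine gap.
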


\begin{proof}
\begin{comment}
We recall a fact we will use several times: if $M, M'$ and $N$ are matroids such that $E(M) \cap E(M') = E(N)$ and the modular sum $M \oplus_m M'$ exists, then
for any minor $M \d D / C$ of $M$ that  has $N$ as a restriction, $(M \oplus_m M') \d D / C = (M \d D / C) \oplus_m M'$, and for any minor $M' \d D' / C'$ of
$M'$ that has $N$ as a restriction, $(M \oplus_m M') \d D' / C' = M \oplus_m (M' \d D' / C')$.
\end{comment}

By \autoref{lem:preservesmodularity}, $M_1$ has $N_1$ as a restriction, so we can define the matroid $M_2 = ((R^* \oplus_m M_1) \d E(N_1))^*$.
Let $N_0'$ be a copy of $N_0$ on a disjoint ground set.
Let $R'$ and $M_0'$ be the matroids obtained from $R$ and $M_0$, respectively, by relabelling each element in $E(N_0)$ by its copy in $E(N_0')$.
The next claim is a straightforward calculation.

\begin{claim} \label{clm:involution-0}
$r((R^* \oplus_m (R' \oplus_m M_0')^*)^*) = r((R^* \oplus_m R'^*)^*) + r(M_0') - r(N_0')$.
\end{claim}

\begin{comment}
The left hand side, $r((R^* \oplus_m (R' \oplus_m M_0')^*)^*)$, is equal to
\begin{align*}
& |(R^* \oplus_m (R' \oplus_m M_0')^*| - r((R^* \oplus_m (R' \oplus_m M_0')^*)) \\
&= |E(M_0)| + 2|E(N_0)| - r(R^*) - r((R' \oplus_m M_0')^*) + r(N_1) \\
&= |E(M_0)| + |E(N_0)| - |E(R' \oplus_m M_0')| + r(R' \oplus_m M_0') + r(N_1) \\
&= r(R') + r(M_0') - r(N_0') + r(N_1) \\
&= |E(N_0)| + r(M_0).
\end{align*}
Similarly, $r((R^* \oplus_m R'^*)^*) + r(M_0') - r(N_0')$ is equal to
\begin{align*}
& |E(R^* \oplus_m R'^*)| - r(R^* \oplus_m R'^*) + r(M_0') - r(N_0') \\
&= 3|E(N_0)| - r(R^*) - r(R'^*) + r(N_1) + r(M_0) - r(N_0)\\
&= |E(N_0)| + r(M_0),
\end{align*}
which proves (\ref*{clm:involution-0}).
\end{comment}

\begin{claim} \label{clm:involution-1}
$(R^* \oplus_m (R' \oplus_m M_0')^*)^* = (R^* \oplus_m R'^*)^* \oplus_m M_0'$.
\end{claim}

\begin{comment}
The restriction $(R^* \oplus_m (R' \oplus_m M_0')^*)^* | E(M_0')$ is equal to
\begin{align*}
(R^* \oplus_m (R' \oplus_m M_0')^*)^* \d E(R) &= ((R^* \oplus_m (R' \oplus_m M_0')^*) / E(R))^* \\
&= ((R' \oplus_m M_0')^* / E(N_1))^* \\
&= (R' \oplus_m M_0') \d E(N_1) \\
&= M_0',
\end{align*}
The restriction $(R^* \oplus_m (R' \oplus_m M_0')^*)^* | E((R^* \oplus_m R'^*)^*)$ is equal to
\begin{align*}
& (R^* \oplus_m (R' \oplus_m M_0')^*)^* \d (E(M_0') \setminus E(N_0')) \\
&= ((R^* \oplus_m (R' \oplus_m M_0')^*) / (E(M_0') \setminus E(N_0')))^* \\
&= (R^* \oplus_m R'^*)^*,
\end{align*}
where the last step follows because $(R' \oplus_m M_0')^* / (E(M_0') \setminus E(N_0')) = R'^*$, which has $N_1$ as a restriction.
\end{comment}
We observe that $(R^* \oplus_m (R' \oplus_m M_0')^*)^*$, when restricted to the sets $E((R^* \oplus_m R'^*)^*)$ and $E(M_0')$, yields the matroids $(R^* \oplus_m R'^*)^*$ and $M_0'$, respectively. Along with (\ref*{clm:involution-0}), this proves (\ref*{clm:involution-1}) because of the uniqueness of the modular sum as in
\autoref{prop:modularsumexists}.

\begin{claim} \label{clm:involution-2}
$M_2 = ((R^* \oplus_m R'^*)^* \oplus_m M_0') \d E(N_0') / E(N_1)$.
\end{claim}

We have $M_2 = ((R^* \oplus_m ((R' \oplus_m M_0') \d E(N_0'))^*) \d E(N_1))^* = (R^* \oplus_m ((R' \oplus_m M_0')^* / E(N_0')))^* / E(N_1)$.
Since $N_1^* = R' / E(N_0') = (R' \oplus_m M_0') / E(M_0')$, $N_1$ is a restriction of $(R' \oplus_m M_0')^*$. Hence we have $R^* \oplus_m ((R' \oplus_m
M_0')^* / E(N_0')) = (R^* \oplus_m (R' \oplus_m M_0')^*) / E(N_0')$. This implies that $M_2 = ((R^* \oplus_m (R' \oplus_m M_0')^*) / E(N_0'))^* /
E(N_1)$, and (\ref*{clm:involution-2}) now follows from (\ref*{clm:involution-1}).

\begin{claim} \label{clm:involution-3}
$(R^* \oplus_m R'^*)^* / E(N_1)$ is the matroid obtained from $N_0'$ by adding each element of $E(N_0)$ in parallel to its copy in $E(N_0')$.
\end{claim}

We observe that $R^*$ is represented over $\F$ by the matrix
\[
 \begin{array}{ccccc@{}l@{\;}l}
                     & \multicolumn{2}{c}{E(N_0)} & \multicolumn{2}{c}{E(N_1)} &                    &   \cr
  \ldelim({2}{0.5em} & A^T & I                    & 0 & 0                      & \rdelim){2}{0.5em} & \multirow{2}*{.} \cr
                     & I   & 0                    & I & A                      &                    &   \cr
 \end{array}
\]
Thus we have the following representation of $R^* \oplus_m R'^*$
\[
 \begin{array}{ccccccc@{}l@{\;}l}
                    & \multicolumn{2}{c}{E(N_0)} & \multicolumn{2}{c}{E(N_1)} & \multicolumn{2}{c}{E(N_0')} &                    & \cr
 \ldelim({3}{0.5em} & A^T & I                    & 0 & 0                      & 0 & 0                       & \rdelim){3}{0.5em} & \multirow{3}*{,} \cr 
                    & I & 0                      & I & A                      & I & 0                       &                    & \cr 
                    & 0 & 0                      & 0 & 0                      & A^T & I                     &                    & \cr
 \end{array}
\]
hence $(R^* \oplus_m R'^*)^*$ has the representation
\[
 \begin{array}{ccccccc@{}l@{\;}l}
                    & \multicolumn{2}{c}{E(N_0)} & \multicolumn{2}{c}{E(N_1)} & \multicolumn{2}{c}{E(N_0')} &                    & \cr
 \ldelim({3}{0.5em} & I & A                      & I & 0                      & 0 & 0                       & \rdelim){3}{0.5em} & \multirow{3}*{,} \cr
                    & 0 & 0                      & A^T & I                    & 0 & 0                       &                    & \cr 
                    & 0 & 0                      & I & 0                      & I & A                       &                    & \cr
 \end{array}
\]
which is row-equivalent to
\[
 \begin{array}{ccccccc@{}l@{\;}l}
                    & \multicolumn{2}{c}{E(N_0)} & \multicolumn{2}{c}{E(N_1)} & \multicolumn{2}{c}{E(N_0')} &                    & \cr
 \ldelim({3}{0.5em} & I & A                      & 0 & 0                      & -I & -A                     & \rdelim){3}{0.5em} & \multirow{3}*{,} \cr
                    & 0 & 0                      & 0 & I                      & -A^T & -A^TA                &                    & \cr 
                    & 0 & 0                      & I & 0                      & I & A                       &                    & \cr
 \end{array}
\]
which proves (\ref*{clm:involution-3}).
\\

By (\ref*{clm:involution-2}) we have $M_2 = ((R^* \oplus_m R'^*)^* \oplus_m M_0') \d E(N_0') / E(N_1)$. By (\ref*{clm:involution-3}), $(R'^* \oplus_m R^*)^* / E(N_1)$ has
$N_0'$ as a restriction, and $M_2 = ((R'^* \oplus_m R^*)^* / E(N_1) \oplus_m M_0') \d E(N_0') = M_0$, as required.
\end{proof}

The next lemma shows that our modular sum operation preserves $\F$-representability.

\begin{lemma} \label{lem:preservesrepresentability}
If $M_0$ is a matroid with $N_0$ as a restriction and $\lambda_{M_0}(E(N_0)) = 3$, then $M_1 = ((R \oplus_m M_0) \d E(N_0))^*$ is $\F$-representable if and only
if $M_0$ is.
\end{lemma}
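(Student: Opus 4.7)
The plan is to prove both directions by combining Proposition~\ref{prop:modularsumpreservesrepresentability}, the Fundamental Theorem of Projective Geometry, and the involution established in Lemma~\ref{lem:involution}. In fact one direction suffices, since the involution immediately converts a proof of ``$M_0$ representable $\Rightarrow$ $M_1$ representable'' into a proof of the reverse implication by swapping the roles of $R$ and $R^*$, of $N_0$ and $N_1$, and of $M_0$ and $M_1$.

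So the main task is the forward direction. Suppose $M_0$ is $\F$-representable. By construction, $R = M_\F(C)$ is $\F$-representable, and $R|E(N_0) = N_0 \cong \PG(2,\F)$ is a modular restriction of $R$ (since projective geometries are modular in any $\F$-representable matroid containing them, as noted in the introduction). Hence $R$ and $M_0$ are two $\F$-representable matroids meeting exactly in $E(N_0)$, both inducing the same matroid $N_0$ there, with $N_0$ modular in $R$ (and indeed in $M_0$, were that required, although only modularity on one side is needed for Proposition~\ref{prop:modularsumpreservesrepresentability}). The Fundamental Theorem of Projective Geometry says $\PG(2,\F)$ is uniquely $\F$-representable, so Proposition~\ref{prop:modularsumpreservesrepresentability} applies and yields that the modular sum $R \oplus_m M_0$ is $\F$-representable. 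Representability is preserved under deletion and duality, so $M_1 = ((R \oplus_m M_0) \setminus E(N_0))^*$ is $\F$-representable, as desired.

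For the converse, assume $M_1$ is $\F$-representable. By Lemma~\ref{lem:involution}, $M_0 = ((R^* \oplus_m M_1) \setminus E(N_1))^*$, and by Lemma~\ref{lem:preservesmodularity}, $M_1$ has $N_1$ as a restriction and $\lambda_{M_1}(E(N_1)) = 3$, so this expression is well-defined. Now $R^*$ is $\F$-representable (being the dual of an $\F$-representable matroid) and has $N_1 = R^*|E(N_1) \cong \PG(2,\F)$ as a modular restriction by the same remark as before. Applying Proposition~\ref{prop:modularsumpreservesrepresentability} a second time — again invoking unique $\F$-representability of $\PG(2,\F)$ — we conclude that $R^* \oplus_m M_1$ is $\F$-representable, and therefore so is $M_0$.

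The only subtle point I anticipate is making sure the hypotheses of Proposition~\ref{prop:modularsumpreservesrepresentability} are actually met on both sides, i.e., that $R$ has $N_0$ modular and $R^*$ has $N_1$ modular; both follow from the fact that $R$ and $R^*$ are $\F$-representable and the intersection matroid is a projective plane over $\F$. Once that is verified, the proof is essentially a two-line application of Brylawski's theorem in each direction, with Lemma~\ref{lem:involution} supplying the symmetry.
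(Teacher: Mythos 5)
Your proposal is correct and follows essentially the same route as the paper: the converse is handled identically (Lemma~\ref{lem:involution} plus representability of $R^* \oplus_m M_1$), and for the forward direction the paper merely writes out the explicit block matrix representing $R \oplus_m M_0$, which is the concrete instance of the appeal you make to Proposition~\ref{prop:modularsumpreservesrepresentability} together with the Fundamental Theorem of Projective Geometry. The only nitpick is that $\lambda_{M_1}(E(N_1)) = 3$ comes from Lemma~\ref{lem:preservesstability} rather than Lemma~\ref{lem:preservesmodularity}, but that fact is not actually needed for your argument.
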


\begin{proof}
First we assume that $M_0$ is $\F$-representable.
Then $M_0$ has an $\F$-representation of the form
\[ D = \bordermatrix{& E(M_0) \setminus E(N_0) & B_0 & E(N_0) \setminus B_0 \cr & A_1 & 0 & 0 \cr & A_2 & I & A} \]
where $(I \; A)$ is, as before, the matrix representing $N_0 \cong \PG(2,\F)$.
Then 
\[ \bordermatrix{& E(M_0) \setminus E(N_0) & B_0 & E(N_0) \setminus B_0 & B_1^* & E(N_1) \setminus B_1^* \cr & A_1 & 0 & 0 & 0 & 0 \cr & A_2 & I & A & 0
& I \cr & 0 & 0 & 0 & I & A^T} \]
is an $\F$-representation of $R \oplus_m M_0$ and therefore $M_1 = ((R \oplus_m M_0) \d E(N_0))^*$ is $\F$-representable.

Next we assume that $M_1$ is $\F$-representable.
By \autoref{lem:preservesmodularity}, $M_1$ has $N_1$ as a restriction, and the matroid $R^* \oplus_m M_1$ exists.
By the same argument we applied to $R \oplus_m M_0$, $R^* \oplus_m M_1$ is $\F$-representable.
But by \autoref{lem:involution}, $M_0 = ((R^* \oplus_m M_1) \d E(N_1))^*$, and so $M_0$ is $\F$-representable.
\end{proof}

Next, we show that $M_1$ is internally $3$-connected.

\begin{lemma} \label{lem:preservesstability}
Let $M_0$ be a matroid with $N_0$ as a restriction, $\lambda_{M_0}(E(N_0)) = 3$, and $M_1 = ((R \oplus_m M_0) \d E(N_0))^*$. Then $\lambda_{M_1}(E(N_1)) = 3$
and if $M_0$ is $3$-connected then $M_1$ is internally $3$-connected and each of its parallel pairs contains an element of $E(N_1)$.
\end{lemma}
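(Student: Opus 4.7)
I start by verifying $\lambda_{M_1}(E(N_1)) = 3$. Write $X = E(M_0) \setminus E(N_0)$ and $M' = R \oplus_m M_0$. The representation matrix $C$ makes it immediate that $E(N_1)$ is a basis of $R$, so $E(N_1)$ spans $R$ and therefore $E(N_0) \subseteq \cl_R(E(N_1)) \subseteq \cl_{M'}(E(N_1))$. The hypothesis $\lambda_{M_0}(E(N_0)) = 3$ gives $r_{M_0}(X) = r(M_0)$. Together these yield $r(M_1^*) = r_{M'}(E(N_1) \cup X) = r(M') = |E(N_0)| + r(M_0) - 3$, and a one-line computation $\lambda_{M_1}(E(N_1)) = \lambda_{M_1^*}(E(N_1)) = r_{M'}(E(N_1)) + r_{M'}(X) - r(M_1^*) = 3$ follows. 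A parallel calculation will give me the two auxiliary identities I need later: for $S \subseteq X$, $\lambda_{M_1}(S) = \lambda_{M_0}(S)$, and for $S \subseteq E(N_1)$, $\lambda_{M_1}(S) = r_{N_1}(S)$ (the latter using that $X$ spans $M_1$).

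Now assume $M_0$ is 3-connected. To control parallel pairs I identify $M_1 | X$: the modular-sum rank formula gives $r_{M' / E(N_1)}(S) = r_{M_0}(S \cup E(N_0)) - 3$ for each $S \subseteq E(M_0)$, so $E(N_0)$ is a set of loops of $M' / E(N_1)$ and deleting it gives $M_1^* / E(N_1) = M_0 / E(N_0)$. Dualising, $M_1 | X = (M_0 / E(N_0))^* = M_0^* \d E(N_0)$, so a parallel pair of $M_1$ contained in $X$ would be a series pair of $M_0$, contradicting 3-connectedness. A parallel pair contained in $E(N_1)$ is ruled out because $N_1$ is simple, so every parallel pair of $M_1$ meets $E(N_1)$.

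For internal 3-connectedness my main tool will be the general estimate $\lambda_{M | E(N)}(A \cap E(N)) \le \lambda_M(A)$, valid for any restriction $N$ of a matroid $M$ and any partition $(A, B)$ of $E(M)$; it follows by combining the submodular inequality $r_M(A) + r(N) \ge r_M(A \cup E(N)) + r_N(A \cap E(N))$ (and its $B$-analogue) with $r_M(A \cup E(N)) + r_M(B \cup E(N)) \ge r(M) + r(N)$. Applied to the 3-connected restriction $N_1$ of $M_1$, any $2$-separation $(A, B)$ of $M_1$ forces $|A \cap E(N_1)| \le 1$ or $|B \cap E(N_1)| \le 1$; assume the former by symmetry. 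The main obstacle is the genuinely mixed case $|A \cap E(N_1)| = 1$: here I will apply submodularity of $\lambda_{M_1}$ to $A$ and $X$, noting that $\lambda_{M_1}(X) = 3$ and $\lambda_{M_1}(A \cup X) = r_{N_1}(E(N_1) \setminus (A \cap E(N_1))) = 3$, to force $\lambda_{M_0}(A \cap X) \le \lambda_{M_1}(A) \le 1$; if $(A, B)$ were internal then $|A \cap X| \ge 2$ together with $E(N_0) \subseteq E(M_0) \setminus (A \cap X)$ would produce a $2$-separation of $M_0$, contradicting 3-connectedness. The remaining case $A \cap E(N_1) = \emptyset$ is immediate from $\lambda_{M_1}(A) = \lambda_{M_0}(A)$, and the same technique rules out $1$-separations, so $M_1$ is connected.
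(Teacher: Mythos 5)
Your proof is correct, and while it reaches the same reductions as the paper (every low-order separation of $M_1$ is forced into the $M_0$-side and then transferred to a separation of $M_0$, contradicting $3$-connectivity), the mechanics are genuinely different. The paper works in $M_1^* = (R \oplus_m M_0) \d E(N_0)$: it uses the $3$-connectivity of the contraction $M_1^* / (E(M_0) \setminus E(N_0)) = N_1^*$ to place all but one element of $E(N_1)$ on one side of a $2$-separation, moves the stray element across by a rank count, lifts the resulting separation to $R \oplus_m M_0$ via $E(N_0) \subseteq \cl_R(E(N_1))$, and projects down to $M_0$; the parallel-pair claim is handled by lifting a series pair through the modular sum and invoking \autoref{prop:connectivitypropertiesofmodularsum}. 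You instead work in $M_1$ itself, first establishing the translation identities $\lambda_{M_1}(S) = \lambda_{M_0}(S)$ for $S \subseteq X$ and $\lambda_{M_1}(S) = r_{N_1}(S)$ for $S \subseteq E(N_1)$; then restriction-monotonicity of $\lambda$ applied to $N_1$ localizes $E(N_1)$ to one side up to one element, and the uncrossing inequality $\lambda_{M_1}(A) + \lambda_{M_1}(X) \geq \lambda_{M_1}(A \cap X) + \lambda_{M_1}(A \cup X)$ with $\lambda_{M_1}(X) = \lambda_{M_1}(A \cup X) = 3$ replaces both the element-moving step and the lift-and-project step in one stroke. Your identification $M_1 | X = (M_0 / E(N_0))^*$ is likewise a clean substitute for the paper's treatment of parallel pairs. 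Both arguments use that $N_1$ is a restriction of $M_1$, which is available from \autoref{lem:preservesmodularity}. The submodularity route is arguably more systematic and self-contained; the paper's route avoids computing the auxiliary identities at the cost of the slightly fiddly case of a single element of $E(N_1)$ on the wrong side.
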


\begin{proof}
$M_1$ has $N_1$ as a restriction and $\lambda_{M_1}(E(N_1)) = \lambda_{R \oplus_m M_0}(E(N_1))$, which is three by
\autoref{prop:connectivitypropertiesofmodularsum}, part (i).

We assume that $M_0$ is $3$-connected. 
If $M_1$ is not connected, it has a separation $(W, Z)$. Since $\lambda_{M_0}(E(N_0)) = 3$, $(R \oplus_m M_0) \d E(N_0) / (E(M_0) \setminus E(N_0)) = R / E(N_0) = N_1^*$, which is $3$-connected. Therefore, we may assume that $E(N_1) \subseteq W$. Then since $E(N_0) \subseteq \cl_R(E(N_1))$, $(W \cup E(N_0), Z)$ is a separation of $R \oplus_m M_0$. This implies that $(W \cup E(N_0) \setminus E(N_1), Z)$ is a separation of $M_0$, a contradiction; so $M_1$ is connected.

If $M_1$ is not internally $3$-connected, then $M_1^* = (R \oplus_m M_0) \d E(N_0)$ has an internal $2$-separation $(U, V)$.
Since $M_1^* / (E(M_0) \setminus E(N_0)) = R / E(N_0) = N_1^*$ is $3$-connected we may assume that all but at most one element of $E(N_1)$
is contained in $U$.
If there is an element $e \in E(N_1) \cap V$ then as $V \setminus \{e\} \subseteq E(M_0)$ we have $r_{M_1^*}(V) = r_{M_1^*}(V \setminus \{e\}) + 1$, implying that $(U \cup \{e\}, V \setminus \{e\})$ is a $2$-separation of $M_1^*$.
Therefore, $M_1^*$ has a $2$-separation $(A, B)$ with $E(N_1) \subseteq A$ (either $A = U$ or $A = U \cup \{e\}$). Since $E(N_0) \subseteq \cl_R(E(N_1))$, $(A \cup E(N_0), B)$ is a $2$-separation of $R \oplus_m M_0$, and thus $(A \cup E(N_0) \setminus E(N_1), B)$ is a separation or $2$-separation of $M_0$, a contradiction.

Suppose that $M_1$ has a parallel pair $X$ that is disjoint from $E(N_1)$. Then $X$ is a series pair of $(R \oplus_m M_0) \d E(N_0)$. Since $E(N_0) \subseteq \cl_{R \oplus_m M_0}(E(N_1))$, $X$ is also a series pair of $R \oplus_m M_0$. But then \autoref{prop:connectivitypropertiesofmodularsum}, part (ii) implies that $X$ is also a series pair of $M_0$, a contradiction. Therefore, $M_1$ is internally $3$-connected and each of its parallel pairs contains an element of $E(N_1)$.
\end{proof}

We conclude this section by remarking that Lemmas~\ref{lem:preservesmodularity}, \ref{lem:involution}, \ref{lem:preservesrepresentability}, and \ref{lem:preservesstability} together prove \autoref{prop:dualledmatroid}.

\section{Finding a deletion pair} \label{sec:deletionpair}

We recall that a deletion pair in a $3$-connected matroid $M$ is a pair of elements $x,y$ such that $M \d x$ and $M\d y$ are $3$-connected and $M \d x,y$ is internally $3$-connected.
The purpose of this section is to show that if there exists a counterexample to \autoref{lem:keylemma}, then there is one that has a deletion pair.
We will start with several useful facts on connectivity. The first, Bixby's Lemma \cite[Theorem 1]{Bixby:asimpletheoremon3connectivity}, is one we will use many times throughout this paper.

\begin{bixbyslemma}
If $M$ is a $3$-connected matroid and $e \in E(M)$ then at least one of $M \d e$ and $M / e$ is internally $3$-connected.
\end{bixbyslemma}

A \defn{triangle} is a three-element circuit and a \defn{triad} is a three-element cocircuit.
% A \defn{coloop} in a matroid is a loop in its dual and a \defn{series} pair of elements is a parallel pair in the dual. A \defn{series class} is a parallel class in the dual, and is a maximal set of corank one.
Next we state a useful lemma of Tutte \cite{Tutte:connectivityinmatroids}.

\begin{tuttestrianglelemma}
If $T = \{a,b,c\}$ is a triangle in a $3$-connected matroid $M$ with $|E(M)| \geq 4$ then either $M \d a$ is $3$-connected, $M\d b$ is $3$-connected or there is a triad of $M$ that contains $a$ and exactly one of $b$ and $c$.
\end{tuttestrianglelemma}

The next is a corollary of Tutte's Triangle Lemma and is proved in \cite{GeelenGerardsWhittle:onrotas}.

\begin{lemma}[Lemma 2.7, \cite{GeelenGerardsWhittle:onrotas}] \label{lem:lemma2.7}
If $T$ is a triangle in a $3$-connected matroid $M$ with $|E(M)| \geq 4$ then there exists $e \in T$ such that $M \d e$ is internally $3$-connected.
\end{lemma}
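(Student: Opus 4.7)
My plan is to apply Tutte's Triangle Lemma to $T = \{a,b,c\}$ and handle the only non-trivial case, in which a triad $T'$ of $M$ meets $T$ in exactly two elements; there I will show that deleting the third element of $T$ gives an internally $3$-connected matroid.

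Tutte's Triangle Lemma applied to $T$ yields three possibilities: $M\setminus a$ is $3$-connected, $M\setminus b$ is $3$-connected, or there exists a triad $T'$ with $a \in T'$ and $|T' \cap \{b,c\}| = 1$. In the first two outcomes we are done, because every $3$-connected matroid is internally $3$-connected. So assume the third holds; write $\{u,v\} = T \cap T'$ and let $w$ be the remaining element of $T$. Since $w$ lies in the circuit $T$, it is not a coloop of $M$, so $r(M\setminus w) = r(M)$; a short argument using the $3$-connectedness of $M$ also shows that $M\setminus w$ is connected. The goal is then to show $M\setminus w$ has no internal $2$-separation.

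Suppose otherwise, and let $(A,B)$ be an internal $2$-separation of $M\setminus w$, so $|A|, |B| \geq 3$ and $\lambda_{M\setminus w}(A) = 1$. Since $w$ is not a coloop of $M$ and $w \notin A \cup B$, we get $\sqcap_M(A,B) = \lambda_{M\setminus w}(A) = 1$. If, after possibly swapping $A$ and $B$, both $u$ and $v$ lie in $A$, then $w \in \cl_M(\{u,v\}) \subseteq \cl_M(A)$ gives $\lambda_M(A \cup \{w\}) = 1$, so $(A \cup \{w\}, B)$ is a $2$-separation of $M$ with both sides of size at least $2$, contradicting $3$-connectedness. In the genuinely new case $u \in A$ and $v \in B$, I would exploit the triad $T' = \{u,v,x\}$: it remains a triad in $M\setminus w$ and hence a triangle in $(M\setminus w)^*$. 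Relabeling so that $x \in A$ (swap $A \leftrightarrow B$ and $u \leftrightarrow v$ if necessary), we get $v \in \cl_{(M\setminus w)^*}(\{u,x\}) \subseteq \cl_{(M\setminus w)^*}(A)$. A brief rank calculation in the dual then yields $\lambda_{M\setminus w}(A \cup \{v\}) \leq 1$; the value $0$ is excluded because $M\setminus w$ is connected and both new sides still have size at least $2$, and the value $1$ lands us in the previous subcase applied to $(A \cup \{v\}, B \setminus \{v\})$, again contradicting $3$-connectedness of $M$.

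The main difficulty I expect is the crossing case, where $T$ straddles the alleged $2$-separation of $M\setminus w$; naively extending the $2$-separation to $M$ by placing $w$ on a side does not in general keep $\lambda$ below $2$. Tutte's Triangle Lemma is tailored to precisely this obstacle: the triad $T'$ it provides, together with the self-duality of $\lambda$, lets us slide the crossing element to the same side as its mate without raising the connectivity, reducing the crossing case to the easy one.
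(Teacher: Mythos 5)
Your proof is correct and takes precisely the route the paper indicates for this lemma, which it cites from Geelen--Gerards--Whittle as a corollary of Tutte's Triangle Lemma: in the only nontrivial case one has a triad $T'$ with $|T\cap T'|=2$, and deleting the element $w$ of $T\setminus T'$ works, because any internal $2$-separation of $M\backslash w$ can first be adjusted via the triad so that both elements of $T\cap T'$ lie on one side, and then extended by $w$ to a $2$-separation of $M$. The one step you assert without justification, that $T'$ is still a triad of $M\backslash w$ (what you actually need is only that $v$ remains spanned by $\{u,x\}$ in $(M\backslash w)^{*}$), is routine since $\cl_{(M\backslash w)^{*}}(S)=\cl_{M^{*}}(S\cup\{w\})\setminus\{w\}\supseteq \cl_{M^{*}}(S)\setminus\{w\}$.
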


A \defn{fan} in a matroid is a sequence $(s_1, s_2, \ldots, s_n)$ of distinct elements such that:
\begin{itemize}
 \item $\{s_i, s_{i+1}, s_{i+2}\}$ is a triangle or a triad for each $i = 1, 2, \ldots, n-2$, and
 \item if $\{s_i, s_{i+1}, s_{i+2}\}$ is a triangle then $\{s_{i+1}, s_{i+2}, s_{i+3}\}$ is a triad, and if $\{s_i, s_{i+1}, s_{i+2}\}$ is a triad then $\{s_{i+1}, s_{i+2}, s_{i+3}\}$ is a triangle, for each $i = 1, 2, \ldots, n-3$.
\end{itemize}

An easy fact about fans is that $\lambda_M(S) \leq 2$ for any set $S$ forming a fan in a matroid $M$. Recall that $\lambda_M(S) = r_M(S) + r^*_M(S) - |S|$.

\begin{lemma} \label{lem:fan3separates}
 If $(s_1, \ldots, s_n)$ is a fan in a matroid $M$, then $\lambda_M(\{s_1, \ldots, s_n\}) \leq 2$.
\end{lemma}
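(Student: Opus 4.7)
The plan is to prove this by induction on $n$. For the base cases $n \leq 3$, the bound is immediate: if $n \in \{1, 2\}$ then $\lambda_M(S) \leq r_M(S) \leq |S| \leq 2$; if $n = 3$ then $S$ itself is a triangle or a triad, so either $r_M(S) = 2$ or $r^*_M(S) = 2$, and combining with the trivial bound on the other quantity via $\lambda_M(S) = r_M(S) + r^*_M(S) - |S|$ gives $\lambda_M(S) \leq 2$.

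For the inductive step with $n \geq 4$, let $S' = \{s_1, \ldots, s_{n-1}\}$, so by the induction hypothesis $\lambda_M(S') \leq 2$. The key observation is to look at the final triple $\{s_{n-2}, s_{n-1}, s_n\}$, which is either a triangle or a triad of $M$. If it is a triangle, then $s_n \in \cl_M(\{s_{n-2}, s_{n-1}\}) \subseteq \cl_M(S')$, so $r_M(S) = r_M(S')$; combined with $r^*_M(S) \leq r^*_M(S') + 1$ and $|S| = |S'| + 1$, the identity $\lambda_M(S) = r_M(S) + r^*_M(S) - |S|$ yields $\lambda_M(S) \leq \lambda_M(S')$. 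If it is a triad, then dually $s_n \in \cl_{M^*}(S')$, so $r^*_M(S) = r^*_M(S')$, and the same identity together with $r_M(S) \leq r_M(S') + 1$ again gives $\lambda_M(S) \leq \lambda_M(S')$. In either case $\lambda_M(S) \leq 2$, completing the induction.

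There is no serious obstacle here; the argument is essentially bookkeeping around the connectivity formula $\lambda_M(S) = r_M(S) + r^*_M(S) - |S|$, exploiting the fact that each new element of a fan is either in the closure or the coclosure of the preceding elements, and so contributes at most a $+1$ to exactly one of $r_M$ or $r^*_M$, which is cancelled by the $+1$ increase in $|S|$.
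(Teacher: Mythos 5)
Your proof is correct, and it is essentially the paper's argument in inductive form: the paper bounds $r_M(S)$ and $r^*_M(S)$ directly by counting, for each element beyond the initial triples, whether it can raise the rank or the corank, whereas you package the same observation (each new fan element lies in the closure or the coclosure of its two predecessors, so it adds at most $1$ to exactly one of $r_M$, $r^*_M$, cancelled by the increase in $|S|$) as an induction on $n$. No gap.
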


\begin{proof}
By duality we may assume that $\{s_i, s_{i+1}, s_{i+2}\}$ is a triad of $M$ for odd $i$ and a triangle for even $i$. Then $r_M(\{s_1, \ldots, s_n\}) \leq r_M(\{s_1, s_2, s_3\}) + |\{5 \leq i \leq n : i \textrm{ is odd}\}|$.
In $M^*$, we have $r_{M^*}(\{s_1, \ldots, s_n\}) \leq r_{M^*}(\{s_2, s_3, s_4\}) + |\{6 \leq i \leq n : i \textrm{ is even}\}|$.
Hence $\lambda_M(\{s_1, \ldots, s_n\}) \leq r_M(\{s_1, s_2, s_3\}) + r^*_M(\{s_2, s_3, s_4\}) + n - 4 - n \leq 2$.
\end{proof}

An element $e$ of a $3$-connected matroid $M$ is called \defn{essential} if neither $M \d e$ nor $M / e$ is $3$-connected.
The next two results of Oxley and Wu are specializations of the statements of Lemma 8.8.6 and Theorem 8.8.8 in \cite{Oxley}.

\begin{lemma}[Oxley, Wu, \cite{OxleyWu}] \label{lem:oxleywulemma}
 If $M$ is a $3$-connected matroid containing a projective plane restriction and $S = (s_1, \ldots, s_n)$ is a maximal fan in $M$ with $n \geq 4$, then the set of non-essential elements in $S$ is $\{s_1, s_n\}$.
\end{lemma}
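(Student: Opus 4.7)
The plan is to handle two claims separately: every interior element $s_i$ with $2 \leq i \leq n-1$ is essential, and each endpoint $s_1, s_n$ is non-essential. For the first claim, I would exploit the alternating structure of the fan. An interior element $s_i$ appears in the triple $\{s_{i-1}, s_i, s_{i+1}\}$ and in at least one of $\{s_{i-2}, s_{i-1}, s_i\}$, $\{s_i, s_{i+1}, s_{i+2}\}$, and by the alternation requirement these two triples have opposite types. Hence $s_i$ lies in at least one triangle and at least one triad of the fan, producing a parallel pair in $M \setminus s_i$ and a series pair in $M / s_i$. Because the projective plane restriction forces $|E(M)| \geq 7$, both $M \setminus s_i$ and $M / s_i$ have at least four elements, so those pairs witness genuine $2$-separations; neither minor is $3$-connected, and $s_i$ is essential.

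For the second claim, by the symmetry $(s_1,\ldots,s_n) \leftrightarrow (s_n,\ldots,s_1)$ it suffices to show that $s_1$ is non-essential. I would split into two cases depending on whether $\{s_1, s_2, s_3\}$ is a triangle or a triad; the two cases are parallel, so I describe the triangle case. If $\{s_1, s_2, s_3\}$ is a triangle, then $\{s_2, s_3\}$ is a parallel pair in $M / s_1$, so $M / s_1$ is not $3$-connected and it remains to show that $M \setminus s_1$ is $3$-connected. Applying Tutte's Triangle Lemma to $\{s_1, s_2, s_3\}$, there are three alternatives: $M \setminus s_1$ is $3$-connected (the desired conclusion); $M \setminus s_2$ is $3$-connected, which is excluded because $s_2$ is essential by the first claim; or $M$ has a triad $T^*$ containing $s_1$ and exactly one of $s_2, s_3$.

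The main obstacle is ruling out this last alternative using maximality of the fan. If $T^* = \{s_1, s_2, x\}$ with $x \notin S$, then prepending $x$ produces the strictly longer sequence $(x, s_1, s_2, \ldots, s_n)$, whose leading triple $\{x, s_1, s_2\}$ is a triad alternating correctly with the triangle $\{s_1, s_2, s_3\}$, contradicting maximality of $S$. In the remaining subcases (either $x \in S$, or $T^* = \{s_1, s_3, x\}$), I would derive a contradiction by combining $T^*$ with the circuits and cocircuits coming from the fan via orthogonality (the fact that a circuit and cocircuit cannot meet in a single element), thereby producing a $2$-separator of $M$ that contradicts $3$-connectedness. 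The dual case in which $\{s_1, s_2, s_3\}$ is a triad is handled by applying Tutte's Triangle Lemma in $M^*$ and running the same maximality and orthogonality analysis. The projective plane restriction enters only to guarantee that $M$ is large enough (and in particular not a wheel or whirl) for these arguments to have room to operate.
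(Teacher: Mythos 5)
A point of orientation first: the paper does not prove this lemma --- it quotes it from Oxley and Wu (as a specialization of Lemma 8.8.6 in Oxley's book) --- so your argument has to stand on its own. Your first half is sound: each interior $s_i$ lies in two consecutive fan triples of opposite types, hence in both a triangle and a triad of $M$, and since $|E(M)| \geq 7$ neither single-element removal leaves a $3$-connected matroid. You have, however, swapped the witnesses: a triangle through $s_i$ produces a parallel pair in $M / s_i$, and a triad produces a series pair in $M \backslash s_i$, not the other way around. That is a slip, not a gap.

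The gap is in the endpoint argument. After Tutte's Triangle Lemma you must exclude every triad $T^*$ containing $s_1$ and exactly one of $s_2, s_3$, and you only execute the subcase $T^* = \{s_1, s_2, x\}$ with $x \notin S$. The remaining subcases do not all yield to ``orthogonality producing a $2$-separator.'' Take $n = 4$ with $\{s_1,s_2,s_3\}$ a triangle, $\{s_2,s_3,s_4\}$ a triad, and $T^* = \{s_1, s_3, x\}$ with $x \notin S$: orthogonality is satisfied ($T^*$ meets the only fan triangle in two elements), and the best connectivity bound available is $\lambda_M(S \cup \{x\}) \leq 4 + 3 - 5 = 2$, which contradicts nothing. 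This case dies by maximality again, but via a reordering rather than a prepending: $(x, s_1, s_3, s_2, s_4)$ is a five-element fan whose underlying set properly contains $S$. For the subcases with $T^* \subseteq S$ (which arise, e.g., when $T^* = \{s_1, s_2, s_n\}$ with $n$ even, where no fan triangle detects $s_n$), one does obtain $\lambda_M(S) \leq 1$ by rerunning the count in \autoref{lem:fan3separates} with $s_1$ now in the coclosure of $S \setminus \{s_1\}$; but to convert $\lambda_M(S) \leq 1$ into a contradiction you need $|E(M) \setminus S| \geq 2$, and here the projective plane does real work beyond making $M$ large or excluding wheels and whirls: every cocircuit of $M$ meets the ground set of the plane in $0$ or at least $q^2 \geq 4$ elements (where $q$ is the order of the plane), so every triad of $M$ avoids the plane, the fan triads cover all of $S$ except possibly its two ends, and hence $|S \cap E(N_0)| \leq 2$ and $|E(M) \setminus S| \geq 5$. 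None of this is unfixable, but as written the case analysis is incomplete and the single mechanism you announce for closing it fails in at least one case.
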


\begin{theorem}[Oxley, Wu, \cite{OxleyWu}] \label{thm:oxleywutheorem}
 If $M$ is a $3$-connected matroid and $e$ is an essential element of $M$ that is in a four-element fan, then either 
 \begin{enumerate}[(i)]
 \item $e$ is in a unique maximal fan in $M$, or 
 \item $e$ is in exactly three maximal fans each of which has exactly five elements, the union $X$ of these three fans has exactly six elements, and one of $M | X$ and $M / (E(M) \setminus X)$ is isomorphic to $M(K_4)$.
 \end{enumerate}
\end{theorem}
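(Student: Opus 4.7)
The plan is a local analysis of the triangles and triads through $e$, driven by Tutte's Triangle Lemma, its dual, and Bixby's Lemma. Fix a $4$-element fan $F = (f_1,f_2,f_3,f_4)$ containing $e$. By duality I may assume $\{f_1,f_2,f_3\}$ is a triangle and $\{f_2,f_3,f_4\}$ is a triad. Since $e$ is essential, \autoref{lem:oxleywulemma} forbids $e$ from being an endpoint of any maximal fan, so $e \in \{f_2,f_3\}$; by symmetry I would take $e = f_2$. I would then try to extend $F$ to a maximal fan, alternately seeking $f_5$ with $\{f_3,f_4,f_5\}$ a triangle and $f_0$ with $\{f_0,f_1,f_2\}$ a triad. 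If at every stage the extending element is uniquely determined, a single maximal fan through $e$ results, and we are in case~(i).

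The substantive case is when non-uniqueness arises, i.e.\ $e$ lies in two distinct maximal fans $\mathcal{F},\mathcal{F}'$. I would first catalogue all triangles and triads of $M$ containing $e$, using orthogonality (any circuit and cocircuit meet in $0$ or at least $2$ elements) to tightly constrain their pairwise intersections. Non-unique extension produces either two triangles through a common pair containing $e$ or two triads through such a pair; say two triangles $T_1, T_2$ with $T_1 \cap T_2 = \{e, g\}$. Applying Tutte's Triangle Lemma and its dual at each branching point, essentiality of $e$ forbids the ``easy'' outcomes, so each new triangle forces a new triad meeting it in two elements (and vice versa), all passing through $\{e,g\}$ and its neighbours.

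After a careful case analysis the triples through $e$ close up on a set $X$ of exactly six elements, containing exactly two triangles and two triads through $e$, with the full incidence pattern of $M(K_4)$: four triangles and four triads, each element lying in two of each, interlocking in the $K_4$ pattern. A direct rank computation then identifies $M|X$, or dually $M/(E(M)\setminus X)$, as $M(K_4)$. The count in case~(ii) follows by the symmetry of $M(K_4)$: at any fixed edge one can start a fan with either of its two incident triangles and continue uniquely in each direction until the orbit closes, producing exactly three distinct $5$-element maximal fans through $e$, whose union is $X$.

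The main obstacle is the closure step: showing that the iterative production of new triples through $e$ terminates at exactly six elements. The leverage is essentiality of $e$ together with Bixby's Lemma: any attempt to extend the configuration beyond $X$ introduces a further non-$3$-connected minor at $e$ whose unique internally $3$-connected minor (guaranteed by Bixby's Lemma) must still be consistent with the overlapping fans $\mathcal{F}, \mathcal{F}'$, and this consistency forces the new element back into $X$ or into a series/parallel pair with an element of $X$, contradicting $3$-connectivity of $M$. Hence the $6$-element closure, and with it the $M(K_4)$ structure, is unavoidable.
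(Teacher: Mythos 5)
This statement is not proved in the paper: it is quoted from Oxley and Wu as a specialization of Theorem 8.8.8 of Oxley's book, so there is no internal proof to compare yours against. Judged on its own, your proposal is a plan rather than a proof, and several of its load-bearing steps fail. First, the reduction ``$e$ is not an endpoint of any maximal fan'' invokes \autoref{lem:oxleywulemma}, whose hypothesis that $M$ contain a projective-plane restriction is not available in the present theorem; without some such hypothesis the claim is false, since every element of a wheel or whirl is essential and is an end of a maximal fan. (The projective-plane hypothesis in \autoref{lem:oxleywulemma} is precisely what excludes wheels and whirls; the general Oxley--Wu lemma does not give you ``non-essential iff endpoint.'') Second, your dichotomy that non-uniqueness of the maximal fan must come from two triangles (or two triads) through a common pair containing $e$ is not how the exceptional case arises: in $M(K_4)$ no two triangles share two elements --- that would create a four-point line meeting a triad in exactly one element, contradicting orthogonality --- and the three five-element maximal fans through an element of $M(K_4)$ differ by reassigning which elements play the roles $s_1,\dots,s_5$, not by branching at a shared pair. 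So the case you analyse in detail is not the case that actually occurs.

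Third, and most seriously, the ``closure step'' --- that the triples through $e$ close up on exactly six elements carrying the full $M(K_4)$ incidence pattern, and that exactly three five-element maximal fans result --- is essentially the entire content of outcome (ii), and your final paragraph does not supply an argument for it. Bixby's Lemma guarantees that one of $M\backslash e$ and $M/e$ is internally $3$-connected; it does not produce a ``unique internally $3$-connected minor'' whose ``consistency'' with two overlapping fans can be tested, and no mechanism is given by which an attempted extension of the configuration is forced back into $X$. You correctly identify this as the main obstacle, but identifying an obstacle is not overcoming it. As it stands the proposal establishes neither the six-element bound nor the $M(K_4)$ identification, which is why this result is cited from the literature rather than reproved.
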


In the following lemma, we find either a deletion pair or a contraction pair in a matroid.
It is a generalization of a lemma of Geelen, Gerards and Whittle \cite[Lemma 2.8]{GeelenGerardsWhittle:onrotas}, and we partly follow the outline of their proof. We make extensive use of Tutte's Triangle Lemma, as well as the following consequence of it: when a matroid has no element in a both a triangle and a triad, every triangle contains at least two elements whose deletion leaves the matroid $3$-connected.

\begin{lemma} \label{lem:notlemma2.8}
If $M$ is a $3$-connected matroid such that
\begin{enumerate}[(a)]
\item $M$ has $N_0 \cong \PG(2,\F)$ as a modular restriction with $\lambda_M(E(N_0)) = 3$, \label{hyp:notlemma2.8-hasplane}
\item $M / E(N_0)$ is connected and non-empty, and  \label{hyp:notlemma2.8-onebridge}
\item no element of $M$ is in both a triangle and a triad,  \label{hyp:notlemma2.8-notriangletriad}
\end{enumerate}
then either
\begin{enumerate}[(i)]
\item $M$ has a restriction $K \cong M(K_5)$ with a cocircuit $\{a, b, c, d\}$ such that $M = K \oplus_m (M \d a,b,c,d)$, \label{out:notlemma2.8-modularsum}
\item $M$ has a deletion pair $x,y \in E(M) \setminus E(N_0)$, or \label{out:notlemma2.8-deletionpair}
\item $M$ has a contraction pair $x,y \in E(M) \setminus E(N_0)$. \label{out:notlemma2.8-contractionpair}
\end{enumerate}
\end{lemma}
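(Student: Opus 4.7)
The plan is to assume that neither outcome (ii) nor outcome (iii) of the lemma holds and to derive outcome (i), adapting the proof of \cite[Lemma~2.8]{GeelenGerardsWhittle:onrotas} to the modular plane setting. First, hypothesis (c) together with Tutte's Triangle Lemma forces every triangle of $M$ to contain at least two elements $x$ with $M\setminus x$ 3-connected (the triad clause of Tutte's lemma is blocked by (c)); dually, every triad contains at least two elements $y$ with $M/y$ 3-connected. Bixby's Lemma then ensures each element of $M$ is non-essential in at least one of the two senses. Hypothesis (b) supplies a non-empty $E(M)\setminus E(N_0)$ to work inside.

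Next, I would use the failure of (ii) and (iii) to pin elements into fans. If $x, y \in E(M)\setminus E(N_0)$ both satisfy that $M\setminus x$ and $M\setminus y$ are 3-connected, then the failure of (ii) forces $M\setminus x, y$ to have an internal 2-separation; standard 2-separation analysis (using the 3-connectivity of $M\setminus x$ and $M\setminus y$, and using hypothesis (a) to control how the separation meets $N_0$) then places $\{x,y\}$ into a fan of $M$. The dual argument handles contractible pairs via (iii). Then \autoref{lem:oxleywulemma}, which uses precisely the projective plane restriction in (a), says that in every maximal fan of length at least $4$ only the endpoints are non-essential; combined with the previous observation, this sharply restricts the location of essential elements of $E(M)\setminus E(N_0)$. \autoref{thm:oxleywutheorem} then pinpoints the local structure at each essential element of a $4$-fan: it lies in a unique maximal fan or in a distinguished $6$-element set whose restriction or complementary contraction is isomorphic to $M(K_4)$.

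Weaving these local fan and $M(K_4)$ structures together with hypotheses (a) and (b) should produce a $10$-element restriction $K \cong M(K_5)$ of $M$ containing a $4$-cocircuit $\{a,b,c,d\}$, with complement $E(K)\setminus\{a,b,c,d\}$ a modular $M(K_4)$-flat of $K$; the modularity of $N_0$ and $\lambda_M(E(N_0)) = 3$ then force the modular sum decomposition $M = K \oplus_m (M\setminus\{a,b,c,d\})$, giving outcome (i). The main obstacle is this final assembly: many overlapping local fan and $M(K_4)$ configurations must be reconciled with the modular plane attachment. Hypothesis (c) will be crucial to keep fans from reaching deep into $E(N_0)$ (no element of $E(N_0)$ lies in a triad), and hypothesis (b) rules out premature modular-sum decompositions along $E(N_0)$ itself, ensuring that the extracted $M(K_5)$ lands cleanly on $E(M)\setminus E(N_0)$ together with a $4$-cocircuit of attachment to the remainder of $M$.
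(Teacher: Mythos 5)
Your outline has the right ingredients (Tutte's Triangle Lemma, Bixby's Lemma, the Oxley--Wu fan results), but it rests on two concrete misconceptions that make the plan unworkable as written. First, under hypothesis (c) the matroid $M$ itself has \emph{no} fan of length at least four: every interior element of such a fan lies in both a triangle and a triad. So the step ``standard $2$-separation analysis \ldots\ places $\{x,y\}$ into a fan of $M$'' and the ensuing application of \autoref{lem:oxleywulemma} and \autoref{thm:oxleywutheorem} to fans of $M$ cannot get off the ground. In the paper these results are applied not to $M$ but to a minor $M \d e / f$ for a carefully chosen contractible pair $e, f$, and the resulting fan decomposition of that minor is used to reach a \emph{contradiction} with $\lambda_M(E(N_0)) = 3$ via \autoref{lem:fan3separates}; it is never used to build the $M(K_5)$. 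Relatedly, your claim that Bixby's Lemma makes every element non-essential is false: Bixby's Lemma gives only internal $3$-connectivity of one of $M \d e$ and $M / e$, and an element lying in a triangle through a point of $E(N_0)$ can be essential --- the paper's first substantial claim is precisely that such triangles are the only obstruction to lying in $\Lambda \cup \Lambda^*$ (the sets of deletable and contractible elements outside $E(N_0)$).

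Second, outcome (i) does not emerge from ``weaving together'' fan and $M(K_4)$ structures at the end, and the extracted $M(K_5)$ does not ``land cleanly on $E(M) \setminus E(N_0)$.'' In the paper it appears early and in a degenerate case: if $\{a,b,c,d\}$ is a cocircuit with $\sqcap_M(\{a,b,c,d\}, E(N_0)) = 3$, then hypothesis (b) forces $E(M) = E(N_0) \cup \{a,b,c,d\}$, and modularity of $N_0$ supplies, for each pair from $\{a,b,c,d\}$, a point of the \emph{plane} completing it to a triangle; these six points of $E(N_0)$ together with $\{a,b,c,d\}$ form the $M(K_5)$. Thus the triangle part of $K$ lies inside $E(N_0)$ and only the $4$-cocircuit lies outside --- the opposite of your picture. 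The ``final assembly'' you flag as the main obstacle is indeed where essentially all of the work lies: after disposing of the $M(K_5)$ case, the paper needs roughly a dozen further claims (analysing $\Lambda$ and $\Lambda^*$, showing neither is a single triangle or triad, proving $\Lambda = E(M) \setminus E(N_0)$, and only then running the fan analysis of $M \d e / f$) to close the argument. As it stands, the proposal is a plausible-sounding plan whose key steps are either incorrect or missing.
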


\begin{proof}
We let $M$ be a counterexample; so $M$ is $3$-connected and satisfies (\ref*{hyp:notlemma2.8-hasplane}), (\ref*{hyp:notlemma2.8-onebridge}), and (\ref*{hyp:notlemma2.8-notriangletriad}), but none of the conclusions (\ref*{out:notlemma2.8-modularsum}), (\ref*{out:notlemma2.8-deletionpair}), or (\ref*{out:notlemma2.8-contractionpair}) hold.
The fact that $M$ is simple and $N_0$ is modular in $M$ means that $E(N_0)$ is closed in $M$.

We let $\Lambda$ denote the set of elements $e \in E(M) \setminus E(N_0)$ such that $M \d e$ is $3$-connected and $\Lambda^*$ the set of elements $e \in E(M) \setminus E(N_0)$ such that $M / e$ is $3$-connected.
The first two claims are straightforward.

\begin{claim}
Let $e, f$ be distinct elements of $E(M) \setminus E(N_0)$. If $e \in \Lambda$ then $M \d e, f$ is not $3$-connected, and if $e \in \Lambda^*$ then $M / e, f$ is not $3$-connected.
\end{claim}

\begin{claim} \label{clm:notlemma2.8-3}
If $N$ is a $3$-connected matroid with $|E(N)| \geq 4$ and there are elements $e, f$ such that $N \d e / f$ is $3$-connected, then either $N / f$ is $3$-connected or there is a triangle of $N$ containing $e$ and $f$.
\end{claim}

\begin{claim} \label{clm:notlemma2.8-4}
Each element of $E(M) \setminus E(N_0)$ is either in $\Lambda \cup \Lambda^*$ or is in a triangle that contains an element of $E(N_0)$ and not in any triangle disjoint from $E(N_0)$.
\end{claim}

Suppose that $e \in E(M) \setminus E(N_0)$ is not in $\Lambda \cup \Lambda^*$ and is either not in a triangle containing an element of $E(N_0)$ or is in
a triangle disjoint from $E(N_0)$.
By Bixby's Lemma, either $M \d e$ or $M / e$ is internally $3$-connected. 
Since neither $M / e$ nor $M \d e$ is $3$-connected, $e$ is either in a triangle or a triad. If it is contained in a triangle, then it is contained in a
triangle disjoint from $E(N_0)$. If it is contained in a triad, then since $N_0$ is $3$-connected, the triad is disjoint from $E(N_0)$.

We assume that $e$ is contained in a triangle $T = \{e, a, b\}$ disjoint from $E(N_0)$. A dual argument covers the case where $T$ is a triad disjoint from $E(N_0)$.
Neither $a$ nor $b$ is in a triad and $M \d e$ is not $3$-connected, so by Tutte's Triangle Lemma, both $M\d a$ and $M \d b$ are $3$-connected. We will prove
(\ref*{clm:notlemma2.8-4}) by showing that $M \d a,b$ is internally $3$-connected so that $M$ satisfies (\ref*{out:notlemma2.8-deletionpair}). Let $(A, B)$ be a
$2$-separation in $M \d e$ with $a \in A$. Then $b \in B$. Since neither $a$ nor $b$ is in a triad, $|A|, |B| \geq 3$. Since $|E(M)| \geq 8$ (from (\ref*{hyp:notlemma2.8-hasplane}) and (\ref*{hyp:notlemma2.8-onebridge})),
by possibly
swapping $A$ and $B$ we may assume $|A| \geq 4$. Note that $(A, B \cup \{e\})$ is a $3$-separation of $M$, and $a \in \cl_M(B \cup \{e\})$. Thus $(A \setminus
\{a\}, B \cup \{e\})$ is a $2$-separation of $M/a$ and hence $(A \setminus \{a\}, B \cup \{e\} \setminus \{b\})$ is a $2$-separation in $M/a \d b$, and it is an
internal $2$-separation. Thus by Bixby's Lemma, $M \d a, b$ is internally $3$-connected, contradicting the fact that (\ref*{out:notlemma2.8-deletionpair}) does not hold for $M$.
This proves (\ref*{clm:notlemma2.8-4}).

\begin{claim} \label{clm:notlemma2.8-4.5}
If $e \in E(M) \setminus E(N_0)$ is in a triad, then $e \in \Lambda^*$, and if $e$ is in a triangle disjoint from $E(N_0)$, then $e \in
\Lambda$.
\end{claim}

If $e \in E(M) \setminus E(N_0)$ is in a triad, then $e \not\in \Lambda$ and by (c) $e$ is not in a triangle, so (\ref*{clm:notlemma2.8-4}) implies that $e \in
\Lambda^*$. If $e \in E(M) \setminus E(N_0)$ is in a triangle disjoint from $E(N_0)$, then $e \not\in \Lambda^*$ and (\ref*{clm:notlemma2.8-4}) implies that $e
\in \Lambda$.

\begin{claim} \label{clm:notlemma2.8-5}
If $T$ is a triangle of $M$ disjoint from $E(N_0)$, then $\Lambda \subseteq T$, and if $T^*$ is a triad of $M$, then $\Lambda^* \subseteq T^*$.
\end{claim}

Suppose that there is a triangle $T$ disjoint from $E(N_0)$ and an element $e$ of $\Lambda \setminus T$.
Then $M \d e$ is $3$-connected, so by \autoref{lem:lemma2.7}, there exists $f \in T$ such that $M\d e,
f$ is internally $3$-connected. Also, since $f$ is in a triangle disjoint from $E(N_0)$, it follows from (\ref*{clm:notlemma2.8-4}) that $M \d f$ is $3$-connected.
Then $e,f$ is a deletion pair of $M$, contradicting the fact that $M$ does not satisfy (\ref*{out:notlemma2.8-deletionpair}).
This proves the first part of (\ref*{clm:notlemma2.8-5}) and the dual argument, along with the fact that all triads of $M$ are disjoint from $E(N_0)$, proves the
second.

\begin{claim} \label{clm:notlemma2.8-6}
If $M$ has a triangle $T$ disjoint from $E(N_0)$, it is unique and $\Lambda = T$, and if $M$ has a triad $T^*$, then it is unique and $\Lambda^* = T^*$.
\end{claim}

This follows immediately from (\ref*{clm:notlemma2.8-4.5}) and (\ref*{clm:notlemma2.8-5}).

\begin{claim} \label{clm:notlemma2.8-7}
If $e \in \Lambda$ and $f \in E(M) \setminus (E(N_0) \cup \{e\})$, then either $M \d e,f$ is not internally $3$-connected, or $e$ is in a triangle containing exactly one element of $E(N_0)$, or $\Lambda$ is a triangle.
\end{claim}

Suppose that $M \d e, f$ is internally $3$-connected. Then $M \d f$ is not $3$-connected, since $M$ does not satisfy (\ref*{out:notlemma2.8-deletionpair}). Let $(A, B)$ be a $2$-separation in
$M \d f$ with $e \in A$. If $|A| = 2$ then $A \cup \{f\}$ is a triad of $M$, contradicting the fact that $M \d e$ is $3$-connected.
Thus $|A| \geq 3$, and $(A \setminus \{e\}, B)$ is a $2$-separation in $M \d e, f$, which is internally $3$-connected, so $|A| = 3$. As $(A, B \cup \{f\})$ is
a $3$-separation of $M$, $A$ is a triangle (not a triad since $e \in \Lambda$). Since $E(N_0)$ is a closed set and $e \not\in E(N_0)$, $A$ contains at most one element of $E(N_0)$. Then either $A$ is a triangle containing exactly one element of $E(N_0)$, or
(\ref*{clm:notlemma2.8-6}) implies that $\Lambda = A$.

\begin{claim} \label{clm:notlemma2.8-8}
If $e \in \Lambda^*$ and $f \in E(M) \setminus (E(N_0) \cup \{e\})$, then either $M / e,f$ is not internally $3$-connected, or $\Lambda^*$ is a triad.
\end{claim}

% The dual argument to (\ref*{clm:notlemma2.8-7}) proves (\ref*{clm:notlemma2.8-8}).
Suppose that $M / e,f$ is internally $3$-connected. Then $M / f$ is not $3$-connected, since $M$ does not satisfy (\ref*{out:notlemma2.8-contractionpair}). Let $(A,B)$ be a $2$-separation in $M / f$ with $e \in A$. If $|A| = 2$ then $A \cup \{f\}$ is a triangle of $M$, contradicting the fact that $M / e$ is $3$-connected.
Thus $|A| \geq 3$, and $(A \setminus \{e\}, B)$ is a $2$-separation in $M / e,f$, which is internally $3$-connected,
so $|A| = 3$. As $(A, B \cup \{f\})$ is a $3$-separation of $M$, $A$ is a triad (not a triangle since $e \in \Lambda^*$), and (\ref*{clm:notlemma2.8-6})
implies that $ \Lambda^* = A$.

\begin{claim}\label{clm:notlemma2.8-9}
There is no cocircuit $\{a,b,c,d\}$ of $M$ such that $\sqcap_M(\{a, b\}, E(N_0)) = 1$ and either $\sqcap_M(\{a, c\}, E(N_0)) = 1$ or $\sqcap_M(\{c, d\}, E(N_0)) = 1$.
\end{claim}

Let $\{a,b,c,d\}$ be a cocircuit of $M$ with $\sqcap_M(E(N_0), \{a,b,c,d\}) \geq 2$.
If $\lambda_M(\{a,b,c,d\}) = 2$ then by (\ref*{hyp:notlemma2.8-onebridge}), $E(M) = E(N_0) \cup \{a,b,c,d\}$ and $\lambda_M(E(N_0)) = 2$, a contradiction. So $\lambda_M(\{a,b,c,d\}) = 3$, and $\{a,b,c,d\}$ is independent in $M$.

Suppose that $\sqcap_M(E(N_0), \{a,b,c,d\}) = 3$. Then $r_{M / E(N_0)}(\{a, b, c, d\}) = 1$ and since $\{a, b, c, d\}$ is a cocircuit of $M$, it is a rank-one cocircuit of $M / E(N_0)$, which means it is a component of $M / E(N_0)$. Then since $M / E(N_0)$ is connected, $E(M) = E(N_0) \cup \{a,b,c,d\}$.
By the modularity of $N_0$, for each pair of distinct elements $f, g \in \{a, b, c, d\}$, there is an element $e_{fg} \in E(N_0)$ such that $\{f, g, e_{fg}\}$ is a triangle in $M$; let $X$ be the set of these six elements. For each triple of distinct elements $f, g, h \in \{a, b, c, d\}$, $\{e_{fg}, e_{gh}, e_{hf}\}$ is a triangle. Hence $M | (\{a, b, c, d\} \cup X) \cong M(K_5)$. Moreover, $M = (M | (\{a, b, c, d\} \cup X)) \oplus_m N_0$, so outcome (\ref*{out:notlemma2.8-modularsum}) holds. So we may assume that $\sqcap_M(E(N_0), \{a,b,c,d\}) = 2$.

First, we assume that $\sqcap_M(\{a, c\}, E(N_0)) = 1$; then also $\sqcap_M(\{b, c\}, E(N_0)) = 1$ and there are elements $e_{ab}, e_{bc}, e_{ca} \in E(N_0)$ such that $\{a,b,e_{ab}\}$, $\{b,c,e_{bc}\}$ and $\{c,a,e_{ca}\}$ are triangles.
We claim that $a,b,c \in \Lambda$.
If not, by symmetry we may assume that $M \d b$ is not $3$-connected. Then it has a $2$-separation, $(U, V)$, with $e_{ab} \in U$ and $a \in V$. We have $a
\not\in \cl_M(U)$ and $e_{ab} \not\in \cl_M(V)$. Hence since $e_{ca}$ is in triangles of $M \d b$ with each of $e_{ab}$ and $a$, $e_{ca} \in \cl_M(U) \cap
\cl_M(V)$. Then we have $c \in V$ and $e_{bc} \in U$. Since $|V \setminus \{e_{ca}\}| \geq 2$, we may assume that $e_{ca} \in U$.
Now $(U, V \cup \{b\})$ is a $3$-separation of $M$ and $U \cap \cl_M(V \cup \{b\}) = \{e_{ab}, e_{bc},
e_{ca}\}$. So $\lambda_{M / E(N_0)}(U \setminus E(N_0)) = 0$, implying by (\ref*{hyp:notlemma2.8-onebridge}) that $U = E(N_0)$, since $\cl_M(V) \cap E(N_0) = \{e_{ca}\}$. This contradicts the fact that $\lambda_M(E(N_0)) = 3$.
Therefore, $a,b,c \in \Lambda$ and by (\ref*{clm:notlemma2.8-6}), $d$ is not in a triangle of $M$.

We now show that one of $M \d a,b$ and $M \d b,c$ is internally $3$-connected.
Suppose that $M \d b,c$ is not internally $3$-connected. Then $M \d b,c$ has an internal $2$-separation $(W,Z)$. 
Since $|E(M)| > 8$ (by (\ref*{hyp:notlemma2.8-hasplane}) and (\ref*{hyp:notlemma2.8-onebridge})), one of $W$ and $Z$ has size at least four. So since $\{a, d\}$ is a series pair of $M \d b, c$, we may assume that $a, d \in W$.
Since $M \d b$ is $3$-connected, $e_{ca} \in Z$ and $a \not\in \cl_M(Z)$.
Therefore, since $|W| \geq 3$, $(W \setminus \{a\}, Z)$ is a $2$-separation of $M \d b,c / a$. Since $\{c,e_{ca}\}$ is a parallel pair in $M \d b / a$, $(W
\setminus \{a\}, Z \cup \{c\})$ is a $2$-separation of $M \d b / a$.
The unique triangle of $M \d b$ containing $a$ is $\{a, c, e_{ca}\}$, so $W \setminus \{a\}$ is not a parallel pair of $M \d b / a$; nor is it a series pair as $M \d b$ is $3$-connected. Hence $|W \setminus \{a\}| > 2$ and $(W \setminus \{a\}, Z \cup \{c\})$ is an internal $2$-separation of $M \d b / a$. By Bixby's Lemma, $M \d a, b$ is internally $3$-connected.
Therefore, outcome (\ref*{out:notlemma2.8-deletionpair}) holds. 

We may therefore assume that $\sqcap_M(\{c, d\}, E(N_0)) = 1$. We let $e_{ab}, e_{cd} \in E(N_0)$ be the elements such that $\{a, b, e_{ab}\}$ and $\{c, d, e_{cd}\}$ are triangles. Then by Tutte's Triangle Lemma, we may assume that $b, c \in \Lambda$, and by (\ref*{clm:notlemma2.8-6}), $a$ and $d$ are not contained in any triangles except these two.
If $M \d b, c$ is internally $3$-connected, then outcome (\ref*{out:notlemma2.8-deletionpair}) holds, so there is an internal $2$-separation $(W, Z)$ of $M \d b, c$. Since $\{a, d\}$ is a series pair of $M \d b, c$, we may assume that $a, d \in W$. Then since $M \d b$ is $3$-connected, $e_{cd} \in Z$. Since $(W, Z)$ is an internal $2$-separation, $(W \setminus \{d\}, Z)$ is a $2$-separation of $M \d b, c / d$. Then as $c$ is parallel to $e_{cd}$ in $M \d b / d$, $(W \setminus \{d\}, Z \cup \{c\})$ is a $2$-separation of $M \d b / d$. But $d$ is in a unique triangle of $M$, so $W \setminus \{d\}$ is not a parallel pair of $M \d b / d$. Nor is it a series pair since $M \d b$ is $3$-connected. So $|W \setminus \{d\}| > 2$ and $(W \setminus \{d\}, Z \cup \{c\})$ is an internal $2$-separation of $M \d b / d$. By Bixby's Lemma, $M \d b, d$ is internally $3$-connected. Thus either outcome (\ref*{out:notlemma2.8-deletionpair}) holds, or $M \d d$ is not $3$-connected.
We let $(X, Y)$ be a $2$-separation of $M \d d$ with $c \in X$ and $E(N_0) \in Y$ (this exists as $\{c, d, e_{cd}\}$ is a triangle). Since $M \d c$ is $3$-connected, $c$ is not in a triad of $M$ so $|X| > 2$. Thus if $\{a, b\} \subseteq Y$ then as $\{a, b, c\}$ is a triad of $M \d d$, $(X \setminus \{c\}, Y \cup \{c\})$ is also a $2$-separation of $M \d d$; but then $(X \setminus \{c\}, Y \cup \{c, d\})$ is a $2$-separation of $M$, a contradiction. So at least one of $a$ and $b$ is in $X$. Then as $\{a, b, c\}$ is a triad of $M \d d$, $(X \cup \{a, b\}, Y \setminus \{a, b\})$ is also a $2$-separation of $M \d d$, hence we may assume that $a, b \in X$. But then $(X \cup \{d\}, Y)$ is a $3$-separation of $M$. Also, since $e_{cd}$ and $e_{ab}$ are in $Y \cap \cl_M(X \cup \{d\})$, $\sqcap_M(X \cup \{d\}, E(N_0)) = \sqcap_M(X \cup \{d\}, Y) = 2$. Then $X \cup \{d\}$ is a component of $M / E(N_0)$, which is connected, so $Y = E(N_0)$. This contradicts the fact that $\lambda_M(E(N_0)) = 3$.
This proves (\ref*{clm:notlemma2.8-9}).

\begin{claim} \label{clm:notlemma2.8-no4pointline}
 $M$ has no $U_{2, 4}$-restriction whose ground set is not contained in $E(N_0)$.
\end{claim}

Suppose $\{a, b, c, d\}$ is a set of rank two in $M$ not contained in $E(N_0)$. We may assume that $a, b, c \not\in E(N_0)$. By Tutte's Triangle Lemma, we may assume that $M \d a$ is $3$-connected. Then by Tutte's Triangle Lemma applied to the triangle $\{b, c, d\}$ in $M \d a$, we may assume that $M \d a, c$ is $3$-connected. Then $\{a, c\}$ is a deletion pair and $M$ satisfies (\ref*{out:notlemma2.8-deletionpair}), proving (\ref*{clm:notlemma2.8-no4pointline}).

\begin{claim}\label{clm:notlemma2.8-10}
There is at most one triangle of $M$ containing exactly one element of $N_0$.
\end{claim}

By (\ref*{clm:notlemma2.8-no4pointline}), no two triangles that are not contained in $E(N_0)$ can meet in more than one element. Thus there are three ways in which $M$ could have two triangles that each contain exactly one element of $N_0$: either they meet in an element of $E(M) \setminus E(N_0)$, they are disjoint, or they meet in an element of $E(N_0)$.

First, we assume that there are two triangles $\{a, b, d\}$ and $\{a, c, e\}$ where $a \not\in E(N_0)$ and $d,e \in E(N_0)$.
By the modularity of $N_0$, there is an element $f \in E(N_0) \cap \cl_M(\{b, c\})$, so $M$ has a triangle containing $\{b, c\}$. By Tutte's Triangle Lemma and the fact that no element of $M$ is in both a triangle and a triad, at least one of $M \d b$ and $M \d c$ is $3$-connected.
By symmetry between $b$ and $c$ we may assume that $M \d b$ is $3$-connected. Since $M$ does not satisfy (\ref*{out:notlemma2.8-deletionpair}), neither $M \d b,c$ nor $M \d b,a$ is $3$-connected. So by Tutte's Triangle Lemma applied to the triangle $\{a,c,e\}$, $a$ and $c$ are both in triads of $M \d b$.
But since $e$ is not in a triad, $a$ and $c$ are contained in the same triad. Since $a$ and $c$ are not in a triad of $M$, this means that $\{a,b,c\}$ is contained in a $4$-element cocircuit of $M$, contradicting (\ref*{clm:notlemma2.8-9}).

Next, we assume there are two triangles $T_1$ and $T_2$ that do not meet in any element of $E(M) \setminus E(N_0)$. Let $\{a, b, e\}$ and $\{c, d,
f\}$, where $e, f \in E(N_0)$. So $a, b, c, d$ are distinct, but $e$ and $f$ may not be distinct.
By Tutte's Triangle Lemma applied to $\{c, d, f\}$, either $M \d c$ or $M \d d$ is $3$-connected; by symmetry we may assume $M \d c$ is.
Since $M$ does not satisfy (\ref*{out:notlemma2.8-deletionpair}), $M \d c, a$ and $M \d c, b$ are not $3$-connected. Thus by Tutte's Triangle Lemma and the fact that no triad of $M \d c$ contains an element of $N_0$, $\{a, b\}$ is contained in a triad of $M \d c$. Therefore, as $a$ is not in any triad of $M$, $\{c, a, b\}$ is contained
in some $4$-element cocircuit of $M$. Since this cocircuit does not contain exactly one element of $\{c, d, e\}$ and does not contain exactly one element of $N_0$, it contains $d$. So $\{a, b, c, d\}$ is a cocircuit of $M$. If $e$ and $f$ are distinct, then $\sqcap_M(\{a, b, c, d\}, E(N_0)) = 2$, contradicting (\ref*{clm:notlemma2.8-9}). Hence we may assume that $e = f$.

We may assume by Tutte's Triangle Lemma applied to $\{a, b, e\}$ that $M \d a$ is $3$-connected (recall that we have already applied Tutte's Triangle Lemma to $\{c, d, e\}$ to assume that $M \d c$ is $3$-connected). Hence $M \d a, c$ is not internally $3$-connected or $\{a, c\}$ would be a deletion pair. We let $(X, Y)$ be an internal $2$-separation of $M \d a, c$ with $E(N_0) \subseteq \cl_M(X)$.
Since $M \d a$ and $M \d c$ are $3$-connected, we have $b, d \in Y \setminus \cl_M(X)$. Suppose that $|Y| > 3$. Then recall that $\{b, d\}$ is a series pair of $M \d a, c$, so $(X \cup \{b, d\}, Y \setminus \{b, d\})$ is also a $2$-separation of $M \d a, c$. But as $a, c \in \cl_M(X \cup \{b, d\})$, we have a $2$-separation $(X \cup \{a, b, c, d\}, Y \setminus \{b, d\})$ of $M$, a contradiction. Hence $|Y| = 3$, and the unique element $w$ of $Y \setminus \{b, d\}$ lies in $\cl_M(X) \cap \cl_M(\{b, d\})$.
Applying Tutte's Triangle Lemma to $\{b, d, w\}$, one of $M \d b$ and $M \d d$ is $3$-connected; by symmetry we may assume that $M \d b$ is $3$-connected.
Now since $M \d a$ and $M \d b$ are both $3$-connected, we have restored the symmetry between $a$ and $b$, and so by a symmetric argument $\{a, d\}$ is also contained in a triangle; we let $z$ be its third element.
Since $\{a, b, c, d\}$ is a cocircuit of $M$ and $u, w, z \in \cl_M(\{a, b, c, d\})$, it follows that $\{u, w, z\}$ is a triangle of $M$. So by Tutte's Triangle Lemma and the symmetry between $w$ and $z$, we may assume that $M \d w$ is $3$-connected.
Hence $M \d w, a$ is not internally $3$-connected, or $\{w, a\}$ would be a deletion pair. We let $(A, B)$ be an internal $2$-separation of $M \d w, a$ with $E(N_0) \subseteq \cl_M(A)$. Since $M \d w$ and $M \d a$ are $3$-connected and $\{u, w, z\}$ and $\{u, a, b\}$ are triangles, we have $z, b \in B \setminus \cl_M(A)$. Also, since $\{b, w, d\}$ and $\{d, u, c\}$ are triangles, we have $d \in A$ and $c \in \cl_M(A)$.
Suppose that $|B| = 3$ and let $y$ be the unique element of $B \setminus \{b, z\}$. Then $B$ is a triangle of $M$ and $y \in \cl_M(A)$. A triangle cannot contain just one element of the cocircuit $\{a, b, c, d\}$. But $a, d \in A$, and if $c \in B$ then $c$ and $y$ are both in $\cl_M(A) \cap \cl_M(B)$ and hence parallel, a contradiction. This proves that $|B| > 3$.
Now $b$ is in the closure of $B \setminus \{b, z\}$, for if not then $(A \cup \{w, b, z\}, B \setminus \{b, z\}))$ is a $2$-separation of $M \d a$. This is a contradiction because $\{a, b, c, d\}$ is a cocircuit that is disjoint from $B \setminus \{b, z\}$.
This proves (\ref*{clm:notlemma2.8-10}).

\begin{claim} \label{clm:notlemma2.8-notbothsmall}
Either $\Lambda$ is not a triangle or $\Lambda^*$ is not a triad.
\end{claim}

Suppose that $\Lambda$ is a triangle and $\Lambda^*$ is a triad. Then $\Lambda$ and $\Lambda^*$ are disjoint, and by (\ref*{clm:notlemma2.8-4}) and (\ref*{clm:notlemma2.8-10}), $E(M) \setminus E(N_0)$ consists of the union of $\Lambda$, $\Lambda^*$, and at most one triangle, which contains an element of $N_0$. 

Suppose that $E(M) \cup E(N_0) = \Lambda \cup \Lambda^*$. 
Since $\Lambda^*$ has corank two, $\lambda_{M \d \Lambda^*}(E(N_0)) \geq \lambda_M(E(N_0)) - 2 = 1$, so $\sqcap_M(\Lambda, E(N_0)) \geq 1$. By the modularity of $N_0$, this means that $\cl_M(\Lambda)$ contains an element of $N_0$, hence $M$ has a four-point line not contained in $E(N_0)$, contradicting (\ref*{clm:notlemma2.8-no4pointline}).
Therefore, $E(M) \setminus E(N_0)$ is the union of $\Lambda \cup \Lambda^*$ with a triangle $T$ that contains precisely one element, $u$, of $N_0$.

We write $\Lambda = \{a, b, c\}$ and $\Lambda^* = \{d, e, f\}$. Tutte's Triangle Lemma implies that $T$ cannot be disjoint from $\Lambda$, so we may assume that $c \in T$. We let $w$ denote the element of $T \setminus \{u, c\}$.
Since $w \not\in \Lambda$, $M \d w$ is not $3$-connected, and being in a triangle, $w$ is not in a triad, so $M \d w$ is not internally $3$-connected. Let $(X, Y)$ be an internal $2$-separation of $M \d w$ with $E(N_0) \subseteq \cl_M(X)$. Then $c \in Y \setminus \cl_M(X)$ and we may assume $a, b \in Y$. Moreover, we may assume that $\Lambda^*$ is contained in either $X$ or $Y$. If $\Lambda^* \subseteq Y$ then $(E(N_0), \Lambda \cup \Lambda^* \cup \{w\})$ is a $3$-separation of $M$, contradicting the fact that $\lambda_M(E(N_0)) = 3$. So $\Lambda^* \subseteq X$. Hence $\lambda_M(\{a, b, c, w\}) = 2$ and so $\{a, b, c, w\}$ is a cocircuit of $M$.
We let $d, e$, and $f$ denote the elements of $\Lambda^*$. % We note that $\sqcap_M(\{a, b, c, e, f\}, E(N_0)) \geq \lambda_M(E(N_0)) - 2 = 1$.
Recall that $M / f$ and $M / d$ are $3$-connected; so $M / d, f$ is not internally $3$-connected or we would have a contraction pair. So $M / f$ has a $3$-separation $(A, B)$ with $E(N_0) \subseteq \cl_{M / f}(A)$, $d \in \cl_{M / f}(A) \cap \cl_{M/f}(B)$, and $|B \setminus \{d\}| \geq 3$. If $\Lambda \subseteq \cl_{M/f}(A)$ then $e$ would be a coloop of $M/f$; so we may assume that $\Lambda \subseteq B$.
We assume that $w$ is in $\cl_{M/f}(A)$. Then so is $c$, and hence $e \in B \setminus \cl_{M/f}(A)$ or else $\{a, b\}$ would be a series pair of $M / f$. Then $\{c, d\}$ is contained in $\cl_{M/f}(A) \cap \cl_{M/f}(B)$, which implies that $E(N_0)$ is disjoint from $\cl_{M/f}(B)$ because otherwise $\{c, d\}$ would be contained in a triangle by the modularity of $N_0$. Thus $\{a, b, c, e\}$ and $E(N_0)$ are skew in $M / f$, and so $\{a, b, c, e, f\}$ and $E(N_0)$ are skew in $M$. But then $\sqcap_M(\{a, b, c, d, e, f, w\}, E(N_0)) \leq 2$, contradicting the fact that $\lambda_M(E(N_0)) = 3$. This proves that $w \in B$.
If $e \in A$ then we have $d \in \cl_{M / f}(\{a, b, c, w\})$. But then $\sqcap_M(\{f, d\}, \{a, b, c, w\}) = 1$ so $(\{a, b, c, w, d, f\}, E(N_0))$ is a $2$-separation of $M \d e$, which means $\lambda_M(E(N_0)) \leq 2$, a contradiction. So $e \in B$. Then since $u$ is the unique element of $N_0$ in $\cl_{M / f}(B)$, $\cl_M(\{a, b, c, e, f\})$ contains no element of $N_0$ except possibly $u$. So $\lambda_{M \d w, d}(E(N_0)) \leq 1$, but since $w$ is in the closure of $\{a, b, c, u\}$, we have also $\lambda_{M \d d}(E(N_0)) \leq 1$, and then $\lambda_M(E(N_0)) \leq 2$, a contradiction.
This proves (\ref*{clm:notlemma2.8-notbothsmall}).

\begin{claim} \label{clm:notlemma2.8-actually8part1}
If $\Lambda^*$ is not a triad then for each $e \in \Lambda^*$, $E(M) \setminus (E(N_0) \cup \{e\}) \subseteq \Lambda$.
\end{claim}

We assume that $\Lambda^*$ is not a triad and that there exists $e \in \Lambda^*$. Let $f \in E(M) \setminus (E(N_0) \cup \{e\})$. By (\ref*{clm:notlemma2.8-8}), $M / e, f$ is not internally $3$-connected. Thus by Bixby's Lemma, $M / e \d f$ is internally $3$-connected. But $M$ has no triads by (\ref*{clm:notlemma2.8-6}) so $M / e \d f$ is actually $3$-connected. Also, as $M$ has no triads, the dual of (\ref*{clm:notlemma2.8-3}) implies that $M \d f$ is $3$-connected. Thus $f \in \Lambda$, proving (\ref*{clm:notlemma2.8-actually8part1}).

\begin{claim} \label{clm:notlemma2.8-actually8part2}
If $\Lambda$ is not a triangle then for each $e \in \Lambda$ that is not in a triangle of $M$ meeting $E(N_0)$, each $f \in E(M) \setminus (E(N_0) \cup \{e\})$ that is not in a triangle meeting $E(N_0)$ is in $\Lambda^*$.
\end{claim}

We assume that $\Lambda$ is not a triangle and that there exists $e \in \Lambda$ that is not in a triangle of $M$ meeting $E(N_0)$. Let $f \in E(M) \setminus (E(N_0) \cup \{e\})$ such that $f$ is not in a triangle meeting $E(N_0)$. By (\ref*{clm:notlemma2.8-7}), $M \d e, f$ is not internally $3$-connected. Thus by Bixby's Lemma, $M \d e / f$ is internally $3$-connected. But $M$ has no triangles disjoint from $E(N_0)$ by (\ref*{clm:notlemma2.8-6}) so $M \d e / f$ is actually $3$-connected. Also, as $M$ has no triangles disjoint from $E(N_0)$, (\ref*{clm:notlemma2.8-3}) implies that $M / f$ is $3$-connected. Thus $f \in \Lambda^*$, proving (\ref*{clm:notlemma2.8-actually8part2}).

\begin{claim} \label{clm:notlemma2.8-nottriangleandnottriad}
$\Lambda$ is not a triangle and $\Lambda^*$ is not a triad.
\end{claim}

Since $\lambda_M(E(N_0)) = 3$, $|E(M) \setminus E(N_0)| \geq 4$. If this is an equality then $E(M) \setminus E(N_0)$ is a cocircuit, contradicting (\ref*{clm:notlemma2.8-9}), so $|E(M) \setminus E(N_0)| \geq 5$.

It follows from (\ref*{clm:notlemma2.8-4}) and (\ref*{clm:notlemma2.8-10}) that $E(M) \setminus (E(N_0) \cup \Lambda \cup \Lambda^*)$ is contained in a triangle meeting $E(N_0)$. By Tutte's Triangle Lemma, such a triangle contains an element of $\Lambda$, so $|E(M) \setminus (E(N_0) \cup \Lambda \cup \Lambda^*)| \leq 1$. Hence $|\Lambda \cup \Lambda^*| \geq 4$.

Assume that $\Lambda$ is a triangle. Then by (\ref*{clm:notlemma2.8-notbothsmall}), $\Lambda^*$ is not a triad. We have $|\Lambda| = 3$, so $\Lambda^*$ is not empty. Thus by (\ref*{clm:notlemma2.8-actually8part1}), for any $e \in \Lambda^*$, $E(M) \setminus (E(N_0) \cup \{e\})$, which has size at least four, is contained in the triangle $\Lambda$. This proves that $\Lambda$ is not a triangle.

Next, assume that $\Lambda^*$ is a triad. We have $|\Lambda^*| = 3$, so $\Lambda$ is not empty. Suppose that there exist two distinct elements $e, f \in E(M) \setminus (E(N_0) \cup \Lambda^*)$ that are not in a triangle meeting $E(N_0)$. Then $e \in \Lambda$ and by (\ref*{clm:notlemma2.8-actually8part2}), $f \in \Lambda^*$, a contradiction. So there is at most one element of $E(M) \setminus (E(N_0) \cup \Lambda^*)$ that is not in a triangle meeting $E(N_0)$. Hence such a triangle, $T$, exists.
Denote the elements of $\Lambda^*$ by $\{x, y, z\}$.
Suppose that $E(M) \setminus E(N_0) = \Lambda^* \cup T$. Then $r(M) = 5$, and $r(M / x, y) = 3$ and $\si(M / x, y)$ is $3$-connected. So either $\{x, y\}$ is a contraction pair, or $M / x, y$ has a parallel class of size at least three, in which case $M / x$ has a line $L$ with at least four points and $y \in L$. If $L$ contains $T \setminus E(N_0)$, then $\sqcap_M(\{x, y\}, T) = 1$, but then $(E(N_0), (T \setminus E(N_0)) \cup \{x, y\})$ is a $2$-separation of $M \d z$, contradicting the fact that $\lambda_{M \d z}(E(N_0)) \geq \lambda_M(E(N_0)) - 1 = 2$. Otherwise, $L$ contains $\{y, z\}$ and exactly one element $a \in T$. But then $r_{M/x}(T \cup \{y, z\}) = 3$ so $r_M(T \cup \Lambda^*) = 4$; but as $r^*_M(T \cup \Lambda^*) < 4$, this means $\lambda_M(E(N_0)) < 3$, a contradiction.
Therefore, we may assume that $E(M) \setminus E(N_0)$ contains an element, $w$, that is not in $\Lambda^* \cup T$.
If $r(M \d \Lambda^*) = 4$, then $\{w\} \cup (T \setminus E(N_0))$ is a triad of $M \d \Lambda^*$, and the modularity of $N_0$ implies that $w$ is in two triangles, but we know that $T$ is the unique triangle not contained in $E(N_0)$. So $r(M \d \Lambda^*) = 5$ and hence $r(M) = 6$.
As $w \not\in \Lambda^*$, $M$ has a $3$-separation $(A, B)$ with $w \in \cl_M(A) \cap \cl_M(B)$ and $E(N_0) \subseteq \cl_M(A)$. The set $B \setminus \cl_M(A)$ is non-empty because $w$ is not in a triangle. Then $M$ being $3$-connected, we must have $|B \setminus \cl_M(A)| \geq 3$. If $B \setminus \cl_M(A)$ contains just one element of $\Lambda^*$, then it is a triad, but the unique triad of $M$ is $\Lambda^*$. So $(A \setminus \Lambda^*, B \cup \Lambda^*)$ is also a $3$-separation of $M$ and we may assume that $\Lambda^* \subseteq B$. If any one element of $T \setminus E(N_0)$ is in $B \setminus \cl_M(A)$, then both are; but then $\cl_M(A) = E(N_0)$ and $\lambda_M(E(N_0)) = 2$, a contradiction. So $T \subseteq \cl_M(A)$. Then $w \in \cl_M(A \setminus \cl_M(B))$ because $(A, B)$ is a $3$-separation. This means that $r(M \d \Lambda^*) = 4$ and $r(M) = 5$, a contradiction.
This proves (\ref*{clm:notlemma2.8-nottriangleandnottriad}).

\begin{claim} \label{clm:notlemma2.8-lastclaim}
 $\Lambda = E(M) \setminus E(N_0)$ and $\Lambda^*$ contains all elements of $E(M) \setminus E(N_0)$ except possibly two, which are in a common triangle.
\end{claim}

Recall that $|E(M) \setminus E(N_0)| \geq 5$ and $M$ has at most one triangle not contained in $E(N_0)$, so the set $X$ of elements of $E(M) \setminus E(N_0)$ not in a triangle meeting $E(N_0)$ has size at least three.
Thus either $|\Lambda^*| \geq 2$ or $|X \cap \Lambda| \geq 2$. If $|\Lambda^*| \geq 2$ then it follows from (\ref*{clm:notlemma2.8-actually8part1}) that $\Lambda = E(M) \setminus E(N_0)$. So in either case $|X \cap \Lambda| \geq 2$. Therefore, by (\ref*{clm:notlemma2.8-actually8part2}), $X \subseteq \Lambda^*$. Hence by (\ref*{clm:notlemma2.8-actually8part1}) again we have $\Lambda = E(M) \setminus E(N_0)$, proving (\ref*{clm:notlemma2.8-lastclaim}).

\begin{claim} \label{clm:notlemma2.8-essential}
 For any distinct $e, f \in \Lambda^*$ and $g \in \Lambda$, $M \d e / f$ is $3$-connected and $g$ is essential in $M \d e / f$.
\end{claim}

We have $e, f \in \Lambda$, so as $\{e, f\}$ is not a deletion pair, $M \d e, f$ is not internally $3$-connected and Bixby's Lemma says that $M \d e / f$ is internally $3$-connected. Since $f$ is not in a triangle of $M$, $M \d e / f$ is $3$-connected.
Suppose that $M \d e, g / f$ is $3$-connected.
Then $M \d e, g$ is internally $3$-connected, and $g \in \Lambda$ so $\{e, g\}$ is a deletion pair.
Next, suppose that $M \d e / f, g$ is $3$-connected.
Then $M / f, g$ is internally $3$-connected. Also, $g$ is not in a triangle of $M$, for if it were then $M \d e / f, g$ would have a parallel pair since $e$ is not in a triangle. Therefore, $g \in \Lambda^*$ and $\{f, g\}$ is a contraction pair.
Thus neither $M \d e, g / f$ nor $M \d e / f, g$ is $3$-connected, meaning that $g$ is essential in $M \d e / f$.
This proves (\ref*{clm:notlemma2.8-essential}).
\\

Let $e, f \in \Lambda^*$. By (\ref*{clm:notlemma2.8-essential}), every $g \in E(M) \setminus (E(N_0) \cup \{e, f\})$ is essential in $M \d e / f$.
However, Bixby's Lemma implies that one of $M \d e, g / f$ and $M \d e / f, g$ is internally $3$-connected, which means that $g$ is in a triangle or a triad of $M \d e / f$. But if $g$ is in a triangle then by Tutte's Triangle Lemma, it is also in a triad, and if $g$ is in a triad, then by the dual of Tutte's Triangle Lemma, it is also in a triangle. A circuit and a cocircuit of a matroid cannot meet in exactly one element, and a $3$-connected matroid has no triangle that is also a triad. Hence $g$ is contained in a four-element fan of $M \d e / f$.

Let $S = (s_1, \ldots, s_n)$ be a maximal fan of $M \d e / f$ containing $g$. By \autoref{lem:oxleywulemma}, the set of non-essential elements of $S$ is $\{s_1, s_n\}$. Note that all elements in $E(N_0)$ are non-essential, because deleting a point from a projective plane leaves a $3$-connected matroid. By (\ref*{clm:notlemma2.8-essential}), every element not in $E(N_0)$ is essential.
Hence $S \cap E(N_0) = \{s_1, s_n\}$. Moreover, no element of $E(N_0)$ is in a triad, so $\{s_1, s_2, s_3\}$ and $\{s_{n-2}, s_{n-1}, s_n\}$ are triangles and $\{s_2, s_3, s_4\}$ is a triad.

Suppose that $S$ is not the unique maximal fan of $M$ containing $g$. Then we apply \autoref{thm:oxleywutheorem} to $g$ and conclude that $S$ has five elements and there is an element $s \not\in S$ such that if $X = \{s, s_1, \ldots, s_5\}$ then $X$ contains another $5$-element fan and one of $M | X$ and $M / (E(M) \setminus X)$ is isomorphic to $M(K_4)$. But if we contract all but two or three elements of a projective plane, the remaining two or three elements are loops and hence cannot be part of an $M(K_4)$. So we have $M | X \cong M(K_4)$. Then either $s \in \cl_{M \d e / f}(\{s_2, s_5\}) \cap \cl_{M \d e / f}(\{s_1, s_4\})$ or $s \in \cl_{N_0}(\{s_1, s_5\}) \cap \cl_{M \d e / f}(\{s_2, s_3\}$. But in the former case, the triad $\{s_2, s_3, s_4\}$ meets the triangle $\{s_2, s, s_5\}$ in only one element, a contradiction. So $s \in E(N_0)$ and all maximal fans containing $g$ consist of $S \setminus E(N_0)$ and some two elements of $E(N_0)$.

Hence we can partition the elements of $E(M \d e / f) \setminus E(N_0)$ into sets $Y^1, \ldots, Y^k$ such that for each $Y^j$ there is an ordering $(y^j_2, \ldots, y^j_{n-1})$ of $Y^j$ and elements $y^j_1, y^j_n \in E(N_0)$ so that $(y^j_1, y^j_2, \ldots, y^j_n)$ is a fan of $M \d e / f$, and every maximal fan not contained in $E(N_0)$ consists of the union of some $Y^j$ with two elements of $E(N_0)$.
Note that every element of $E(M \d e / f) \setminus E(N_0)$ is in at most two triangles, and each triangle not contained in $E(N_0)$ has exactly one element in no other triangle.
Moreover, this is true for any choice of $e, f \in \Lambda^*$.

\begin{claim} \label{clm:notlemma2.8-nosmallfan}
 $|Y^j| > 3$ for all $j = 1, \ldots, k$.
\end{claim}

Suppose $|Y^j| = 3$ for some $j$. Then $Y^j$ is a triad of $M \d e / f$, and at least one element $y \in Y^j$ is in $\Lambda^*$. In $M \d y / f$, at least one of the two elements of $Y^j \setminus \{y\}$ is in a triangle with $e$, otherwise they are contained in a triangle that meets no other triangle outside $E(N_0)$. Let $z$ be the element in a triangle with $e$. If $k > 1$ then there is another part $Y^{j'}$. We choose any element $e' \in Y^{j'} \cap \Lambda^*$. Then $z$ is in three triangles of $M \d e' / f$, a contradiction. Hence $k = 1$, and so $j = 1$ and $E(M \d e / f) = Y^1$. But then $Y^j \cup \{e\}$ is a four-element cocircuit of $M / f$, and $M / f, e$ is internally $3$-connected, a contradiction. This proves (\ref*{clm:notlemma2.8-nosmallfan}).

\begin{claim} \label{clm:notlemma2.8-inclosureoffan}
 For all $j = 1, \ldots, k$, $e \in \cl_{M / f}(Y^j)$.
\end{claim}

Let $j \in \{1, \ldots, k\}$. By (\ref*{clm:notlemma2.8-nosmallfan}), $|Y^j| > 3$. Hence $Y_j$ contains a triangle $T = \{y^j_{i-1}, y^j_i, y^j_{i+1}\}$ of $M \d e / f$ disjoint from $E(N_0)$. We consider $M \d y^j_i / f$. Now the triangles $\{y^j_{i-3}, y^j_{i-2}, y^j_{i-1}\}$ and $\{y^j_{i+1}, y^j_{i+2}, y^j_{i+3}\}$ each have two elements in no other triangle. So one of $y^j_{i-2}, y^j_{i-1}$ is in a triangle with $e$, and one of $y^j_{i+1}, y^j_{i+2}$ is in a triangle with $e$. Let $T_1$ and $T_2$ be these triangles. Note that they are disjoint from $Y^m$ for all $m \neq j$: this is because every element of $Y^m$ is either in two other triangles in that fan, or is in one triangle of that fan in which it is the unique element not in any other triangle. Suppose $T_1$ and $T_2$ meet $E(N_0)$. Then whichever of $y^j_{i-1}$ and $y^j_{i-2}$ is in $T_1$ is in three triangles of $M \d y^j_{i+2} / f$, a contradiction if $y^j_{i+2} \in \Lambda^*$. But if $y^j_{i+2} \not\in \Lambda^*$, then $y^j_{i-2} \in \Lambda^*
$,
 and then whichever of $y^j_{
i+1}$ and $y^j_{i+2}$ is in $T_2$ is in three triangles of $M \d y^j_{i-2} / f$, a contradiction. This proves that one of $T_1$ or $T_2$ is disjoint from $E(N_0)$, hence contained in $Y^j \cup \{e\}$. Thus $e \in \cl_{M / f}(Y^j)$, proving (\ref*{clm:notlemma2.8-inclosureoffan}).
\\

If $k = 1$, then (\ref*{clm:notlemma2.8-inclosureoffan}) implies that $(Y^1 \cup \{e\}, E(N_0))$ is a $3$-separation of $M / f$, and so $\lambda_{M / f}(E(N_0)) = 2$. But since $f \not\in \cl_M(E(N_0))$, we have $3 = \lambda_M(E(N_0)) = \lambda_{M / f}(E(N_0)) = 2$, a contradiction. Hence $k \geq 2$. But note that $\sqcap_{M / f}(Y^1, E(N_0)) = 2$ and $\sqcap_{M / f}(Y^2, E(N_0)) = 2$. So the fact that $e \not\in \cl_{M / f}(E(N_0))$ but $e \in \cl_{M / f}(Y^1) \cap \cl_{M / f}(Y^2)$ means that $\lambda_{M / f}(Y^1) \geq 3$. But since $e \in \cl_{M / f}(Y^2)$, we then have $\lambda_{M \d e / f}(Y^1) = 3$. This contradicts \autoref{lem:fan3separates}, which said that a fan always has connectivity at most two.
\end{proof}

We can now prove the main result of this section. It asserts that if there exists a counterexample to \autoref{lem:keylemma}, then there exists one with a deletion pair disjoint from $E(N_0)$, and moreover that we can choose it such that deleting this pair results in a matroid with at most one series pair.

\begin{lemma} \label{lem:getcounterexamplewithdeletionpair}
Let $M_0$ be a matroid that is $3$-connected, non-$\F$-representable and has a modular $\PG(2, \F)$-restriction, $N_0$, with $\lambda_{M_0}(E(N_0)) = 3$, such that no proper minor of $M_0$ that has $N_0$ as a restriction is $3$-connected and non-$\F$-representable. 
Then there exists such a matroid $M_0$ with a deletion pair $x, y \in E(M_0) \setminus E(N_0)$ such that $M_0 \d x, y$ has at most one series pair.
\end{lemma}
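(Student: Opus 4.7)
The plan is to take a minimal $M_0$ as in the hypothesis, apply \autoref{lem:notlemma2.8} to produce a deletion pair disjoint from $E(N_0)$, and then refine the choice so that $M_0 \d x, y$ has at most one series pair. The duality construction of \autoref{sec:duality} serves to convert a contraction pair into a deletion pair when needed.

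First, I would verify the three hypotheses of \autoref{lem:notlemma2.8}. Hypothesis (a) is immediate. For (b), the quotient $M_0 / E(N_0)$ is nonempty because $M_0 \neq N_0$, as $N_0$ is $\F$-representable and $M_0$ is not. If it were disconnected, \autoref{prop:separatorismodularsum} would give $M_0 = M_a \oplus_m M_b$ with $E(M_a) \cap E(M_b) = E(N_0)$; combining \autoref{prop:modularsumpreservesrepresentability} with the Fundamental Theorem of Projective Geometry, one summand would be non-$\F$-representable, and reducing it to a $3$-connected minor containing $N_0$ would contradict minimality. Hypothesis (c) follows from a similar minimality reduction applied to any element lying in both a triangle and a triad.

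Applying \autoref{lem:notlemma2.8} yields one of three outcomes. Outcome (i), a decomposition $M_0 = K \oplus_m (M_0 \d a, b, c, d)$ with $K \cong M(K_5)$, is ruled out by minimality: $M(K_5)$ is graphic and hence $\F$-representable, the common restriction on $X = E(K) \setminus \{a,b,c,d\}$ is isomorphic to $M(K_4)$ and uniquely $\F$-representable, so \autoref{prop:modularsumpreservesrepresentability} forces $M_0 \d a, b, c, d$ to be non-$\F$-representable, and taking a $3$-connected reduction containing $N_0$ contradicts minimality. Outcome (iii) (a contraction pair) is converted to outcome (ii) via \autoref{prop:dualledmatroid}: the matroid $M_1 = ((R \oplus_m M_0) \d E(N_0))^*$ is internally $3$-connected, non-$\F$-representable, and carries $N_1 \cong \PG(2,\F)$ as a modular restriction with $\lambda_{M_1}(E(N_1)) = 3$, while \autoref{lem:involution} shows that the operation is an involution and hence preserves the minimality hypothesis. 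A contraction pair of $M_0$ disjoint from $E(N_0)$ then corresponds to a deletion pair of $\si(M_1)$ disjoint from $E(N_1)$, so after replacing $M_0$ by $\si(M_1)$ if necessary, we may assume outcome (ii) holds and $M_0$ has a deletion pair $(x, y) \in E(M_0) \setminus E(N_0)$.

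To secure the series pair condition, I would, over all minimal counterexamples $M_0$ equipped with a deletion pair $(x, y)$ disjoint from $E(N_0)$, choose one minimizing the number of series pairs in $M_0 \d x, y$. Each series pair $\{u, v\}$ arises from a cocircuit of $M_0$ contained in $\{u, v, x, y\}$, i.e.\ a triad $\{u, v, x\}$, a triad $\{u, v, y\}$, or a $4$-cocircuit $\{u, v, x, y\}$. Assuming two or more such cocircuits exist, I would apply the dual form of Tutte's Triangle Lemma to one of them, together with the $3$-connectivity of $M_0 \d x$ and $M_0 \d y$, to produce either an alternative deletion pair inducing strictly fewer series pairs, or (via the duality construction of \autoref{sec:duality}) a strictly smaller counterexample with a deletion pair, contradicting the chosen minimum. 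I expect this last step to be the main obstacle: carefully controlling the interaction of triads through $x$ or $y$, $4$-cocircuits through $\{x, y\}$, and the modular $\PG(2, \F)$-structure, while preserving $3$-connectivity, non-$\F$-representability, and minimality. A secondary obstacle is the verification of hypothesis (c) above, which demands a minimality reduction that does not disturb any other hypothesis.
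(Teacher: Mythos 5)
Your setup matches the paper's: verifying hypotheses (a)--(c) of \autoref{lem:notlemma2.8} (with (b) and (c) handled by minimality reductions through \autoref{prop:separatorismodularsum}, \autoref{prop:modularsumpreservesrepresentability} and the Fundamental Theorem of Projective Geometry), ruling out outcome (i) by the same modular-sum argument, and converting a contraction pair into a deletion pair via the duality construction of \autoref{sec:duality}, with \autoref{lem:involution} guaranteeing that the minimality hypothesis transfers to $M_1$. One small point you gloss over: the paper shows $M_1$ is already simple (hence $3$-connected), because contracting the elements of $M_1$ parallel to $E(N_1)$ and re-applying the duality would produce a smaller counterexample; passing to $\si(M_1)$ without this observation risks losing the exact form of the correspondence between the two matroids.

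The genuine gap is the final step, which you yourself flag as the main obstacle: the reduction to at most one series pair. Your proposed mechanism --- classify the cocircuits generating the series pairs and apply the dual of Tutte's Triangle Lemma to find a better deletion pair --- is not how the argument can be made to work, and it omits the one idea the paper's proof actually turns on. The minimization must be taken \emph{jointly} over deletion pairs and contraction pairs, using the fact (established in the paper's first claim) that every series pair of $M_1 \d x, y$ is a parallel pair of $M_0 / x, y$ and vice versa; one then assumes that for every contraction pair $\{u,v\}$ disjoint from the planes, $M / u, v$ has at least as many parallel pairs as $M \d x, y$ has series pairs. The subsequent case analysis does not delete elements of the offending cocircuits but \emph{contracts} elements of the series pairs themselves: one shows $M/c$ is $3$-connected for suitable $c$ in a series pair, rules out exactly two series pairs by exhibiting explicit alternative deletion pairs such as $\{x, a_2\}$ or $\{b_1, b_2\}$ with strictly fewer series pairs, shows that three or more series pairs force exactly three with $\{x,y\}$ in the closure of their union, and finally derives a rank contradiction. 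Without the deletion/contraction symmetry in the minimization, the steps that produce contraction pairs $\{c,d\}$ with too few parallel pairs in $M/c,d$ have nothing to contradict, so the argument as you have sketched it would not close.
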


\begin{proof}
We use the matroid $R$ defined in \autoref{sec:duality}.
First we show that a contraction pair in one of $M_0$ and $M_1 = ((R \oplus_m M_0) \d E(N_0))^*$ is a deletion pair in the other.

\begin{claim} \label{clm:getcounterexamplewithdeletionpair-transformcontractiontodeletion}
Let $x,y \in E(M_0) \setminus E(N_0)$.
If $\{x, y\}$ is a contraction pair in $M_0$, then $\{x, y\}$ is a deletion pair in $M_1$, and any series pair in $M_1 \d x, y$ is a parallel pair in $M_0 / x, y$.
If $\{x, y\}$ is a contraction pair in $M_1$, then $\{x, y\}$ is a deletion pair in $M_0$, and any series pair in $M_0 \d x, y$ is a parallel pair in $M_1 / x, y$.
\end{claim}

We prove the first statement, and the second follows by \autoref{lem:involution}.
By \autoref{prop:dualledmatroid}, $\si(M_1)$ is a $3$-connected, non-$\F$-representable matroid with a modular restriction $N_1 \cong \PG(2,\F)$ and $\lambda_{M_1}(E(N_1)) = 3$.
Let $X$ be the set of elements of $M_1$ parallel to an element of $E(N_1)$.
By \autoref{lem:involution}, $M_0 = ((R^* \oplus_m M_1) \d E(N_1))^*$, so $M_0 / X = ((R^* \oplus_m \si(M_1)) \d E(N_1))^*$. By \autoref{prop:dualledmatroid} applied to $M_1$, $\si(M_0 / X)$ is a $3$-connected, non-$\F$-representable matroid with $N_0$ as a modular restriction. Thus the minimality of $M_0$ implies that $\si(M_0 / X) = M_0$, and so $X = \emptyset$ and $M_1$ is $3$-connected.

We have $M_1 \d x = ((R \oplus_m (M_0 / x)) \d E(N_0))^*$ so by \autoref{lem:preservesstability} applied to $M_0 / x$, $\si(M_1 \d x)$ is $3$-connected; but $M_1$ is $3$-connected so this implies that $M_1 \d x$ is $3$-connected. Similarly, $M_1 \d y$ is $3$-connected.

The modularity of $N_0$ in the $3$-connected matroid $M_0 / x$ implies that $y \not\in \cl_{M_0 / x}(E(N_0))$ so $M_0 / x,y$ has $N_0$ as a restriction.
Let $S_1$ be a set consisting of one element from each parallel pair of $M_0 / x,y$ that is disjoint from $E(N_0)$, and let $S_2$ be the set of elements of $M_0 /x,y$ parallel to an element of $E(N_0)$.
Then by \autoref{lem:preservesstability}, $(R \oplus_m (M_0 / x,y \d S_1,S_2)) \d E(N_0)$ is internally $3$-connected with no parallel pairs.
But recall that $M_1^* = (R \oplus_m M_0) \d E(N_0)$ is $3$-connected, so $(R \oplus_m (M_0 / x,y \d S_1,S_2)) \d E(N_0)$ has no series pairs either and is thus $3$-connected.
Therefore, $(R \oplus_m (M_0 / x,y \d S_1)) \d E(N_0)$ is also $3$-connected as it is obtained by adding non-parallel elements without increasing the rank.
So $M_1 \d x,y / S_1$ is $3$-connected.
Each parallel pair of $M_0 / x,y$ disjoint from $E(N_0)$ is a series pair of $M_1 \d x, y$, and contracting an element from each of these pairs results in the $3$-connected matroid $M_1 \d x, y / S_1$, so $M_1 \d x, y$ is internally $3$-connected and $\{x, y\}$ is a deletion pair of $M_1$.

Any series pair of $M_1 \d x, y$ is a parallel pair of $(R \oplus_m M_0) \d E(N_0) / x, y$ and hence a parallel pair of $M_0 / x, y$.
This proves (\ref*{clm:getcounterexamplewithdeletionpair-transformcontractiontodeletion}).
\\

We see that $M_0 / E(N_0)$ is connected, for if not then by \autoref{prop:separatorismodularsum} $M_0$ is a modular sum of two proper restrictions of $M_0$ containing $N_0$. Both of these are $3$-connected by \autoref{prop:connectivitypropertiesofmodularsum} and at least one of them is not $\F$-representable by \autoref{prop:modularsumpreservesrepresentability} and the Fundamental Theorem of Projective Geometry, contradicting the minimality of $M_0$.
Moreover, $E(M_0) \setminus E(N_0)$ is non-empty as $M_0$ is not $\F$-representable.

For the following claim, we use the fact that any binary matroid is uniquely representable over any field over which it is representable \cite{BrylawskiLucas}.

\begin{claim} \label{clm:getcounterexamplewithdeletionpair-fan}
No element of $M_0$ is in both a triangle and a triad.
\end{claim}

Suppose $f \in E(M_0)$ is in both a triangle and a triad. Since a circuit and a cocircuit cannot intersect in a single element and a triangle in a $3$-connected matroid cannot be a triad, $M_0$ has elements $\{e, f, g, h\}$ such that $\{e, f, g\}$ is a triad and $\{f, g, h\}$ is a triangle.
Since $\{e, f, g\}$ is not a triangle, $e$ is not in a four-point line of $M_0$.
Suppose that $M_0 / e$ has an internal $2$-separation, $(A, B)$. Then $(A \cup \{e\}, B)$ is a $3$-separation of $M_0$ and $e \in \cl_{M_0}(A) \cap \cl_{M_0}(B)$. 
Since $|A|, |B| \geq 3$ but $e$ is not in a four-point line in $M_0$, neither $A$ nor $B$ is contained in a parallel class of $M_0 / e$. So $(A, B)$ is a vertical $2$-separation of $M_0 / e$ and hence $(A \cup \{e\}, B)$ is a vertical $3$-separation of $M_0$. Therefore, $e \in \cl_{M_0}(A \setminus \cl_{M_0}(B))$ and $e \in \cl_{M_0}(B \setminus \cl_{M_0}(A))$. Then as $\{e, f, g\}$ is a cocircuit, one of $f$ and $g$ is contained in $A \setminus \cl_{M_0}(B)$ and the other in $B \setminus \cl_{M_0}(A)$. But then $\{f, g\}$ cannot be contained in a triangle. So $M_0 / e$ is internally $3$-connected.

Let $N$ be a copy of $M(K_4)$ such that $E(N) \cap E(M_0) = \{f, g, h\}$ and $\{f, g, h\}$ is a triangle of $N$.
Then $M_0$ is isomorphic to $(N \oplus_m (M_0 / e)) \d f, g$. Since $N$, being graphic, is $\F$-representable and $U_{2, 3}$ is uniquely $\F$-representable, $M_0 / e$ is not $\F$-representable by \autoref{prop:modularsumpreservesrepresentability}.

So $M_0$ has a proper minor $\si(M_0 / e)$ that is $3$-connected and non-$\F$-representable and has $N_0$ as a restriction, contradicting our minimal choice of $M_0$. This proves (\ref*{clm:getcounterexamplewithdeletionpair-fan}).
\\

Since $M_0 / E(N_0)$ is connected and non-empty and no element of $M_0$ is in both a triangle and a triad, all assumptions of \autoref{lem:notlemma2.8} hold for $M_0$.
Suppose that $M_0$ has a restriction $K \cong M(K_5)$ with a cocircuit $\{a, b, c, d\}$ such that $M_0 = K \oplus_m (M_0 \d a, b, c, d)$. Then $K \d a, b, c, d \cong M(K_4)$, which is binary hence uniquely $\F$-representable. Therefore, $M_0 \d a, b, c, d$ is non-$\F$-representable by \autoref{prop:modularsumpreservesrepresentability}. Also, it is $3$-connected by \autoref{prop:connectivitypropertiesofmodularsum}, contradicting our minimal choice of $M_0$. This means that outcome (\ref*{out:notlemma2.8-modularsum}) of \autoref{lem:notlemma2.8} does not hold for $M_0$ and so $M_0$ has either a deletion or contraction pair $x, y \in E(M_0) \setminus E(N_0)$.

By (\ref*{clm:getcounterexamplewithdeletionpair-transformcontractiontodeletion}), $\{x, y\}$ is a deletion pair in a matroid $M \in \{M_0, M_1\}$. We choose such a pair $\{x, y\}$ so that the number of series pairs of $M \d x, y$ is minimum. We may therefore assume that for any contraction pair $\{u, v\}$ of $M$ disjoint from $E(N_0)$ and $E(N_1)$, $M / u, v$ has at least as many parallel pairs as $M \d x, y$ has series pairs.
We now show that $M \d x, y$ has at most one series pair.

We denote the series pairs of $M \d x, y$ by $S_1 = \{a_1, b_1\}, \ldots, S_k = \{a_k, b_k\}$.

\begin{claim}\label{clm:getcounterexamplewithdeletionpair-oneseriespair:2}
Let $c \in S_1 \cup \cdots \cup S_k$ such that $c$ is not in a triangle of $M$.
If $\{x, y\} \not\subseteq \cl_M(\{c\} \cup S_j)$ for all $j$, then $M / c$ is $3$-connected.
\end{claim}

By symmetry we may assume that $c \in S_1$.
It suffices to show that $M/c$ is internally $3$-connected.
Suppose that $M/c$ has an internal $2$-separation $(A,B)$ with $y \in B$. If $x \in A$, then as $M/c\d x,y$ is internally $3$-connected, either $A$ or $B$ is a triad of $M/c$ hence also of $M$, contradicting the fact that $M\d x$ and $M\d y$ are $3$-connected. So $x,y \in B$.
Since $\{c,x,y\}$ is not a triangle, $|B| > 2$, and the fact that $M \d x$ is $3$-connected implies that $B$ is not a triad, so $|B| > 3$.
Then since $M / c \d x,y$ is internally $3$-connected, $B \setminus \{x,y\}$ is a series pair of it, so $B = S_j \cup \{x,y\}$ for some $j>1$.
Since $M/c$ is connected and $\lambda_{M/c}(S_j \cup \{x,y\}) = 1$, either $r_{M/c}(\{x,y\} \cup S_j) = 2$ or $r^*_{M/c}(\{x,y\} \cup S_j) = 2$; but $r^*_{M / c \d x,y}(S_j) = 1$ so we have $r_{M/c}(\{x,y\} \cup S_j) = 2$. But this means $\{x,y\} \subseteq \cl_{M/c}(S_j)$ so $\{x,y\} \subseteq \cl_M(\{c\} \cup S_j)$, a
contradiction. This proves (\ref*{clm:getcounterexamplewithdeletionpair-oneseriespair:2}).

\begin{claim} \label{clm:getcounterexamplewithdeletionpair-oneseriespair:0}
$M \d x,y$ does not have exactly two series pairs.
\end{claim}

Suppose that $M \d x,y$ has exactly two series pairs.
First, we assume that $\{x,y\} \not\subseteq \cl_M(S_1 \cup S_2)$; by symmetry we may assume that $y \not\in \cl_M(S_1 \cup S_2)$.
Then there is at most one triangle containing an element of $S_1 \cup S_2$; either it contains $\{x,y\}$ or it contains $x$ and an element of each of $S_1$ and $S_2$. So we may assume that $a_1$ and $a_2$ are not contained in any triangles, and if $b_2$ is contained in a triangle then it is $\{x, b_1, b_2\}$.
Then by (\ref*{clm:getcounterexamplewithdeletionpair-oneseriespair:2}), $M/a_1$ and $M/a_2$ are $3$-connected.
Since $M \d x,y / a_1,a_2$ is $3$-connected, $\si(M / a_1,a_2)$ is $3$-connected and all parallel pairs of $M/a_1,a_2$ contain $x$ or $y$. So if $M / a_1,a_2$ has a parallel class of size greater than two, then it contains $x$ and $y$, and so $\{a_2,x,y\}$ is a triangle of $M/a_1$. But then $\{x,b_1,b_2\}$ is not a triangle of $M$, so by (\ref*{clm:getcounterexamplewithdeletionpair-oneseriespair:2}), $M/b_2$ is also $3$-connected, and $M/a_1,b_2$ has no parallel classes of size greater than two.
So by possibly swapping the labels of $a_2$ and $b_2$, we may assume that $M / a_1,a_2$ is internally $3$-connected.
By our choice of deletion pair $x,y$, there are exactly two parallel pairs in $M/ a_1,a_2$.
We call them $\{x,u\}$ and $\{y,v\}$.

We consider $M \d x,a_2$, which has $\{b_2,y\}$ as a series pair.
In $M \d x / y$, $S_1 \cup S_2$ is a $4$-element cocircuit. Also, $\lambda_{M \d x / y}(S_1 \cup S_2) = 3$ because $y \not\in \cl_M(S_1 \cup S_2)$. Moreover, $v \in \cl_{M \d x / y}(\{a_1,a_2\})$ but $v \not\in \cl_{M \d x / y}(\{a_1,b_1,b_2\})$, so $\lambda_{M \d x,a_2 / y}(\{a_1,b_1,b_2,v\}) = 3$ and $M \d x, a_2 / y$ is $3$-connected.
Thus $M \d x,a_2$ is internally $3$-connected with a unique series pair, $\{b_2,y\}$.
Then $M \d a_2$ is $3$-connected because $x \not\in \cl_M(E(M) \setminus (S_2 \cup \{x,y\}))$ and $x \not\in \cl_M(\{b_2,y\})$. This contradicts our choice of deletion pair $\{x, y\}$.
Therefore, we may assume that $\{x,y\} \subseteq \cl_M(S_1 \cup S_2)$.

Suppose $\{x,y\}$ is contained in a triangle; we may assume that either $\{b_1,x,y\}$ is the only such triangle or $\{b_1,x,y\}$ and $\{b_2,x,y\}$ are the only
two.
Then (\ref*{clm:getcounterexamplewithdeletionpair-oneseriespair:2}) implies that $M / a_2$ is $3$-connected. Also, $\{b_1,b_2,x,y\}$ is a $4$-point line of $M / a_2$, so $M / a_2 \d b_1$ and $M /
a_2 \d b_2$ are $3$-connected. Then $M \d b_1$ and $M \d b_2$ are internally $3$-connected and any series pair of each contains $a_2$. But neither $\{a_2,b_1\}$ nor $\{a_2,b_2\}$ is contained in a triad of $M$, so $M \d b_1$ and $M \d b_2$ are $3$-connected. In $M \d b_1,b_2$, $\{x,y\}$ is a series pair.
In $M \d b_1,b_2 / y$, $\{a_1,a_2,x\}$ is a triad which is not a triangle, so $\lambda_{M \d b_1,b_2/y}(\{a_1,a_2,x\}) = 2$ and $M \d b_1,b_2/y$ is $3$-connected. Therefore, $M\d b_1,b_2$ is internally $3$-connected with a single series pair, $\{x,y\}$, contradicting our choice of deletion pair $\{x, y\}$.
So $\{x,y\}$ is not contained in a triangle.

At least one of $\{a_1,x\}$ and $\{b_1,x\}$ is skew to $S_2$, for if not then since $\{a_1,b_1,x\}$ is not a triangle we have $\sqcap_M(S_1 \cup \{x\}, S_2) =
2$, contradicting the fact that $r_M(S_1 \cup S_2) = 4$. By symmetry we may assume that $\{a_1,x\}$ is skew to $S_2$.
We claim that $M \d x,a_1$ is internally $3$-connected with one series pair, $\{b_1,y\}$. If not, then $M \d x,a_1/b_1$ is not $3$-connected. But $M \d x /
b_1$ is $3$-connected and has a triad $\{a_2,b_2,y\}$, so $\{a_2,b_2,y\}$ is also a triad of $M \d x, a_1 / b_1$. Then we may choose a $2$-separation $(A,B)$
of $M \d x,a_1/b_1$ with $a_2,b_2,y \in A$. But then $a_1 \in \cl_{M \d x / b_1}(\{a_2,b_2,y\}) \subseteq \cl_{M\d x / b_1}(A)$, a contradiction. So $M \d
x,a_1$ is internally $3$-connected with a unique series pair. Moreover, $M \d a_1$ is $3$-connected because $x \not\in \cl_M(\{b_1,y\})$ and $x \not\in
\cl_M(E(M) \setminus (S_1 \cup S_2 \cup \{y\}))$, contradicting our choice of deletion pair $\{x, y\}$.
This proves (\ref*{clm:getcounterexamplewithdeletionpair-oneseriespair:0}).

\begin{claim}\label{clm:getcounterexamplewithdeletionpair-oneseriespair:1}
There is at most one $c \in S_1 \cup \cdots \cup S_k$ such that $\{c, x, y\}$ is a triangle.
\end{claim}

If not, then we may assume that there are distinct triangles $\{c_1,x,y\}$ and $\{c_2,x,y\}$ with $c_1 \in S_1$ and $c_2 \in S_2$. Then $x,y \in \cl_M(S_1 \cup
S_2)$. But (\ref*{clm:getcounterexamplewithdeletionpair-oneseriespair:0}) implies that there is a third series pair $S_3$ of $M \d x,y$, and in this case $S_3$ is also a series pair of $M$, which is
$3$-connected. This proves (\ref*{clm:getcounterexamplewithdeletionpair-oneseriespair:1}).

\begin{claim}\label{clm:getcounterexamplewithdeletionpair-oneseriespair:3}
If $M \d x, y$ has more than one series pair, it has exactly three and $\{x, y\}$ is in the closure of their union.
\end{claim}

By (\ref*{clm:getcounterexamplewithdeletionpair-oneseriespair:0}), we may assume that $M \d x,y$ has at least three series pairs, and by (\ref*{clm:getcounterexamplewithdeletionpair-oneseriespair:1}) we may assume that for any $c \in S_2
\cup S_3$, $\{c,x,y\}$ is not a triangle.
Then any triangle of $M$ containing $c$ contains exactly one of $x$ and $y$, say $x$. But if such a triangle exists
then for some $r$, $S_r$ is skew to it and is thus a series pair of $M \d y$, a contradiction. So $c$ is in no triangles of $M$, and by (\ref*{clm:getcounterexamplewithdeletionpair-oneseriespair:2}),
$M / c$ is $3$-connected.
For any two such elements $c \in S_2, d \in S_3$, any parallel class of $M / c, d$ contains $x$ or $y$ since $M / c, d \d x, y$ has no parallel pairs.
If $M / c, d$ is internally $3$-connected, this implies that there are at most two parallel pairs in $M / c, d$, which contradicts our choice of $\{x, y\}$. Thus it suffices to show that $M / c, d$ is internally $3$-connected.

Assume that $M / c,d$ is not internally $3$-connected. Then $M/c,d$ has an internal $2$-separation $(A,B)$ with $y \in B$. If $x \in A$ then, as $M/c,d\d x,y$ is internally $3$-connected, $A$ or $B$ is a triad of $M/c,d$ hence also of $M$, contradicting the fact that $M\d x$ and $M\d y$ are $3$-connected. So $x,y \in B$, and since $M \d x,y / c,d$ is internally $3$-connected, $|B| \leq 4$.

Let $z \in B \setminus \{x,y\}$. 
If $B$ is a parallel class, then $\{c,d,z,x\}$ and $\{c,d,z,y\}$ are circuits of $M$. If $z \in S_1$, then (\ref*{clm:getcounterexamplewithdeletionpair-oneseriespair:3}) holds, while if not then
$\{x,y\}$ is in the closure of $E(M) \setminus S_1$ so $S_1$ is a series pair of $M$, a contradiction. Therefore, $r_{M/c,d}(B) \geq 2$.
Also, $r^*_{M/c,d}(B) = r^*_M(B) \geq 2$ since $M$ is $3$-connected.
It follows that if $|B| = 3$, then $B$ is a triad of $M/c,d$ and hence also of $M$, a contradiction. So $|B| = 4$, $r_{M/c,d}(B) = 2$, and $B \setminus \{x,y\}$ is a series pair $S_i$ of $M \d x,y$. Therefore, $x,y \in \cl_M(S_2 \cup S_3 \cup S_i)$ and (\ref*{clm:getcounterexamplewithdeletionpair-oneseriespair:3}) holds.

\begin{claim} \label{clm:getcounterexamplewithdeletionpair-oneseriespair:4}
$M \d x,y$ has at most one series pair.
\end{claim}

If not, then by (\ref*{clm:getcounterexamplewithdeletionpair-oneseriespair:3}) it has three.
By (\ref*{clm:getcounterexamplewithdeletionpair-oneseriespair:1}) and (\ref*{clm:getcounterexamplewithdeletionpair-oneseriespair:2}), we may assume that $M / a_1$, $M / b_1$ and $M / a_2$ are $3$-connected. $S_3$ is a series pair of
$M / a_1,a_2 \d x,y$ and of $M / b_1, a_2 \d x,y$, which each have a unique $2$-separation. This implies that $M / a_1,a_2 \d y$ is $3$-connected if and only if $x \not\in \cl_{M/a_1,a_2}(S_3)$, and $M / b_1, a_2 \d y$ is $3$-connected if and only if $x \not\in \cl_{M/b_1,a_2}(S_3)$. 
If at least one of $M / a_1,a_2 \d y$ and $M / b_1,a_2 \d y$ is $3$-connected, then one of $M / a_1,a_2$ and $M/ b_1,a_2$ is internally $3$-connected with at most one parallel pair, contradicting our choice of $\{x, y\}$. So we may assume that $x \in \cl_{M/a_1,a_2}(S_3)$ and $x \in \cl_{M/b_1,a_2}(S_3)$. 
But then we have $a_1 \in \cl_M(\{x,a_2\} \cup S_3)$ and $b_1 \in \cl_M(\{x,a_2\} \cup S_3)$, so $r_M(S_1 \cup S_3 \cup \{a_2\}) \leq 4$, a contradiction. This proves (\ref*{clm:getcounterexamplewithdeletionpair-oneseriespair:4}).
\\

%

%By (\ref*{clm:getcounterexamplewithdeletionpair-transformcontractiontodeletion}), $x,y$ is a deletion pair in a matroid $M \in \{M_0, M_1\}$. We apply \autoref{lem:oneseriespair} to $M$ and we
%have a pair $u,v$, disjoint from $E(N_0)$ or $E(N_1)$, such that either $u,v$ is a deletion pair and $M \d u,v$ has at most one series pair, or $u,v$ is a
%contraction pair and $M / u,v$ has at most one parallel pair.
%If $u,v$ is a contraction pair of $M_0$, then by (\ref*{clm:getcounterexamplewithdeletionpair-transformcontractiontodeletion}) it is a deletion pair of $M_1$ and $M_1 \d u,v$ has at most one series
%pair. If $u,v$ is a contraction pair of $M_1$, then by \autoref{lem:involution} combined with (\ref*{clm:getcounterexamplewithdeletionpair-transformcontractiontodeletion}), it is a deletion pair of
%$M_0$ and $M_0 \d u,v$ has at most one series pair.
If $M = M_0$, then we are done by (\ref*{clm:getcounterexamplewithdeletionpair-oneseriespair:4}).
If $M = M_1$, then 
%Therefore, one of $M_0$ and $M_1$ has a series pair as required; if is it $M_0$ then we are done, and if it is $M_1$ then 
it remains to show that $M_1$ has no $3$-connected, non-$\F$-representable proper minor with $N_1$ as a restriction. If it does have such a minor $M_1 \d D / C$, then by \autoref{lem:involution} we have $M_0 / D \d C = ((R^* \oplus_m (M_1 \d D / C)) \d E(N_1))^*$ and then \autoref{prop:dualledmatroid} implies that $\si(M_0 / D \d C)$ is a $3$-connected, non-$\F$-representable proper minor of $M_0$ with $N_0$ as a restriction, a contradiction.
\end{proof}

\section{Stabilizers}

Two representations of a matroid over a field $\mathbb{F}$ are called \defn{equivalent} if one can be obtained from the other by row operations (including adjoining and removing zero rows) and column scaling; they are \defn{inequivalent} otherwise.

When $A$ is a representation of a matroid $M$ in standard form with respect to a basis $B$, we index the rows of $A$ by the elements of $B$, so that $A_{bb} = 1$ for each $b \in B$. For any $X \subseteq B$ and $Y \subseteq E(M)$, we write $A[X, Y]$ for the submatrix of $A$ in the rows indexed by $X$ and the columns indexed by $Y$.

Whenever $N$ is a minor of a matroid $M$, we can partition $E(M) \setminus E(N)$ into an independent set $C$ and a coindependent set $D$ such that $N = M / C \d D$ (see \cite[Lemma 3.3.2]{Oxley}).
Suppose that $B$ is a basis of $N$ and that $B' = B \cup C$, so $B'$ is a basis of $M$.
Let $\F$ be a field and $A'$ an $\F$-representation of $M$ in standard form with respect to the basis $B'$.
Then the matrix $A = A'[B, E(N)]$ is an $\F$-representation of $N$ in standard form with respect to the basis $B$.
We say that the representation $A'$ of $M$ \defn{extends} the representation $A$ of $N$ and that $A$ \defn{extends to} $A'$. Conversely, $A$ is the representation of $N$ that is \defn{induced} by $A'$.
Any representation of $N$ that is row-equivalent to $A$ is also said to extend to $A'$.

When $N$ is a minor of a matroid $M$, we say that $N$ \defn{stabilizes} $M$ over $\F$ if no $\F$-representation of $N$ extends to two inequivalent $\F$-representations of $M$.
We will use the following fact about stabilizers.

\begin{theorem}[Geelen, Whittle, \cite{GeelenWhittle:projectiveplaneisastabilizer}] \label{thm:modularflatstabilizes}
For any finite field $\F$, if $M$ is a $3$-connected matroid with $\PG(2, \F)$ as a minor, then $\PG(2, \F)$ stabilizes $M$ over $\F$.
\end{theorem}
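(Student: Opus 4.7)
The plan is induction on $|E(M)| - |E(N)|$, where $N$ denotes a fixed $\PG(2,\F)$-minor of $M$. The base case $M = N$ is immediate, since in that case the induced representation of $N$ in $M$ is the representation itself, so any two representations of $M$ whose inductions on $N$ are equivalent are already equivalent as representations of $M$. For the inductive step, I would appeal to Seymour's Splitter Theorem: since $M$ is a $3$-connected matroid properly containing the $3$-connected minor $N$, there is an element $e \in E(M) \setminus E(N)$ such that either $M \d e$ or $M / e$ is $3$-connected with $N$ as a minor. The wheel and whirl exceptions of the Splitter Theorem can be ruled out because wheels and whirls have no $\PG(2,\F)$-minor.

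Stabilization is preserved under duality (the correspondence $(I \; A) \mapsto (-A^T \; I)$ is a bijection between equivalence classes of $\F$-representations of $M$ and those of $M^*$), and by the Fundamental Theorem of Projective Geometry $\PG(2,\F)^*$ is also uniquely $\F$-representable. So by passing to the dual if necessary, I may assume that $M \d e$ is $3$-connected, and the inductive hypothesis gives that $N$ stabilizes $M \d e$. Given two $\F$-representations $A_1, A_2$ of $M$ whose induced representations of $N$ are equivalent, their restrictions to $E(M) \setminus \{e\}$ induce equivalent representations of $N$ in $M \d e$, hence are equivalent representations of $M \d e$ by induction. After applying row operations and column scalings so that $A_1$ and $A_2$ agree on $E(M) \setminus \{e\}$, the only remaining freedom lies in the column indexed by $e$.

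The core rigidity step is thus to show that, in a $3$-connected matroid, once a representation of $M \d e$ is fixed, the column indexed by $e$ is determined up to scalar multiplication. This relies on the fact that the circuits of $M$ through $e$ impose linear relations on this column, and in a $3$-connected matroid these relations cut out a one-dimensional subspace of candidate vectors. Equivalently, the cocircuit space of $M \d e$ pins down the column of $e$ up to a scalar. The main obstacle in the proof is making this final rigidity argument precise: one must verify that $3$-connectivity of $M$ provides enough independent constraints to exclude any other one-parameter family of lifts, and in the coextension case one must also handle subtleties introduced when $e$ lies in a small cocircuit of $M / e$. These technicalities are standard in matroid representation theory but require careful bookkeeping with the cocircuit space; once in place, the induction closes and yields the stabilization of $M$.
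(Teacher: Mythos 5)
First, note that the paper does not prove this statement: it is quoted from Geelen and Whittle's paper \emph{The projective plane is a stabilizer} and used as a black box, so there is no internal proof to compare against. Judged on its own merits, your proposal has a genuine gap at exactly the point you flag as ``the main obstacle.'' The claimed core rigidity step --- that in a $3$-connected matroid $M$, a fixed $\F$-representation of $M \d e$ determines the column of $e$ up to scalar multiplication --- is false. If it were true, it would show (by your own induction) that \emph{every} $3$-connected minor of a $3$-connected matroid is a stabilizer, and indeed that $3$-connected matroids are uniquely representable relative to any $3$-connected single-element deletion; this is the statement whose failure is the entire reason stabilizer theory exists. Concretely, take $\F = \F_5$, $M = U_{2,5}$ and $e \in E(M)$, so $M \d e = U_{2,4}$ and both are $3$-connected. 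A representation of $U_{2,4}$ occupies four of the six points of $\PG(1,\F_5)$, and the column for $e$ may be placed at either of the two remaining points; since the pointwise stabilizer of four points of $\PG(1,\F_5)$ in $PGL_2(\F_5)$ is trivial, these two extensions are inequivalent. The circuit/cocircuit relations through $e$ cut out a set of candidate columns that need not be a single projective point. Your proposal never explains how the $\PG(2,\F)$-minor enters this step --- it is used only to launch the Splitter Theorem induction --- yet the counterexample shows the minor must be used essentially in the rigidity argument itself.

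The correct route, and the one Geelen and Whittle take, is Whittle's Stabilizer Theorem (reference \cite{Whittle} in the bibliography): a $3$-connected $\F$-representable matroid $N$ is a stabilizer for $\F$ provided no $3$-connected single-element extension, single-element coextension, or extension-plus-coextension of $N$ itself carries two inequivalent representations extending a common representation of $N$. This reduces the problem to a local verification on matroids with at most $|E(N)|+2$ elements, where the rich dependency structure of $\PG(2,\F)$ (every new element in such an extension is constrained by many lines of the plane) does supply the rigidity you want; the Splitter-Theorem-style induction is then carried out once and for all inside Whittle's theorem, with the base-case verification --- not $3$-connectivity of $M$ --- doing the work of pinning down the new column. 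Your outline could be repaired by replacing the false general rigidity claim with an appeal to Whittle's theorem together with the explicit check on extensions and coextensions of $\PG(2,\F)$, but as written the induction does not close. (A smaller point: since $\PG(2,\F)^* \not\cong \PG(2,\F)$, the duality reduction requires phrasing the induction so that it tracks an $N$-minor or an $N^*$-minor as appropriate; this is routine but should be said.)
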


A matroid $M$ is called \defn{stable} if it is connected and is not a $2$-sum of two non-binary matroids (this definition differs slightly from the original in \cite{GeelenGerardsKapoor} in that we require that $M$ be connected).

We observe that \autoref{thm:modularflatstabilizes} implies that any stable matroid $M$ with $\PG(2, \F)$ as a minor is stabilized by it over $\F$; this follows from the fact that a binary matroid is uniquely representable over any field over which it is representable \cite{BrylawskiLucas}. In particular, if $M$ is a direct sum or a $2$-sum of a $3$-connected matroid $N$ and a binary matroid, then every $\F$-representation of
$N$ extends to a unique (up to equivalence) $\F$-representation of $M$.
For a field $\mathbb{F}$, we call a matroid a \defn{stabilizer for $\mathbb{F}$} if it stabilizes over $\mathbb{F}$ all stable matroids that have it as a minor.

In the next section we will apply the following two lemmas about stabilizers. They were proved by Geelen, Gerards and Whittle \cite{GeelenGerardsWhittle:onrotas} and can also be derived from results of Whittle \cite{Whittle}.

\begin{lemma}[Geelen, Gerards, Whittle, \cite{GeelenGerardsWhittle:onrotas}] \label{lem:uniquerepresentablematroid1}
Let $N$ be a uniquely $\F$-representable stabilizer for a finite field $\F$.
Let $M$ be a matroid with $x, y \in E(M)$ such that $\{x,y\}$ is coindependent and $M \d x,y$ is stable and has an $N$-minor.
If $M \d x$ and $M \d y$ are both $\F$-representable, then there exists an $\F$-representable matroid $M'$ such that $M' \d x = M \d x$ and $M' \d y = M \d y$.
\end{lemma}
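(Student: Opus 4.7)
The plan is to construct an $\F$-representation of the desired matroid $M'$ by gluing $\F$-representations of $M \d x$ and $M \d y$ along their common minor $M \d x, y$. Since $\{x,y\}$ is coindependent in $M$, we can choose a basis $B$ of $M$ contained in $E(M) \setminus \{x,y\}$; this $B$ is simultaneously a basis of each of $M \d x$, $M \d y$, and $M \d x, y$. Let $A_1$ and $A_2$ be $\F$-representations of $M \d x$ and $M \d y$, respectively, in standard form with respect to $B$, and let $A_1'$, $A_2'$ denote the submatrices on the columns indexed by $E(M \d x, y)$; these are $\F$-representations of $M \d x, y$ in standard form with respect to $B$.

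Since $N$ is uniquely $\F$-representable, after possibly replacing $A_1$ by the matrix obtained by applying a field automorphism of $\F$ entrywise, I may assume that the representations of $N$ induced by $A_1'$ and $A_2'$ (via the independent/coindependent partition witnessing $N$ as a minor of $M \d x, y$) are equivalent. Because $M \d x, y$ is stable and has $N$ as a minor and $N$ is a stabilizer for $\F$, the stabilizer property then forces $A_1'$ and $A_2'$ themselves to be equivalent representations of $M \d x, y$.

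Write the equivalence as $A_2' = P A_1' D$ for an invertible matrix $P$ on the rows and a diagonal matrix $D$ on the columns. Since both $A_1'$ and $A_2'$ are in standard form with respect to $B$, looking at the submatrices on columns $B$ gives $P \cdot I \cdot D_B = I$, so $P = D_B^{-1}$. Using this, I modify $A_2$ by applying $P^{-1}$ on the left and by scaling the columns indexed by $E(M \d x, y)$ by $D^{-1}$ (and the column $x$ by any nonzero scalar), producing a new $\F$-representation $A_2''$ of $M \d y$, still in standard form with respect to $B$, whose restriction to $E(M \d x, y)$ literally equals $A_1'$. I then form the matrix $A$ with columns indexed by $E(M)$ consisting of all columns of $A_1$ together with the column of $A_2''$ indexed by $x$, and let $M' = M_\F(A)$. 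Deleting column $x$ recovers $A_1$, so $M' \d x = M \d x$; deleting column $y$ recovers $A_2''$ (since the $E(M \d x, y)$-columns of $A$ agree with those of $A_2''$), so $M' \d y = M \d y$.

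The main obstacle is bridging between the abstract equivalence delivered by the stabilizer hypothesis and a literal coincidence of matrix blocks, which is what makes the gluing possible. The standard-form bookkeeping relative to the common basis $B$ is exactly what resolves this: it restricts the equivalence to row rescalings paired with column rescalings ($P = D_B^{-1}$), which can then be compensated without altering the columns for $x$ or $y$.
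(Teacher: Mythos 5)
Your argument is correct and is essentially the standard proof of this lemma: the paper itself does not reprove it but cites Geelen--Gerards--Whittle, whose argument is exactly this gluing of standard-form representations of $M \d x$ and $M \d y$ over a common basis $B \subseteq E(M) \setminus \{x,y\}$, using unique representability of $N$ (up to a field automorphism) together with the stabilizer property applied to the stable matroid $M \d x,y$ to force the two representations to agree on the common columns. The bookkeeping you carry out ($P = D_B^{-1}$, compensating by row and column rescalings without touching the columns $x$ and $y$) is the right way to pass from abstract equivalence to literal equality of the shared blocks, and coindependence of $\{x,y\}$ is used exactly where it should be, namely to find the common basis $B$.
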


We remark that although their statement of the above lemma \cite[Lemma 5.3]{GeelenGerardsWhittle:onrotas} requires that $M$ be $3$-connected, their proof requires only that $\{x, y\}$ be coindependent in $M$.

\begin{lemma}[Geelen, Gerards, Whittle, \cite{GeelenGerardsWhittle:onrotas}] \label{lem:uniquerepresentablematroid2}
Let $\F$ be a finite field and let $M$ and $M'$ be $\F$-representable matroids on the same ground set with elements $x,y \in E(M)$ such that $M \d x = M' \d x$ and $M \d y = M' \d y$. If $M \d x$ and $M \d y$ are both stable, $M \d x,y$ is connected, and $M \d x,y$ has a minor that is a uniquely $\F$-representable stabilizer for $\F$, then $M = M'$.
\end{lemma}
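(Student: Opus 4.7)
The plan is to use the stabilizer hypothesis to align $\F$-representations of $M$ and $M'$, and then to use connectedness of $M\d x,y$ to show the alignment is essentially unique. Since $M\d x$ is stable with the uniquely $\F$-representable stabilizer $N$ as a minor, $N$ stabilizes $M\d x$ over $\F$, and combining this with unique $\F$-representability of $N$, any two $\F$-representations of $M\d x$ are equivalent; the same applies to $M\d y$. Fix $\F$-representations $A$ of $M$ and $A'$ of $M'$. Since $A\d x$ and $A'\d x$ are equivalent representations of $M\d x = M'\d x$, applying the witnessing row operations and column scalings to all of $A'$ produces a new representation of $M'$ with $A\d x = A'\d x$, so afterwards $A$ and $A'$ agree on every column indexed by $E(M)\setminus\{x\}$.

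The restrictions $A\d y$ and $A'\d y$ are now $\F$-representations of $M\d y$ agreeing on every column of $E(M)\setminus\{x,y\}$. By unique $\F$-representability of $M\d y$ there are an invertible row-operation matrix $P$ and a diagonal column-scaling $D$ with $P(A\d y)D = A'\d y$; restricting to any $b \in E(M)\setminus\{x,y\}$ yields $PA[b] = D_{bb}^{-1}A[b]$. For each circuit $C$ of $M\d x,y$ the columns $\{A[c] : c \in C\}$ support a linear dependence that is unique up to global scalar, so applying $P$ forces $D_{cc}^{-1}$ to be constant on $C$. Since $M\d x,y$ is connected, any two elements of $E(M)\setminus\{x,y\}$ are joined by a chain of overlapping circuits, and these constants fuse into a single $\lambda$ with $PA[b] = \lambda A[b]$ for every $b \in E(M)\setminus\{x,y\}$.

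Stability of $M\d x$ implies it is connected, so $y$ is not a coloop of $M\d x$; symmetrically $x$ is not a coloop of $M\d y$. Hence $r(M\d x,y) = r(M\d y)$, and the columns indexed by $E(M)\setminus\{x,y\}$ already span the full column space of $A\d y$. Therefore $P$ acts as multiplication by $\lambda$ on this entire space, so $P = \lambda I$ (after discarding trivial zero rows). Consequently $A'[x] = PA[x]D_{xx} = \lambda D_{xx}A[x]$ is a nonzero scalar multiple of $A[x]$; scaling a single column does not change the represented matroid, so $M = M'$.

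The main obstacle I expect is collapsing the equivalence between $A\d y$ and $A'\d y$ to a single scalar. This is exactly where connectedness of $M\d x,y$ is essential, via the circuit-chaining argument that forces the scalings $D_{bb}^{-1}$ to coincide, and where the no-coloop consequences of stability of $M\d x$ and $M\d y$ are used to guarantee that the already-aligned columns span the full row space and thus pin down $P$.
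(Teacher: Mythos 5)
The paper does not actually prove this lemma --- it is quoted from Geelen, Gerards and Whittle \cite{GeelenGerardsWhittle:onrotas} --- so there is no in-paper proof to compare against. Your argument is correct and is essentially the standard stabilizer argument from that source: align the representations on $E(M)\setminus\{x\}$ using that $M\d x$ is stable with the stabilizer minor, then use the equivalence of the two induced representations of $M\d y$ together with connectivity of the spanning restriction $M\d x,y$ (where the agreement already holds) to force the residual row operation to be a global scalar, so that $A'$ differs from $A$ only by a nonzero scaling of column $x$. The one point needing care is that ``uniquely $\F$-representable'' in this paper permits a field automorphism, so ``any two $\F$-representations of $M\d x$ are equivalent'' should read ``equivalent up to an automorphism of $\F$''; this is harmless provided you apply the required automorphism to all of $A'$ once at the outset, so that $A$ and $A'$ induce genuinely equivalent representations of the stabilizer minor $N$, after which the stabilizer property yields honest equivalences (row operations and column scalings only) in both of your applications and the circuit-chaining computation goes through unchanged. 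Your auxiliary observations --- that stability forces $M\d x$ and $M\d y$ to be connected, hence $x$ and $y$ are not coloops and the columns of $E(M)\setminus\{x,y\}$ span --- are exactly what is needed to conclude $P=\lambda I$.
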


\section{Finding distinguishing sets} \label{sec:strands}

In this section, we show that if $M$ is a matroid with a restriction $N_0 \cong \PG(2, \F)$ and a deletion pair $x, y \not \in E(N_0)$ such that $M \d x$ and $M \d y$ are $\F$-representable, then there is a unique $\F$-representable matroid $M'$ on $E(M)$ whose rank function can differ from that of $M$ only on sets containing $x$ and $y$. Moreover, we find two such sets with special properties that will be used to prove \autoref{lem:keylemma}.

\begin{lemma}\label{lem:uniquegfqmatroid}
Let $M$ be a $3$-connected matroid with a restriction $N_0 \cong \PG(2, \F)$ and a deletion pair $x,y \in E(M) \setminus E(N_0)$. If $M \d x$ and $M \d y$ are $\F$-representable, then there is a unique $\F$-representable matroid $M'$ such that $M' \d x = M \d x$ and $M' \d y = M \d y$.
\end{lemma}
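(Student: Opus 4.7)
The plan is to derive this lemma essentially as a conjunction of \autoref{lem:uniquerepresentablematroid1} (for existence) and \autoref{lem:uniquerepresentablematroid2} (for uniqueness), with $N_0 \cong \PG(2,\F)$ playing the role of the uniquely $\F$-representable stabilizer. The Fundamental Theorem of Projective Geometry gives that $\PG(2,\F)$ is uniquely $\F$-representable, and \autoref{thm:modularflatstabilizes} (together with the remark following it) gives that $\PG(2,\F)$ is a stabilizer for $\F$. Since $x, y \notin E(N_0)$, the matroid $M \d x, y$ retains $N_0$ as a restriction, so the hypothesis about an $N$-minor is immediate in both lemmas.

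For existence, I need to verify the remaining hypotheses of \autoref{lem:uniquerepresentablematroid1}: that $\{x,y\}$ is coindependent in $M$, and that $M\d x, y$ is stable. Coindependence follows because $M$ is $3$-connected with $|E(M)| \geq |E(N_0)|+2 \geq 9$, so $M$ has no coloops and no $2$-element cocircuits; any cocircuit contained in $\{x,y\}$ would violate one of these, so $\{x,y\}$ is contained in a cobasis. For stability, $M\d x, y$ is internally $3$-connected and hence connected; if it were a $2$-sum $M_1 \oplus_2 M_2$ of two non-binary matroids, then each $M_i$ would have at least four elements (as $U_{2,4}$ is the smallest non-binary matroid), so the induced $2$-separation of $M\d x, y$ would have both sides of size at least three, contradicting internal $3$-connectedness. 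Thus \autoref{lem:uniquerepresentablematroid1} applies and delivers an $\F$-representable matroid $M'$ with $M'\d x = M\d x$ and $M'\d y = M\d y$.

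For uniqueness, suppose $M'$ and $M''$ are two such $\F$-representable matroids. Apply \autoref{lem:uniquerepresentablematroid2} to the pair $(M', M'')$: both are $\F$-representable on the same ground set, and $M'\d x = M\d x = M''\d x$ and $M'\d y = M\d y = M''\d y$. Now $M'\d x = M\d x$ and $M'\d y = M\d y$ are $3$-connected matroids on at least $|E(N_0)|+1 \geq 8$ elements, so they admit no $2$-separations and are therefore stable; $M'\d x, y = M\d x, y$ is internally $3$-connected hence connected and contains $N_0$ as a restriction. All hypotheses of \autoref{lem:uniquerepresentablematroid2} are met, giving $M' = M''$.

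There is no major obstacle here; the only step that requires any care is verifying stability of $M\d x, y$ from internal $3$-connectedness, which is handled by the size estimate on $2$-sums of non-binary matroids as above.
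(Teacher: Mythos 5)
Your proposal is correct and follows essentially the same route as the paper: existence via Lemma~\ref{lem:uniquerepresentablematroid1} and uniqueness via Lemma~\ref{lem:uniquerepresentablematroid2}, with the Fundamental Theorem of Projective Geometry and Theorem~\ref{thm:modularflatstabilizes} supplying that $\PG(2,\F)$ is a uniquely $\F$-representable stabilizer. The paper simply asserts that the definition of a deletion pair gives stability of $M \setminus x$, $M \setminus y$, and $M \setminus x,y$; your explicit verifications of coindependence and of stability from internal $3$-connectedness are correct and fill in exactly those details.
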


\begin{proof}
The definition of a deletion pair implies that $M\d x$, $M\d y$, and $M\d x,y$ are all stable.
We recall from \autoref{thm:modularflatstabilizes} that $\PG(2,\F)$ is a stabilizer for $\F$, and that $\PG(2, \F)$ is uniquely $\F$-representable.
Therefore, with $N = N_0$ all the hypotheses of \autoref{lem:uniquerepresentablematroid1} are satisfied, and there is an $\F$-representable matroid $M'$ such
that $M \d x = M' \d x$ and $M \d y = M' \d y$. Then by \autoref{lem:uniquerepresentablematroid2}, $M'$ is the unique such matroid.
\end{proof}

The purpose of the remainder of this section is to find two ways to distinguish the matroids $M$ and $M'$ of \autoref{lem:uniquegfqmatroid}; these are
\begin{enumerate}[(a)]
\item elements $e \in E(M)$ such that $M \d e \neq M' \d e$ and $M/e \neq M'/e$, and
\item sets $S \subseteq E(M) \setminus E(N_0)$ such that $\cl_M(S) \neq \cl_{M'}(S)$.
\end{enumerate}

In a matroid $M$ with a restriction $N$, we call a set $S \subseteq E(M) \setminus E(N)$ a \defn{strand} for $N$ if $\sqcap_M(S, E(N)) = 1$.
If $M$ and $M'$ are matroids on the same ground set, both contain a restriction $N$, and $S$ is a strand for $N$ in both $M$ and $M'$, then we say that $S$ \defn{distinguishes} $M$ and $M'$ if $\cl_M(S) \cap E(N) \neq \cl_{M'}(S) \cap E(N)$.

When $B$ is a basis of a matroid $M$ and $e$ is an element not in $B$, then the \defn{fundamental circuit} of $e$ with respect to $B$ is the unique circuit of $M$ contained in $B \cup \{e\}$.
The \defn{fundamental matrix} of a matroid $M$ with respect to a basis $B$ is the matrix $A \in \{0,1\}^{B \times (E(M) \setminus B)}$ such that
for each $e \in E(M) \setminus B$, the column of $A$ indexed by $e$ is the characteristic vector of the fundamental circuit of $e$ with respect to $B$.
If $A'$ is any representation of $M$ in standard form with respect to $B$, then the matrix obtained from $A' | (E(M) \setminus B)$ by replacing each non-zero entry with $1$ is the fundamental matrix of $M$ with respect to $B$.

For each $X \subseteq B$ and $Y \subseteq E(M) \setminus B$, we write $A[X, Y]$ for the submatrix of $A$ in the rows indexed by $X$ and the columns indexed by $Y$.

\begin{lemma} \label{lem:baddeterminant}
Let $M$ and $M'$ be matroids on the same ground set that both have a modular restriction $N_0 \cong \PG(2,\F)$, such that $M$ is $3$-connected and $M \neq M'$, but for some $x,y \in E(M) \setminus E(N_0)$, $M' \d x = M \d x$ and $M' \d y = M \d y$.
There are sets $B$ and $B'$ such that
\begin{enumerate}[(i)]
 \item $B$ is a basis of both $M$ and $M'$ and contains a basis of $N_0$,
 \item $B'$ is a basis of exactly one of $M$ and $M'$,
 \item $|(B \setminus B') \setminus E(N_0)| \in \{1,2\}$, and \label{out:baddeterminant-third}
 \item $|B \Delta B'| = 4$. \label{out:baddeterminant-fourth}
\end{enumerate}
\end{lemma}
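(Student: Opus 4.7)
The plan is to construct $B$ as a carefully chosen common basis of $M$ and $M'$, then produce $B'$ by extending a single-element discrepancy witness to a two-element exchange. First I would choose $B$ to contain $B_0 \cup \{y\}$ and avoid $x$. This is possible because $\cl_M(B_0) = E(N_0)$ and $y \notin E(N_0)$ (so $B_0 \cup \{y\}$ is independent), $x$ is not a coloop of $M$ by $3$-connectivity, and any basis of $M$ avoiding $x$ is automatically a basis of $M'$ in view of $M \d x = M' \d x$ and the fact that $x$ is not a coloop of either matroid.

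The crucial structural observation is that $r_M(S) = r_{M'}(S)$ for every $S \subseteq E(M)$ with $x \notin S$ or $y \notin S$, because in that case $M|S = (M \d x)|S = (M' \d x)|S = M'|S$ (or the analogous identity for $y$). Consequently, every circuit of $M$ that fails to be a circuit of $M'$, and every basis of one matroid that is not a basis of the other, must contain both $x$ and $y$. Since $M \neq M'$ and the fundamental circuits with respect to a basis determine the matroid, there exists $e^* \in E(M) \setminus B$ with $C_M(e^*, B) \neq C_{M'}(e^*, B)$; both fundamental circuits lie in $B \cup \{e^*\}$ and must contain $\{x,y\}$, so since $y \in B$ and $x \notin B$, this forces $e^* = x$. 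Choosing $b^* \in C_M(x, B) \mathbin{\Delta} C_{M'}(x, B)$ yields $B_1 := B - b^* + x$, a basis of exactly one of $M, M'$, with $b^* \in B \setminus \{y\}$ and $|B \mathbin{\Delta} B_1| = 2$.

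To reach $|B \mathbin{\Delta} B'| = 4$ I would extend $B_1$ by a second exchange $u \leftrightarrow v$. In the generic case where $b^* \notin E(N_0)$, pick $u \in E(N_0) \setminus B_0$ and $v \in B_0$ with $v \in C_{N_0}(u, B_0)$; such pairs exist in $\PG(2,\F)$ (indeed some $u$ has fundamental circuit $\{u\} \cup B_0$). Since $u \in E(N_0) = \cl_M(B_0)$ and $B_0 \subseteq B_1$ (as $b^* \notin B_0$), the fundamental circuit of $u$ with respect to $B_1$ equals $C_{N_0}(u, B_0)$ in both $M$ and $M'$, so the exchange preserves basis status and $B' := B_1 + u - v$ is a basis of exactly one of $M, M'$ with $|B \mathbin{\Delta} B'| = 4$. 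Then $B \setminus B' = \{b^*, v\}$ with $v \in E(N_0)$ and $b^* \notin E(N_0)$, giving $|(B \setminus B') \setminus E(N_0)| = 1$.

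The main obstacle is the subcase $b^* \in E(N_0)$, in which the exchange above would yield $|(B \setminus B') \setminus E(N_0)| = 0$. To handle this I would instead select the second exchange outside $E(N_0)$: pick $u \in E(M) \setminus (B \cup E(N_0))$ and $v \in (B \setminus B_0) \cap C_M(u, B_1) \cap C_{M'}(u, B_1)$, so that the exchange preserves basis status because the two fundamental circuits of $u$ agree on $v$. Existence of such $u$ and $v$ rests on the localization observation: any discrepancy between $C_M(u, B_1)$ and $C_{M'}(u, B_1)$ involves $\{x,y\}$, so after excluding these the circuits agree and leave room in $B \setminus B_0$ to choose $v$; completing this case analysis, together with the option of pre-adjusting $B_0$ by an $E(N_0)$-internal exchange using the modularity of $N_0$, constitutes the technical heart of the argument.
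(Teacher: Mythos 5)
There is a genuine gap at the very first step of your exchange argument. You assert that ``the fundamental circuits with respect to a basis determine the matroid,'' and deduce that some $e^* \in E(M)\setminus B$ has $C_M(e^*,B)\neq C_{M'}(e^*,B)$, which your localization then forces to be $e^*=x$. That assertion is false for general matroids (for example, $U_{2,4}$ and the rank-$2$ matroid on $\{1,2,3,4\}$ in which $3$ and $4$ are parallel have identical fundamental circuits with respect to the basis $\{1,2\}$), and here it is exactly the delicate point: the paper's proof observes that, for its choice of $B$, the fundamental matrices of $M$ and $M'$ \emph{coincide} even though $M\neq M'$ --- the discrepancy is only visible at symmetric difference $4$, as a $2\times 2$ ``determinant'' involving both $x$ and $y$ together with two basis elements. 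For your particular $B\supseteq B_0\cup\{y\}$, the sets $B-b+x$ are only some of the candidate discrepancy witnesses containing $\{x,y\}$; nothing guarantees that $M$ and $M'$ already differ on $B\cup\{x\}$, i.e.\ that $C_M(x,B)\neq C_{M'}(x,B)$. If they agree, no single exchange from your $B$ yields a set that is a basis of exactly one of the two matroids, and the construction of $B_1$ collapses. The paper avoids this by reversing the logic: it starts from an arbitrary $B'$ that is a basis of exactly one of $M,M'$ (which exists since $M\neq M'$), chooses $B'$ jointly with a common basis $B$ of $M\d x,y$ containing a basis of $N_0$ so as to \emph{minimize} $|B\Delta B'|$, and only then extracts the structure of $B\Delta B'$ from exchange and modularity arguments.

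The second exchange is also incomplete. In the subcase $b^*\in E(N_0)$ you require $v\in(B\setminus B_0)\cap C_M(u,B_1)\cap C_{M'}(u,B_1)$, but $B_1$ is by construction a basis of only one of $M,M'$, so one of these fundamental circuits is not even defined; and the localization principle only says that a circuit of one matroid that is not a circuit of the other must contain $\{x,y\}$ --- it does not say that the two circuits of $u$ agree off $\{x,y\}$ (they could be $\{u,x,y,a\}$ and $\{u,x,y,b\}$ with $a\neq b$), so the existence of a common $v$ outside $E(N_0)\cup\{x,y\}$ is not established. You would further need to show that the exchange preserves \emph{dependence} in the matroid for which $B_1$ is dependent. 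These are precisely the difficulties that the paper's minimization set-up is designed to sidestep.
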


\begin{proof}
Since $M \neq M'$, there exists a set $B'$ that is a basis of exactly one of $M$ and $M'$. We choose $B'$ and a basis $B$ of $M \d x,y$ containing a basis
of $N_0$ such that $|B \Delta B'|$ is minimum. As $M$ is $3$-connected, $B$ is a basis of $M$ and also of $M'$.

\begin{claim} \label{clm:baddeterminant-1}
$B' \setminus B \subseteq E(M) \setminus E(N_0)$.
\end{claim}

Suppose there exists an element $u$ of $(B' \setminus B) \cap E(N_0)$. Then by the basis exchange property, there is $v \in B \setminus B'$ such that $B \Delta
\{u,v\}$ is a basis in $M$ or $M'$. But since $B \Delta \{u,v\}$ is contained in $E(M) \setminus \{x,y\}$, it is a basis of both $M$ and $M'$. Because
$u \in E(N_0)$, $B \Delta \{u,v\}$ contains a basis of $N_0$, contradicting the minimality of $|B \Delta B'|$.
This proves (\ref*{clm:baddeterminant-1}).
\\

It follows from the fact that $M \d x = M' \d x$ and $M \d y = M' \d y$ that the fundamental matrices of $M$ and $M'$ with respect to the basis $B$ are equal; we denote this matrix by $A$.

\begin{claim}\label{clm:baddeterminant-2}
$A[(B \setminus B') \setminus E(N_0), (B' \setminus B) \setminus \{x,y\}] = 0$.
\end{claim}

Suppose there exist elements $u \in B' \setminus B \setminus \{x,y\}$ and $v \in (B \setminus B') \setminus E(N_0)$ such that $B \Delta \{u,v\}$ is a basis of
$M$ or $M'$. Then since $B \Delta \{u,v\} \subseteq E(M) \setminus \{x,y\}$, it is a basis of both $M$ and $M'$. Furthermore, $B \Delta \{u,v\}$ contains a
basis of $N_0$ because $v \not\in E(N_0)$, contradicting the minimality of $|B \Delta B'|$. Therefore, for every $u \in (B' \setminus B) \setminus \{x, y\}$ and every $v \in (B \setminus B') \setminus E(N_0)$, the set $B \Delta \{u, v\}$ is dependent in both $M$ and $M'$.
This proves (\ref*{clm:baddeterminant-2}).

\begin{claim}\label{clm:baddeterminant-3}
$|(B \setminus B') \setminus E(N_0)| \leq 2$.
\end{claim}

Let $e$ and $f$ be two elements of $(B \setminus B') \setminus E(N_0)$. It follows from (\ref*{clm:baddeterminant-2}) that $B'$ is a basis of $M'' \in \{M, M'\}$ if and only
if $B \Delta \{x,y,e,f\}$ and $B \Delta ((B \Delta B') \setminus \{x,y,e,f\})$ are both bases of $M''$.
The latter is not a basis of $M''$ in the case when $|(B \setminus B') \setminus E(N_0)| > 2$, as in this case (\ref*{clm:baddeterminant-2}) implies that 
$A[B \setminus (B' \cup \{e, f\}), B' \setminus (B \cup \{x, y\})]$ has a zero row. This contradicts the fact that $B'$ is a basis of exactly one of $M$ and $M'$, proving (\ref*{clm:baddeterminant-3}).

\begin{claim} \label{clm:baddeterminant-4}
$|(B \setminus B') \setminus E(N_0)| \in \{1,2\}$.
\end{claim}

We suppose that $|(B \setminus B') \setminus E(N_0)| = 0$. Then $B \setminus E(N_0) \subseteq B'$. Hence the set $B' \setminus (B \setminus E(N_0))$ is independent in exactly one of the matroids $M / (B \setminus E(N_0))$ and $M' / (B \setminus E(N_0))$. But it follows from the modularity of $N_0$ and the definition of $M'$ that $M / (B \setminus E(N_0)) = M' / (B \setminus E(N_0))$, a contradiction. This proves (\ref*{clm:baddeterminant-4}), showing that (\ref*{out:baddeterminant-third}) holds.
\\

It remains to show that $B$ and $B'$ satisfy (\ref*{out:baddeterminant-fourth}). If not, then there is an element $w$ of $B' \setminus B$ other than $x$ and $y$. By the modularity of $N_0$ and (\ref*{clm:baddeterminant-2}), there is an element $z \in E(N_0)$ such that $w$ is parallel to $z$ in both $M / (B \cap B' \setminus E(N_0))$ and $M' / (B \cap B' \setminus E(N_0))$. Then $B' \Delta \{w,z\}$ is independent in exactly one of $M$ and $M'$. Also, $|B \Delta (B' \Delta \{w,z\})| = |B \Delta B'|$, so (\ref*{clm:baddeterminant-1}) holds with $B' \Delta \{w,z\}$ in place of $B'$, a contradiction.
\end{proof}

We will need the following two facts about fundamental matrices. For matrices $P$ and $Q$ with the same dimensions, we write $P \leq Q$ when $P_{ij} \leq Q_{ij}$ for each row $i$ and column $j$.

\begin{proposition}[Brualdi, \cite{Brualdi}] \label{prop:brualdi}
Let $B$ be a basis of a matroid $M$, $A$ the fundamental matrix of $M$ with respect to $B$, and $X \subseteq B$ and $Y \subseteq E(M) \setminus B$ sets of the
same size.
If $(B \setminus X) \cup Y$ is a basis of $M$ then there exists a permutation matrix $P$ such that $P \leq A[X,Y]$.
\end{proposition}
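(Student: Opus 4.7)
The plan is to recognize this as an instance of Brualdi's symmetric basis exchange. Setting $B' := (B \setminus X) \cup Y$, we have two bases of $M$ with $B \setminus B' = X$ and $B' \setminus B = Y$. The desired conclusion amounts to a perfect matching in the bipartite graph $G$ on $X \cup Y$ whose edges are the pairs $(x,y)$ with $B \,\Delta\, \{x,y\}$ a basis of $M$, equivalently with $A[x,y] = 1$; any such matching yields the required permutation matrix $P \leq A[X,Y]$.

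To produce the matching I would apply Hall's Marriage Theorem. Fix $Z \subseteq X$. Since $B \setminus Z$ is independent and $B' = (B \setminus X) \cup Y$ is a basis, the augmentation axiom gives a set $Y' \subseteq B' \setminus (B \setminus Z)$ of size $|Z|$ such that $(B \setminus Z) \cup Y'$ is a basis. Because $Z \subseteq X$ is disjoint from $B \setminus X$, every element of $B \setminus X$ already lies in $B \setminus Z$, forcing $Y' \subseteq Y$.

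Now for each $y \in Y'$, let $C_y$ be the fundamental circuit of $y$ with respect to $B$, so $C_y \setminus \{y\} \subseteq B$ and $A[b,y] = 1$ precisely for $b \in C_y \setminus \{y\}$. Since $(B \setminus Z) \cup Y'$ is independent and contains $y$, the circuit $C_y$ cannot be a subset of it; as $C_y \setminus \{y\} \subseteq B$ and is not contained in $B \setminus Z$, it must meet $Z$. Any $x \in C_y \cap Z$ then satisfies $A[x,y] = 1$, so $y \in N_G(Z)$. Hence $|N_G(Z)| \geq |Y'| = |Z|$, Hall's condition holds, and $G$ has the desired perfect matching.

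I do not expect any substantive obstacle: the whole argument is symmetric basis exchange packaged through Hall's theorem. The only piece that needs attention is the observation that $Z \cap (B \setminus X) = \emptyset$ forces the augmenting set $Y'$ to lie entirely in $Y$ rather than spilling back into $B \setminus X$.
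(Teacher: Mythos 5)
Your argument is correct. Note that the paper does not prove this proposition at all: it is stated as a known result of Brualdi with a citation to his paper, so there is no internal proof to compare against. Your Hall-based derivation is a sound, self-contained proof: the augmentation step correctly produces a basis $(B \setminus Z) \cup Y'$ with $Y' \subseteq Y$ (the observation that $Z \subseteq X$ forces $B \setminus X \subseteq B \setminus Z$, hence $B' \setminus (B \setminus Z) \subseteq Y$, is exactly the point that needs checking, and you handle it), and the circuit-elimination step showing $C_y \cap Z \neq \emptyset$ for each $y \in Y'$ gives Hall's condition $|N_G(Z)| \geq |Y'| = |Z|$. Since $|X| = |Y|$, the resulting matching saturating $X$ is a bijection $\sigma : X \to Y$ with $A_{x\sigma(x)} = 1$, which is precisely a permutation matrix $P \leq A[X,Y]$. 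This is essentially the standard proof of Brualdi's exchange property via the fundamental-circuit bipartite graph, so there is nothing to object to.
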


\begin{proposition}[Krogdahl, \cite{Krogdahl}] \label{prop:krogdahl}
Let $B$ be a basis of a matroid $M$, $A$ the fundamental matrix of $M$ with respect to $B$, and $X \subseteq B$ and $Y \subseteq E(M) \setminus B$ sets of the
same size.
If there is a unique permutation matrix $P$ such that $P \leq A[X,Y]$, then $(B \setminus X) \cup Y$ is a basis of $M$. 
\end{proposition}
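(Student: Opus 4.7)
The plan is to reduce the statement to the special case where $X$ is itself a basis of $M$, and then prove that case by induction on $|X|$. For the reduction, I would pass to the minor $M' = M | (B \cup Y) / (B \setminus X)$, whose ground set is $X \cup Y$, whose rank is $|X|$, and for which $X$ is a basis. For each $y \in Y$, the fundamental circuit of $y$ in $M$ with respect to $B$ contracts to the circuit $\{y\} \cup \{x \in X : A_{x, y} = 1\}$ of $M'$ (minimality follows from uniqueness of the fundamental circuit in $B \cup \{y\}$), so the fundamental matrix of $M'$ with respect to $X$, restricted to columns $Y$, equals $A[X, Y]$. Since $(B \setminus X) \cup Y$ has size $|B|$ and is a basis of $M$ if and only if $Y$ is independent in $M / (B \setminus X)$, if and only if $Y$ is a basis of $M'$, we may assume $X$ is a basis of $M$ and $Y = E(M) \setminus X$.

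I would then induct on $|X|$. The base case $|X| = 1$ is immediate: the unique permutation matrix forces $A[X, Y]$ to be the single $1$ entry, so the unique element $y$ of $Y$ is parallel to the unique element of $X$, giving $\{y\}$ as a basis. For the inductive step, I would invoke the well-known fact that a bipartite graph with a unique perfect matching has a vertex of degree one on each side (seen by orienting matching and non-matching edges oppositely and observing that any directed cycle would yield a second perfect matching, so that the resulting digraph is acyclic and a sink gives the degree-one vertex). Applied to the bipartite graph of $A[X, Y]$, some $y \in Y$ has a unique neighbor $x \in X$, so the fundamental circuit of $y$ in $M$ is $\{x, y\}$, i.e.\ $\{x, y\}$ is a parallel pair. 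I would pass to $M'' = M / y \d x$, which has $X \setminus \{x\}$ as a basis and $Y \setminus \{y\}$ as its non-basis elements.

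The key claim is that the fundamental matrix of $M''$ with respect to $X \setminus \{x\}$ equals $A[X \setminus \{x\}, Y \setminus \{y\}]$, which inherits the unique-permutation-matrix property (since any perfect matching of the submatrix extends by the edge $(x, y)$ to a perfect matching of $A[X, Y]$). Once established, this claim lets the inductive hypothesis give that $Y \setminus \{y\}$ is a basis of $M''$, whence $Y$ is a basis of $M$ (using that $y$ is not a loop). The main obstacle is proving this fundamental-matrix claim. For $y' \in Y \setminus \{y\}$ with $A_{x, y'} = 0$, the fundamental circuit $C$ of $y'$ in $M$ avoids both $x$ and $y$ and remains a circuit of $M''$. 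For $y' \in Y \setminus \{y\}$ with $A_{x, y'} = 1$, applying the circuit elimination axiom to $C$ and the parallel pair $\{x, y\}$ (both containing $x$) yields a circuit $D$ of $M$ with $y' \in D \subseteq (C \setminus \{x\}) \cup \{y\}$; a rank computation, using that $y \in \cl_M(\{x\}) \subseteq \cl_M(C)$ to show $(C \setminus \{x\}) \cup \{y\}$ has rank $|C| - 1$, forces $D$ to equal this entire set, so the fundamental circuit of $y'$ in $M''$ is $D \setminus \{y\} = C \setminus \{x\}$, exactly matching the $y'$-column of $A[X \setminus \{x\}, Y \setminus \{y\}]$.
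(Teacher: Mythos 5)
Your argument is correct and complete. Note that the paper does not prove this proposition at all: it is quoted as a known result of Krogdahl and used as a black box, so there is no in-paper proof to compare against. Your proof is a valid self-contained derivation, and it follows the standard route: reduce to the case where $X$ is a basis by passing to $M|(B\cup Y)/(B\setminus X)$ (your observation that fundamental circuits contract correctly is the right justification), then use the fact that a bipartite graph with a unique perfect matching has a degree-one vertex to locate a parallel pair $\{x,y\}$, and induct after removing it. The two points that genuinely need care both check out: the forced circuit $D$ equals all of $(C\setminus\{x\})\cup\{y\}$ because that set has size $|C|$, rank $|C|-1$, and every proper subset is independent (for $c\in C\setminus\{x\}$, if $y\in\cl(C\setminus\{x,c\})$ then $x\in\cl(C\setminus\{x,c\})$ and $C\setminus\{c\}$ would be dependent); and the submatrix $A[X\setminus\{x\},Y\setminus\{y\}]$ inherits the unique-permutation-matrix property because the edge $(x,y)$ is forced into every perfect matching. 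A common alternative phrasing of the same induction, closer to Krogdahl's original, avoids minors: order $X$ and $Y$ so that the unique matching makes $A[X,Y]$ triangular with $1$'s on the diagonal, and perform the exchanges $x_i\leftrightarrow y_i$ sequentially in $M$ itself, checking at each step that $x_{i+1}$ still lies in the fundamental circuit of $y_{i+1}$ with respect to the current basis. Either formulation is fine.
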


Next we find a strand for $N_0$ in one of $M$ and $M'$ that either distinguishes $M$ and $M'$ or is not a strand in the other.

\begin{lemma} \label{lem:strand}
Let $M$ be a $3$-connected, non-$\F$-representable matroid with a modular restriction $N_0 \cong \PG(2,\F)$ and elements $x,y \in E(M) \setminus E(N_0)$.
Let $M'$ be an $\F$-representable matroid such that $M' \d x = M \d x$ and $M' \d y = M \d y$.
Either there exists a strand for $N_0$ that distinguishes $M$ and $M'$, or there is a set $S$ that is a strand for $N_0$ in one of $M$ or $M'$ and skew to
$E(N_0)$ in the other.
\end{lemma}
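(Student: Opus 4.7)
The plan is to combine Lemma~\ref{lem:baddeterminant} with the Brualdi--Krogdahl basis-exchange results (Propositions~\ref{prop:brualdi} and \ref{prop:krogdahl}) to pin down exactly how $M$ and $M'$ disagree, and then to turn that disagreement into a strand for $N_0$ using modularity.

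First I would invoke Lemma~\ref{lem:baddeterminant} (note $M\neq M'$ because $M$ is non-$\F$-representable while $M'$ is $\F$-representable) to obtain a basis $B$ of both $M$ and $M'$ that contains a basis of $N_0$, and a set $B'$ with $|B\triangle B'|=4$ that is a basis of exactly one of $M,M'$. From the proof of Lemma~\ref{lem:baddeterminant} we may take $B'\setminus B=\{x,y\}$ and $B\setminus B'=\{e_1,e_2\}$, with $|\{e_1,e_2\}\setminus E(N_0)|\in\{1,2\}$. Because $M\d x=M'\d x$ and $M\d y=M'\d y$, the fundamental matrices of $M$ and $M'$ with respect to $B$ coincide; call this common matrix $A$. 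Now examine the $2\times 2$ submatrix $A[\{e_1,e_2\},\{x,y\}]$: if it contains no permutation matrix, Proposition~\ref{prop:brualdi} forces $B'$ to be a basis of neither matroid, while if it contains a unique permutation matrix, Proposition~\ref{prop:krogdahl} forces $B'$ to be a basis of both. Since $B'$ is a basis of exactly one, $A[\{e_1,e_2\},\{x,y\}]$ must contain both $2\times 2$ permutation matrices, hence is the all-ones matrix. Equivalently, the fundamental circuits $C_x,C_y$ of $x,y$ with respect to $B$ both contain $\{e_1,e_2\}$.

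Without loss of generality $B'$ is a basis of $M'$ but not of $M$, so $B'$ contains a unique circuit $C_M$ of $M$. A circuit in $B\cup\{x\}$ containing $x$ is $C_x$, which meets $\{e_1,e_2\}$ while $B'$ does not; by symmetry the same holds for $y$. Hence $\{x,y\}\subseteq C_M$, and we may write $C_M=\{x,y\}\cup D_M$ with $D_M\subseteq B\setminus\{e_1,e_2\}$. I would then construct the candidate strand by taking $S := (C_M\setminus E(N_0))$, possibly enlarged by a carefully chosen subset of $(C_x\cup C_y)\cap E(N_0)$ depending on which case of Lemma~\ref{lem:baddeterminant} we are in (i.e. whether $e_1\in E(N_0)$ or $\{e_1,e_2\}\subseteq E(M)\setminus E(N_0)$). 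The point is that $S$ contains both $x$ and $y$, so the rank of $S$ and of $S\cup E(N_0)$ is precisely where $M$ and $M'$ are allowed to differ: sets not containing both $x$ and $y$ have the same rank in $M$ and $M'$. Because $C_M$ is a circuit in $M$ and an independent set in $M'$, the ranks $r_M(S)$ and $r_{M'}(S)$ differ by exactly one.

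By modularity of $N_0$ in each of $M$ and $M'$ (which holds in $M'$ because it is $\F$-representable with a $\PG(2,\F)$-restriction, and in $M$ by hypothesis), we have $\sqcap_M(S,E(N_0))=r_{N_0}(\cl_M(S)\cap E(N_0))$ and analogously in $M'$. In $M$, the circuit $C_M$ forces $\cl_M(S)$ to contain $D_M\cap E(N_0)$ plus the entire $N_0$-shadow of $D_M$; in $M'$, the same set is independent, and its $N_0$-shadow is computed directly from a representation. Tracking the two subcases of Lemma~\ref{lem:baddeterminant} and using the representation to compute the $M'$-closure in terms of the nonzero $2\times 2$ block of $A'$, the conclusion splits: either $S$ is a strand in both matroids but its $N_0$-shadow differs (so $S$ distinguishes $M$ and $M'$), or $S$ is a strand in one and skew to $E(N_0)$ in the other. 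The hardest step will be the final bookkeeping: choosing the enlargement of $S$ so that the local connectivities $\sqcap_M(S,E(N_0))$ and $\sqcap_{M'}(S,E(N_0))$ actually land in the ``strand or skew'' range rather than some larger value, and verifying this uniformly across the two cases $|\{e_1,e_2\}\setminus E(N_0)|=1,2$.
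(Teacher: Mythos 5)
Your opening is sound and coincides with the paper's: invoking \autoref{lem:baddeterminant}, using Propositions~\ref{prop:brualdi} and \ref{prop:krogdahl} to force $A[\{e_1,e_2\},\{x,y\}]$ to be all-ones, and deducing that the unique circuit $C_M$ of $M$ in $B'$ contains both $x$ and $y$ are all correct steps (the uniqueness of $C_M$ needs a word: $r_M(B')=r(M)-2$ would force $x,y\in\cl_M(B\cap B')$, a condition visible in $M\setminus y$ and $M\setminus x$ respectively, hence holding in $M'$ too and contradicting that $B'$ is a basis of $M'$). After that, however, the argument has a genuine gap rather than mere unfinished bookkeeping. First, a strand is by definition a subset of $E(M)\setminus E(N_0)$, so ``enlarging $S$ by a subset of $(C_x\cup C_y)\cap E(N_0)$'' is not available to you. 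Second, the claim that $r_M(S)$ and $r_{M'}(S)$ differ by one fails whenever $C_M$ meets $E(N_0)$: then $S=C_M\setminus E(N_0)$ is a proper subset of a circuit and is independent in both matroids. Third, and most importantly, the two facts that constitute the lemma --- that $\sqcap_M(S,E(N_0))$ and $\sqcap_{M'}(S,E(N_0))$ are each at most $1$, and that if $S$ is a strand in both matroids then $\cl_M(S)\cap E(N_0)\neq\cl_{M'}(S)\cap E(N_0)$ --- are exactly the statements you defer to ``final bookkeeping.'' Nothing in your setup controls $\sqcap_M(C_M\setminus E(N_0),E(N_0))$; for instance if $|C_M\cap E(N_0)|\geq 2$ then this local connectivity is already at least $2$, and there is no mechanism in the proposal to repair this.

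The paper's way around both obstacles is to take the candidate set to be all of $\{x,y\}\cup((B\cap B')\setminus E(N_0))$ and to carry out the analysis in the contraction $N=M/((B\cap B')\setminus E(N_0))$, whose rank is at most $r(N_0)+2$ because $B$ contains a basis of $N_0$. In that small-rank setting the all-ones block guarantees $x,y\notin\cl_N(E(N_0))$, which caps the local connectivity of $\{x,y\}$ with $E(N_0)$ at $1$ in both $N$ and $N'$ and yields the strand/skew dichotomy by a short case analysis on $r(N)-r(N_0)\in\{1,2\}$. The remaining case --- $\{x,y\}$ a strand in both --- is resolved not by a rank count on $S$ but by a second fundamental-matrix computation with respect to the pivoted basis $B\Delta\{e,y\}$, which shows the two matroids span different points of $N_0$. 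You would need to supply arguments playing these two roles before the proposal becomes a proof.
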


\begin{proof}
We choose $B$ and $B'$ as in the statement of \autoref{lem:baddeterminant}. We denote the two elements of $B \setminus B'$ by $e$ and $f$ and let $A$ denote the fundamental matrix of $M$ (and also $M'$) with respect to the basis $B$. We observe that all entries of $A[\{e,f\}, \{x,y\}]$ are equal to 1 by Propositions~\ref{prop:brualdi} and \ref{prop:krogdahl}.

We let $N = M / ((B \cap B') \setminus E(N_0))$ and $N' = M' / ((B \cap B') \setminus E(N_0))$.
If there is a strand distinguishing $N$ and $N'$, then since $E(N_0)$ is closed, the union of this strand with $B \cap B' \setminus E(N_0)$ is a strand distinguishing $M$ and $M'$.
We note that by (\ref*{out:baddeterminant-third}) of \autoref{lem:baddeterminant}, $r(N) \leq r(N_0) + 2$ and at most one of $e,f$ is contained in $E(N_0)$.

\begin{claim}\label{clm:strand-1}
If $r(N) = r(N_0) + 2$ then there is a set $S \subseteq E(M) \setminus E(N_0)$ such that one of $\sqcap_M(S,E(N_0)), \sqcap_{M'}(S,E(N_0))$ is $0$ and the other is $1$.
\end{claim}

The set $(B \cap E(N_0)) \cup \{x,y\}$ is independent in exactly one of $N$ and $N'$. So $\{x,y\}$ is a strand for $N_0$ in one of $N$ and $N'$ and skew to $E(N_0)$ in the other. Then $\{x,y\} \cup ((B \cap B') \setminus E(N_0))$ has the same property in $M$ and $M'$, proving (\ref*{clm:strand-1}).
\\

We may assume that $r(N) = r(N_0) + 1$, and by symmetry that $e \not\in E(N_0)$ and $f \in E(N_0)$.

\begin{claim}\label{clm:strand-2}
If $r(N) = r(N_0) + 1$ then either $\{x, y\}$ is independent in both $N$ and $N'$ or there is a set $S \subseteq E(M) \setminus E(N_0)$ such that one of $\sqcap_M(S, E(N_0)), \sqcap_{M'}(S, E(N_0))$ is $0$ and the other is $1$.
\end{claim}

We assume that $\{x, y\}$ is a parallel pair in one of $N$ and $N'$. Then they are parallel in exactly one of $N$ and $N'$ because $B' \cap E(N)$, which contains $\{x, y\}$, is a basis of one of $N$ and $N'$.
We note that $\{x, e\}$ is not a parallel pair in $N$ or $N'$, otherwise $\{x, y, e\}$ is a parallel class of both matroids.
Let $w$ be the element of $N_0$ in $\cl_N(\{x, e\})$ (and hence also in $\cl_{N'}(\{x, e\})$). Then in the matroid in which $x$ and $y$ are parallel, $\{y, e\}$ also spans $w$; this means $\{x, y, e, w\}$ has rank two in both $N$ and $N'$.

In the matroid in which $\{x, y\}$ are independent, $\{x, y\}$ is a strand for $N_0$, while in the other it is skew to $E(N_0)$. Therefore, $S = \{x, y\} \cup (B \cap B' \setminus E(N_0))$ is a strand for $N_0$ in one of $M$ and $M'$ and skew to $E(N_0)$ in the other.
This proves (\ref*{clm:strand-2}).
\\

Now we may assume that $\{x,y\}$ is independent in both $N$ and $N'$.
Since $A_{ex} = A_{ey} = 1$, neither $x$ nor $y$ are in the closure of $E(N_0)$ in $N$ or $N'$. Then $\{x,y\}$ is a strand in both $N$ and $N'$.

We let $D$ and $D'$ be the fundamental matrices of $N$ and $N'$, respectively, with respect to the basis $B \Delta \{e,y\}$. Since $(B \Delta \{e,y\}) \Delta \{f,x\} = B'$ is independent in exactly one of $N$ and $N'$, it follows that exactly one of $D_{fx}$ and $D'_{fx}$ is equal to 1. Thus $D[E(N_0), \{x\}] \neq D'[E(N_0), \{x\}]$. 
If $w$ is the element of $E(N_0) \cap \cl_N(\{x,y\})$ and $w'$ is the element of $E(N_0) \cap \cl_{N'}(\{x,y\})$, then $D[E(N_0), \{w\}] = D[E(N_0), \{x\}]$ and $D'[E(N_0), \{w'\}] = D'[E(N_0), \{x\}]$. But because $N\d x = N' \d x$, $D[E(N_0), \{w\}] = D'[E(N_0), \{w\}]$, so $w \neq w'$. This proves that $\{x,y\}$ is a strand distinguishing $N$ and $N'$ and so there is a strand distinguishing $M$ and $M'$.
\end{proof}

For disjoint sets $S, T$ in a matroid $M$, we define
\[ \kappa_M(S, T) = \min\{\lambda_M(A) : S \subseteq A \subseteq E(M) \setminus T\}. \]

Let $M$ and $M'$ be two matroids on the same ground set. We write $\Sigma(M, M')$ to denote the set 
\[ \Sigma(M, M') = \{e \in E(M) : M \d e \neq M' \d e \textrm{ and } M / e \neq M' / e\}. \]

We now prove the main lemma of this section.

\begin{lemma} \label{lem:strandspicture}
Let $M$ be a $3$-connected, non-$\F$-representable matroid with a modular restriction $N_0 \cong \PG(2, \F)$ such that no proper minor of $M$ with $N_0$ as a restriction is $3$-connected and non-$\F$-representable. 
If $\lambda_M(E(N_0)) = 3$, $x, y \in E(M) \setminus E(N_0)$ are distinct, and $M'$ is an $\F$-representable matroid with $M' \d x = M \d x$ and $M' \d y = M \d y$, then 
\begin{enumerate}[(i)]
 \item $|\Sigma(M, M')| \geq 2$, and
 \item there are non-nested sets $S, T \subseteq E(M) \setminus E(N_0)$ such that $\cl_M(S) \cap E(N_0) \neq \cl_{M'}(S) \cap E(N_0)$, $\cl_M(T) \cap E(N_0) \neq \cl_{M'}(T) \cap E(N_0)$, and $S \Delta T \subseteq \Sigma(M,M')$.
\end{enumerate}
\end{lemma}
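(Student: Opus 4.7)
The first step is a structural forcing observation. I would show that any $S \subseteq E(M) \setminus E(N_0)$ satisfying $\cl_M(S) \cap E(N_0) \neq \cl_{M'}(S) \cap E(N_0)$ must contain both $x$ and $y$: if $x \notin S$, then every $f \in E(N_0)$ satisfies $r_M(S \cup \{f\}) = r_{M \d x}(S \cup \{f\}) = r_{M' \d x}(S \cup \{f\}) = r_{M'}(S \cup \{f\})$ and similarly $r_M(S) = r_{M'}(S)$, forcing the two closures (intersected with $E(N_0)$) to coincide. Using the identities $\cl_{M \d e}(S) \cap E(N_0) = \cl_M(S) \cap E(N_0)$ for $e \in E(M) \setminus (S \cup E(N_0))$ and $\cl_{M/e}(S \setminus \{e\}) \cap E(N_0) = \cl_M(S) \cap E(N_0)$ for $e \in S \setminus E(N_0)$, I would then deduce that $e \in S$ forces $M/e \neq M'/e$ and $e \in (E(M) \setminus E(N_0)) \setminus S$ forces $M \d e \neq M' \d e$. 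Since $M$ is non-$\F$-representable and $M'$ is, $M \neq M'$, so no element satisfies both $M \d e = M' \d e$ and $M/e = M'/e$. Consequently, the membership of every $e \in E(M) \setminus (E(N_0) \cup \Sigma(M, M'))$ in any distinguishing set is uniquely determined, so any two distinguishing sets $S$ and $T$ automatically satisfy $S \Delta T \subseteq \Sigma(M, M')$. In particular, (ii) immediately implies (i).

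For (ii), I would first produce one distinguishing set $S$ by applying Lemma \ref{lem:strand} (either as a distinguishing strand, or as a set whose local connectivity to $E(N_0)$ takes value $1$ in one of $M, M'$ and $0$ in the other); by the forcing step it contains $\{x, y\}$. To construct a second distinguishing set $T$ non-nested with $S$, I would pick $e \in \Sigma(M, M')$ (which must exist, else $S$ is the unique candidate distinguishing set and any modification destroys the distinguishing property), pass to the pair of minors $(M/e, M'/e)$ when $e \in S$ or $(M \d e, M' \d e)$ when $e \notin S$, and reapply Lemma \ref{lem:strand} to that pair—both halves remain unequal matroids with $N_0$ (or $\si$ of it) as a modular restriction. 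The resulting distinguishing set lifts back to a $T \subseteq E(M) \setminus E(N_0)$ which either omits or includes $e$ opposite to $S$, producing a second distinguishing set that differs from $S$ on at least one element of $\Sigma(M, M')$. Iterating—or dualizing by swapping the roles of contraction and deletion with another $e' \in \Sigma(M, M')$ on the other side of $S$—produces a $T$ with both $S \setminus T$ and $T \setminus S$ non-empty.

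The main obstacle is guaranteeing the \emph{non-nestedness}: not merely that $T \neq S$, but that $T$ simultaneously drops an element of $\Sigma(M, M') \cap S$ and acquires an element of $\Sigma(M, M') \setminus S$. This is where the minimality hypothesis on $M$ and the equality $\lambda_M(E(N_0)) = 3$ enter: the minimality rules out degenerate cases where the minor pair $(M/e, M'/e)$ (resp.\ $(M \d e, M' \d e)$) collapses to equal matroids or fails to retain a $3$-connected non-representable core, and $\lambda_M(E(N_0)) = 3$ provides enough ``room'' in $E(M) \setminus E(N_0)$ to ensure $\Sigma(M, M')$ contains elements on both sides of $S$. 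I expect the argument to proceed by a short case analysis based on whether each step of the construction yields a strand inside $S$, inside its complement, or straddling them, using $\kappa_M(\cdot, \cdot)$ to control how these strands interact with $E(N_0)$.
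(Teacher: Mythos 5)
Your first paragraph is sound and matches bookkeeping that the paper uses implicitly: every distinguishing set must contain $x$ and $y$; for a non-loop $e$ the equalities $M \d e = M' \d e$ and $M/e = M'/e$ together force $M = M'$ (since $r_M(X) = r_{M/e}(X \setminus \{e\}) + 1$ for $e \in X$), so membership of each $e \notin \Sigma(M,M')$ in a distinguishing set is forced and $S \Delta T \subseteq \Sigma(M,M')$ holds automatically for any two distinguishing sets; and (ii) does imply (i). The first distinguishing set $S$ does come from Lemma \ref{lem:strand}, as in the paper.

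The gap is in the construction of the second, non-nested set $T$, which is the entire content of the lemma, and your plan for it does not work. Re-applying Lemma \ref{lem:strand} to the pair $(M/e, M'/e)$ or $(M \d e, M' \d e)$ is not available: those minors need not be $3$-connected, and --- worse --- the minimality hypothesis on $M$ forces every proper $3$-connected minor containing $N_0$ to be $\F$-representable, so the hypotheses of Lemma \ref{lem:strand} (a $3$-connected \emph{non}-$\F$-representable matroid with $N_0$ modular) fail for exactly the minors you want to use. Moreover the existence of an element of $\Sigma(M,M')$ positioned on the correct side of $S$ is asserted rather than proved (your parenthetical ``else $S$ is the unique candidate'' yields no contradiction), and even granting such an $e$, lifting a distinguishing set of $(M/e, M'/e)$ back to $M$ re-inserts $e$, landing on the \emph{same} side as $S$; you acknowledge that non-nestedness is the main obstacle and leave it to an unspecified case analysis. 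The paper's actual mechanism is a projective-plane triangulation argument that your proposal does not contain: for the point $p = \cl_M(S) \cap E(N_0)$ one finds two distinct lines $L_1, L_2$ of $N_0$ through $p$ together with sets $U_1, U_2 \supseteq S$ satisfying $\kappa_{M|(E(N_0) \cup U_i)}(S, E(N_0)) \geq 2$ (this is where $\lambda_M(E(N_0)) = 3$ and the connectedness of $M/E(N_0)$, itself a consequence of minimality via Propositions \ref{prop:separatorismodularsum} and \ref{prop:modularsumpreservesrepresentability}, are used); each $U_i$ then contains two further strands spanning distinct points of $L_i$, and if all of these agreed between $M$ and $M'$ one would get $\cl_{M'}(S) \cap E(N_0) = L_1 \cap L_2 = \cl_M(S) \cap E(N_0)$, a contradiction. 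One of these auxiliary strands is the required $T$, and minimality of strands gives non-nestedness. Without this (or an equivalent) idea, the proof is incomplete.
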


\begin{proof}
We start with some short claims.

\begin{claim} \label{clm:strandspicture-connected2}
$M / E(N_0)$ and $M'/E(N_0)$ are connected.
\end{claim}

If $M / E(N_0)$ is not connected, then by \autoref{prop:separatorismodularsum}, $M$ is a modular sum of two proper restrictions $M_1$ and $M_2$ of $M$ with $E(M_1) \cap E(M_2) = E(N_0)$. Both $M_1$ and $M_2$ are $3$-connected by \autoref{prop:connectivitypropertiesofmodularsum} and so both are $\F$-representable by the choice of $M$. Then \autoref{prop:modularsumpreservesrepresentability} implies that $M$ is $\F$-representable, a contradiction.
Suppose that $M' / E(N_0)$ is not connected. Then $M' / E(N_0)$ has at least two components. Let $A$ be that containing $x$. We can choose a basis $B$ of $M'$ that contains a basis of $N_0$ and does not contain $x$ or $y$. Then $B \cap (E(N_0) \cup A)$ spans $x$ in $M'\d y$ and hence also in $M$. Now $M / E(N_0) \d x$ has at least two components, and we can choose one, $A'$, disjoint from $A \setminus \{x\}$.
But $B \cap (E(N_0) \cup A)$ spans $x$ and so $A$ and $A'$ are components of $M / E(N_0)$, contradicting the fact that $M / E(N_0)$ is connected and proving (\ref*{clm:strandspicture-connected2}).

\begin{claim} \label{clm:strandspicture-connected1}
$\lambda_{M'}(E(N_0)) = 3$.
\end{claim}

Suppose that $\lambda_{M'}(E(N_0)) < 3$; then $\lambda_{M'}(E(N_0)) = 2$ since $M' \d x = M \d x$.
We let $L$ be the line of $N_0$ that is spanned by $E(M') \setminus E(N_0)$.
Since $M'$ is $3$-connected, it has a basis $B$ disjoint from $\{x,y\}$; since $\lambda_{M'}(E(N_0)) = 2$, we may further choose $B$ so that it contains at most one element of $E(N_0) \setminus L$. Then the set $B \cap \cl_{M'}(E(M') \setminus E(N_0))$ is a basis for $M' \d (E(N_0) \setminus L)$ and hence spans $\{x,y\}$. This means that $x,y \in \cl_M(B \cap \cl_{M'}(E(M') \setminus E(N_0)))$, and then $\lambda_M(E(N_0)) = \lambda_{M\d x,y}(E(N_0)) < 3$, a contradiction.
This proves (\ref*{clm:strandspicture-connected1}).

\begin{claim} \label{clm:strandspicture-findtwolines}
If $S$ is a strand for $E(N_0)$ in $N \in \{M,M'\}$, then there exist sets $U_1$ and $U_2$ in $E(N) \setminus E(N_0)$ such that 
$\kappa_{N | (E(N_0) \cup U_1)} (S, E(N_0)) > 1$, $\kappa_{N | (E(N_0) \cup U_2)} (S, E(N_0)) > 1$, and $\cl_N(U_1) \cap E(N_0)$ and $\cl_N(U_2) \cap E(N_0)$ are distinct lines containing $\cl_N(S) \cap E(N_0)$.
\end{claim}

Let $L$ be a line of $N_0$ such that $\sqcap_N(S,L) = 1$, and $A = E(N_0) \setminus L$.
Suppose that $\kappa_{N \d A}(S, L) < 2$. If $\kappa_{N \d A}(S, L) = 0$ then $N / A$ is not connected; so $\kappa_{N \d A}(S,L) = 1$.
We have a $2$-separation $(U,V)$ of $N \d A$ with $S \subseteq U$ and $L \subseteq V$. But since $\sqcap_N(S,L) = 1$, we have $\sqcap_N(U,L) = 1$. Then either $N / E(N_0)$ is not connected or $V = L$, which implies that $\lambda_N(E(N_0)) < 3$; this contradicts either (\ref*{clm:strandspicture-connected2}) or (\ref*{clm:strandspicture-connected1}).

Therefore, $\kappa_{N \d A}(S,L) \geq 2$, and there exists a minimal set $U \subseteq E(N\d A)$ such that $\kappa_{N | (L \cup U \cup S)}(S, L) = 2$.
Choosing two lines $L$ of $N_0$ with $\sqcap_N(S,L) = 1$, we obtain the two sets $U_1$ and $U_2$ as required, proving (\ref*{clm:strandspicture-findtwolines}).

\begin{claim} \label{clm:strandspicture-getthreestrands:1}
Let $N''$ be a restriction of $N \in \{M,M'\}$ containing $N_0$ such that $E(N'') \setminus E(N_0)$ is independent.
If $X$ and $Y$ are minimal strands for $N_0$ in $N''$ such that $\cl_N(X) \cap E(N_0) = \cl_N(Y) \cap E(N_0)$, then $X = Y$.
\end{claim}

Suppose there are two distinct minimal strands $X$ and $Y$ for $N_0$ in $N''$ such that $\cl_N(X) \cap E(N_0) = \cl_N(Y) \cap E(N_0)$.
We denote by $e$ the element of $N_0$ spanned by $X$ and $Y$.
By the minimality of $X$ there exists an element $b \in Y \setminus X$.
Then $b \in \cl_N(Y \cup \{e\} \setminus \{b\}) \subseteq \cl_N(X \cup Y \setminus \{b\})$, contradicting the fact that $E(N'') \setminus E(N_0)$ is independent.
This proves (\ref*{clm:strandspicture-getthreestrands:1}).

\begin{claim} \label{clm:strandspicture-getthreestrands}
If $S$ is a strand for $N_0$ in $N \in \{M,M'\}$ and $U$ is a subset of $E(N) \setminus E(N_0)$ containing $S$ such that $\kappa_{N | (E(N_0) \cup U)} (S, E(N_0)) > 1$, then $U$ contains two strands $T_1$ and $T_2$ such that $\cl_N(S) \cap E(N_0)$, $\cl_N(T_1) \cap E(N_0)$ and $\cl_N(T_2) \cap E(N_0)$ are all distinct.
\end{claim}

We may assume that $U$ is minimal and thus independent, and that $S$ is minimal.
We pick any element $z \in S$. Then $\sqcap_N(U \setminus \{z\}, E(N_0)) \geq 1$ so $U \setminus \{z\}$ contains a minimal strand $T_1$. It follows from (\ref*{clm:strandspicture-getthreestrands:1}) that $\cl_N(T_1) \cap E(N_0) \neq \cl_N(S) \cap E(N_0)$.

If $S \cap T_1 \neq \emptyset$, then there is an element $s \in S \cap T_1$ and $U \setminus \{s\}$ contains a minimal strand $T_2$ for $N_0$ distinct from $S$ and $T_1$.
Similarly, if for some $s \in S$ and $t \in T_1$, $\sqcap_N(U \setminus \{s,t\}, E(N_0)) \geq 1$, then $U \setminus \{s,t\}$ contains a minimal strand $T_2$ for $N_0$ distinct from $S$ and $T_1$.
In both cases, (\ref*{clm:strandspicture-getthreestrands:1}) implies that $\cl_N(T_2) \cap E(N_0)$ is distinct from $\cl_N(S) \cap E(N_0)$ and $\cl_N(T_1) \cap E(N_0)$.

Therefore, we may assume that $S$ and $T_1$ are disjoint and that for any $s \in S$ and $t \in T_1$, $\sqcap_N(U \setminus \{s,t\}, E(N_0)) = 0$. This means that for any $u \in U$, $\sqcap_N(U \setminus \{u\}, E(N_0)) = 1$, and $S$ and $T_1$ are in the coclosure of $E(N_0)$.

But because $S$ and $T_1$ are disjoint, $\sqcap_N(U \setminus T_1, E(N_0)) = 1$, so $T_1$ is a series class of $N | (E(N_0) \cup U)$. For the same reason, $S$ is a series class of $N | (E(N_0) \cup U)$.
This contradicts the fact that $\kappa_{N | (E(N_0) \cup U)}(S, E(N_0)) = 2$, proving (\ref*{clm:strandspicture-getthreestrands}).
\\

We now let $N$ and $N'$ be matroids such that $\{N,N'\} = \{M, M'\}$.

\begin{claim} \label{clm:strandspicture-gettwodistinguishers}
If there is an independent strand $S$ for $N_0$ in $N$ such that $\sqcap_{N'}(S, E(N_0)) = 0$, then either
\begin{itemize}
\item there is a strand for $N_0$ distinguishing $N$ and $N'$, or
\item there is a strand $T$ for $N_0$ in $N$ such that $\cl_N(T) \cap E(N_0) \neq \cl_N(S) \cap E(N_0)$ and $\sqcap_{N'}(T,E(N_0)) = 0$.
\end{itemize}
\end{claim}

We let $U$ be a minimal set containing $S$ such that $\kappa_{N | (E(N_0) \cup U)} (S, E(N_0)) > 1$. By (\ref*{clm:strandspicture-getthreestrands}), $U$ contains two strands $T_1$ and $T_2$ for $N_0$ such that $\cl_N(S) \cap E(N_0), \cl_N(T_1) \cap E(N_0)$, and $\cl_N(T_2) \cap E(N_0)$ are distinct.
We may assume that $\sqcap_{N'}(T_1, E(N_0)) > 0$ and $T_1$ is not a strand distinguishing $N$ and $N'$, and that the same holds for $T_2$.
Therefore, $\sqcap_{N'}(U,E(N_0)) \geq 2$.

Let $e \in U \setminus S$. By the minimality of $U$, $\sqcap_N(U \setminus \{e\}, E(N_0)) = 1$. If $\cl_{N'}(U \setminus \{e\}) \cap E(N_0) \not\subseteq \cl_N(S) \cap E(N_0)$, then there is a strand $T$ for $N_0$ in $N'$ such that $S \subseteq T \subseteq U \setminus \{e\}$ and $\cl_{N'}(T) \cap E(N_0) \neq \cl_{N}(S) \cap E(N_0)$, proving (\ref*{clm:strandspicture-gettwodistinguishers}). So we may assume that $\cl_{N'}(U \setminus \{e\}) \cap E(N_0) = \cl_N(S) \cap E(N_0)$ and $\sqcap_{N'}(U \setminus \{e\}, E(N_0)) = 1$.

The fact that $\sqcap_{N'}(U,E(N_0)) \geq 2$ but $\sqcap_{N'}(U \setminus \{e\}, E(N_0)) = 1$ for all $e \in U \setminus S$ implies that $U \setminus S$ is contained in the coclosure of $E(N_0)$ in $N'$.
Then since $\sqcap_{N'}(S, E(N_0)) = 0$, it follows that $r^*_{N'}(U \setminus S) \geq 2$.
Furthermore, for any two elements $e,f \in U \setminus S$ that are not in series in $N'$, $\sqcap_{N'}(U \setminus \{e,f\}, E(N_0)) = 0$.

Therefore, for each series class $X$ of $U \setminus S$, there is a circuit containing $\cl_N(S) \cap E(N_0)$ and $(U \setminus S) \setminus X$.
This implies that for each series class $X$ of $U \setminus S$, $X \subseteq \cl_{N'}(U \setminus X)$. But then $\sqcap_{N'}(U, E(N_0)) = \sqcap_{N'}(U \setminus X, E(N_0)) = 1$, a contradiction.
This proves (\ref*{clm:strandspicture-gettwodistinguishers}).

\begin{claim} \label{clm:strandspicture-strandspicturev1}
If there exists a strand $S$ for $N_0$ in $N$ such that either $S$ distinguishes $N$ and $N'$ or $\sqcap_{N'}(S, E(N_0)) = 0$, then
\begin{itemize}
\item $|\Sigma(N, N')| \geq 2$, and
\item there is another strand $T$ for $N_0$ in $N$ such that $\cl_N(S) \cap E(N_0) \neq \cl_N(T) \cap E(N_0)$, $\cl_N(T) \cap E(N_0) \neq \cl_{N'}(T) \cap E(N_0)$ and $S \Delta T \subseteq \Sigma(N, N')$.
\end{itemize}
\end{claim}

First we suppose that there is no strand that distinguishes $N$ and $N'$; so $\sqcap_{N'}(S, E(N_0)) = 0$. Then by (\ref*{clm:strandspicture-gettwodistinguishers}), there is a strand $T$ for $N_0$ in $N$ such that $\cl_N(T) \cap E(N_0) \neq \cl_N(S) \cap E(N_0)$ and $\sqcap_{N'}(T,E(N_0)) = 0$.
We observe that $S \not\subseteq T$ and $T \not\subseteq S$, and $S \Delta T \subseteq \Sigma(N,N')$.

Therefore, we may assume that $S$ is a strand for $N_0$ that distinguishes $N$ and $N'$.
By (\ref*{clm:strandspicture-findtwolines}), there are two lines $L_1$ and $L_2$ of $N_0$ containing $\cl_N(S) \cap E(N_0)$, and sets $U_1$ and $U_2$ containing $S$ such that $\cl_N(U_1) \cap E(N_0) = L_1$, $\cl_N(U_2) \cap E(N_0) = L_2$, $\kappa_{N | (E(N_0) \cup U_1)} (S, E(N_0)) > 1$, and $\kappa_{N | (E(N_0) \cup U_2)} (S, E(N_0)) > 1$.
Then by (\ref*{clm:strandspicture-getthreestrands}), each of $U_1$ and $U_2$ contains two more strands that span distinct elements of $E(N_0) \setminus \cl_N(S)$.
If all of these four strands are strands for $N_0$ in $N'$ that do not distinguish $N$ and $N'$, then $\cl_{N'}(U_1) = L_1$ and $\cl_{N'}(U_2) = L_2$, so $\cl_{N'}(S) = L_1 \cap L_2 = \cl_N(S)$, contradicting the fact that $S$ distinguishes $N$ and $N'$.
Therefore, there exists a strand $S'$ for $N_0$ in $N$ with $\cl_N(S') \cap E(N_0) \neq \cl_N(S) \cap E(N_0)$ such that either $S'$ is a strand for $N_0$ in $N'$ that distinguishes $N$ and $N'$, or $S'$ is not a strand for $N_0$ in $N'$.

Now we observe that $S \Delta S' \subseteq \Sigma(N,N')$, and since $\cl_N(S) \cap E(N_0) \neq \cl_N(S') \cap E(N_0)$, we have $|S \Delta S'| \geq 2$.
This proves (\ref*{clm:strandspicture-strandspicturev1}).
\\

According to \autoref{lem:strand}, there is a strand for $N_0$ in one of $M$ or $M'$ that is either a strand for $N_0$ in the other matroid and distinguishes $M$ and $M'$, or is skew to $E(N_0)$ in the other matroid.
The result now follows from (\ref*{clm:strandspicture-strandspicturev1}).
\end{proof}

\section{Connectivity} \label{sec:connectivity}

In this section we prove \autoref{lem:keylemma}.
First, we state two useful results on matroid connectivity.

\begin{theorem}[Tutte, \cite{Tutte:connectivityinmatroids}] \label{thm:tutteconnectedtheorem}
 If $M$ is a connected matroid and $e \in E(M)$, then at least one of $M \d e$ and $M / e$ is connected.
\end{theorem}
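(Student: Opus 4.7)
The plan is to proceed by contradiction: assume $M$ is connected but that both $M \d e$ and $M/e$ are disconnected. Since $M$ is connected with $|E(M)| \geq 2$, the element $e$ is neither a loop nor a coloop, so $r(M \d e) = r(M)$ and $r(M/e) = r(M) - 1$. From $M \d e$ disconnected I obtain a partition $(A,B)$ of $E(M) \setminus \{e\}$ into nonempty parts with $r_M(A) + r_M(B) = r(M)$. If $e \in \cl_M(A)$ then $(A \cup \{e\},B)$ would be a $1$-separation of $M$, contradicting connectedness; so $e \notin \cl_M(A) \cup \cl_M(B)$. From $M/e$ disconnected I obtain a partition $(C,D)$ of $E(M) \setminus \{e\}$ with $r_M(C \cup \{e\}) + r_M(D \cup \{e\}) = r(M)+1$; a parallel case analysis on where $e$ sits relative to $\cl_M(C)$ and $\cl_M(D)$ shows that if $e$ were outside one of them we would again produce a $1$-separation of $M$, so $e \in \cl_M(C) \cap \cl_M(D)$ and $r_M(C) + r_M(D) = r(M) + 1$.

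The main step is to combine these two partitions via submodularity. Since $e \in \cl_M(C) \subseteq \cl_M(A \cup C)$, we have $r_M(A \cup C) = r_M(A \cup C \cup \{e\}) = r_M(E(M) \setminus (B \cap D))$. Submodularity applied to $A$ and $C$ therefore gives
\[ r_M(A) + r_M(C) \geq r_M(A \cap C) + r_M(E(M) \setminus (B \cap D)), \]
and symmetrically, using $e \in \cl_M(D)$,
\[ r_M(B) + r_M(D) \geq r_M(B \cap D) + r_M(E(M) \setminus (A \cap C)). \]
Adding these and substituting the rank equalities for $(A,B)$ and $(C,D)$ yields
\[ 2r(M) + 1 \geq r_M(A \cap C) + r_M(E(M) \setminus (A \cap C)) + r_M(B \cap D) + r_M(E(M) \setminus (B \cap D)). \]
Connectedness of $M$ forces $r_M(X) + r_M(E(M) \setminus X) \geq r(M) + 1$ for every nonempty proper subset $X$, so if $A \cap C$ and $B \cap D$ were both nonempty we would obtain $2r(M) + 1 \geq 2r(M) + 2$, a contradiction. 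Hence $A \cap C = \emptyset$ or $B \cap D = \emptyset$; running the same argument with the pairs $(A,D)$ and $(B,C)$ in place of $(A,C)$ and $(B,D)$ forces $A \cap D = \emptyset$ or $B \cap C = \emptyset$.

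Finally, any combination of these two disjunctions makes one of $A,B,C,D$ empty---for instance, $A \cap C = \emptyset$ together with $A \cap D = \emptyset$ gives $A = \emptyset$---contradicting the nonemptiness we started with. The step I expect to require the most care is the submodularity bookkeeping: it is crucial that $e$ lie in the appropriate closures $\cl_M(A \cup C)$ and $\cl_M(B \cup D)$ before invoking submodularity, so that the complementary set appearing on the right-hand side really is $E(M) \setminus (B \cap D)$ rather than $(E(M) \setminus \{e\}) \setminus (B \cap D)$; this is precisely what permits the final appeal to connectedness of $M$.
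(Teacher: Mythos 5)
Your argument is correct. Note that the paper does not prove this statement at all --- it is quoted as Theorem~\ref{thm:tutteconnectedtheorem} with a citation to Tutte --- so there is no in-paper proof to compare against; what you have written is the standard uncrossing/submodularity proof of this classical fact. The bookkeeping you flagged as delicate checks out: since $e \in \cl_M(C) \cap \cl_M(D)$, one indeed has $r_M(A \cup C) = r_M(A \cup C \cup \{e\}) = r_M(E(M) \setminus (B \cap D))$ and likewise for the other three unions, the left-hand sides of the two summed submodular inequalities total $r(M) + (r(M)+1) = 2r(M)+1$ in each of the two pairings, and every one of the four ways of combining the resulting disjunctions forces one of $A$, $B$, $C$, $D$ to be empty. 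The only thing worth adding is a one-line disposal of the degenerate case $|E(M)| \le 1$ (where the claim is vacuous under the usual conventions), which is what licenses the assertion that $e$ is neither a loop nor a coloop.
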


The second is another theorem of Tutte \cite{Tutte:mengerstheoremformatroids} that generalizes Menger's Theorem to matroids.

\begin{tutteslinkingtheorem}
If $M$ is a matroid and $S, T \subseteq E(M)$ are disjoint then $\kappa_M(S, T) = \max\{ \sqcap_{M / Z}(S, T) : Z \subseteq E(M) \setminus (S \cup T) \}$.
\end{tutteslinkingtheorem}

Recall that sets $S$ and $T$ in a matroid $M$ are called skew if $\sqcap_M(S, T) = 0$. 
We can choose the set $Z$ that attains the maximum in Tutte's Linking Theorem so that it is skew to both $S$ and $T$.
We have the following stronger version of the theorem, for which an explicit proof can be found in \cite[Theorem 8.5.7]{Oxley} or \cite[Theorem 4.2]{GeelenGerardsWhittle:excludingaplanargraph}.

\begin{tutteslinkingtheorem2}
If $M$ is a matroid and $S, T \subseteq E(M)$ are disjoint then there is a set $Z \subseteq E(M) \setminus (S \cup T)$ such that $\sqcap_{M / Z}(S, T) = \kappa_M(S, T)$, $(M / Z) | S = M | S$, and $(M / Z) | T = M | T$.
\end{tutteslinkingtheorem2}

Before the main result of this section, we prove one last short lemma that is similar to Lemma~2.3 of \cite{GeelenGerardsKapoor}.

\begin{lemma} \label{lem:remove}
Let $\F$ be a finite field, $M$ a $3$-connected matroid with a restriction $N_0 \cong \PG(2, \F)$ and a deletion pair $x,y \in E(M) \setminus E(N_0)$, and $M'$ an $\F$-representable matroid with $M' \d x = M \d x$ and $M' \d y = M \d y$.
If there are sets $C, D \subset E(M)$ disjoint from $E(N_0) \cup \{x,y\}$ such that
\begin{enumerate}[(a)]
 \item $\{x, y\}$ is coindependent in $M \d D / C$, \label{hyp:remove-coindependent}
 \item $(M \d D / C) \d x$ and $(M \d D / C) \d y$ are stable, \label{hyp:remove-stable}
 \item $(M \d D / C) \d x,y$ is connected, and \label{hyp:remove-connected}
 \item $M \d D / C \neq M' \d D / C$, \label{hyp:remove-notequal}
\end{enumerate}
then $M \d D / C$ is not $\F$-representable.
\end{lemma}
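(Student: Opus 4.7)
The plan is to argue by contradiction via \autoref{lem:uniquerepresentablematroid2}. Set $N := M \d D / C$ and $N' := M' \d D / C$; both are $\F$-representable (the former by the contradiction hypothesis, the latter as a minor of $M'$), and since $C \cup D$ is disjoint from $\{x, y\}$, deletion and contraction commute to give $N \d x = N' \d x$ and $N \d y = N' \d y$. Hypotheses (b) and (c) directly supply the stability of $N \d x$ and $N \d y$ and the connectivity of $N \d x, y$ required by \autoref{lem:uniquerepresentablematroid2}, so only one condition remains: exhibiting a uniquely $\F$-representable stabilizer for $\F$ as a minor of $N \d x, y$.

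For this I would show that $N_0$ itself is a restriction of $N \d x, y$. The matroid $M \d x, y$ equals $M' \d x, y$, hence is $\F$-representable with $N_0 \cong \PG(2,\F)$ as a restriction; by \cite[Corollary 6.9.6]{Oxley}, $N_0$ is modular in the $\F$-representable matroid $M \d x, y \d D$ (which still contains $N_0$ as a restriction since $D$ is disjoint from $E(N_0)$). Connectivity of $N \d x, y$ together with $|E(N_0)| = |\F|^2 + |\F| + 1 \geq 7$ forces $N \d x, y$ to be loopless; since an element $e \in E(N_0)$ is a loop of $M \d x, y \d D / C$ exactly when $e \in \cl_{M \d x, y \d D}(C)$, this gives $\cl_{M \d x, y \d D}(C) \cap E(N_0) = \emptyset$. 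Modularity then yields $\sqcap_{M \d x, y \d D}(C, E(N_0)) = 0$, and a short submodularity argument upgrades this to $\sqcap_{M \d x, y \d D}(C, S) = 0$ for every $S \subseteq E(N_0)$, so $(N \d x, y) | E(N_0) = N_0$.

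Since $\PG(2, \F)$ is uniquely $\F$-representable by the Fundamental Theorem of Projective Geometry and is a stabilizer for $\F$ by \autoref{thm:modularflatstabilizes} together with the remark on stable matroids that follows it, \autoref{lem:uniquerepresentablematroid2} applied to $N$ and $N'$ gives $N = N'$, contradicting (d) and thereby establishing the lemma.

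I expect the main obstacle to be the step verifying that $N_0$ survives the contraction of $C$, i.e., that $C$ is skew to $E(N_0)$ in $M \d x, y \d D$. A priori the hypotheses allow $C$ to meet the closure of $E(N_0)$ in $M$, which would collapse the $\PG(2, \F)$-restriction under contraction; the key insight is that connectivity hypothesis (c) rules out loops in $N \d x, y$, and combined with modularity of $N_0$ in the $\F$-representable auxiliary matroid $M \d x, y \d D$, this is exactly enough to force the skewness required to keep $N_0$ intact.
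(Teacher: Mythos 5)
Your proof is correct and follows the same route as the paper's: reduce to \autoref{lem:uniquerepresentablematroid2} applied to the pair $M \d D / C$ and $M' \d D / C$, whose remaining hypotheses are supplied by (a)--(c), and contradict (d). The one genuine difference is that the paper's proof is terser and never verifies that $(M \d D / C) \d x, y$ still has a uniquely $\F$-representable stabilizer (i.e., a $\PG(2,\F)$-minor) after $C$ is contracted; your argument — connectivity from (c) forces looplessness, hence $\cl_{M \d x, y \d D}(C) \cap E(N_0) = \emptyset$, and modularity of $N_0$ in the $\F$-representable matroid $M \d x, y \d D$ then makes $C$ skew to $E(N_0)$ — correctly fills in exactly this point, and you also dispense with \autoref{lem:uniquerepresentablematroid1} by taking $M' \d D / C$ itself as the representable companion.
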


\begin{proof}
Since $M\d x$ and $M\d y$ are $\F$-representable, so are $(M \d D /C) \d x$ and $(M \d D / C) \d y$. By Lemmas~\ref{lem:uniquerepresentablematroid1} and \ref{lem:uniquerepresentablematroid2} applied to $M \d D / C$, there is a unique $\F$-representable matroid $N$ such that $N \d x = (M \d D / C) \d x$ and $N \d y = (M \d D / C) \d y$. But $M' \d D / C$ satisfies this condition and is $\F$-representable, so $N = M' \d D / C$. Then $M \d D / C \neq N$, so $M\d D / C$ is not $\F$-representable.
\end{proof}

We now prove \autoref{lem:keylemma}, which we restate for convenience.
We will use the fact that a matroid $M$ that is not $3$-connected is a direct sum or a $2$-sum of matroids isomorphic to proper minors of $M$ (see \cite[Theorem 8.3.1]{Oxley} for a proof).

\keylemma*

\begin{proof}
First, we need the following easy fact.

\begin{claim} \label{clm:keylemma-nonbinary2sum}
Let $M = M_1 \oplus_2 M_2$ with $B = E(M_2) \setminus E(M_1)$ such that $|B| = 3$. If $M_2$ is non-binary, then $B$ is a triangle and a triad in $M$.
\end{claim}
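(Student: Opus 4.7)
The plan is to show that $M_2$ must be isomorphic to $U_{2,4}$ and then observe that a triangle and a triad of $M_2$ that avoid the basepoint of the $2$-sum survive to become a triangle and a triad of $M_1 \oplus_2 M_2$.

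First I would count: by definition of the $2$-sum, $E(M_1) \cap E(M_2) = \{e\}$ for some element $e$, and so $|E(M_2)| = |B| + 1 = 4$. A straightforward case check (or appeal to the fact that every matroid on at most three elements is binary and that $U_{2,4}$ is the unique non-binary four-element matroid) shows that the only non-binary matroid on four elements is $U_{2,4}$. Hence $M_2 \cong U_{2,4}$, and in particular every three-element subset of $E(M_2)$ is both a circuit and a cocircuit of $M_2$; so $B$ is a triangle and a triad of $M_2$.

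It then remains to transfer this to $M$. Recall that $M = M_1 \oplus_2 M_2 = (M_1 \oplus_m M_2) \d e$. Since $B$ is a circuit of $M_2$ that avoids $e$, it is a circuit of $M_1 \oplus_m M_2$ (circuits of the restrictions are circuits of the modular sum), and hence a circuit of $M$. For the cocircuit, I would use the standard description of cocircuits of a $2$-sum: they are precisely the cocircuits of $M_1$ and of $M_2$ that avoid $e$, together with sets of the form $(C_1^* \cup C_2^*) \setminus \{e\}$ where $C_i^*$ is a cocircuit of $M_i$ containing $e$. Since $B$ is a cocircuit of $M_2$ disjoint from $e$, it falls into the first class, and so $B$ is a cocircuit of $M$.

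There is no genuine obstacle here; the only minor point to verify carefully is the claim that $U_{2,4}$ is the unique non-binary four-element matroid, which is routine (any four-element matroid that is not $U_{2,4}$ has at most two elements of rank $\geq 2$ or is a direct sum of loops, coloops and parallel classes, all of which are binary).
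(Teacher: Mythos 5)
Your proof is correct and follows essentially the same route as the paper's: identify $M_2 \cong U_{2,4}$ (the unique non-binary four-element matroid) and transfer the triangle, a circuit of $M_2$ avoiding the basepoint, through the $2$-sum. The only variation is the triad step, where the paper computes $r^*_M(B) = |B| - r_M(B) + \lambda_M(B) = 3 - 2 + 1 = 2$ directly from the $2$-separation, while you invoke the self-duality of the $2$-sum and its cocircuit description; both arguments are valid.
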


Since $M_2$ is non-binary and has four elements, it is isomorphic to $U_{2,4}$ and has no series pairs. Thus $B$ is a triangle in $M$. Since $\lambda_M(B) = 1$ and $r_M(B) = 2$, we have $r^*_M(B) = 2$ and $B$ is a triad of $M$.
This proves (\ref*{clm:keylemma-nonbinary2sum}).
\\

By choosing $M_0$ minimally, we may assume that it has no proper minor that is $3$-connected, non-$\F$-representable, and has $N_0$ as a restriction.
We assume that $\lambda_{M_0}(E(N_0)) = 3$ to obtain a contradiction.

By \autoref{lem:getcounterexamplewithdeletionpair}, there is a $3$-connected, non-$\F$-representable matroid $M$ with $N_0$ as a modular
restriction and $\lambda_M(E(N_0)) = 3$ such that $M$ has a deletion pair $x,y \in E(M) \setminus E(N_0)$ and no proper minor of $M$ containing $N_0$ is
$3$-connected and non-$\F$-representable. Furthermore, $M \d x,y$ has at most one series pair.

By \autoref{lem:uniquegfqmatroid} there is an $\F$-representable matroid $M'$ such that $M \d x = M' \d x$ and $M \d y = M' \d y$.
We recall that $\Sigma(M, M')$ is the set of elements $e \in E(M) \setminus E(N_0)$ such that $M \d e \neq M' \d e$ and $M / e \neq M' / e$. By \autoref{lem:strandspicture}, $|\Sigma(M,M')| \geq 2$.

\begin{claim}\label{clm:keylemma-twounstableminors:1}
If $e \in E(M) \setminus E(N_0)$, then $M \d e$ and $M / e$ are $\F$-representable.
%If $e \in \Sigma(M,M')$, then $M \d e$ and $M / e$ are $\F$-representable.
\end{claim}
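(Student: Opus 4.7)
The plan is to derive $\F$-representability of both $M \d e$ and $M / e$ by combining the minimality of $M$ with the 2-sum decomposition theorem and the $\F$-representable companion matroid $M'$ obtained from \autoref{lem:uniquegfqmatroid}.

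First, I would show that $N_0$ is a restriction of both $M \d e$ and $M / e$. The case of $M \d e$ is immediate. For $M / e$, if $e \in \cl_M(E(N_0))$ then modularity of $N_0$ applied to the rank-one flat $\cl_M(\{e\})$ would force $e$ to be parallel to some element of $E(N_0)$, violating the simplicity of the 3-connected matroid $M$; so $e \notin \cl_M(E(N_0))$ and $N_0$ survives the contraction.

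Next, I would consider $M \d e$. If it is 3-connected, then by the minimal choice of $M$ (since $M \d e$ is a proper 3-connected minor containing $N_0$) it is $\F$-representable. Otherwise, I would decompose $M \d e$ as a 2-sum $P_1 \oplus_2 \cdots \oplus_2 P_k$ of its 3-connected components in the Cunningham-Edmonds tree. The 3-connectedness of $N_0$ forces $E(N_0)$ into a single piece $P_1$, which is a proper 3-connected minor of $M$ containing $N_0$ and hence $\F$-representable by minimality. For each remaining piece $P_i$: if $P_i \cap \{x, y\} = \emptyset$, then $P_i$ is a minor of $(M \d e) \d x, y = (M' \d e) \d x, y$ and so is $\F$-representable via $M'$; if $P_i$ contains $x$ or $y$, I would invoke the structural constraints from \autoref{lem:getcounterexamplewithdeletionpair} (no element of $M$ lies in both a triangle and a triad, and $M \d x, y$ has at most one series pair) to force $P_i$ to be small enough to be $\F$-representable. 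Since 2-sums preserve $\F$-representability, $M \d e$ is $\F$-representable. The argument for $M / e$ proceeds dually in $M^*$.

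The main obstacle will be controlling the 2-sum pieces $P_i$ that meet $\{x, y\}$, since minimality of $M$ does not apply to them directly and they are not guaranteed to equal the corresponding pieces of $M' \d e$. I anticipate this requires a careful case analysis showing that, under the hypothesis that $M \d x, y$ has at most one series pair and that no element of $M$ lies in both a triangle and a triad, any such $P_i$ must be small enough (essentially a triangle, triad, or a short series/parallel extension) to be automatically $\F$-representable.
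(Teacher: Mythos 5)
Your first two steps are sound and agree with the paper: $E(N_0)$ is closed in $M$ (by modularity plus simplicity), so $N_0$ survives both the deletion and the contraction of $e$; and in the $2$-sum decomposition of $P \in \{M\d e, M/e\}$ the piece containing $N_0$ is (after simplification) a proper $3$-connected minor of $M$ containing $N_0$, hence $\F$-representable by minimality. The gap is in how you handle the remaining pieces. A piece $P_i$ whose ground set avoids $\{x,y\}$ is \emph{not} in general a minor of $(M\d e)\d x,y$: to extract $P_i$ from $M\d e$ one must delete \emph{or contract} the elements on the other side of the $2$-separation, and the choice is forced, so you may be driven into $M\d e/x$, which is not known to be representable. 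Worse, a piece containing $x$ or $y$ can be arbitrarily large; the constraints you cite (no element in both a triangle and a triad, at most one series pair in $M\d x,y$) bound $2$-separations of $M\d x,y$ and of $M$, not the $2$-sum pieces of $M\d e$, and there is no reason such a piece reduces to a triangle, triad, or short fan. Your plan never uses the hypothesis $\lambda_M(E(N_0)) = 3$, which is the load-bearing assumption here, and that is a sign the argument cannot close.

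The paper's proof takes a different and essentially unavoidable route for the non-$N_0$ pieces: it \emph{assumes} some piece $M_i$ is not $\F$-representable and derives a contradiction. Since $(U\cup\{e\}, V)$ is a $3$-separation of $M$ (where $U = E(M_i)\setminus E(P')$), Tutte's Linking Theorem produces a minor $N$ of $M$ on $E(N_0)\cup U\cup\{e\}$ with $\lambda_N(E(N_0)) = 2$ and $N|(U\cup\{e\}) = M|(U\cup\{e\})$; modularity of $N_0$ then forces $N$ to contain a restriction isomorphic to $M_i$, so $N$ is $3$-connected, non-$\F$-representable, and contains $N_0$. By minimality $N = M$, contradicting $\lambda_M(E(N_0)) = 3$. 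In short, rather than certifying each piece representable (which fails), one transplants a hypothetically bad piece onto $N_0$ at connectivity $2$ and lets minimality kill it. You would need to replace your case analysis with this linking argument for the proof to go through.
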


We let $P$ be either $M \d e$ or $M / e$.
Suppose that $P$ is not $\F$-representable. Then the fact that $P$ is a proper minor of $M$ implies that it is not $3$-connected. Since $N_0$ is $3$-connected, there exists a matroid $P'$ containing $N_0$ such that $\si(P')$ is $3$-connected and $P$ is obtained by $2$-sums of $P'$ with matroids $M_1, \ldots, M_t$. Since $P'$ is isomorphic to a proper minor of $M$, it is $\F$-representable; this means that for some $i$, $M_i$ is not $\F$-representable.
We let $U = E(M_i) \setminus E(P')$ and $V = E(P) \setminus U$. Then $(U,V)$ is a $2$-separation of $P$, and $(U \cup \{e\}, V)$ is a $3$-separation of $M$.
By Tutte's Linking Theorem, there is a minor $N$ of $M$ such that $E(N) = E(N_0) \cup U \cup \{e\}$ and $\lambda_N(E(N_0)) = 2$. Since $\lambda_N(U \cup \{e\}) = \lambda_M(U \cup \{e\})$, we note that $N | (U \cup \{e\}) = M | (U \cup \{e\})$. This means that $N$ is $3$-connected, as $M$ is.
Furthermore, $\sqcap_N(U, E(N_0)) = 1$ so the modularity of $N_0$ implies that $N$ contains a restriction isomorphic to $M_i$. Therefore, $N$ is not $\F$-representable. But then $N$ is not a proper minor of $M$, so $N = M$, contradicting the fact that $\lambda_M(E(N_0)) \geq 3$.
This proves (\ref*{clm:keylemma-twounstableminors:1}).

\begin{claim}\label{clm:keylemma-twounstableminors:2}
If $e \in \Sigma(M, M')$, and $N$ is one of $M \d e$ or $M / e$, then either $N \d x$ is not stable, $N \d y$ is not stable, or $N\d x,y$ is not connected.
\end{claim}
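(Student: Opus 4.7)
The strategy is to argue by contradiction using \autoref{lem:remove}: if $N \d x$ and $N \d y$ are both stable and $N \d x, y$ is connected, then \autoref{lem:remove} will force $N$ to be non-$\F$-representable, directly contradicting (\ref*{clm:keylemma-twounstableminors:1}), which says that $M \d e$ and $M / e$ are $\F$-representable for every $e \in E(M) \setminus E(N_0)$.

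First I would check that $e \notin E(N_0) \cup \{x, y\}$. The cases $e = x$ or $e = y$ are immediate: since $M \d x = M' \d x$ and $M \d y = M' \d y$, neither $x$ nor $y$ lies in $\Sigma(M, M')$. The case $e \in E(N_0)$ can be ruled out using that $\PG(2, \F)$ is uniquely $\F$-representable together with the agreement of $M$ and $M'$ after deleting $x$ or $y$, which forces $M \d e = M' \d e$ and $M / e = M' / e$ when $e \in E(N_0)$ (alternatively, for the use of this claim later in the argument only elements of $\Sigma(M, M')$ outside $E(N_0)$ are relevant, as guaranteed by \autoref{lem:strandspicture}). Next I would verify that $\{x, y\}$ is coindependent in $N$. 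Because $M$ is $3$-connected with $r(M) \geq r(N_0) = 3$, $M$ has neither loops nor coloops; and because $M \d x, y$ is internally $3$-connected it is connected and hence also has no coloops. Thus $r_M(E(M) \setminus \{e, x, y\}) = r_M(E(M) \setminus \{x, y\}) = r(M)$, from which the coindependence of $\{x, y\}$ in both $M \d e$ and $M / e$ follows in one line.

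With these preparations, I would apply \autoref{lem:remove} taking $(C, D) = (\emptyset, \{e\})$ when $N = M \d e$ and $(C, D) = (\{e\}, \emptyset)$ when $N = M / e$. Hypothesis (\ref*{hyp:remove-coindependent}) is the coindependence just verified; hypotheses (\ref*{hyp:remove-stable}) and (\ref*{hyp:remove-connected}) are the stability of $N \d x$, $N \d y$ and the connectedness of $N \d x, y$ we are assuming for contradiction; and hypothesis (\ref*{hyp:remove-notequal}) is exactly the statement that $e \in \Sigma(M, M')$. \autoref{lem:remove} then yields that $N$ is not $\F$-representable, contradicting (\ref*{clm:keylemma-twounstableminors:1}). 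The only delicate point in this proof is ensuring that $e \notin E(N_0)$; everything else is a direct match against the hypotheses of \autoref{lem:remove}.
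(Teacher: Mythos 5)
Your proposal is correct and matches the paper's argument: the paper likewise applies \autoref{lem:remove} with $(C,D)=(\emptyset,\{e\})$ and $(\{e\},\emptyset)$, uses (\ref*{clm:keylemma-twounstableminors:1}) to see that $M\d e$ and $M/e$ are $\F$-representable, and concludes that hypotheses (\ref*{hyp:remove-stable}) and (\ref*{hyp:remove-connected}) cannot both hold. The only cosmetic difference is that you verify $e\notin E(N_0)\cup\{x,y\}$ explicitly, whereas the paper builds this into how it recalls the definition of $\Sigma(M,M')$, and it checks coindependence of $\{x,y\}$ by noting that $\{x,y,e\}$ is not a triad since $M\d x,y$ is connected.
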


Since $M \d x,y$ is connected, $\{x,y,e\}$ is not a triad of $M$ and $\{x,y\}$ is coindependent in $M\d e$.
With $C = \emptyset$ and $D = \{e\}$, hypotheses (\ref*{hyp:remove-coindependent}) and (\ref*{hyp:remove-notequal}) of \autoref{lem:remove} are satisfied by $M\d D/C$.
But $M\d e$ is $\F$-representable by (\ref*{clm:keylemma-twounstableminors:1}).
Therefore, (\ref*{hyp:remove-stable}) and (\ref*{hyp:remove-connected}) of \autoref{lem:remove} do not both hold, and we conclude that either $M\d e \d x$ is not stable, $M\d e \d y$ is not stable, or $M\d e \d x,y$ is not connected. 

Similarly, with $C = \{e\}$ and $D = \emptyset$, (\ref*{hyp:remove-coindependent}) and (\ref*{hyp:remove-notequal}) of \autoref{lem:remove} are satisfied by $M \d D / C$, and $M / e$ is $\F$-representable by (\ref*{clm:keylemma-twounstableminors:1}).
Therefore, (\ref*{hyp:remove-stable}) and (\ref*{hyp:remove-connected}) of \autoref{lem:remove} do not both hold, and either $M /e \d x$ is not stable, $M /e \d y$ is not stable, or $M/e \d x,y$ is not connected.
This proves (\ref*{clm:keylemma-twounstableminors:2}).

\begin{claim}\label{clm:keylemma-twounstableminors}
If $e \in \Sigma(M,M')$ then either $M \d x \d e$ and $M \d y /e$ are not stable, or $M \d x / e$ and $M \d y \d e$ are not stable.
\end{claim}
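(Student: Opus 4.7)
My plan is to apply claim (\ref*{clm:keylemma-twounstableminors:2}) to both $N = M \d e$ and $N = M / e$ and to combine the resulting trichotomies with Bixby's Lemma. Writing $P_z = M \d z, e$ and $Q_z = M \d z / e$ for $z \in \{x, y\}$, the application to $M \d e$ yields that $P_x$ is not stable, or $P_y$ is not stable, or $M \d x, y, e$ is not connected; the application to $M / e$ yields the analogous trichotomy involving $Q_x, Q_y$, and $M \d x, y / e$. By \autoref{thm:tutteconnectedtheorem} applied to the connected matroid $M \d x, y$, at most one of the two disconnection conditions can hold. The second key ingredient is Bixby's Lemma applied to the $3$-connected matroids $M \d x$ and $M \d y$: for each $z \in \{x, y\}$, one of $P_z, Q_z$ is internally $3$-connected. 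I will observe that internal $3$-connectedness implies stability, since any $2$-sum decomposition of an internally $3$-connected matroid must have a summand on at most three elements (which is binary), whereas a $2$-sum of two non-binary matroids has both summands of size at least three. Hence at most one of $\{P_x, Q_x\}$ and at most one of $\{P_y, Q_y\}$ is not stable.

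In the main case where both $M \d x, y, e$ and $M \d x, y / e$ are connected, conclusion (A) or (B) is then immediate from a short case split: if $P_x$ is not stable, then $Q_x$ is stable by Bixby's Lemma, and the $M / e$-trichotomy forces $Q_y$ to be not stable, so (A) holds; otherwise $P_y$ is not stable, $Q_y$ is stable by Bixby, and the $M / e$-trichotomy forces $Q_x$ to be not stable, so (B) holds.

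The main obstacle will be handling the disconnected cases. Here I will use the hypothesis from (\ref*{clm:getcounterexamplewithdeletionpair-oneseriespair:4}) that $M \d x, y$ is internally $3$-connected with at most one series pair. Any $1$-separation of $M \d x, y, e$ (respectively $M \d x, y / e$) extends by the usual connectivity arithmetic to a $2$-separation of $M \d x, y$ whose small side has size at most two. Singleton small sides are ruled out by the $3$-connectedness of $M \d x$ and $M \d y$, since such a singleton would force a triad of $M$ meeting $\{x, y\}$ in one or two elements, creating a series pair in $M \d x$ or $M \d y$. The remaining possibility is that the small side is the unique series pair of $M \d x, y$, which forces $M$ to have a $4$-cocircuit of the form $\{a, a', x, y\}$ or $\{b, x, y, e\}$. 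I then plan to trace the induced series-pair structure of the four minors $P_x, P_y, Q_x, Q_y$ directly from these cocircuits of $M$, and to verify by direct connectivity computation that the at-most-one-series-pair hypothesis, together with the constraint from \autoref{thm:tutteconnectedtheorem} that exactly one of the two disconnection conditions holds, leaves only stability patterns consistent with conclusion (A) or (B). This last step, ensuring that no configuration forces the four minors into a ``mixed'' stability pattern contradicting both (A) and (B), is the delicate part of the argument.
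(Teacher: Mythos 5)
Your reduction to the case where both $M \d x, y, e$ and $M \d x, y / e$ are connected is correct and is exactly the paper's argument: Tutte's connectivity theorem kills the doubly-disconnected case, Bixby's Lemma applied to $M \d x$ (resp.\ $M \d y$) at $e$ shows that at most one of $M \d x, e$ and $M \d x / e$ (resp.\ of $M \d y, e$ and $M \d y / e$) is unstable, and the two trichotomies from (\ref*{clm:keylemma-twounstableminors:2}) then force outcome (A) or (B). Your observation that internal $3$-connectivity implies stability (a $2$-sum with a three-element part has a binary part) is also correct.

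The gap is in the mixed cases, where exactly one of $M \d x, y, e$ and $M \d x, y / e$ is disconnected; you explicitly leave the ``delicate part'' unverified, and the route you sketch does not obviously close it. The difficulty is not merely structural: if, say, $M \d x, y, e$ is disconnected, the first trichotomy gives you nothing about $M \d x, e$ or $M \d y, e$, while the second gives only that one of $M \d x / e$, $M \d y / e$ is unstable --- so neither (A) nor (B) can be read off, and you must instead derive a contradiction. Your plan analyzes the disconnection of $M \d x, y, e$ in isolation (via the unique series pair of $M \d x, y$ and the resulting four-element cocircuits), but never couples it to the instability supplied by the other trichotomy, which is where the contradiction actually comes from. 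The paper's argument is: if $M \d x / e$ is not stable, Bixby's Lemma forces $M \d x, e$ to be internally $3$-connected, and then the disconnectedness of $(M \d x, e) \d y$ pins $y$ into a coloop/series configuration of $M \d x, e$ yielding a cocircuit of $M$ inside $\{x, y, e\}$ together with possibly one more element; the $3$-connectivity of $M \d x$ and $M \d y$ and the connectivity of $M \d x, y$ then rule this out. (The dual-flavoured case, where $M \d x, y / e$ is disconnected and $M \d x, e$ is unstable, is even quicker: Bixby makes $M \d x / e$ internally $3$-connected, and it has no series pairs since $M \d x$ is $3$-connected, so deleting $y$ cannot disconnect it.) Without this coupling, your four-cocircuit case analysis would at best show the disconnected configurations are consistent with \emph{some} stability pattern, not that they force (A) or (B); as it stands the proof is incomplete.
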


Up to symmetry between $x$ and $y$, (\ref*{clm:keylemma-twounstableminors:2}) implies that one of the following five cases occurs:\\

\noindent
\emph{(a)} $M \d x,y \d e$ is not connected and $M \d x,y /e$ is not connected,\\
\emph{(b)} $M \d x,y \d e$ is not connected and $M \d x / e$ is not stable,\\
\emph{(c)} $M \d x,y /e$ is not connected and $M \d x \d e$ is not stable,\\
\emph{(d)} $M \d x \d e$ is not stable and $M \d x / e$ is not stable, or\\
\emph{(e)} $M \d x \d e$ is not stable and $M \d y /e$ is not stable.\\

As $M\d x,y$ is connected, case (a) contradicts \autoref{thm:tutteconnectedtheorem}, and since $M\d x$ is $3$-connected, case (d) contradicts Bixby's Lemma.
We suppose case (b) holds.
Since $M\d x / e$ is not stable, Bixby's Lemma implies that $M \d x \d e$ is internally $3$-connected. Then since $M \d x,y \d e$ is not connected, $y$ is in a series pair of $M \d x \d e$, so $\{x,y,e\}$ is a triad of $M$. This contradicts the fact that $M \d x,y$ is connected.
Next, we suppose that case (c) holds.
Since $M \d x \d e$ is not stable, by Bixby's Lemma $M \d x / e$ is internally $3$-connected. There are no series pairs in $M\d x / e$, so $M \d x,y /e$ is connected, a contradiction.
We conclude that, up to symmety, case (e) holds, which proves (\ref*{clm:keylemma-twounstableminors}).

\begin{claim} \label{clm:keylemma-Mxynot3conn}
$M \d x, y$ is not $3$-connected.
\end{claim}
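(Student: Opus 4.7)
The plan is to suppose for contradiction that $M \d x, y$ is $3$-connected and exhibit a $2$-separation of it. By \autoref{lem:strandspicture}, $|\Sigma(M, M')| \geq 2$, and since the distinguishing sets $S, T$ provided there lie in $E(M) \setminus E(N_0)$, I may choose some $e \in \Sigma(M, M') \setminus E(N_0)$. Applying claim (\ref{clm:keylemma-twounstableminors}) and swapping $x, y$ if necessary, both $M \d x \d e$ and $M \d y / e$ are not stable. As these are one-element minors of the $3$-connected matroids $M \d x$ and $M \d y$, each is connected and hence, being unstable, is a $2$-sum of two non-binary matroids: write $M \d x \d e = P_1 \oplus_2 P_2$ and $M \d y / e = Q_1 \oplus_2 Q_2$. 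Every non-binary matroid contains a $U_{2,4}$-minor, so has at least four elements; thus the associated $2$-separations $(U_1, U_2)$ of $M \d x \d e$ and $(V_1, V_2)$ of $M \d y / e$ have all four sides of size at least three.

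Next I would lift $(U_1, U_2)$ back to $M \d x$. Since $M \d x$ is $3$-connected and restoring $e$ changes $\lambda$ of a subset by at most one, $\lambda_{M \d x}(U_1) = 2$, and neither $(U_1, U_2 \cup \{e\})$ nor $(U_1 \cup \{e\}, U_2)$ can be a $2$-separation of $M \d x$, forcing $e$ to lie outside both $\cl_{M \d x}(U_1)$ and $\cl_{M \d x}(U_2)$. Assume without loss of generality that $y \in U_1$. A direct computation using $\lambda(A) = r(A) + r(E(M) \setminus A) - r(M)$, together with the fact that $y$ is not a coloop of $M \d x$, gives $\lambda_{M \d x, y}(U_1 \setminus \{y\}) \in \{1, 2\}$, with value $1$ exactly when $y$ is a coloop of $(M \d x)|U_1$. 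In the former case $(U_1 \setminus \{y\}, U_2 \cup \{e\})$ is a $2$-separation of $M \d x, y$ with both sides of size at least $2$, contradicting the assumption. In the latter case I would perform the analogous dual analysis on $M \d y / e = Q_1 \oplus_2 Q_2$ and, assuming $x \in V_1$, exploit $(V_1, V_2)$ together with the fact that $x$ is not a loop of the relevant minor to produce a $2$-separation of $M \d x, y$.

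The main obstacle is the ``non-coloop'' sub-case, in which the deletion-side argument only produces a $3$-separation of $M \d x, y$ and we must combine information from both decompositions around $e$. When the $2$-sum parts attain their minimum size ($P_1, Q_1 \cong U_{2, 4}$), one obtains a $4$-point line of $M \d x$ through $y$ and a dual $4$-element cocircuit of $M \d y$ through $x$, both centred at $e$. Combining these structural facts with the modularity of $N_0$, the equality $\lambda_M(E(N_0)) = 3$, the hypothesis that $M \d x, y$ has at most one series pair, and the minimality of $M$ as a counterexample should yield either a $2$-separation of $M \d x, y$ directly, or else a smaller $3$-connected non-$\F$-representable minor of $M$ containing $N_0$ as a restriction---in either case, a contradiction.
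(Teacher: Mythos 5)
Your opening moves are sound: choosing $e\in\Sigma(M,M')$ with $M\d x\d e$ and $M\d y/e$ unstable, decomposing each as a $2$-sum of non-binary parts, and the ``coloop dichotomy'' computation showing that $(U_1\setminus\{y\},U_2\cup\{e\})$ is a $2$-separation of $M\d x,y$ when $y\notin\cl_M(U_1\setminus\{y\})$ (and dually on the contraction side) are all correct. But this only disposes of two complementary sub-cases, and the remaining case --- $y\in\cl_M(U_1\setminus\{y\})$ on the deletion side \emph{and} $x\notin\cl_M(V_1\setminus\{x\})$ on the contraction side --- is exactly where the real work lies, and your treatment of it is not a proof. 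Two concrete gaps: first, you restrict attention to ``minimum size'' parts $P_1,Q_1\cong U_{2,4}$ with no mechanism forcing this; the paper gets the small side for free by first applying Bixby's Lemma to decide which of $M\d x,y\d e$ and $M\d x,y/e$ is internally $3$-connected, which then forces the non-binary part on the corresponding side of $M\d x\d e$ (resp.\ $M\d y/e$) to be a $3$-element triangle-and-triad, yielding the triangle $\{a,b,y\}$ and cocircuit $\{a,b,y,e\}$ of $M\d x$. Second, and more seriously, the structural facts you list (a $4$-point line through $y$, a cocircuit through $x$, modularity, $\lambda_M(E(N_0))=3$) do not by themselves contradict anything, and ``should yield a contradiction'' is where the argument actually begins. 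The paper's contradiction requires a \emph{second} element $f\in\Sigma(M,M')$, a repetition of the whole analysis for $f$ (in both of the two possible orientations of (\ref{clm:keylemma-twounstableminors})), and crucially the non-nested distinguishing sets $S,T$ with $S\Delta T\subseteq\Sigma(M,M')$ supplied by part (ii) of \autoref{lem:strandspicture}: one shows that the triangles and cocircuits just constructed force $\cl_M(S)\cap E(N_0)=\cl_{M'}(S)\cap E(N_0)$ or the analogous equality for $T$, contradicting their distinguishing property. None of this appears in your sketch, and it is not a routine completion; indeed part (ii) of \autoref{lem:strandspicture} exists precisely to make this step possible.

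Also note that under the hypothesis that $M\d x,y$ is $3$-connected it has no series pairs, so your appeal to ``the hypothesis that $M\d x,y$ has at most one series pair'' is vacuous in this claim, and the minimality of $M$ is used only through the already-established claims about $\Sigma(M,M')$, not as a fresh source of contradiction here.
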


We say that a $2$-separation $(A, B)$ in a matroid $N$ corresponds to a $2$-sum of non-binary matroids if $N = N_1 \oplus_2 N_2$ for some non-binary matroids $N_1$ and $N_2$ with $A = E(N_1) \setminus E(N_2)$ and $B = E(N_2) \setminus E(N_1)$.

Let $e \in \Sigma(M,M')$. From (\ref*{clm:keylemma-twounstableminors}) we may assume that $M \d x \d e$ and $M \d y / e$ are not stable.
Suppose $M \d x,y$ is $3$-connected. Then by Bixby's Lemma, either $M \d x,y /e$ or $M \d x,y \d e$ is internally $3$-connected. 

First, assume that $M \d x,y / e$ is internally $3$-connected.
Let $(A, B)$ be a $2$-separation of $M \d y /e$ corresponding to a $2$-sum of two non-binary matroids, with $x \in B$. Since $M \d y / e \d x$ is internally $3$-connected, $|B| = 3$. Since $M\d x,y$ is $3$-connected, $M \d x, y / e$ has no series pairs, so $B \setminus \{x\}$ is a parallel pair of $M \d y / e$, contradicting the fact that $(A, B)$ is a $2$-separation that corresponds to a $2$-sum of two non-binary matroids.

Therefore, $M \d x, y / e$ is not internally $3$-connected, and $M \d x, y \d e$ is.
Let $(A, B)$ be a $2$-separation of $M \d x \d e$ corresponding to a $2$-sum of two non-binary matroids, with $y \in B$. Then $|B| = 3$ since $M \d x, y \d e$ is internally $3$-connected. 
So by (\ref*{clm:keylemma-nonbinary2sum}), $B$ is a triangle and a triad of $M \d x \d e$ containing $y$.
Denote the other two elements of $B$ by $a$ and $b$. Since $M \d x$ is $3$-connected, $\{a, b, y, e\}$ is a cocircuit of $M \d x$ that contains the triangle $\{a, b, y\}$, and $\{a, b, e\}$ is a triad of $M \d x, y$.

Let $(C, D)$ be an internal $2$-separation of $M \d x, y / e$. Then $e \in \cl_M(C) \cap \cl_M(D)$, so $a$ and $b$ are not both contained in the same one of $C$ or $D$ because $\{a, b, e\}$ is a cocircuit of $M \d x, y$.
Thus we may assume $a \in C$, $b \in D$.
So in $M \d x$ we have $\sqcap_{M \d x}(\{e, a\}, C \setminus \{a\}) = 1$, $\sqcap_{M \d x}(\{e, b\}, D \setminus \{b\}) = 1$, and $\sqcap_{M \d x}(\{a, b, y\}, C \cup D \setminus \{a, b\}) = 1$, but $\{a, b, y\}$ and $\{y, e\}$ are each skew to both $C \setminus \{a\}$ and $D \setminus \{b\}$.

We pick a second element $f \in \Sigma(M, M')$. Then either $M \d x \d f$ and $M\d y /e$ are not stable, or $M \d x /f$ and $M\d y \d f$ are not stable. First we assume that $M\d x \d f$ and $M \d y / f$ are not stable.
By the same argument that was applied to $e$, $M\d x$ has a cocircuit $\{c,d,f,y\}$ with a triangle $\{c,d,y\}$, and there is an internal $2$-separation of $M\d x,y/f$, $(U,V)$ with $c \in U, d \in V$.
So $\{e,a,b\}$ and $\{f,c,d\}$ are both triads in $M\d x,y$, and $\{y, a, b\}$ and $\{y, c, d\}$ are both triangles of $M \d x$.
But the only triangle of $M \d x$ containing $y$ is $\{y, a, b\}$, so we may assume that $a = c$ and $b = d$. 
Now $(V \setminus \{b\}) \cup \{f\}$ spans $b$, so $e \in (V \setminus \{b\}) \cup \{f\}$. But $e \neq f$ so $e \in V$. Symmetrically, $(U \setminus \{a\}) \cup \{f\}$ spans $a$ so we also have $e \in U$, a contradiction.

Therefore, there are only two elements of $\Sigma(M, M')$, $e$ and $f$, and
$M \d x / f$ and $M \d y \d f$ are not stable.

Applying to $f$ the same argument as for $e$ but with $x$ and $y$ swapped, we see that $M \d y$ has a cocircuit $\{c, d, x, f\}$ with a triangle $\{c, d, x\}$.

We assume that $e \not\in \{c, d\}$ and $f \not\in \{a, b\}$. Then $\{a, b, e\}$ and $\{c, d, f\}$ are disjoint because $\{a, b\}$ is a series class of $M \d x, y, e$ but $\{c, d\}$ is not.
By \autoref{lem:strandspicture}, there are two distinct sets $S, T \subseteq E(M) \setminus E(N_0)$ such that $S \Delta T = \{e, f\}$, $\cl_M(S) \cap E(N_0) \neq \cl_{M'}(S) \cap E(N_0)$ and $\cl_M(T) \cap E(N_0) \neq \cl_{M'}(T) \cap E(N_0)$. 
By symmetry, we may assume $S \setminus T = \{e\}$ and $T \setminus S = \{f\}$.
Note that $x, y \in S \cap T$. We may also assume that $S$ and $T$ are minimal, so neither contains $\{a, b\}$ or $\{c, d\}$, both of which are in triangles with $y$.
Suppose that $a \in S$. then $\cl_M(S) \cap E(N_0) = \cl_M((S \cup \{b\}) \setminus \{y\}) \cap E(N_0)$ since $\{a, b, y\}$ is a triangle. But then $\cl_M(S) \cap E(N_0) = \cl_{M'}((S \cup \{b\}) \setminus \{y\}) \cap E(N_0)$, which equals $\cl_{M'}(S)$ because $\{a, b, y\}$ is also a triangle of $M'$. This is a contradiction, so $a \not\in S$, and by the symmetric argument, $b \not\in S$.
Suppose that $a, b, e \not\in T$. Then since $\{a, b, e, y\}$ is a cocircuit of $M$, $\cl_M(T) \cap E(N_0) = \cl_M(T \setminus \{y\}) \cap E(N_0) = \cl_{M'}(T \setminus \{y\}) \cap E(N_0)$. But this equals $\cl_{M'}(T) \cap E(N_0)$, because $\{a, b, y, e\}$ is a union of cocircuits in $M' \d x$, a contradiction. Hence $T$ contains at least one of $a$ or $b$, and by symmetry we may assume $a \in T$. Then $a \in S \Delta T = \{e, f\}$, so we have $a = f$, contradicting our assumption that $f \not\in \{a, b\}$.

Therefore, we may assume by symmetry that $a = f$.
Hence $\{f, b, y\}$ is a triangle, in both $M$ and $M'$. But then $\cl_M(T) = \cl_M((T \setminus \{y\}) \cup \{b\})$ and $\cl_{M'}(T) = \cl_{M'}((T \setminus \{y\}) \cup \{b\})$, so these sets are equal, a contradiction.
This proves (\ref*{clm:keylemma-Mxynot3conn}).
\\

Note that, since our deletion pair $\{x, y\}$ was arbitrary up to the assumption that $M \d x, y$ has at most one series pair, (\ref*{clm:keylemma-Mxynot3conn}) implies that there is no deletion pair $x', y' \in E(M) \setminus E(N_0)$ such that $M \d x', y'$ is $3$-connected.
Whenever $u, v \in E(M)$ are elements such that $M \d u, v$ is $3$-connected, then $M \d u$ and $M \d v$ are internally $3$-connected. But they have no parallel pairs so they are actually $3$-connected, and $\{u, v\}$ is a deletion pair of $M$. Therefore, there are no two distinct elements $u, v \in E(M) \setminus E(N_0)$ such that $M \d u, v$ is $3$-connected.

\begin{claim} \label{clm:keylemma-enotinseriespair}
If $e \in \Sigma(M, M')$, then $e$ is not in a series pair of $M \d x, y$.
\end{claim}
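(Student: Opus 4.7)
The plan is to argue by contradiction, supposing that $e \in \Sigma(M, M')$ lies in the (necessarily unique) series pair $\{e, f\}$ of $M \d x, y$. First I would show that $\{e, f, x, y\}$ must be a cocircuit of $M$: the minimal cocircuit $C^*$ of $M$ with $\{e, f\} \subseteq C^* \subseteq \{e, f, x, y\}$ has $|C^*| \geq 3$ by $3$-connectivity of $M$, and $C^* = \{e, f, x\}$ or $C^* = \{e, f, y\}$ would make $\{e, f\}$ a series pair of $M \d y$ or $M \d x$, contradicting their $3$-connectivity. This cocircuit then forces $\{e, f, y\}$ to be a triad of $M \d x$ and $\{e, f, x\}$ a triad of $M \d y$, so that $\{f, y\}$ is a series pair of $M \d x \d e$, $\{f, x\}$ is a series pair of $M \d y / e$, and $f$ is a coloop of both $M \d x, y \d e$ and $M \d x, y / e$ (making each disconnected).

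By (\ref*{clm:keylemma-twounstableminors}), I may assume without loss of generality that $M \d x \d e$ and $M \d y / e$ are both non-stable. Each is connected: any $1$-separation $(A, B)$ of $M \d y / e$ would pull back to a $2$-separation of $M \d y$ with $|A|, |B \cup \{e\}| \geq 2$ (using that $M \d y / e$ has no loops or coloops, as $M$ is simple and $3$-connected), contradicting the $3$-connectivity of $M \d y$; the same argument handles $M \d x \d e$. Hence each of $M \d x \d e$ and $M \d y / e$ is a $2$-sum of two non-binary matroids, yielding a $2$-separation with both sides of size at least three.

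The main obstacle is extracting a contradiction from these two $2$-sum decompositions. Taking a $2$-separation $(A, B)$ of $M \d y / e$ with $|A|, |B| \geq 3$ arising from the $2$-sum, I would first argue using the series pair $\{f, x\}$ and the cocircuit structure of a $2$-sum that, up to swapping sides, $\{f, x\} \subseteq A$; lifting through the contraction of $e$ and then inserting $y$ via the cocircuit $\{e, f, x, y\}$ produces a $3$-separation of $M$ placing $B$ opposite $\{e, f, x, y\}$. Aligning this with the analogous decomposition of $M \d x \d e$ (whose series pair is $\{f, y\}$) and invoking $\lambda_M(E(N_0)) = 3$ would locate a small non-binary restriction of $M$ that is disjoint from $E(N_0)$ and is attached to the rest via the cocircuit $\{e, f, x, y\}$. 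The concluding step would apply \autoref{lem:remove}, with $C$ and $D$ chosen so that the resulting minor is connected (by deleting or contracting $f$ to resolve its coloop role) and has $\{x, y\}$ coindependent, to contradict either the $\F$-representability of $M \d e$ or $M / e$ granted by (\ref*{clm:keylemma-twounstableminors:1}), or—by exhibiting an alternative pair $\{u, v\} \subseteq E(M) \setminus E(N_0)$ with $M \d u, v$ being $3$-connected—the observation recorded immediately after (\ref*{clm:keylemma-Mxynot3conn}).
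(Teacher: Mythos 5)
There is a genuine error at the start of your structural analysis, and it points in exactly the wrong direction. From the triad $\{e,f,x\}$ of $M \d y$ you conclude that $\{f,x\}$ is a series pair of $M \d y / e$; but deleting (not contracting) an element of a triad is what leaves the other two in series — contracting $e$ from a triad gives $r^*(\{f,x\})=2$, so $\{f,x\}$ is not a series pair of $M\d y/e$. Likewise $f$ is not a coloop of $M \d x, y / e$: contracting one element of a series pair does not make the other a coloop (think of a circuit). In fact the truth is the opposite of what you assert: since $M \d x, y$ is internally $3$-connected with $\{e,f\}$ as its \emph{unique} series pair (this is exactly what \autoref{lem:getcounterexamplewithdeletionpair} bought us), $M \d x, y / e$ is $3$-connected, not disconnected. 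That observation is the entire proof: by (\ref*{clm:keylemma-twounstableminors}) and the $x$--$y$ symmetry one may assume $M \d y / e$ is not stable; but $M\d x,y/e$ being $3$-connected forces every $2$-separation of $M \d y / e$ to have a side of size at most two (the side containing $x$), so $M \d y / e$ is internally $3$-connected and hence stable — a contradiction. No cocircuit $\{e,f,x,y\}$, no $2$-sum decompositions, and no appeal to \autoref{lem:remove} are needed.

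Beyond that sign error, the remainder of your argument is a plan rather than a proof: the step where you ``align'' the two $2$-sum decompositions to ``locate a small non-binary restriction'' and then choose $C$ and $D$ for \autoref{lem:remove} is not carried out, and since it is built on the incorrect premise that $M \d x, y / e$ is disconnected, it cannot be repaired as stated. The fix is to discard the cocircuit analysis entirely and use the uniqueness of the series pair as above.
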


From (\ref*{clm:keylemma-twounstableminors}) and the symmetry between $x$ and $y$, we may assume that $M \d y / e$ is not stable.
Suppose $e$ is in a series pair of $M \d x,y$. Since $M \d x,y$ is internally $3$-connected with at most one series pair, $M \d x,y /e$ is $3$-connected. This
contradicts the fact that $M \d y / e$ is not internally $3$-connected, proving (\ref*{clm:keylemma-enotinseriespair}).
\\

By \autoref{lem:strandspicture}, there is an element $e \in \Sigma(M, M')$ and sets $S, T \subseteq E(M) \setminus E(N_0)$ such that $e \in S \setminus T$, $\cl_M(S) \cap E(N_0) \neq \cl_{M'}(S) \cap E(N_0)$, and $\cl_M(T) \cap E(N_0) \neq \cl_{M'}(T) \cap E(N_0)$.
From (\ref*{clm:keylemma-twounstableminors}) we may assume that $M \d x \d e$ and $M \d y / e$ are not stable.
By (\ref*{clm:keylemma-Mxynot3conn}), $M \d x, y$ has exactly one series pair; we denote it by $\{a, b\}$.
Then $M \d x, y / a$ is $3$-connected, and by Bixby's Lemma either $M \d x, y / a \d e$ or $M \d x, y / a / e$ is internally $3$-connected.

\begin{claim} \label{clm:keylemma-abnotindistinguishingsets}
For any set $H \subseteq E(M)$, if $a \in H$ or $b \in H$ then $\cl_M(H) = \cl_{M'}(H)$.
\end{claim}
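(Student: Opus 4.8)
The plan is to reduce the statement to a comparison of the rank functions of $M$ and $M'$, and then to exploit the cocircuit structure forced by the series pair $\{a,b\}$. I begin with two preliminary facts. Since $M\d x=M'\d x$ and $M\d y=M'\d y$, for any $A\subseteq E(M)$ with $\{x,y\}\not\subseteq A$ we have $r_M(A)=r_{M'}(A)$: if $x\notin A$ then $r_M(A)=r_{M\d x}(A)=r_{M'\d x}(A)=r_{M'}(A)$, and symmetrically if $y\notin A$. Secondly, $\{a,b,x,y\}$ is a cocircuit of both $M$ and $M'$: the matroid $M\d x,y=M'\d x,y$ has $\{a,b\}$ as a series pair, so some cocircuit $C^*$ of $M$ satisfies $\{a,b\}\subseteq C^*\subseteq\{a,b,x,y\}$; since $M\d x$ and $M\d y$ are $3$-connected they have no series pairs, so $C^*$ cannot avoid $x$ and cannot avoid $y$, forcing $C^*=\{a,b,x,y\}$, and the same argument applies to $M'$. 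Because $M$ is $3$-connected and $|E(M)\setminus\{a,b,x,y\}|\geq 3$ (as $E(N_0)$ contributes at least five elements), $\lambda_M(\{a,b,x,y\})=3$, so this is an \emph{independent} cocircuit; consequently $\{a,b,x\}$ is a triad of $M\d y$ and $\{a,b,y\}$ a triad of $M\d x$, and likewise in $M'$.

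Next I would reduce the claim to the following sub-claim: \emph{if $\{a,x,y\}\subseteq G\subseteq E(M)$ or $\{b,x,y\}\subseteq G\subseteq E(M)$, then $r_M(G)=r_{M'}(G)$.} Granting this, assume $a\in H$ (the case $b\in H$ is identical), take $z\in E(M)$, and compare membership of $z$ in $\cl_M(H)$ and $\cl_{M'}(H)$. If $\{x,y\}\not\subseteq H\cup\{z\}$, then neither $H$ nor $H\cup\{z\}$ contains $\{x,y\}$, so $r_M(H)=r_{M'}(H)$ and $r_M(H\cup\{z\})=r_{M'}(H\cup\{z\})$ by the first fact, and membership agrees. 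Otherwise $\{x,y\}\subseteq H\cup\{z\}$; either $\{x,y\}\subseteq H$, so both $H$ and $H\cup\{z\}$ contain $\{a,x,y\}$ and the sub-claim gives $r_M(H)=r_{M'}(H)$ and $r_M(H\cup\{z\})=r_{M'}(H\cup\{z\})$; or, up to swapping $x$ and $y$, $z=x$, $x\notin H$ and $y\in H$, in which case $r_M(H)=r_{M'}(H)$ by the first fact while $r_M(H\cup\{x\})=r_{M'}(H\cup\{x\})$ by the sub-claim (since $H\cup\{x\}\supseteq\{a,x,y\}$). In every case $z\in\cl_M(H)\iff z\in\cl_{M'}(H)$, proving the claim.

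Finally, the sub-claim is the heart of the matter. Fix $G$ with $\{a,x,y\}\subseteq G$ and put $G_0=G\setminus\{x,y\}\ni a$ and $\rho=r_M(G_0)=r_{M'}(G_0)$. By the first fact $r_M(G\setminus x)=r_{M'}(G\setminus x)$ and $r_M(G\setminus y)=r_{M'}(G\setminus y)$, and each of these lies in $\{\rho,\rho+1\}$. Using $\max\{r_M(G\setminus x),r_M(G\setminus y)\}\leq r_M(G)\leq\min\{r_M(G\setminus x)+1,\,r_M(G\setminus y)+1\}$ (and the same for $M'$), one checks that $r_M(G)=r_{M'}(G)$ is automatic unless $r_M(G\setminus x)=r_M(G\setminus y)$, equivalently unless $x\in\cl_{M\d y}(G_0)\Leftrightarrow y\in\cl_{M\d x}(G_0)$. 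In this remaining case I would invoke the triads $\{a,b,x\}$ of $M\d y$ and $\{a,b,y\}$ of $M\d x$, splitting on whether $b\in G_0$: if $b\in G_0$ the triad $\{a,b,x\}$ meets $G_0$ in $\{a,b\}$, while if $b\notin G_0$ it meets $G_0$ only in $\{a\}$; in each subcase one uses the triad (and, when $b\notin G_0$, the hyperplane $E(M)\setminus\{a,b,x,y\}$) to force a circuit inside $G$, or to force $x$ and $y$ out of $\cl_M(G_0)$, pinning $r_M(G)$ to a value determined by $\rho$ and $r_M(G_0\cup\{b\})$ — quantities shared by $M$ and $M'$. This last case analysis, which is where the independent cocircuit $\{a,b,x,y\}$ really does its work, is the main obstacle; the earlier steps are essentially bookkeeping, as is the parallel argument with $b$ in place of $a$.
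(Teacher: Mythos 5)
Your reduction to the rank statement and the bookkeeping around it are fine, but the proof has a genuine gap at exactly the point you flag as ``the main obstacle.'' After your reductions, the only case left is: $G\supseteq\{a,x,y\}$, $G_0=G\setminus\{x,y\}$, and $x,y\notin\cl_M(G_0)$ (equivalently $\sigma=\rho+1$); you must then show $y\in\cl_M(G_0\cup\{x\})$ if and only if $y\in\cl_{M'}(G_0\cup\{x\})$. That is not bookkeeping --- it \emph{is} the claim, in its only nontrivial case, and you do not prove it. It is also doubtful that the tools you propose (the cocircuit $\{a,b,x,y\}$ and the triads $\{a,b,x\}$ of $M\d y$ and $\{a,b,y\}$ of $M\d x$) can close it: Lemma \ref{lem:strandspicture} guarantees that there \emph{are} sets containing $x$ and $y$ whose closures differ in $M$ and $M'$, so a successful argument must exploit something about $a$ beyond its membership in a cocircuit with $x$ and $y$. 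The paper's proof does this by establishing the much stronger fact $M/a=M'/a$: since $M\d x,y/a$ is $3$-connected, the matroid $M/a$ satisfies hypotheses (\ref{hyp:remove-coindependent})--(\ref{hyp:remove-connected}) of \autoref{lem:remove}, and $M/a$ is $\F$-representable by (\ref{clm:keylemma-twounstableminors:1}) (the minimality of $M$), so \autoref{lem:remove} --- which rests on the stabilizer uniqueness results, Lemmas \ref{lem:uniquerepresentablematroid1} and \ref{lem:uniquerepresentablematroid2} --- forces $M/a=M'/a$, and the claim follows by taking closures in the contraction. In other words, the real content is representability-theoretic, and your purely combinatorial route omits it entirely.

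A secondary error: you assert that $\lambda_M(\{a,b,x,y\})=3$, hence independence of this cocircuit, follows from $3$-connectivity. It does not: $3$-connectivity only forbids $2$-separations, so it gives $\lambda_M(\{a,b,x,y\})\ge 2$, and $\lambda=2$ (a dependent $4$-element cocircuit) is a priori possible. The paper proves $y\notin\cl_M(\{a,b,x\})$ only later, in (\ref{clm:keylemma-contracteisinternally3conn}), by a separate argument using the instability of $M\d x,e$. Since your final case analysis leans on this independence (via the hyperplane complementary to the cocircuit), this is not merely cosmetic.
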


Since $M \d x, y / a$ is $3$-connected, it follows that $M \d x / a$ and $M \d y / a$ are both stable and $M \d x, y / a$ is connected. Also, $M / a$ is $\F$-representable by (\ref*{clm:keylemma-twounstableminors:1}). Therefore, \autoref{lem:remove} implies that $M / a = M' / a$. 
For any set $H \subseteq E(M)$ containing $a$, $\cl_{M / a}(H \setminus \{a\}) = \cl_{M' / a}(H \setminus \{a\})$, which means that $\cl_M(H) = \cl_{M'}(H)$.
The same argument applies with $b$ in place of $a$, proving (\ref*{clm:keylemma-abnotindistinguishingsets}).

\begin{claim}\label{clm:keylemma-trianglethingcase:1}
$M \d x, y, e$ is not internally $3$-connected.
\end{claim}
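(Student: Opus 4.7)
The plan is to suppose for contradiction that $M \d x, y, e$ is internally $3$-connected, extract a triangle and matching cocircuit of $M$ from the unstable $2$-sum structure of $M \d x, e$, and then combine these with the unique series pair $\{a, b\}$ of $M \d x, y$ to force $M \d x, y, e$ to violate either its internal $3$-connectedness or its connectedness.

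First I would exploit the hypothesis, from (\ref*{clm:keylemma-twounstableminors}), that $M \d x \d e$ is not stable. Since $M \d x$ is $3$-connected and $e$ is neither a loop nor a coloop of $M \d x$, the matroid $M \d x, e$ is connected, so instability forces it to be a $2$-sum of two non-binary matroids. Let $(A, B)$ be the corresponding $2$-separation, and take $y \in B$ by symmetry; both sides have at least three elements. Internal $3$-connectedness of $M \d x, y, e = (M \d x, e) \d y$, together with $|A| \geq 3$, gives $|B \setminus \{y\}| \leq 2$ and hence $|B| = 3$. The four-element non-binary side over $B$ must then be $U_{2, 4}$, and (\ref*{clm:keylemma-nonbinary2sum}) yields $B = \{y, u, v\}$ as simultaneously a triangle and a triad of $M \d x, e$. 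Consequently $\{y, u, v\}$ is a triangle of $M$; and since $M \d x$ is $3$-connected and cannot have a triangle equal to a triad, the cocircuit of $M \d x$ producing this triad of $M \d x, e$ must be exactly $\{y, u, v, e\}$.

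Next I would bring in the series pair $\{a, b\}$ of $M \d x, y$, whose corresponding cocircuit in $M$ is $\{a, b, x, y\}$ (because $M \d x$ and $M \d y$ are $3$-connected and admit no $2$-cocircuit). The triangle $\{y, u, v\}$ and the cocircuit $\{a, b, x, y\}$ cannot meet in exactly one element, so (as $u, v \neq x$) I may relabel to assume $u = a$. The case $v = b$ is impossible, since then $\{a, v, e\} = \{a, b, e\}$, descending from $\{y, u, v, e\}$, would be a cocircuit of $M \d x, y$ properly containing the cocircuit $\{a, b\}$. Using (\ref*{clm:keylemma-enotinseriespair}) to rule out $2$-cocircuits of $M \d x, y$ that contain $e$, one then checks that $\{a, v, e\}$ is an honest triad of $M \d x, y$, so deleting $e$ produces $\{a, v\}$ as a second series pair of $M \d x, y, e$.

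Finally, both $\{a, b\}$ and $\{a, v\}$ are $2$-cocircuits of $M \d x, y, e$, so cocircuit elimination yields $\{b, v\}$ as a third $2$-cocircuit, making $\{a, b, v\}$ a single series class of corank $1$. Hence $\lambda_{M \d x, y, e}(\{a, b, v\}) = r_{M \d x, y, e}(\{a, b, v\}) - 2 \in \{0, 1\}$: if $\{a, b, v\}$ is independent the partition is an internal $2$-separation (its complement easily has well over three elements), contradicting internal $3$-connectedness; if $\{a, b, v\}$ is a triangle then $\lambda_{M \d x, y, e}(\{a, b, v\}) = 0$ and the set is a proper separator, contradicting connectedness. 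The main obstacle I anticipate is in the second paragraph: carefully tracking the minimality conditions as cocircuits descend through the successive deletions of $y$ and then $e$, and ruling out the coincidences $v = a$ and $v = b$, so that the concluding cocircuit-elimination step really delivers a genuine third $2$-cocircuit.
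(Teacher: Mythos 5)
Your argument is correct and follows essentially the same route as the paper's: extract the triangle-and-triad $B = \{y, u, v\}$ of $M \d x, e$ from the instability of $M \d x, e$ via the non-binary $2$-sum observation, use orthogonality with the cocircuit arising from the series pair $\{a, b\}$ to force $\{u, v\}$ to meet $\{a, b\}$ in exactly one element, and then exhibit two overlapping series pairs of $M \d x, y, e$ whose union is a series class of connectivity at most one, contradicting internal $3$-connectedness. The only quibble is in your exclusion of $v = b$: deleting $y$ from the cocircuit $\{y, a, b, e\}$ of $M \d x$ yields only a set \emph{containing} a cocircuit of $M \d x, y$, not a cocircuit itself, but the intended contradiction is already available one level up, since $\{y, a, b, e\}$ would then be a cocircuit of $M \d x$ properly containing the triad $\{y, a, b\}$.
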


We assume that $M \d x, y, e$ is internally $3$-connected.
Since $M \d x \d e$ is not stable, it has an internal $2$-separation $(A, B)$ where $y \in B$, and by (\ref*{clm:keylemma-nonbinary2sum}), $B$ is a triangle and a triad.
We let $c$ and $d$ be the other two elements of $B$. 
Then $B$ is coindependent in $M \d x$ and $\{c, d\}$ is coindependent in $M \d x, y$. Thus at most one of $a, b$ is in $\{c, d\}$; but $\{y, a, b\}$ is a triad of $M \d x$ and $\{y, c, d\}$ is a triangle, so $\{c, d\}$ contains exactly one of $a$ or $b$. We may assume that $c = b$ so $B = \{y, b, d\}$ and $d \neq a$.
Then $a \in A$ and $\{a, b\}$ is not a series pair of $M \d x, y, e$, because $\{b, d\}$ is and $M \d x, y, e$ is internally $3$-connected. This contradicts the fact that $\{a, b\}$ is a series pair of $M \d x, y$, proving (\ref*{clm:keylemma-trianglethingcase:1}).

\begin{comment}
\begin{claim} \label{clm:keylemma-abyc}
 If $M \d x, y, e / a$ is internally $3$-connected then for some $c \in E(M)$, $\{a, b, c\}$ is a series class of $M \d x, y, e$ and $\{a, b, y, c\}$ is a rank-$3$ flat of $M$.
\end{claim}

By (\ref*{clm:keylemma-trianglethingcase:1}), $M \d x, y, e$ has an internal $2$-separation $(U, V)$ with $a \in V$. Since $\{a, b\}$ is a series pair, we may assume $b \in V$. Then $(U, V \setminus \{a\})$ is not an internal $2$-separation of $M \d x, y, e / a$, so $|V| = 3$, $V$ is closed in $M \d x, y$, and $V$ is a series class of $M \d x, y, e$. Let $c$ denote the element of $V \setminus \{a, b\}$. Note that $(U, V)$ is (up to ordering the parts) the only internal $2$-separation of $M \d x, y, e$. Since $M \d x, e$ is not stable, $y$ is in the closure of either $V$ or some series pair of $M \d x, y, e$. But if $y$ is in the closure of $E(M) \setminus \{a, b, x\}$ then $\{a, b\}$ is a series pair of $M \d x$, a contradiction. So $y \in \cl_M(\{a, b, c\})$, and $\{a, b, c, y\}$ has rank three in $M$. Since $M \d y / e$ is not stable, $M \d y, e$ is internally $3$-connected by Bixby's Lemma. So $(U, V \cup \{x\})$ is not a $2$-separation of $M \d y, e$, and hence $x \not\in \cl_M(V)$. Therefore, since $V$ 
is closed in $M \d x, y$ and does not span $x$, $V \cup \{y\}$ is a flat of $M$, proving (\ref*{clm:keylemma-abyc}).
\end{comment}

\begin{claim} \label{clm:keylemma-trianglethingcase:2}
 $M \d x, y / a, e$ is internally $3$-connected if and only if $M \d x, y / e$ is internally $3$-connected.
\end{claim}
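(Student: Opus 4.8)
The plan is to prove the two implications separately, using the single series pair $\{a,b\}$ of $M \d x,y$ and standard "internal $2$-separations blow up or collapse" arguments. For the easier direction, I would first suppose that $M \d x,y / a,e$ is internally $3$-connected but $M \d x,y / e$ is not. Take an internal $2$-separation $(U,V)$ of $M \d x,y / e$ with $a \in U$; since $\{a,b\}$ is a series pair of $M \d x,y$ and hence of $M \d x,y/e$ (we know $e$ is not in a series pair of $M\d x,y$ by (\ref{clm:keylemma-enotinseriespair}), so $\{a,b\}$ survives as a series pair after contracting $e$), we may assume $b \in U$. Then $(U \setminus \{a\}, V)$ is a separation of $M\d x,y/a,e$ of the same order, and since $M\d x,y/a,e$ is internally $3$-connected it must be a parallel or series class on the small side; as $|V|\ge 3$ this forces $|U| = 3$, i.e. $U$ is itself a series class $\{a,b,c\}$ of $M\d x,y/e$. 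But a series class of size $3$ in $M\d x,y/e$ gives $r^*(U)$ small; combined with $\lambda_{M\d x,y/e}(U) = 1$ this makes $U\cup\{e,x,y\}$ a cocircuit-bounded piece of $M$, and tracking back through the contraction of $e$ we get that $\{a,b\}$ was already a $2$-separating series-type set contradicting $3$-connectivity of $M\d x,y/a$ (which holds because $M\d x,y$ is internally $3$-connected with unique series pair $\{a,b\}$). I expect this direction to go through cleanly once the bookkeeping of coranks is done.

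For the reverse implication, suppose $M\d x,y/e$ is internally $3$-connected but $M\d x,y/a,e$ is not. Let $(U,V)$ be an internal $2$-separation of $M\d x,y/a,e$ with $b\in V$. Then $(U, V\cup\{a\})$ is a $2$-separation of $M\d x,y/e$ (adding back $a$, which lies in the closure of $V$ in $M\d x,y/e$ because $\{a,b\}$ is a series pair there, raises connectivity by at most nothing and keeps $\lambda = 1$ — more precisely $\lambda_{M\d x,y/e}(V\cup\{a\}) \le \lambda_{M\d x,y/a,e}(V) + $ correction, which one checks equals $1$). Since $M\d x,y/e$ is internally $3$-connected, one of the two sides of $(U, V\cup\{a\})$ is a parallel or series pair. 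It cannot be $U$ unless $|U| = 2$; and it cannot be $V\cup\{a\}$ a series pair (since then $U\cup\{x,y,e\}$ would be spanned by few elements, contradicting $\lambda_M(E(N_0)) = 3$ together with $E(N_0)\subseteq \cl$ of the big side) unless $|V\cup\{a\}| = 2$, impossible as $b\in V$ and $a\notin V$ means $|V\cup\{a\}|\ge 2$ with equality only when $V = \{b\}$, contradicting $|V|\ge 3$. So $|U| = 2$, meaning $U$ is a series or parallel pair of $M\d x,y/e$; but then $(U, V)$ being an \emph{internal} $2$-separation of $M\d x,y/a,e$ forces $|U|\ge 3$, a contradiction.

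The main obstacle, and the step I would be most careful about, is the precise relationship between $\lambda_{M\d x,y/a,e}$ and $\lambda_{M\d x,y/e}$ when moving the element $a$ between the two sides of a separation: contracting $a$ can drop the connectivity of a set by one, so one must use that $a$ lies in the closure (in $M\d x,y/e$) of its series-partner $b$ and hence of the side containing $b$, which guarantees that re-inserting $a$ on that side does not increase $\lambda$. Establishing that $\{a,b\}$ remains a series pair of $M\d x,y/e$ — which needs (\ref{clm:keylemma-enotinseriespair}) so that $e$ is not in a series pair of $M\d x,y$, hence contracting $e$ does not merge $\{a,b\}$ into a larger series class or destroy it — is the load-bearing observation, and I would state it explicitly before doing either implication. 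Everything else is the routine "an internal $2$-separation either has a small side of size exactly $3$ that is a series/parallel class, or it blows up to contradict internal $3$-connectivity" dichotomy, combined with the global constraint $\lambda_M(E(N_0)) = 3$ to rule out the case where one side collapses onto $E(N_0)$.
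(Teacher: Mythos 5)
Your second implication (if $M \d x, y / e$ is internally $3$-connected then so is $M \d x, y / a, e$) is essentially the paper's one-sentence argument, and it is fine once the justification is corrected: the reason $(U, V \cup \{a\})$ has the same connectivity as $(U,V)$ is not that $a$ lies in the \emph{closure} of $V$ --- a series pair puts $a$ in the coclosure of $b$, not the closure --- but that $a \notin \cl_{M \d x,y/e}(U)$, since $\{a,b\}$ is a cocircuit avoiding $U$; that is what makes $\lambda$ not increase when $a$ is reinstated on the side containing $b$. Also, your ``load-bearing observation'' that $\{a,b\}$ survives as a series pair of $M \d x,y/e$ needs no appeal to (\ref*{clm:keylemma-enotinseriespair}): a cocircuit of $N$ avoiding $e$ is a cocircuit of $N/e$, full stop.

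The genuine gap is in your other implication, which is the hard one. First, a local error: the small side $U = \{a,b,c\}$ of the internal $2$-separation of $M \d x,y/e$ cannot be a series class. Since $M \d x,y/a$ is $3$-connected, $M \d x,y/a,e$ has no series pairs, so $\{b,c\}$ is forced to be a \emph{parallel} pair of $M \d x,y/a,e$ and $\{a,b,c\}$ is a \emph{triangle} of $M \d x,y/e$ (one that happens to contain the series pair $\{a,b\}$). More seriously, the contradiction you then wave at --- ``tracking back through the contraction of $e$ \ldots contradicting $3$-connectivity of $M \d x,y/a$'' --- does not exist: a triangle $\{a,b,c\}$ spanning a $2$-separation of $M \d x,y/e$, with $\{a,b\}$ a series pair, is entirely consistent with $M \d x,y/a$ being $3$-connected and with $M \d x,y/a,e$ being internally $3$-connected (contracting $a$ just turns the small side into a parallel pair). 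No amount of corank bookkeeping inside $M \d x,y$ closes this case, and the global constraint $\lambda_M(E(N_0)) = 3$ that you invoke at the end is not what is used either. The paper has to leave $M \d x,y$ entirely: it uses that $M \d y / e$ is not stable (from (\ref*{clm:keylemma-twounstableminors})) to upgrade the configuration to a four-point line $\{a,b,c,x\}$ in $M \d y / e$, then invokes the earlier consequence of (\ref*{clm:keylemma-Mxynot3conn}) that no deletion $M \d u, v$ with $u, v \notin E(N_0)$ is $3$-connected to extract a triad of $M \d y$ through $c$ meeting $\{a,b,x\}$ exactly once, and finally plays that triad off against the four-point line. None of these inputs appears in your sketch, so the implication is not proved.
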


Suppose that $M \d x, y / e$ is internally $3$-connected. Since $\{a, b\}$ is a series pair of $M \d x, y$, it is a series pair of $M \d x, y / e$, so $M \d x, y / a, e$ is also internally $3$-connected.

Conversely, suppose that $M \d x, y / a, e$ is internally $3$-connected and $M \d x, y / e$ is not internally $3$-connected.
Since $\{a, b\}$ is a series pair, $M \d x, y / e$ has an internal $2$-separation $(W, Z)$ with $a, b \in Z$. But $M \d x, y / a, e$ is internally $3$-connected, so $|Z| = 3$; let $c$ denote the third element of $Z$. Recall that $M \d x, y / a$ is $3$-connected, so $M \d x, y / a, e$ has no series pairs. Hence $\{b, c\}$ is not a series pair of $M \d x, y / a, e$ so it is a parallel pair, and $Z$ is a triangle of $M \d x, y / e$. It is not a triad, however, since $\{a, b\}$ is a series pair, so $c \in \cl_{M \d x, y / e}(W)$. Note that $(W, \{a, b, c\})$ is the unique (up to ordering parts) $2$-separation of $M \d x, y / e$.
Hence since $M \d y / e$ is not stable, $x$ must lie in $\cl_{M \d y / e}(\{a, b\})$ but not in $\cl_{M \d y / e}(W)$. So $(W, \{a, b, c, x\})$ is a $2$-separation of $M \d y / e$ and $\{a, b, c, x\}$ is a four-point line in $M \d y / e$.
Therefore, $(W, \{a, b, c, e, x\})$ is a $3$-separation of $M \d y$ and $c, e \in \cl_M(W) \cap \cl_M(\{a, b, x\})$.
By our remarks after (\ref*{clm:keylemma-Mxynot3conn}), we know that $M \d y, c$ is not $3$-connected, so it has a $2$-separation $(A, B)$ with at least two of $a, b, x$ in $B$. Since $\{a, b, x\}$ is a triad, $\lambda_{M \d y, c}(B \cup \{a, b, x\}) = 1$, but $(A \setminus \{a, b, x\}, B \cup \{a, b, x\})$ is not a $2$-separation of $M \d y, c$ since $c \in \cl_M(\{a, b, x\})$. Therefore, $|A| = 2$, $A$ is a series pair of $M \d y, c$, and $A$ contains one element of $\{a, b, x\}$. 
This implies that there is a triad $L$ of $M \d y$ containing $c$ and precisely one element of $\{a, b, x\}$. Since $e \in \cl_M(W)$, $e \not\in L$.
Therefore, either $c \not\in \cl_M(\{b, x, e\})$, $c \not\in \cl_M(a, x, e\})$, or $c \not\in \cl_M(\{a, b, e\})$. But $r_M(\{a, b, x, e, c\}) = 3$, so one of $\{b, x, e\}$, $\{a, x, e\}$ and $\{a, b, e\}$ is a triangle of $M \d y$, contradicting the fact that $\{a, b, c, x\}$ is a four-point line in $M \d y / e$.
This proves that $M \d x, y / e$ is internally $3$-connected and proves (\ref*{clm:keylemma-trianglethingcase:2}).

\begin{claim} \label{clm:keylemma-circuitflat}
 If $M \d x, y / e$ is internally $3$-connected then $\{a, b, x, e\}$ is a circuit and a flat of $M$ and $M$ has a three- or four-element cocircuit $L$ containing $e$, one of $a$ or $b$, and another element $c \in E(M) \setminus \{a, b, x, y, e\}$, and if $|L| = 4$ then $y \in L$.
\end{claim}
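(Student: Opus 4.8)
The plan is to read the required structure directly off the two ``unstable'' hypotheses recorded just before the claim --- that $M \d x \d e$ and $M \d y / e$ are not stable --- together with the uniqueness of the series pair of $M \d x, y$, using the structure theory of $2$-sums.

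First I would analyse $M \d y / e$. One checks that $M \d y / e$ is connected (it survives as the internally $3$-connected matroid $M \d x, y / e$ on deleting $x$, and $x$ is neither a loop nor a coloop of $M \d y / e$), so being unstable it is a $2$-sum $N_1 \oplus_2 N_2$ of two non-binary matroids along a basepoint $p$, with $x \in E(N_2)\setminus\{p\}$ say. Since $N_2$ is non-binary it has at least four elements, and since $M \d x, y / e = (M\d y/e)\d x$ is internally $3$-connected, the $2$-separation of $M\d y/e$ must restrict, after deleting $x$, to one with a two-element side; this forces $|E(N_2)|=4$, so $N_2\cong U_{2,4}$, say on $\{x,u,v,p\}$. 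Then $\{x,u,v\}$, being a three-element circuit and cocircuit of $N_2$ avoiding $p$, is both a triangle and a triad of $M\d y/e$; deleting $x$ shows $\{u,v\}$ is a series pair of $M \d x,y/e$ (it is independent there, as a two-subset of the circuit $\{x,u,v\}$, hence not a parallel pair), and series pairs of a contraction are series pairs of the matroid contracted, so $\{u,v\}$ is a series pair of $M \d x,y$; by (\ref*{clm:keylemma-Mxynot3conn}) this is the unique series pair $\{a,b\}$, so $\{u,v\}=\{a,b\}$. As $\{x,a,b\}$ is a triad of $M\d y/e$ it is a triad of $M\d y$, and since $M\d y$ is $3$-connected it is not a triangle of $M\d y$; being a triangle of $M\d y/e$, the set $\{a,b,x,e\}$ is a circuit of $M\d y$, hence of $M$. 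This is the first assertion.

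Next, $\{a,b\}$ is a cocircuit of $M\d x,y$, so $M$ has a cocircuit $C^\ast$ with $C^\ast\setminus\{x,y\}=\{a,b\}$; since $M\d x$ and $M\d y$ are $3$-connected they have no two-element cocircuit, which excludes $C^\ast\in\{\{a,b,x\},\{a,b,y\}\}$ and forces $C^\ast=\{a,b,x,y\}$, a cocircuit of $M$. For the flatness of $\{a,b,x,e\}$, suppose $z\in\cl_M(\{a,b,x,e\})\setminus\{a,b,x,e\}$. If $z=y$, one uses that the cocircuit $\{a,b,x,y\}$ then lies in the rank-three flat $\cl_M(\{a,b,x,e\})$, together with $\lambda_M(E(N_0))=3$ and the $3$-connectivity of $M\d x,M\d y$, to reach a contradiction. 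If $z\notin E(N_0)\cup\{y\}$, then in the $2$-sum picture above $z\in\cl_{M\d y/e}(\{x,a,b\})$, so $z$ is a point of $N_1$ parallel to $p$; via the identification $N_1\cong M\d x,y/a,e$ coming from $N_2\cong U_{2,4}$ and (\ref*{clm:keylemma-trianglethingcase:2}) (which tells us $M\d x,y/a,e$ is internally $3$-connected), this says $\{b,z\}$ is a parallel pair of $M\d x,y/a,e$, which must be ruled out using $3$-connectivity of $M\d x$ and $M\d y$ or, if necessary, the minimality of $M$. If $z\in E(N_0)$, then modularity of $N_0$ gives $\sqcap_M(\{a,b,x,e\},E(N_0))\ge 1$, and one shows this is incompatible with $\lambda_M(E(N_0))=3$ and the circuit/cocircuit structure just obtained.

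Finally, for the cocircuit $L$, I would turn to $M\d x\d e$, which is connected (a one-element deletion of the $3$-connected $M\d x$) and unstable, hence a $2$-sum $N_1'\oplus_2 N_2'$ of two non-binary matroids along a basepoint $p'$; by Bixby's Lemma (applied to $M\d x$ and $e$) $M\d x/e$ is internally $3$-connected, so the instability genuinely lives in $M\d x\d e$. Since $\{a,b,x,y\}$ is a cocircuit of $M$, the set $\{a,b,y\}$ is a triad of $M\d x$ missing $e$, hence a triad of $M\d x\d e$. Analysing how this triad meets the $2$-separation of $M\d x\d e$ and the basepoint $p'$ --- either $\{a,b,y\}$ lies inside one non-binary summand, or it straddles the separation and forces a two-element cocircuit through $p'$ on one side --- together with the non-binariness of the summand not accounting for $\{a,b,y\}$, produces a cocircuit $L'$ of $M\d x\d e$ of size at most three that contains one of $a,b$ and an element $c\in E(M)\setminus\{a,b,x,y,e\}$. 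Pulling $L'$ back through the deletion of $e$, using that the $2$-separation of $M\d x\d e$ lifts to a $3$-separation of $M\d x$ with $e$ on its boundary (so $\lambda$ jumps and a cocircuit through $c$ picks up $e$), and then through the deletion of $x$, yields the desired cocircuit $L$ of $M$; the bound $|L|\le 4$ and the requirement $y\in L$ when $|L|=4$ come from bounding what else $e$, but not $x$, can absorb, using once more the $3$-connectivity of $M\d x$.

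I expect the cocircuit $L$ to be the main obstacle: controlling its size and exactly which of $a,b,y,c$ (and not $x$) it contains requires careful bookkeeping with the cocircuit and circuit families of the $2$-sum $M\d x\d e$ and repeated appeals to the $3$-connectivity of $M\d x$ and $M\d y$ and to the uniqueness of the series pair of $M\d x,y$. The flatness argument is the secondary difficulty, since ruling out a fifth point of $\cl_M(\{a,b,x,e\})$ off $E(N_0)$ appears to need either a delicate connectivity estimate in $M\d y$ or an appeal to the minimality of $M$.
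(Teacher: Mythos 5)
Your derivation of the circuit $\{a,b,x,e\}$ is essentially the paper's argument in different clothing: both take the $2$-separation of the unstable matroid $M \d y / e$ containing $x$ on one side, use the internal $3$-connectivity of $M \d x, y / e$ to force that side to be a three-element set that is both a triangle and a triad, and identify its other two elements with the unique series pair $\{a,b\}$ of $M \d x, y$. For flatness, however, the one case that needs a genuine argument, namely $y \in \cl_M(\{a,b,x,e\})$, is left as ``one uses \dots\ to reach a contradiction.'' The missing mechanism is the fact (which you invoke only later, for a different purpose) that $M \d x / e$ is internally $3$-connected by Bixby's Lemma, while $\{a,b,y\}$ is a triad of $M \d x / e$; if $y \in \cl_M(\{a,b,e\}) = \cl_M(\{a,b,x,e\})$ this triad would have rank at most two and yield an internal $2$-separation of $M \d x / e$. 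The remaining case is handled much more simply than you propose: the three-element side $V = \{a,b,x\}$ must be closed in $M \d y / e$, since otherwise moving an element of its closure across recreates an internal $2$-separation of $M \d x, y / e$; no case split on $E(N_0)$ and no appeal to minimality is needed.

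The cocircuit $L$ is where the proposal genuinely breaks down. You look for it inside the $2$-sum decomposition of $M \d x \d e$, but a $2$-sum of two non-binary matroids need not contain any small cocircuit at all (for instance $U_{2,5} \oplus_2 U_{2,5}$ has no triads), and unlike in (\ref*{clm:keylemma-Mxynot3conn}) and (\ref*{clm:keylemma-trianglethingcase:1}) you have no internally $3$-connected double deletion available to force a three-element side of that separation; indeed (\ref*{clm:keylemma-trianglethingcase:1}) says $M \d x, y \d e$ is \emph{not} internally $3$-connected. Even granting a triad $L'$ of $M \d x \d e$, nothing in the proposed ``pulling back'' forces the corresponding cocircuit of $M$ to contain $e$ and to avoid $x$, both of which the claim requires. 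The correct source of $L$ is the other unstable minor: since $M \d y / e$ is not internally $3$-connected, Bixby's Lemma gives that $M \d y \d e$ is internally $3$-connected; it is not $3$-connected (no double deletion outside $E(N_0)$ is, by the remark following (\ref*{clm:keylemma-Mxynot3conn})) and is simple, so it has a series pair, i.e.\ $e$ lies in a triad $L$ of $M \d y$. Orthogonality with the circuit $\{a,b,x,e\}$ puts a second element of $\{a,b,x\}$ in $L$; $x \notin L$ because $e$ is in no series pair of $M \d x, y$; $\{a,b,e\}$ is not a triad of $M \d y$; hence $L$ consists of $e$, exactly one of $a,b$, and some $c \notin \{a,b,x,y,e\}$, and $L$ or $L \cup \{y\}$ is the required cocircuit of $M$.
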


Since $M \d y / e$ is not stable, it has an internal $2$-separation $(U, V)$ with $x \in V$. Since $(U, V \setminus \{x\})$ is not an internal $2$-separation of $M \d x, y / e$, $|V| = 3$ and $V$ is closed in $M \d y / e$. Hence by (\ref*{clm:keylemma-nonbinary2sum}), $V$ is a triangle and a triad of $M \d y / e$. Therefore, $V \setminus \{x\}$ is a series pair of $M \d x, y$, so $V = \{a, b, x\}$. Also, $(U, V \cup \{e\})$ is a $3$-separation of $M \d y$ with $e \in \cl_{M \d y}(U)$, so $r_M(\{a, b, x, e\}) = 3$, and since $\{a, b, x\}$ is independent in $M \d y / e$, $\{a, b, x, e\}$ is a circuit of $M$. Since $M \d x / e$ is internally $3$-connected, $(U, \{a, b, y\})$ is not a $2$-separation of $M \d x / e$, so $y$ is not in $\cl_M(\{a, b, e\})$. Since $V$ is closed in $M \d x, y / e$, $V \cup \{e\}$ is a flat of $M$. %, which proves (\ref*{clm:keylemma-circuitflat}).

By Bixby's Lemma, $M \d y, e$ is internally $3$-connected. But it is not $3$-connected, so it has a series pair and thus $e$ is contained in a triad $L$ of $M \d y$. At least one other element of the circuit $\{a, b, x, e\}$ is in $L$, but $x \not\in L$ since $e$ is not in a series pair in $M \d x, y$. So $L$ contains one of $a$ and $b$. Recall that $\{a, b, x\}$ is a triad of $M \d y$, and $e \in \cl_M(\{a, b, x\}$; thus as $M \d y$ is $3$-connected, $e \in \cl_M(E(M) \setminus \{a, b, x, y\})$ and so $\{a, b, e\}$ is not a triad. Therefore, $L$ contains exactly one of $a$ and $b$, plus another element $c$. 
Since $L$ is a triad of $M \d y$, either $L$ is a triad of $M$ or $L \cup \{y\}$ is a cocircuit of $M$, which proves (\ref*{clm:keylemma-circuitflat}).

\begin{claim} \label{clm:keylemma-contracteisinternally3conn}
If $M \d x, y / a, e$ is internally $3$-connected, then there is an element $c \in E(M) \setminus \{x, y, a, b, e\}$ such that $\lambda_M(\{x, y, a, b, c, e\}) = 2$, $\{a, b, x, e\}$ is a circuit, $\{a, b, y, c\}$ is a circuit, and neither $e$ nor $c$ is in $\cl_M(E(M) \setminus \{x, y, a, b, c, e\})$.
\end{claim}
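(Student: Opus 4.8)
The plan is to verify the conclusion for the element $c$ produced by (\ref*{clm:keylemma-circuitflat}), labelled — after possibly interchanging $a$ and $b$, which is harmless since all the hypotheses are symmetric in them — so that the cocircuit found there meets $\{a,b\}$ in $\{b\}$. Since $M\d x,y/a,e$ is internally $3$-connected, (\ref*{clm:keylemma-trianglethingcase:2}) gives that $M\d x,y/e$ is internally $3$-connected, so (\ref*{clm:keylemma-circuitflat}) applies: $\{a,b,x,e\}$ is a circuit and a flat of $M$, the set $\{b,c,e\}$ is a triad of $M\d y$ with $c\notin\{a,b,x,y,e\}$, and either $\{b,c,e\}$ or $\{b,c,e,y\}$ is a cocircuit of $M$; call this cocircuit $L$. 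The proof of (\ref*{clm:keylemma-circuitflat}) also shows $\{a,b,x\}$ is a triad of $M\d y$. First I would record that $\{a,b,x,y\}$ is a cocircuit of $M$: as $\{a,b\}$ is a series pair of $M\d x,y$ and $M\d x$ is $3$-connected, $\{a,b,y\}$ is a triad of $M\d x$; it is not a triad of $M$ (else $\{a,b\}$ would be a series pair of the $3$-connected $M\d y$), so the cocircuit of $M$ restricting to $\{a,b,y\}$ in $M\d x$ is $\{a,b,x,y\}$.

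Set $Z=\{x,y,a,b,c,e\}$. Since $L\subseteq Z$ is a cocircuit of $M$ containing both $c$ and $e$, and an element avoids the closure of a set precisely when some cocircuit through it avoids that set, we get $c,e\notin\cl_M(E(M)\setminus Z)$. Dually, $Z$ is the union of the cocircuits $\{a,b,x,y\}$ and $L$, and contracting the proper subset $\{a,b,x,y\}\cap L$ of the $M^*$-circuit $L$ leaves $\{c,e\}=L\setminus\{a,b,x,y\}$ dependent, so $\{c,e\}$ has rank at most $1$ over $\{a,b,x,y\}$ in $M^*$ and $r^*_M(Z)\le r^*_M(\{a,b,x,y\})+1=4$. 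As $Z$ is disjoint from $E(N_0)$ and $M$ is $3$-connected, $\lambda_M(Z)\ge 2$, hence $r_M(Z)=\lambda_M(Z)+6-r^*_M(Z)\ge 4$; thus $r_M(Z)=4$ would force $\lambda_M(Z)=2$. Moreover, since $\{a,b,x,e\}$ is a flat of $M$ missing $y$, we have $r_M(\{a,b,x,e,y\})=4$, so $r_M(Z)=4$ amounts to $c\in\cl_M(\{a,b,x,e,y\})$; in fact we aim for the stronger $c\in\cl_M(\{a,b,y\})$.

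The substance of the proof is to show $\{a,b,y,c\}$ is a circuit of $M$. I would begin by using strong cocircuit elimination on the cocircuits $\{a,b,y\}$ and $L$ of $M\d x$ (and on $\{a,b,x\}$ and $L$ of $M\d y$): combined with orthogonality of the circuit $\{a,b,x,e\}$ with the resulting cocircuits and with $L$, and with the fact $\cl_M(\{a,b,e\})=\{a,b,x,e\}$, this shows every proper three-element subset of $\{a,b,y,c\}$ is independent and ties $c$ tightly to $\{a,b,x,y,e\}$ via auxiliary cocircuits such as $\{a,y,c,e\}$ in $M\d x$ and $\{a,x,c,e\}$ in $M\d y$; the small alternatives are killed since they would delete to a two-element cocircuit of $M\d x,y$ distinct from $\{a,b\}$, contradicting (\ref*{clm:keylemma-Mxynot3conn}). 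It then remains to rule out $c\notin\cl_M(\{a,b,y\})$. Assuming it, $\{a,b,y,c\}$ is independent, and I would derive a contradiction by pitting the internal $3$-connectivity of $M\d x,y/a,e$ — strictly stronger than that of $M\d x,y/e$, and exactly what constrains $c$ relative to the series pair $\{a,b\}$ — against the $2$-sum (or disconnection) structure forced on $M\d x\d e$ and on $M\d y/e$ by their non-stability: the offending configuration makes $c$ behave like an element ``parallel to $\{a,b\}$ modulo $y$'', producing a vertical separation of $M$, an internal $2$-separation of $M\d x,y/a,e$, or a set witnessing $\lambda_M(E(N_0))\le 2$ (using that $M/E(N_0)$ is connected), each impossible. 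Once $\{a,b,y,c\}$ is known to be a circuit, $c\in\cl_M(\{a,b,y\})$ gives $r_M(Z)=4$, hence $\lambda_M(Z)=2$, and together with the co-closure facts above the claim follows. Reconciling the local triad/circuit structure around $c$ with the internal $3$-connectivity of $M\d x,y/a,e$ is the main obstacle and carries essentially all of the case analysis.
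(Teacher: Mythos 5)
There is a genuine gap, and it is exactly where you yourself flag ``the main obstacle.'' Your setup matches the paper's: you invoke (\ref*{clm:keylemma-trianglethingcase:2}) to get internal $3$-connectivity of $M\d x,y/e$, then (\ref*{clm:keylemma-circuitflat}) to get the circuit-and-flat $\{a,b,x,e\}$, the cocircuit $L$, and the element $c$; your derivation that $\{a,b,x,y\}$ is a cocircuit, that $c,e\notin\cl_M(E(M)\setminus Z)$, and that $r^*_M(Z)\le 4$ (so $\lambda_M(Z)=2$ would follow from $c\in\cl_M(\{a,b,y\})$) also tracks the paper's argument. But the heart of the claim --- ruling out $c\notin\cl_M(\{a,b,y\})$ --- is not proved; it is described in one sentence as ``pitting the internal $3$-connectivity of $M\d x,y/a,e$ against the $2$-sum structure forced on $M\d x\d e$ and on $M\d y/e$,'' which names no concrete separation and does not say what dependency ultimately fails. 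That step cannot be dispatched by connectivity alone: the paper's proof at this point goes outside pure matroid connectivity and brings in the $\F$-representable companion matroid $M'$ and the distinguishing sets $S,T$ from \autoref{lem:strandspicture}. Concretely, the paper shows $\{x,y\}$ and $U=E(M)\setminus Z$ are skew in both $M$ and $M'$, uses (\ref*{clm:keylemma-abnotindistinguishingsets}) to conclude $a,b\notin T$, deduces $c\in T$ (hence $M/c\neq M'/c$), applies \autoref{lem:remove} to force $M\d x/c$ to be unstable, and then reads off a $2$-separation of $M\d x/c$ whose part $W\setminus\{a,b,y\}$ must be a parallel pair in $M\d y/c$, contradicting $c\notin\cl_M(U)$. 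Nothing in your outline corresponds to this use of $M'$, $T$, or \autoref{lem:remove}; without it there is no apparent route to the contradiction, and indeed it is the representability information (not just connectivity) that makes the argument close.

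A secondary, smaller issue: you say cocircuit elimination ``shows every proper three-element subset of $\{a,b,y,c\}$ is independent,'' but you don't carry it out. The case $\{a,b,c\}$ a triangle is easily killed by internal $3$-connectivity of $M\d x,y$ (it would be a $2$-separating triangle containing the series pair $\{a,b\}$), and $\{a,y,c\}$ is killed by orthogonality with $L$ when $L$ is a triad; but when $L=\{b,c,e,y\}$ has four elements, neither $\{a,y,c\}$ nor $\{b,y,c\}$ being a triangle is ruled out by the orthogonalities you list, so the ``strong cocircuit elimination plus orthogonality'' sketch does not obviously yield the circuit conclusion as stated. To be a complete proof, both the independence of the proper triples and the main contradiction would need to be spelled out; as written, the proposal establishes the preparatory facts but not the claim itself.
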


Assume that $M \d x, y / a, e$ is internally $3$-connected. Then by (\ref*{clm:keylemma-trianglethingcase:2}), $M \d x, y / e$ is internally $3$-connected, and by (\ref*{clm:keylemma-circuitflat}), $\{a, b, x, e\}$ is a circuit and a flat of $M$ and there is an element $c \in E(M) \setminus \{a, b, x, y, e\}$ such that either $\{e, c\}$ or $\{e, c, y\}$ along with one of $a$ or $b$ forms a cocircuit of $M$. By symmetry, we may assume that either $\{e, c, b\}$ or $\{e, c, b, y\}$ is a cocircuit.
Note that $y \not\in \cl_M(\{a, b, x\})$ because then $\lambda_{M \d x / e}(\{a, b, y\}) = 1$ and $M \d x / e$ is not internally $3$-connected, contradicting Bixby's Lemma and the fact that $M \d x, e$ is not stable. Therefore, $\{a, b, x, y\}$ is an independent cocircuit of $M$ and $\lambda_M(\{a, b, x, y\}) = 3$.
Let $U = E(M) \setminus \{a, b, x, y, e, c\}$.
Since $\{b, c\}$ is a series pair of $M \d y, e$, we have $c \not\in \cl_M(U)$, hence $\sqcap_M(\{a, b, x, y\}, U) = 2$.
If $c \in \cl_M(\{a, b, y\})$, then we have $\lambda_M(\{x, y, a, b, c, e\}) = 2$ and we have proved (\ref*{clm:keylemma-contracteisinternally3conn}), so we may assume $c \not\in \cl_M(\{a, b, y\})$.
Also, since $e \not\in \cl_M(U)$, $(\{a, b, x, e\}, U)$ is a $2$-separation of $M \d y, c$. 
But $M \d y$ is $3$-connected, so $c \in \cl_M(\{a, b, x\} \cup U)$. 

Recall that there are sets $S$ and $T$ such that $e \in S \setminus T$, $\cl_M(S) \cap E(N_0) \neq \cl_{M'}(S) \cap E(N_0)$ and $\cl_M(T) \cap E(N_0) \neq \cl_{M'}(T) \cap E(N_0)$. We note that $x, y \in S \cap T$.

Next, we claim that $\{x, y\}$ and $U$ are skew in both $M$ and $M'$. First, suppose $\sqcap_M(\{x, y\}, U) = 1$ and $\sqcap_{M'}(\{x, y\}, U) = 1$. Note that $\sqcap_M(\{x, y, a\}, U)$ and $\sqcap_{M'}(\{x, y, b\}, U)$ cannot both equal two; we may assume that $\sqcap_M(\{x, y, a\}, U) = 1$. Then $\sqcap_M(\{x, a\}, U) = \sqcap_M(\{y, a\}, U) = 0$ so we have also $\sqcap_{M'}(\{x, y, a\}, U) = 1$. But then $\cl_M(S \cup \{a\}) \cap E(N_0) = \cl_M(S) \cap E(N_0)$ and $\cl_{M'}(S \cup \{a\}) \cap E(N_0) = \cl_{M'}(S) \cap E(N_0)$, contradicting (\ref*{clm:keylemma-abnotindistinguishingsets}).
Next, suppose that $\{x, y\}$ and $U$ are skew in one of $M$ and $M'$, which we call $N$, and not in the other, which we call $N'$. By Tutte's Linking Theorem, there is a set $C \subseteq E(N)$ such that $N / C$ has $N | \{x, y, a, b, e, c\}$ and $N_0$ as restrictions, and $\sqcap_{N / C}(\{x, y, a, b, e, c\}, E(N_0)) = 2$. Then $\sqcap_{N / C}(\{x, y\}, E(N_0)) = 1$ and by the modularity of $N_0$, there is an element $z \in \cl_{N / C}(\{x, y\}) \cap E(N_0)$. By (\ref*{clm:keylemma-abnotindistinguishingsets}), $z \in \cl_{N' / C}(\{x, y, a\})$ and $z \in \cl_{N' / C}(\{x, y, b\})$. But this implies that $z \in \cl_{N' / C}(\{x, y\})$, which means that $\sqcap_{N'}(\{x, y\}, U) = 1$, a contradiction.
Hence $\{x, y\}$ and $U$ are skew in both $M$ and $M'$.

Since $\cl_M(T) \neq \cl_{M'}(T)$, it follows from (\ref*{clm:keylemma-abnotindistinguishingsets}) that $a, b \not\in T$. 
We also see that $c \in T$; if not then $T \setminus U = \{x, y\}$ and since $\{x, y\}$ is skew to $U$ in both $M$ and $M'$, we would have $\cl_M(T) \cap E(N_0) = \cl_M(T \setminus \{x, y\}) \cap E(N_0)$ and $\cl_{M'}(T) \cap E(N_0) = \cl_{M'}(T \setminus \{x, y\}) \cap E(N_0)$, and these two sets are equal, a contradiction.

Since $c \in T$, $\cl_{M / c}(T \setminus \{c\}) \neq \cl_{M' / c}(T \setminus \{c\})$, so $M / c \neq M' / c$. By \autoref{lem:remove}, this implies that either $M \d x / c$ is not stable, $M \d y / c$ is not stable, or $M \d x, y / c$ is not connected. But $M \d x, y$ is internally $3$-connected and $c$ is not in its unique series pair $\{a, b\}$, so $M \d x, y / c$ is connected.
We note that $\lambda_{M \d y, c}(\{a, b, x, e\}) = 1$ so $M \d y, c$ has an internal $2$-separation, and by Bixby's Lemma, $M \d y / c$ is internally $3$-connected and stable. We conclude that $M \d x / c$ is not stable.

Let $(W, Z)$ be a $2$-separation of $M \d x / c$. Since $\{a, b, y\}$ is a triad of $M \d x / c$, we may assume that $a, b, y \in W$. Since $M \d x$ is $3$-connected, $c \in \cl_M(W) \cap \cl_M(Z)$. But $c \not\in \cl_M(U)$, so $Z \not\subseteq U$ and thus we have $e \in Z$. 
Since $c \not\in \cl_M(\{a, b, y\})$, $W \setminus \{a, b, y\}$ is not empty and has an element not in $\cl_M(Z)$.
% the next sentence comes from the picture and may not be rigorous enough
But then $M \d y / c$ has a $2$-separation $(W \setminus \{a, b, y\}, Z \cup \{a, b, x\})$. But $M \d y / c$ is internally $3$-connected and $M \d y$ is $3$-connected, so $M \d y / c$ is connected and $W \setminus \{a, b, y\}$ is a parallel pair. But this implies that $c$ is in a triangle with two elements of $W \setminus \{a, b, y\}$, contradicting the fact that $c \not\in \cl_M(U)$.

\begin{claim} \label{clm:keylemma-gettrianglething}
There is an element $c \in E(M) \setminus \{x, y, a, b, e\}$ such that $\lambda_M(\{x, y, a, b, c, e\}) = 2$, $\{a, b, x, e\}$ is a circuit, $\{a, b, y, c\}$ is a circuit, and neither $e$ nor $c$ is in $\cl_M(E(M) \setminus \{x, y, a, b, c, e\})$.
\end{claim}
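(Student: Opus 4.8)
The plan is to apply Bixby's Lemma to the element $e$ in $M \d x, y / a$. This matroid is $3$-connected because $\{a, b\}$ is a series pair of the internally $3$-connected matroid $M \d x, y$, so at least one of $M \d x, y / a \d e$ and $M \d x, y / a, e$ is internally $3$-connected. If $M \d x, y / a, e$ is internally $3$-connected, then (\ref*{clm:keylemma-contracteisinternally3conn}) applies verbatim and produces an element $c$ with exactly the four properties asserted here, and we are done. So the entire burden is to rule out the remaining possibility: that $M \d x, y / a \d e$ is internally $3$-connected while $M \d x, y / a, e$ is not.

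So suppose this holds. Since $\{a, b\}$ is a series pair of $M \d x, y$ it remains a series pair of $M \d x, y, e$, and $a$ is not a coloop there; as $M \d x, y, e / a = M \d x, y / a \d e$ is internally $3$-connected, this forces $M \d x, y, e$ to be connected. By (\ref*{clm:keylemma-trianglethingcase:1}) it is not internally $3$-connected, so it has an internal $2$-separation $(W, Z)$. I would move the series pair to one side so that $a, b \in Z$, note that $a \notin \cl_{M \d x, y, e}(W)$ (otherwise $M \d x, y, e / a$ would be disconnected), and deduce that $|Z| = 3$ (otherwise $(W, Z \setminus \{a\})$ would be an internal $2$-separation of $M \d x, y, e / a$). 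Writing $Z = \{a, b, f\}$ and using that $\{a, b\}$ is a two-element cocircuit of $M \d x, y, e$, the set $\{a, b, f\}$ must be either a triangle of $M \d x, y, e$ or a triple of three mutually-series elements.

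In the triangle case, $\{a, b, f\}$ is a triangle of $M \d x, y$ (a circuit not containing $e$), and $f$ is in series with neither $a$ nor $b$ in $M \d x, y$, since $\{a, b\}$ is the unique series pair of $M \d x, y$; as $M \d x, y$ is simple (being a restriction of the $3$-connected matroid $M$) this gives $r^*_{M \d x, y}(\{a, b, f\}) = 2$ and hence $\lambda_{M \d x, y}(\{a, b, f\}) = 1$, an internal $2$-separation of the internally $3$-connected matroid $M \d x, y$ — a contradiction. In the series-triple case, $\{a, f, e\}$ and $\{b, f, e\}$ are triads of $M \d x, y$ (the two-element alternatives $\{a, f\}$ and $\{b, f\}$ being cocircuits would be forbidden second series pairs), and $\{a, b, e, f\}$ is an independent set of corank two in $M \d x, y$, so $(W, \{a, b, e, f\})$ is a $3$-separation of $M \d x, y$; here I would lift the separation through $x$ and $y$, using that $\{a, b, y\}$ is a triad of $M \d x$ and $\{a, b, x\}$ a triad of $M \d y$ (both because $\{a, b\}$ is the unique series pair of $M \d x, y$ and $M \d x$, $M \d y$ are $3$-connected), and then combine this with the instability of $M \d x \d e$ and of $M \d y / e$ — which, via (\ref*{clm:keylemma-nonbinary2sum}), each produce a triangle-and-triad triple — and with the connectedness of $M / E(N_0)$ to contradict $\lambda_M(E(N_0)) = 3$.

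The main obstacle will be this last, series-triple subcase: identifying the precise $3$-separator of $M \d x, y$, transferring it correctly to $M$ and then to $M / E(N_0)$, and eliminating it with no slack against $\lambda_M(E(N_0)) = 3$. This is the same kind of lengthy, heavily sub-case-driven connectivity bookkeeping as in the proofs of (\ref*{clm:keylemma-trianglethingcase:1}) and (\ref*{clm:keylemma-Mxynot3conn}), and the available levers are always the same: $M \d x$ and $M \d y$ are $3$-connected (so contain no set that is at once a triangle and a triad), $M \d x, y$ is internally $3$-connected with $\{a, b\}$ as its unique series pair, and the remark following (\ref*{clm:keylemma-Mxynot3conn}) that no two elements of $E(M) \setminus E(N_0)$ can be deleted from $M$ to leave a $3$-connected matroid.
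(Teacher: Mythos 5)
Your opening move matches the paper: apply Bixby's Lemma to $e$ in the $3$-connected matroid $M \d x, y / a$, reducing to the case where $M \d x, y / a \d e$ is internally $3$-connected and $M \d x, y / a, e$ is not, since the other case is exactly (\ref*{clm:keylemma-contracteisinternally3conn}). The next step also matches: from (\ref*{clm:keylemma-trianglethingcase:1}) one gets an internal $2$-separation $(W, Z)$ of $M \d x, y, e$ with $a, b \in Z$, $a \notin \cl(W)$, and $|Z| = 3$, say $Z = \{a, b, f\}$. Your dichotomy (triangle versus series triple) is correct, and your elimination of the triangle case — $\lambda_{M \d x, y}(\{a, b, f\}) = 1$ is forbidden since $M \d x, y$ is internally $3$-connected with both sides large — fills in a step the paper states without proof (it simply asserts $V$ is a series class).

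But the strategy then goes fundamentally wrong. You declare that ``the entire burden is to rule out'' the case $M \d x, y / a, e$ not internally $3$-connected, and in the series-triple subcase you aim to produce a contradiction with $\lambda_M(E(N_0)) = 3$. This subcase is not to be ruled out: it is precisely the case in which the claimed structure lives, and the third element $f$ of the series class \emph{is} the element $c$ the claim asserts to exist. The paper does not derive a contradiction here; rather it proves the four asserted properties for $c = f$. Concretely, it shows that $\{a, b, c\}$ (your $\{a, b, f\}$) is the unique internal $2$-separator of $M \d x, y, e$ and hence a flat, deduces $y \in \cl_M(\{a, b, c\})$ from the instability of $M \d x, e$ (this gives the circuit $\{a, b, y, c\}$), shows $e \notin \cl_M(E(M) \setminus \{x, y, a, b, c, e\})$ using that $\{a, b, c\}$ is not a series class of $M \d x, y$, and then, by examining a $2$-separation of $M \d y / e$ and using $M \d x / e$ internally $3$-connected, concludes $e \in \cl_M(\{a, b, x\})$ (giving the circuit $\{a, b, x, e\}$) — at which point all four properties follow. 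The triads $\{a, f, e\}$, $\{b, f, e\}$ of $M \d x, y$ and the $3$-separator $\{a, b, e, f\}$ you identify are correct facts (and consistent with the paper's eventual conclusion), but they do not contradict $\lambda_M(E(N_0)) = 3$, and attempting to lift them through $x$ and $y$ to reach $N_0$ is a dead end: nothing ties $\{a, b, e, f\}$ or its triads to $E(N_0)$. You already have $c$; the missing work is the two closure facts above, not a connectivity contradiction.
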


By (\ref*{clm:keylemma-contracteisinternally3conn}), we may assume that $M \d x, y / a, e$ is not internally $3$-connected; by Bixby's Lemma, $M \d x, y / a \d e$ is internally $3$-connected.

By (\ref*{clm:keylemma-trianglethingcase:1}), $M \d x, y, e$ has an internal $2$-separation $(U, V)$ with $a \in V$. Since $(U, V \setminus \{a\})$ is not an internal $2$-separation of $M \d x, y, e / a$, $|V| = 3$ and $V$ is a series class of $M \d x, y, e$. So $V$ consists of $\{a, b\}$ and another element $c \in E(M) \setminus \{x, y, a, b, e\}$. Moreover, up to ordering the parts, this is the unique internal $2$-separation of $M \d x, y, e$, so $\{a, b, c\}$ is a flat of $M \d x, y, e$.
We have $y \not\in \cl_M(E(M) \setminus \{a, b\})$ because $M \d x$ is $3$-connected. Thus if $y \not\in \cl_M(\{a, b, c\})$ then $M \d x, e$ is internally $3$-connected, a contradiction because it is not stable. Therefore, we have $y \in \cl_M(\{a, b, c\})$.
% next bit may or may not be necessary
% Also, $\{a, b, c, y\}$ is a flat of $M \d x, e$, and since $\lambda_{M \d x}(\{a, b, c, y, e\}) > 1$, $e \not \in \cl_M(\{a, b, c, y\})$. Finally, $\{a, b, c, y\}$ is a flat of $M$ because if $x \in \cl_M(\{a, b, c, y\})$ then $\lambda_{M \d y, e\}(\{a, b, c, x\}) = \lambda_{M \d x, e\}(\{a, b, c, y\}) = 1$, a contradiction because $M \d y, e$ is internally $3$-connected by Bixby's Lemma and the fact that $M \d y / e$ is not stable.
Also, $e \not\in \cl_M(E(M) \setminus \{x, y, a, b, c, e\})$ because then $\{a, b, c\}$, which is a series class of $M \d x, y, e$, would also be a series class of $M \d x, y$.

Let $(W, Z)$ be a $2$-separation of $M \d y / e$; since $\{a, b, x\}$ is a triad of $M \d y / e$, we may assume that $a, b, x \in W$. Since $M \d y$ is $3$-connected, $e \in \cl_M(W) \cap \cl_M(Z)$. But $e \not\in \cl_M(E(M) \setminus \{a, b, x, y, c\})$, for then $\lambda_{M \d x}(\{a, b, y, c\})$ would equal $\lambda_{M \d x, e}(\{a, b, y, c\}) = 1$, but $M \d x$ is $3$-connected. Thus $Z \cap \{a, b, x, y, c\}$ is non-empty and we have $c \in Z$.
If $e \in \cl_M(\{a, b, x\})$ then we are done.
% As before, from picture, maybe not rigorously enough written
If not, then $W \setminus \{a, b, x\}$ is not contained in $\cl_M(\{a, b, x\})$ and $\lambda_{M \d x / e}(W \setminus \{a, b, x\}) \leq 1$. Since $M \d x / e$ is internally $3$-connected, $W \setminus \{a, b, x\}$ is a parallel pair, since if it had a single element that element would be in $\cl_M(Z) \cap \cl_M(\{a, b, x\})$. But this implies that $e$ is in a triangle with two elements of $E(M) \setminus \{x, y, a, b, c\}$, which contradicts the fact that, as we pointed out above, $e \not\in \cl_M(E(M) \setminus \{a, b, x, y, c\})$.
This proves (\ref*{clm:keylemma-gettrianglething}).
\\

% See \autoref{fig:trianglething} for an illustration of $M$.
% \begin{figure}[h]
%  \begin{centering}
%  \includegraphics{trianglething}
%  \caption{The matroid $M$}
%  \label{fig:trianglething}
%  \end{centering}
% \end{figure}
The last step in the proof is to show that $\{x, a\}$ is a deletion pair of $M$ and that $M \d x, a$ has one series pair; from this we will get a contradiction.

Let $(U, V)$ be a $2$-separation of $M \d x, a$; we may assume that $b, y \in V$ since $\{b, y\}$ is a series pair of $M \d x, a$. 
We claim that $V = \{b, y\}$. If not, then $|V| > 2$ and $(U, V \setminus \{b\})$ is a $2$-separation of $M \d x, a / b$.

Suppose that $|V| = 3$. Then $V \setminus \{b\}$ is a series pair of $M \d x, a / b$. But $\{e, c, y\}$ is a triad, so $V \setminus \{b\}$ consists of $y$ and another element $z \not\in \{e, c\}$. But then $\{b, y, z\}$ has corank at most two in $M \d x, a$, which means that $\{b, z\}$ has corank at most one in $M \d x, y, a$. But if it has corank one then $\{a, b, z\}$ is a series class of $M \d x, y$, a contradiction; and if it has corank zero then $\{a, b, z\}$ is a series class of $M \d x, y$, also a contradiction.

Otherwise, $|V| > 3$. Note that since $\lambda_{M \d x, a / b}(\{e, c, y\}) = \lambda_M(\{x, y, a, b, c, e\}) = 2$, there is no $2$-separation $(A, B)$ of $M \d x, a / b$ with $A$ or $B$ disjoint from $\{y, e, c\}$.
If $e, c \in U$, then since $\{y, e, c\}$ is a triad of $M \d x, a / b$, $(U \cup \{y\}, V \setminus \{y\})$ is a $2$-separation of $M \d x, a / b$; a contradiction as $V \setminus \{y\}$ is disjoint from $\{e, c, y\}$.
So $e$ and $c$ are not both contained in $U$. But then $(U \setminus \{e, c\}, V \cup \{e, c\})$ is a $2$-separation of $M \d x, a$, also a contradiction, unless $|U \setminus \{e, c\}| < 2$. This means that either $e$ or $c$ is contained in a series pair of $M \d x, a$, and hence $a$ is in a triad of $M \d x$ containing $e$ or $c$. But the only triad of $M \d x$ containing $a$ is $\{a, b, y\}$, since $\{a, b\}$ is a series class of $M \d x, y$.
This proves that $V = \{b, y\}$ and so $M \d x, a$ is internally $3$-connected and its unique series pair is $\{b, y\}$.

Suppose that $M \d a$ is not $3$-connected, and let $(W, Z)$ be a $2$-separation of $M \d a$ with $x \in Z$. Since $M \d x, a$ is internally $3$-connected and $M \d a$ has no parallel pairs, $|Z| = 3$ and $Z \setminus \{x\}$ is a series pair of $M \d x, a$. Therefore, $Z = \{x, y, b\}$. This means that $\lambda_M(\{x, y, a, b\}) = 2$, but $\{x, y, a, b\}$ is an independent cocircuit of $M$ and therefore we have $r_M^*(\{x, y, a, b\}) = 2$. Then $r_{M \d x}^*(\{y, a, b\}) = 1$, contradicting the fact that $M \d x$ is $3$-connected.
This proves that $M \d a$ is $3$-connected and that $\{x, a\}$ is a deletion pair of $M$.

Since $\{x, a\}$ is a deletion pair of $M$ and $M \d x, a$ has a unique series pair $\{b, y\}$, (\ref*{clm:keylemma-gettrianglething}) holds for the deletion pair $\{x, a\}$ in place of $\{x, y\}$, for possibly some new choice of $e$.
In particular, there are elements $e', c' \in E(M) \setminus \{x, y, a, b\}$ such that $\lambda_M(\{x, y, a, b, c', e'\}) = 2$, $\{a, b, x, e'\}$ and $\{a, b, y, c'\}$ are circuits of $M$, and neither $e'$ nor $c'$ is in $\cl_M(E(M) \setminus \{x, y, a, b, c', e'\})$. 
Let $U = E(M) \setminus \{x, y, a, b, e', c'\}$. We have $\sqcap_M(\{x, y, a, b\}, U) = 2$.

Since $\{a, b, x, e\}$ is a circuit of $M$ and neither $\{a, b, x, e'\}$ nor $\{a, b, x, c'\}$ are, at least one of $c'$ and $e'$ is distinct from $e$. Then since $e', c' \not\in \cl_M(E(M) \setminus \{x, y, a, b, e', c'\})$ and $e', c' \in \cl_M(\{x, y, a, b\})$, we have $\lambda_M(\{x, y, a, b\}) \geq 1 + \sqcap_M(\{x, y, a, b\}, E(M) \setminus \{x, y, a, b, e', c'\}) = 1 + \lambda_M(\{x, y, a, b\})$, a contradiction.
\end{proof}

\section{Vertically 4-connected matroids} \label{sec:findminimal3connminor}

We let $M$ be a vertically $4$-connected, non-$\F$-representable matroid with $N_0 \cong \PG(2,\F)$ as a modular restriction.
In this section, we show that $M$ has a minor that is a counterexample to \autoref{lem:keylemma}, and then we finish the proof of our main result, \autoref{thm:mainmodularpptheorem}.
Before proceeding, we state two useful facts; the first is proved in \cite[(5.2)]{GeelenGerardsWhittle:branchwidthandwqo}.

\begin{bixbycoullardinequality}
If $M$ is a matroid, $e \in E(M)$, and $(C_1, C_2)$ and $(D_1, D_2)$ are partitions of $E(M) \setminus \{e\}$, then $\lambda_{M \d e}(D_1) + \lambda_{M/e}(C_1) \geq \lambda_M(D_1 \cap C_1) + \lambda_M(D_2 \cap C_2) - 1$.
\end{bixbycoullardinequality}

The following lemma is proved in a more general form in \cite{GeelenGerardsWhittle:onrotas} but we only need a special case. Recall that for disjoint sets $A, B$ in a matroid $M$, $\kappa_M(A, B) = \min\{\lambda_M(U) : A \subseteq U \subseteq E(M) \setminus B\}$.

\begin{lemma}[{\cite[Lemma 4.3]{GeelenGerardsWhittle:onrotas}}] \label{lem:ordering1}
If $(A, B, V)$ is a partition of the elements of a matroid $M$ such that for each $e \in V$, either $\kappa_{M \d e}(A, B) < \kappa_M(A, B)$ or $\kappa_{M / e}(A, B) < \kappa_M(A, B)$, then there exists an ordering $v_1, \ldots, v_k$ of $V$ such that $\lambda_M(A) = \kappa_M(A, B)$ and for all $i = 1, \ldots, k$, $\lambda_M(A \cup \{v_1, \ldots, v_i\}) = \kappa_M(A, B)$.
\end{lemma}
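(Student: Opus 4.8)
The plan is to study the family of \emph{$\kappa$-tight} sets. Write $\kappa=\kappa_M(A,B)$ and let
$\mathcal{T}=\{U:\ A\subseteq U\subseteq E(M)\setminus B,\ \lambda_M(U)=\kappa\}$.
By definition of $\kappa$ this is nonempty, and since $\lambda_M$ is submodular while $\lambda_M(U)\geq\kappa$ for every $U$ with $A\subseteq U\subseteq E(M)\setminus B$, an uncrossing argument shows $\mathcal{T}$ is closed under union and intersection; hence it has a least member $U_-$ and a greatest member $U_+$. The goal is to show $\mathcal{T}$ contains a saturated chain $A=W_0\subsetneq W_1\subsetneq\cdots\subsetneq W_k=E(M)\setminus B$, and then take $v_i$ to be the unique element of $W_i\setminus W_{i-1}$.

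The first step is to prove $U_-=A$ (equivalently $\lambda_M(A)=\kappa$) and dually $U_+=E(M)\setminus B$. Suppose $e\in U_-\setminus A$. Since every member of $\mathcal{T}$ contains $U_-\ni e$, no member of $\mathcal{T}$ avoids $e$; a short computation of $\lambda_{M\setminus e}$ of a fixed set in terms of $\lambda_M$ then forces $\kappa_{M\setminus e}(A,B)\geq\kappa$, so by hypothesis $\kappa_{M/e}(A,B)<\kappa$. Translating the latter (again via a rank computation for $\lambda_{M/e}$) yields $U\in\mathcal{T}$ with $e\in U$ and $e\in\cl_M(E(M)\setminus U)$; as $U_-\subseteq U$ this gives $e\in\cl_M(E(M)\setminus U_-)$, whence $\lambda_M(U_-\setminus\{e\})\leq r_M(U_-)+r_M(E(M)\setminus U_-)-r(M)=\kappa$, so $U_-\setminus\{e\}\in\mathcal{T}$, contradicting minimality. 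The claim for $U_+$ follows by applying this to the partition $(B,A,V)$, using $\kappa_M(B,A)=\kappa_M(A,B)$ and $\lambda_M(E(M)\setminus U)=\lambda_M(U)$.

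The second step extracts for each $v\in V$ a \emph{flexible pair}: sets $X,Y\in\mathcal{T}$ with $v\notin X$ and $Y=X\cup\{v\}$. If $\kappa_{M/v}(A,B)<\kappa$, the computation above produces $U\in\mathcal{T}$ with $v\in U$ and $v\in\cl_M(E(M)\setminus U)$, and then $\lambda_M(U\setminus\{v\})\leq\kappa$, so $U\setminus\{v\}\in\mathcal{T}$; otherwise $\kappa_{M\setminus v}(A,B)<\kappa$, and the dual computation gives $W\in\mathcal{T}$ with $v\notin W$ such that $v$ is a coloop of $M\,|\,(E(M)\setminus W)$, whence $\lambda_M(W\cup\{v\})\leq\kappa$ and $W\cup\{v\}\in\mathcal{T}$. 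Finally I would produce the chain by induction on $|V|$, carrying along that $\mathcal{T}$ has least element $A$, greatest element $E(M)\setminus B$, and a flexible pair for every $v\in V$. If $V\neq\emptyset$, pick $U_1$ minimal in $\mathcal{T}\setminus\{A\}$; for $v\in U_1\setminus A$ with flexible pair $(X,X\cup\{v\})$, minimality of $U_1$ gives $X\cap U_1=A$, so $(X\cup\{v\})\cap U_1=A\cup\{v\}\in\mathcal{T}$, and minimality again forces $U_1=A\cup\{v_1\}$ for some $v_1$. Then $\mathcal{T}'=\{U\in\mathcal{T}:v_1\in U\}$ is the tight-set lattice of the partition $(A\cup\{v_1\},B,V\setminus\{v_1\})$ with the same $\kappa$, least element $A\cup\{v_1\}$, greatest element $E(M)\setminus B$; and for $v\in V\setminus\{v_1\}$, replacing a flexible pair $(X,X\cup\{v\})$ by $(X\cup\{v_1\},X\cup\{v_1,v\})$ — each member of which lies in $\mathcal{T}'$ by one more uncrossing against $A\cup\{v_1\}$ — gives a flexible pair in $\mathcal{T}'$. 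Induction then supplies $v_2,\dots,v_k$ with $A\cup\{v_1,\dots,v_i\}\in\mathcal{T}$ for all $i$.

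The main obstacle I expect is the bookkeeping in the first two steps: turning the hypothesis ``$\kappa_{M\setminus e}(A,B)<\kappa$ or $\kappa_{M/e}(A,B)<\kappa$'' into the two closure statements used above requires expressing $\lambda_{M\setminus e}(W)$ and $\lambda_{M/e}(W)$ in terms of $\lambda_M$ and $\cl_M$ (and disposing of the easy loop/coloop corner cases, in which $\kappa$ cannot drop). Once that dictionary is set up, the least/greatest-element claim, the flexible pairs, and the uncrossing induction are all routine.
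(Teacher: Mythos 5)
Your argument is correct: the uncrossing of the tight-set family $\mathcal{T}$, the identification $U_-=A$ and $U_+=E(M)\setminus B$ via the hypothesis on each $e\in V$, the flexible pairs, and the chain-building induction all check out (and the loop/coloop corner cases you flag cannot in fact occur, since a loop or coloop $e$ satisfies $\kappa_{M\setminus e}(A,B)=\kappa_{M/e}(A,B)=\kappa_M(A,B)$, so the hypothesis already excludes them from $V$). Note, however, that the paper does not prove this lemma at all --- it is imported verbatim as Lemma 4.3 of Geelen--Gerards--Whittle --- so there is no in-paper proof to compare against; your tight-set-lattice formulation is a clean, self-contained version of the standard submodular-uncrossing argument used in that source.
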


The main lemma of this section is the following.

\begin{lemma} \label{lem:connectivity3}
If $\F$ is a finite field and $M$ is a simple vertically $4$-connected matroid that is not $\F$-representable and has a modular restriction $N_0 \cong \PG(2, \F)$, then there is a minor $M_0$ of $M$ that is minor-minimal subject to
\begin{enumerate}[(a)]
 \item $M_0$ is $3$-connected, \label{out:connectivity3-3conn}
 \item $N_0$ is a restriction of $M_0$, and \label{out:connectivity3-restriction}
 \item $M_0$ is not $\F$-representable, \label{out:connectivity3-notrepresentable}
\end{enumerate}
such that $\lambda_{M_0}(E(N_0)) \geq 3$.
\end{lemma}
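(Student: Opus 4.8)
The plan is to take a minor $M_0$ of $M$ that is minor-minimal subject to conditions (\ref*{out:connectivity3-3conn}), (\ref*{out:connectivity3-restriction}), and (\ref*{out:connectivity3-notrepresentable}). Such a minor exists because $M$ itself is $3$-connected (being vertically $4$-connected and simple, so $\si(M) = M$ is $3$-connected), has $N_0$ as a restriction, and is not $\F$-representable. The goal is then to prove that for this minor, $\lambda_{M_0}(E(N_0)) \geq 3$. Suppose for contradiction that $\lambda_{M_0}(E(N_0)) \leq 2$. Since $M_0$ is $3$-connected and $|E(N_0)| \geq 7 > 2$, any $\lambda$-value at most $2$ is not a separation only if $E(M_0) \setminus E(N_0)$ is small; more precisely, $M_0 \neq N_0$ because $N_0$ is $\F$-representable, so $E(M_0) \setminus E(N_0) \neq \emptyset$, and since $M_0$ is $3$-connected with $|E(N_0)| \geq 3$, we have $\lambda_{M_0}(E(N_0)) \geq 1$, so in fact $\lambda_{M_0}(E(N_0)) \in \{1, 2\}$ and if this were a $\lambda$-value below $3$ it would have to be that $|E(M_0) \setminus E(N_0)| \leq 2$.

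The heart of the argument is to relate $M_0$ back to $M$ using connectivity-preserving minor operations. I would set $A = E(N_0)$ and consider $\kappa_M(A, B)$ for a suitable choice of $B$ (or more flexibly work with $\kappa$ between $E(N_0)$ and the rest). Since $M$ is vertically $4$-connected with a $\PG(2,\F)$-restriction, $E(N_0)$ is a rank-$3$ flat and removing a single element from $E(N_0)$ leaves things connected; the vertical $4$-connectivity gives $\lambda_M(X) \geq 3$ for any $X$ with $E(N_0) \subseteq X \subsetneq E(M)$ that is ``vertical'' on both sides, and because $r_M(E(N_0)) = 3 < r(M)$ while $E(N_0)$ is closed, one should be able to deduce $\kappa_M(E(N_0), \cdot) \geq 3$ for an appropriate target set. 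Then I would use \autoref{lem:ordering1}: consider the partition obtained from the elements deleted and contracted in passing from $M$ to $M_0$, and argue that along the way the quantity $\kappa$ between $E(N_0)$ and some fixed set stays at least $3$, using the Bixby--Coullard Inequality to control how $\lambda$ changes under single-element deletion and contraction. The point is that minor-minimality forces the ``bad'' elements (those whose removal drops the relevant connectivity) to be removable in an order that keeps $\lambda_{M_0}(E(N_0)) \geq \kappa_M(E(N_0), B) \geq 3$, contradicting $\lambda_{M_0}(E(N_0)) \leq 2$.

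More carefully, I would proceed as follows. First, establish $\kappa_M(E(N_0), B) \geq 3$ for a carefully chosen coindependent or rank-witnessing set $B$ disjoint from $E(N_0)$; this uses the modularity of $N_0$ and the vertical $4$-connectivity of $M$ (if $X \supseteq E(N_0)$ has $\lambda_M(X) < 3$, then since $r_M(X) \geq r_M(E(N_0)) = 3$ and $E(N_0)$ closed, $(X, E(M) \setminus X)$ is a vertical separation unless $E(M) \setminus X$ has rank $r(M)$, and a careful choice of $B$ inside a high-rank part rules this out). Second, apply \autoref{lem:ordering1} with $V$ the set of elements on which the relevant $\kappa$ drops, to get an ordering witnessing that there is a minor $M_1$ of $M$ with $E(N_0) \subseteq E(M_1)$, $N_0$ a restriction of $M_1$, and $\lambda_{M_1}(E(N_0)) = \kappa_M(E(N_0), B) \geq 3$, where $M_1$ is obtained by deleting/contracting only elements outside $E(N_0)$. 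Third, check $M_1$ (or $\si(M_1)$, using that simplification does not affect the $\lambda$-value of the closed set $E(N_0)$ or representability) still satisfies (\ref*{out:connectivity3-restriction}) and can be taken non-$\F$-representable: here one has to argue that among the minors with $\lambda(E(N_0)) \geq 3$ there is still one that is $3$-connected and non-$\F$-representable containing $N_0$ — this should follow because $M$ itself is such a minor, so the class is non-empty, and we can run the minor-minimality argument \emph{within} that class. Finally, minor-minimality of $M_0$ forces $M_0$ to be a minor of (a minimal such) $M_1$, whence $\lambda_{M_0}(E(N_0)) \geq 3$.

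The main obstacle I anticipate is the bookkeeping around the Bixby--Coullard Inequality and \autoref{lem:ordering1}: one must set up the partition $(A, B, V)$ and the target set $B$ so that the hypothesis of \autoref{lem:ordering1} is genuinely satisfied (i.e., every element of $V$ strictly decreases $\kappa$ in one of $M \d e$ or $M / e$), and then verify that the resulting reordered minor still lies in the class for which we are taking a minor-minimal element — in particular that it remains non-$\F$-representable and $3$-connected after simplification. The subtlety is that \autoref{lem:ordering1} gives us a minor with $\lambda(E(N_0))$ equal to $\kappa_M$, which is $\geq 3$, but we need to reconcile this with the minor-minimality of $M_0$ in the \emph{full} class (a)--(c); the clean way is probably to observe that $M_0$, being minor-minimal in (a)--(c), is in particular a minor of any member of (a)--(c) whose ground set it is contained in — no, more accurately, one reverses the logic: start by choosing $M_0$ minor-minimal in (a)--(c), then show that if $\lambda_{M_0}(E(N_0)) \leq 2$ one can apply the above machinery to $M_0$ (not $M$) to produce a \emph{strictly smaller} member of (a)--(c), contradicting minimality. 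That reframing — running the $\kappa$/ordering argument on $M_0$ itself rather than on $M$ — is cleaner and is likely the intended route, and the hard part is then purely the connectivity estimate showing the reordered minor stays non-representable and $3$-connected while having strictly fewer elements.
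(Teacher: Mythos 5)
Your overall toolkit is the right one — $\kappa$, the Bixby--Coullard Inequality, and \autoref{lem:ordering1} are exactly what the paper uses — but the logical frame you settle on at the end is fatally different from what the machinery can deliver. You propose to fix an arbitrary minor-minimal $M_0$ in the class (a)--(c), assume $\lambda_{M_0}(E(N_0)) \leq 2$, and derive a contradiction by producing a strictly smaller member of the class. Nothing in the reordering machinery produces a \emph{smaller} member: \autoref{lem:ordering1} lets you permute the deletions and contractions so that connectivity to $E(N_0)$ is maintained along the way, yielding a member of the class with high $\lambda$ and the \emph{same} number of elements as the one you started aiming for — it never shrinks the target. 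Worse, the universal statement you are trying to prove (``every minor-minimal member of (a)--(c) has $\lambda(E(N_0)) \geq 3$'') is essentially equivalent to the main theorem itself: since $N_0$ remains modular in every minor of $M$ containing it, \autoref{lem:keylemma} applied to any minor-minimal member forces that member to have $\lambda(E(N_0)) = 2$, so your target statement holds only vacuously, i.e.\ only if the class is empty. It therefore cannot be established by the connectivity argument of this section alone. The lemma is deliberately existential, and the paper's proof respects that: it fixes a minimum-cardinality member $N'$, then chooses an intermediate minor $M'$ minimal subject to having a member $N$ of that cardinality as a minor with $\kappa_{M'}(E(N_0), E(N)\setminus E(N_0)) = 3$, and massages $M'$ into a \emph{different} member $M'/v_1\backslash v_2$ of the same cardinality with $\lambda \geq 3$; equal cardinality with a minimum-cardinality member is what certifies minor-minimality.

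Two further points. First, the part you defer as ``checking'' — that the reordered minor is still $3$-connected, still contains $N_0$, and still fails to be $\F$-representable — is where most of the work lies; the paper proves it by showing $M'/v_1\,|\,X = M'\,|\,X$ and $M'/v_1\,|\,E(N_0) = N_0$ (so the non-extendability of representations of $N_0$ to $M'|X$ persists), and by bounding the number of reordered elements to $k \leq 2$ with $v_1$ deleted-type and $v_2$ contracted-type. Your hypothesis for \autoref{lem:ordering1} (every element of $V$ drops $\kappa$) also does not come from minor-minimality of $M_0$; it comes from the minimal choice of the intermediate $M'$. Second, your claim that $3$-connectivity of $M_0$ together with $\lambda_{M_0}(E(N_0)) \leq 2$ forces $|E(M_0)\setminus E(N_0)| \leq 2$ is false: $3$-connectivity excludes $1$- and $2$-separations, not $3$-separations, and indeed the conclusion of \autoref{lem:keylemma} is precisely a $3$-connected matroid with $\lambda(E(N_0)) = 2$ and a large complement.
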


\begin{proof}
First, we make two claims involving connectivity.

\begin{claim} \label{clm:connectivity3-ordering3} \label{clm:connectivity3-1}
Let $(A,B,C,D)$ be a partition of the ground set of a matroid $Q$ such that for each $e \in D$, $\kappa_{Q\d e}(A,B) < \kappa_Q(A,B)$ and for each $e \in C$,
$\kappa_{Q/e}(A,B) < \kappa_Q(A,B)$.
Then for $e \in D$, $\kappa_{Q/e}(A,B) = \kappa_Q(A,B)$ and for $e \in C$, $\kappa_{Q\d e}(A,B) = \kappa_Q(A,B)$.
\end{claim}

Let $e \in D$ and suppose that $\kappa_{Q/e}(A,B) < \kappa_Q(A,B)$. Since we also have $\kappa_{Q\d e}(A,B) < \kappa_Q(A,B)$, there exist
partitions $(W_1,W_2)$ and $(U_1, U_2)$ of $E(Q \d e)$ such that $B \subseteq W_1 \cap U_1$, $A \subseteq W_2\cap U_2$, and $\lambda_{Q \d e}(W_1) =
\kappa_Q(A,B) - 1$ and $\lambda_{Q/e}(U_1) = \kappa_Q(A,B) - 1$.
Then the Bixby-Coullard Inequality implies that
\[ 2 \kappa_Q(A,B) - 2 = \lambda_{Q \d e}(W_1) + \lambda_{Q/e}(U_1) \geq \lambda_Q(U_1 \cap W_1) + \lambda_Q(U_2 \cap W_2) - 1. \]
But the right hand side of this inequality is at least $2\kappa_Q(A,B) - 1$.
This proves the first half of (\ref*{clm:connectivity3-ordering3}) and the second half is the dual argument.

\begin{claim} \label{clm:connectivity3-ordering2}
If $(A,B,C,D)$ is a partition of the ground set of a matroid $Q$ such that for each $e \in D$, $\kappa_{Q\d e}(A,B) < \kappa_Q(A,B)$ and for each $e \in C$,
$\kappa_{Q/e}(A,B) < \kappa_Q(A,B)$, then there exists an ordering $v_1, \ldots, v_k$ of $C \cup D$ such that for all $v_i \in D$, $\lambda_{Q \d v_i}(A \cup
\{v_1, \ldots, v_{i-1}\}) < \kappa_Q(A,B)$ and for all $v_i \in C$, $\lambda_{Q / v_i}(A \cup \{v_1, \ldots, v_{i-1}\}) < \kappa_Q(A,B)$.
\end{claim}

We apply \autoref{lem:ordering1} with $V = C \cup D$ to obtain an ordering $v_1, \ldots, v_k$ of the elements of $C \cup D$ such that $\lambda_Q(A) =
\kappa_Q(A,B)$ and for all $i = 1, \ldots, k$, $\lambda_Q(A \cup \{v_1, \ldots, v_i\}) = \kappa_Q(A,B)$.

This implies that for each $v_i \in C \cup D$, $v_i$ is either in the closure of $A_i = A \cup \{v_1, \ldots, v_{i-1}\}$ and the closure of $B_i = \{v_{i+1},
\ldots, v_k\} \cup B$, or $v_i$ is in the coclosures of both sets.

If $v_i$ is in the closures of both $A_i$ and $B_i$, then $\lambda_{Q/v_i}(A_i) < \kappa_Q(A,B)$ and by (\ref*{clm:connectivity3-ordering3}), $e \in C$. Similarly, if $v_i$
is in the coclosures of these two sets then $\lambda_{Q\d v_i}(A_i) < \kappa_Q(A,B)$ and by (\ref*{clm:connectivity3-ordering3}), $e \in D$.
Thus for $v_i \in C \cup D$, $v_i \in C$ if and only if $\lambda_{Q/v_i}(A_i) < \kappa_Q(A,B)$ and $v_i \in D$ if and only if $\lambda_{Q\d v_i}(A_i) <
\kappa_Q(A,B)$.
This proves (\ref*{clm:connectivity3-ordering2}).
\\

We let $N'$ be a minor of $M$ that is minimal subject to (\ref*{out:connectivity3-3conn}), (\ref*{out:connectivity3-restriction}), and (\ref*{out:connectivity3-notrepresentable}). %We may assume that $\lambda_{N'}(E(N_0)) = 2$.
Then we let $M'$ be a minor of $M$ that is minimal such that
\begin{itemize}
\item $M'$ has a minor $N$ satisfying (\ref*{out:connectivity3-3conn}), (\ref*{out:connectivity3-restriction}), and (\ref*{out:connectivity3-notrepresentable}) with $|E(N)| = |E(N')|$, and
\item $\kappa_{M'}(E(N_0), E(N) \setminus E(N_0)) = 3$.
\end{itemize}
This exists since $M$ itself satisfies these conditions. We may assume that $N$ is a proper minor of $M'$, otherwise with $M_0 = M'$ we are done.

We let $X = E(N) \setminus E(N_0)$.
We may assume that $\lambda_N(E(N_0)) < 3$.
Let $(C,D)$ be the partition of $E(M') \setminus E(N)$ such that $N = M' \d D / C$. Then by the minimality of $M'$, for each $e \in D$, $\kappa_{M' \d
e}(E(N_0),X) = 2$ and for each $e \in C$, $\kappa_{M' / e}(E(N_0), X) = 2$.

By (\ref*{clm:connectivity3-ordering2}) applied with $Q = M'$, $A = E(N_0)$ and $B = X$, there exists an ordering $v_1, \ldots, v_k$ of $C \cup D$ such that for all $v_i \in D$,
$\lambda_{M' \d v_i}(E(N_0) \cup \{v_1, \ldots, v_{i-1}\}) < \kappa_{M'}(E(N_0), X)$ and for all $v_i \in C$, $\lambda_{M' / v_i}(E(N_0) \cup \{v_1, \ldots,
v_{i-1}\}) < \kappa_{M'}(E(N_0), X)$.

Since $N_0$ is modular and $M'$ is simple, $v_1 \not\in \cl_{M'}(E(N_0))$ so $v_1 \in D$.

\begin{claim}\label{clm:connectivity3-connectivity3:2}
$k \leq 2$ and if $k = 2$ then $v_2 \in C$.
\end{claim}

Suppose there are $v_j,v_{j+1} \in C$. Then $v_j, v_{j+1} \in \cl_{M'}(E(N_0) \cup \{v_1, \ldots, v_{j-1}\})$, and since $\lambda_{M' / v_j}(E(N_0) \cup
\{v_1, \ldots, v_{j-1}\}) = 2$, we have $\lambda_{M' / v_j, v_{j+1}}(E(N_0) \cup \{v_1, \ldots, v_{j-1}\}) \leq 1$, a contradiction.
Therefore we may assume that there exists $j > 1$ with $v_j \in D$.

Then $(E(N_0) \cup \{v_1, \ldots, v_{j-1}\}, X \cup \{v_{j+1}, \ldots, v_k\})$ is a $3$-separation of $M' \d v_j$.
Now since $\sqcap_{M'}(E(N_0), X) = 2$, $\sqcap_N(E(N_0), X) = 2$, $M' | E(N_0) = N | E(N_0) = N_0$ and $v_1 \not\in \cl_{M'}(E(N_0))$, it follows that $M' \d v_j
/ v_1$ has $N$ as a minor, and hence so does $M' / v_1$.
Then with (\ref*{clm:connectivity3-1}), the properties of $M' / v_1$ contradict the minimality of $M'$. This proves (\ref*{clm:connectivity3-connectivity3:2}).

\begin{claim}\label{clm:connectivity3-connectivity3:3}
$k = 2$.
\end{claim}

If not, then (\ref*{clm:connectivity3-connectivity3:2}) and the fact that $M' \neq N$ imply that $k = 1$.
We note that $v_1 \not\in \cl_{M'}(E(N_0))$ and $v_1 \not\in \cl_{M'}(X)$.
This implies that $M' / v_1 | E(N_0) = N_0$ and $M' / v_1 | X = M' | X$. Also, $\cl_{M'}(X) \cap E(N_0) \subseteq \cl_{M' / v_1}(X) \cap E(N_0)$.
Since $N$ is not $\F$-representable, $N|X = M' |X$ has no $\F$-representation extending any representation of $\cl_{M'}(X) \cap E(N_0)$ induced by a
representation of $N|E(N_0) = N_0$. Therefore, $M' / v_1 | X$ also has no $\F$-representation extending any representation of $\cl_{M'/v_1}(X) \cap
E(N_0)$ induced by a representation of $N_0$, and $M' / v_1$ is not $\F$-representable.

Therefore, $M' / v_1$ has a minor satisfying (\ref*{out:connectivity3-3conn}), (\ref*{out:connectivity3-restriction}), and (\ref*{out:connectivity3-notrepresentable}), has the same number of elements as $N$, and by (\ref*{clm:connectivity3-1}) satisfies
$\kappa_{M'/v_1}(E(N_0), E(M'/v_1) \setminus E(N_0)) = 3$. This contradicts the minimality of $M'$ and proves (\ref*{clm:connectivity3-connectivity3:3}).
\\

We may now assume from (\ref*{clm:connectivity3-connectivity3:3}) that $k = 2$.
Since $\lambda_{M' \d v_1}(E(N_0)) < \lambda_{M'}(E(N_0))$, $v_1 \not\in \cl_{M'}(X \cup \{v_2\})$, so $M' / v_1 | (X \cup \{v_2\}) = M' | (X \cup \{v_2\})$.
Also, since $v_1 \not\in \cl_{M'}(E(N_0))$, $M' / v_1 | E(N_0) = N_0$.
Therefore, as no $\F$-representation of $M' | E(N_0)$ extends to $M' | (X \cup \{v_2\})$, $M' / v_1$ is not $\F$-representable.
So $M'/ v_1$ contains a minor satisfying (\ref*{out:connectivity3-3conn}), (\ref*{out:connectivity3-restriction}), and (\ref*{out:connectivity3-notrepresentable}), and by (\ref*{clm:connectivity3-1}), $\kappa_{M' / v_1}(E(N_0), X) = 3$.
But since $N_0$ is modular, and $v_2 \in \cl_{M'}(E(N_0) \cup \{v_1\})$ by (\ref*{clm:connectivity3-connectivity3:2}), $v_2$ is parallel in $M' / v_1$ to an element of $E(N_0)$, so
$M' / v_1 \d v_2$ also contains a minor satisfying (\ref*{out:connectivity3-3conn}), (\ref*{out:connectivity3-restriction}), and (\ref*{out:connectivity3-notrepresentable}); but $|E(M' / v_1 \d v_2)| = |E(N)|$, so this minor is $M' / v_1 \d v_2$ itself. Then the fact
that $\lambda_{M' / v_1 \d v_2}(E(N_0)) = \kappa_{M' / v_1 \d v_2}(E(N_0), X) = 3$ completes the proof with $M_0 = M' / v_1 \d v_2$.
\end{proof}

Finally, we restate and prove \autoref{thm:mainmodularpptheorem}.

\mainmodularpptheorem*

\begin{proof}
We let $M$ be a vertically $4$-connected matroid with a modular restriction $N_0 \cong \PG(2, \F)$ and assume that $M$ is not $\F$-representable.
By \autoref{lem:connectivity3}, there is a $3$-connected, non-$\F$-representable matroid $M_0$ containing $N_0$ with $\lambda_{M_0}(E(N_0)) \geq 3$, such that $M_0$ has no $3$-connected, non-$\F$-representable proper minor $M$ containing $N_0$.
But \autoref{lem:keylemma} implies that $M_0$ has a $3$-connected, non-$\F$-representable minor $M$ containing $N_0$ with $\lambda_M(E(N_0)) = 2$, a contradiction. Therefore, $M$ is $\F$-representable.
\end{proof}

\section{General modular restrictions} \label{sec:generalform}

In this section, we show that \autoref{thm:generalform} is a corollary of \autoref{thm:mainmodularpptheorem}.
First, we have this useful equivalent characterization of modularity.

\begin{proposition} \label{prop:equivalentmodularitydefinition}
 A restriction $N$ of a matroid $M$ is modular if and only if $M$ has no minor $N'$ with a non-loop element $e$ such that $N' \d e = N$ and $e \in \cl_{N'}(E(N))$, but $e$ is not parallel to an element of $E(N)$ in $N'$.
\end{proposition}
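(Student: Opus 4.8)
The statement is an "if and only if" about when a restriction $N$ of $M$ is modular, characterized in terms of forbidden minors $N'$. I would prove the two directions separately. The forward direction (modular $\Rightarrow$ no bad minor) is the easier one: I would take a putative bad minor $N' = M\d D/C$ with $N'\d e = N$, $e \in \cl_{N'}(E(N))$, and $e$ not parallel to any element of $E(N)$ in $N'$, and use the modularity inequality applied to a suitable flat of $M$ to derive a contradiction. The natural flat to try is $F = \cl_M(C \cup X)$ where $X \subseteq E(N)$ is a small set witnessing $e \in \cl_{N'}(X)$ — specifically take $X$ of size $r_{N'}(\{e\}\cup X) = |X|$ minimal so $e$ depends on it; then $|X| \ge 2$ since $e$ is not parallel to a single element of $E(N)$ and not a loop. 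I would choose $X$ so that $r_N(X) = |X|$ and $e\in\cl_{N'}(X)$, so that in $M$, $\cl_M(C\cup X)$ meets $E(N)$ in a rank-$|X|$ flat but also contains $e \notin E(N)$, contradicting the modular equation $r_M(F) + r(N) = r_M(F\cap E(N)) + r_M(F\cup E(N))$ — the point being that adding $E(N)$ to $F$ should not drop the corank by the full $r(N) - r_N(F\cap E(N))$ because $e$ is already "caught."

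**The reverse direction** (no bad minor $\Rightarrow$ modular) is the main obstacle. Here I would argue contrapositively: if $N$ is not modular in $M$, there is a flat $F$ of $M$ with $r_M(F) + r(N) > r_M(F\cap E(N)) + r_M(F\cup E(N))$, equivalently $\sqcap_M(F, E(N)) > r_N(F \cap E(N))$, i.e. $E(N)$ fails to be "subjugated by $F\cap E(N)$" in the language the paper set up. The goal is to manufacture from this a minor $N'$ with the forbidden configuration. The strategy: set $Y = F \cap E(N)$, let $r = r_N(Y) < \sqcap_M(F, E(N))$. Using Tutte's Linking Theorem (Version 2), contract a set $Z$ inside $F\setminus E(N)$ so that in $M/Z$ the local connectivity $\sqcap_{M/Z}(\text{something}, E(N))$ is driven down to exactly $r+1$ while keeping $M/Z|E(N) = N$; then by the modularity of flats in projective geometry (or just by choosing $Z$ carefully) we can arrange that some single element $e$ of $E(M/Z)\setminus E(N)$ lies in $\cl(E(N))$, in fact in $\cl$ of a set $X\subseteq E(N)$ with $r_N(X) = r < r+1$, so $e\in\cl_{N'}(E(N))$ but $e$ is not parallel to any element of $E(N)$ because $r_N(X) \ge 2$ (we can ensure $r\ge 1$; if $r = 0$ then $e$ would be a loop — but then $\sqcap_M(F,E(N)) \ge 1$ with $Y$ a loop-free... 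I need to handle $r=0$ by noting $\sqcap_M(F,E(N)) \ge 1$ forces some element of $F$ in $\cl_M(E(N))$). Then deleting the rest of $F\setminus(Z\cup\{e\})$ and deleting $E(M)\setminus(E(N)\cup Z\cup\{e\}\cup\cdots)$ appropriately yields $N'$.

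**Where the difficulty lies.** The hard part is getting the minor-extraction in the reverse direction to produce exactly one extra element $e$ with $e \in \cl(E(N))$ and $e$ not parallel to a single element, rather than a messy configuration of several dependent elements. I expect the clean way is: among all minors $N''$ of $M$ with $N''|E(N) = N$ and $N$ not modular in $N''$, pick one minimal; show minimality forces $|E(N'')\setminus E(N)| = 1$ (if there were two elements outside $E(N)$, one could be deleted or contracted preserving non-modularity, using that contracting an element of $\cl_{N''}(E(N))$ or deleting an element not spanned by $E(N)$ keeps $N$ a restriction, plus a case analysis on whether the witnessing flat uses the element). Then $N'' = N'$ has a single element $e$, and non-modularity of $N$ in $N'$ with $E(N')\setminus E(N) = \{e\}$ forces (checking the one flat $F = \cl_{N'}(E(N)\setminus(\text{hyperplane}))$ or directly) that $e\in\cl_{N'}(E(N))$ and $e$ is not a loop and not parallel to an element of $E(N)$ — otherwise $N$ would be modular in $N'$ by a direct rank computation. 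This reduction step is routine matroid bookkeeping but needs care with the cases of deletion versus contraction and with loops/parallel elements; I would organize it as two short claims, one reducing to $|E(N')\setminus E(N)| = 1$ and one analyzing that base case.
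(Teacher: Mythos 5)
Your overall architecture matches the paper's: the forward direction exhibits a violating flat built from the contraction set and the extra element (the paper simply takes $F = \cl_M(C \cup \{e\})$ and checks that $F \cap E(N)$ has rank $0$ while modularity would force it to have rank $1$), and the reverse direction is a minimal-counterexample reduction to the case of a single extra element, exactly as you propose in your final paragraph. Your base-case analysis is also right: when $E(N') \setminus E(N) = \{e\}$ and $N$ is not modular in $N'$, a direct rank computation forces $e$ to be a non-loop lying in $\cl_{N'}(E(N))$ and parallel to no element of $E(N)$.

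The one step that would fail as written is the reduction itself: you say the case analysis uses ``that contracting an element of $\cl_{N''}(E(N))$ or deleting an element not spanned by $E(N)$ keeps $N$ a restriction.'' This is backwards. Contracting a non-loop $e \in \cl_{N''}(E(N))$ destroys the restriction, since $r_{N''/e}(E(N)) = r_{N''}(E(N)) - 1 < r(N)$; it is contracting an element \emph{outside} $\cl_{N''}(E(N))$ that preserves $N'' | E(N) = N$ (and deleting any element outside $E(N)$ does so trivially). The correct reduction, as in the paper, is driven by a fixed violating flat $F$: if $F$ contains an element $c \notin \cl(E(N))$, contract $c$ --- then $F \setminus \{c\}$ is still a violating flat, because $r(F)$ and $r(F \cup E(N))$ each drop by one while $r(F \cap E(N))$ is unchanged; once $F \subseteq \cl(E(N))$, the inequality $r(F) > r(F \cap E(N))$ (which follows from the violated modular equation) yields an element $e \in F \setminus \cl(F \cap E(N))$, and restricting to $E(N) \cup \{e\}$ gives the forbidden configuration directly. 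With the contraction condition corrected, your plan coincides with the paper's proof; the Tutte's Linking Theorem detour in your second paragraph is unnecessary.
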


\begin{proof}
 Recall that $N$ is modular if and only if for every flat $F$ of $M$,
 \begin{equation} \label{eq:modularity}
  r_M(F) + r(N) = r_M(F \cap E(N)) + r_M(F \cup E(N)).
 \end{equation}
 If such a minor $N' = M / C \d D$ exists then the flat $F = \cl_M(C \cup \{e\})$ violates equation (\ref{eq:modularity}).
 For the other direction, we choose $M$ to be minimal such that it has a restriction $N$ that is not modular, but no such minor $N'$ of $M$ exists.
 We choose a flat $F$ that violates equation (\ref{eq:modularity}). If $F \setminus \cl_M(E(N)) = \emptyset$, then since $r_M(F) > r_M(F \cap E(N))$, we can choose an element $e \in F$ that is not parallel to any element of $E(N)$ and set $N' = M | (E(N) \cup \{e\})$. Otherwise, we pick any element $c \in F \setminus \cl_M(E(N))$. Then $N$ is a restriction of $M/ c$. Moreover, $N$ is not modular in $M / c$, for $r_{M/c}(F \setminus \{c\}) = r_M(F) - 1$ and $r_{M/c}(F \setminus \{c\} \cup E(N)) = r_M(F \cup E(N)) - 1$, which contradicts the minimality of $M$.
\end{proof}

% We restate \autoref{thm:generalform} here and prove it assuming \autoref{thm:mainmodularpptheorem}, towards whose proof we resume working in the next section.

\generalform*

\begin{proof}
 We may assume that $M$ is simple.
 We let $\F$ be any finite field over which $N$ is representable and $A$ an $\F$-representation of $N$.
 Then $N$ is a restriction of a matroid $N' \cong \PG(r(N)-1, \F)$ such that $E(N')$ is disjoint from $E(M) \setminus E(N)$ and $N'$ has an $\F$-representation $A'$ with $A' | E(N) = A$.
 Since $N$ is modular in $M$, the modular sum $M \oplus_m N'$ exists; we denote it by $M'$.
 
 \begin{claim} \label{clm:generalform-modular}
  $N'$ is a modular restriction of $M'$.
 \end{claim}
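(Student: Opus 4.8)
The plan is to verify the modularity of $N'$ in $M' = M \oplus_m N'$ by checking, for every flat $F$ of $M'$, the identity $r_{M'}(F) + r(N') = r_{M'}(F \cap E(N')) + r_{M'}(F \cup E(N'))$. The key tool is \autoref{prop:modularsumexists}, which describes the flats and rank function of a modular sum. In particular, a flat $F$ of $M'$ decomposes as $F \cap E(M)$, a flat of $M$, and $F \cap E(N')$, a flat of $N'$, and its rank is $r_M(F \cap E(M)) + r_{N'}(F \cap E(N')) - r_N(F \cap E(N))$, where $E(N) = E(M) \cap E(N')$.

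First I would compute each of the four terms using these formulas. Write $F_M = F \cap E(M)$ and $F_{N'} = F \cap E(N')$. Then $r_{M'}(F) = r_M(F_M) + r_{N'}(F_{N'}) - r_N(F_M \cap E(N))$. For $F \cup E(N')$: since $E(N')$ is a flat of $M'$ (again by \autoref{prop:modularsumexists}, as $E(N)$ is a flat of $N'$ and $E(M)$ is a flat of $M$), the set $F \cup E(N')$ is a flat, and it meets $E(M)$ in $F_M \cup E(N)$ — a flat of $M$ since $N$ is modular, hence an exact-rank restriction, so $\cl_M(F_M \cup E(N)) \cap E(M)$ equals $F_M \cup E(N)$ by modularity bookkeeping — and meets $E(N')$ in all of $E(N')$. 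So $r_{M'}(F \cup E(N')) = r_M(F_M \cup E(N)) + r(N') - r(N)$. Similarly $F \cap E(N') = F_{N'}$ has rank $r_{N'}(F_{N'})$ in $M'$, since $M' | E(N') = N'$. And $r(N') = r(N') $. Substituting, the claimed identity reduces, after the $r_{N'}(F_{N'})$ and $r(N')$ terms cancel, to $r_M(F_M) - r_N(F_M \cap E(N)) = r_M(F_M \cup E(N)) - r(N)$, i.e. exactly the modularity equation for the flat $F_M$ of $M$ with respect to $N$. Since $N$ is modular in $M$, this holds.

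The one point requiring care — and the main obstacle — is confirming that $F_M \cup E(N)$ is itself a flat of $M$ (so that I may apply \autoref{prop:modularsumexists}'s rank formula to the flat $F \cup E(N')$ of $M'$, whose trace on $E(M)$ must be a flat of $M$). This follows from the modularity of $N$: for any flat $G$ of $M$, the union $G \cup E(N)$ has $r_M(G \cup E(N)) = r_M(G) + r(N) - r_M(G \cap E(N))$, and one checks that adding any further element of $E(M)$ strictly increases this, using that $r_M$ is submodular and $N$ is modular; alternatively, it is standard that the union of a flat with the ground set of a modular restriction is a flat. I would cite or briefly justify this, then assemble the four computed quantities and observe the reduction above. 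A dual check that $F \cap E(N')$ and $F$ itself have the stated ranks completes the verification, giving modularity of $N'$ in $M'$.
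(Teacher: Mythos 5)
Your route is genuinely different from the paper's. The paper proves this claim by contradiction using \autoref{prop:equivalentmodularitydefinition}: if $N'$ were not modular in $M'$, there would be a set $C \subseteq E(M') \setminus E(N')$ and an element $e$ such that $M'/C$ has $N'$ as a restriction, $e \in \cl_{M'/C}(E(N'))$, and $e$ is not parallel to any element of $E(N')$; since $E(M') \setminus E(N') = E(M) \setminus E(N)$ and $E(N') \subseteq \cl_{M'}(E(N))$, the same $C$ and $e$ witness a failure of modularity of $N$ in $M$. Your direct verification via Brylawski's description of flats and ranks in a modular sum (\autoref{prop:modularsumexists}) is a legitimate alternative, and your reduction of the modularity identity for a flat $F$ of $M'$ to the modularity identity for the flat $F \cap E(M)$ of $M$ is exactly the right cancellation; it buys you independence from \autoref{prop:equivalentmodularitydefinition}, at the cost of more rank bookkeeping.

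However, the step you single out as the main obstacle is justified incorrectly. It is not true that the union of a flat with the ground set of a modular restriction is a flat: take $M = \PG(3, \F_2)$, $N$ a hyperplane (which is modular), and $G$ a single point off $N$; then $\cl_M(G \cup E(N)) = E(M) \neq G \cup E(N)$. The same failure occurs already in $U_{2,3}$ with $N$ a single element. Consequently you cannot feed $F \cup E(N')$ into the rank formula of \autoref{prop:modularsumexists} on the grounds that its trace on $E(M)$ is a flat, and the ``adding any further element strictly increases the rank'' argument you sketch cannot work since its conclusion is false. The repair is short and makes the step unnecessary: because $r(N') = r(N)$, every element of $E(N')$ lies in $\cl_{N'}(E(N)) \subseteq \cl_{M'}(E(N))$, so $\cl_{M'}(F \cup E(N')) = \cl_{M'}\bigl((F \cap E(M)) \cup E(N)\bigr)$ and hence $r_{M'}(F \cup E(N')) = r_M\bigl((F \cap E(M)) \cup E(N)\bigr)$, using only that $M' | E(M) = M$. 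With that substitution your four-term computation goes through verbatim.
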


 If not, then by \autoref{prop:equivalentmodularitydefinition}, there is a set $C \subseteq E(M') \setminus E(N')$ and an element $e \in E(M') \setminus E(N')$ such that $M' / C$ has $N'$ as a restriction and $e \in \cl_{M' / C}(E(N'))$ but $e$ is not parallel to an element of $E(N')$ in $M' / C$. But then $M / C$ has $N$ as a restriction, $e \in \cl_{M / C}(E(N))$, and $e$ is not parallel to an element of $E(N)$, contradicting the modularity of $N$ in $M$.
  
 \begin{claim} \label{clm:generalform-connected}
  $M'$ is vertically $4$-connected.
 \end{claim}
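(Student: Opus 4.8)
The plan is to reduce a hypothetical vertical low-order separation of $M'$ to one of $M$, exploiting the fact that attaching $N'$ does not change the rank. The key preliminary observation is that, since $r(N') = r(N) = r(M|E(N))$, the rank formula of \autoref{prop:modularsumexists} gives $r(M') = r(M) + r(N') - r(M|E(N)) = r(M)$; moreover $M'|E(M) = M$, and $E(N') \subseteq \cl_{M'}(E(N))$. So I would first record these two facts.

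Next I would suppose, for a contradiction, that $M'$ has a vertical $\ell$-separation $(A,B)$ with $\ell \le 3$, so that $\lambda_{M'}(A) < \ell$, $|A|,|B|\ge \ell$, and $r_{M'}(A), r_{M'}(B) < r(M')$. Put $A' = A \cap E(M)$ and $B' = B \cap E(M)$, so that $(A',B')$ is a partition of $E(M)$. Because $M'|E(M) = M$ we have $r_M(A') \le r_{M'}(A) < r(M') = r(M)$ and likewise $r_M(B') < r(M)$, so both sides of $(A',B')$ have non-full rank in $M$. Using $r(M') = r(M)$ together with $r_M(A') \le r_{M'}(A)$ and $r_M(B') \le r_{M'}(B)$,
\[ \lambda_M(A') = r_M(A') + r_M(B') - r(M) \le r_{M'}(A) + r_{M'}(B) - r(M') = \lambda_{M'}(A) < \ell \le 3, \]
so $\lambda_M(A') \le 2$.

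The one point requiring a little care is that $A'$ or $B'$ might, a priori, be too small to witness a separation. But this cannot occur: from $r_M(B') \le r(M)-1$ and $\lambda_M(A') \ge 0$ we get $|A'| \ge r_M(A') = \lambda_M(A') + r(M) - r_M(B') \ge \lambda_M(A') + 1$, and symmetrically $|B'| \ge \lambda_M(B') + 1 = \lambda_M(A') + 1$. Hence $(A',B')$ is a vertical $k$-separation of $M$ with $k = \lambda_M(A') + 1 \le 3 < 4$, contradicting the hypothesis that $M$ is vertically $4$-connected; therefore no such $(A,B)$ exists, and $M'$ is vertically $4$-connected. I do not expect a genuine obstacle here — the whole argument is elementary given the rank formula for the modular sum — and the only spot that needs attention is the cardinality bound in this last step, without which one would only obtain a separation "up to the size condition", which would not suffice.
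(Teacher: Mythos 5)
Your proof is correct and follows essentially the same route as the paper: both reduce a hypothetical vertical $(\le 3)$-separation of $M'$ to one of $M$ by intersecting with $E(M)$, using the key fact that $E(N')\setminus E(N)\subseteq\cl_{M'}(E(N))$ (equivalently, $r(M')=r(M)$). Your explicit verification of the cardinality condition via $|A'|\ge r_M(A')\ge\lambda_M(A')+1$ is a detail the paper leaves implicit (it instead argues that one side of the restricted partition must span $M$ and derives the contradiction from there), but the substance is the same.
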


 Suppose $M'$ has a vertical $(\leq 3)$-separation $(A, B)$. Since $M$ is vertically $4$-connected, $(A \cap E(M), B \cap E(M))$ is not a vertical $(\leq 3)$-separation of $M$. Hence we may assume that $B \cap E(M) \subseteq \cl_M(A \cap E(M))$. Then as $(A, B)$ is a vertical $(\leq 3)$-separation of $M'$, $B \setminus E(M)$ is not contained in $\cl_{M'}(A)$. But this means that $B \setminus E(M)$ is not in $\cl_{M'}(E(M))$, a contradiction because $E(M') \setminus E(M) = E(N') \setminus E(N)$ which is in $\cl_{N'}(E(N))$ and hence in $\cl_{M'}(E(N))$. This proves (\ref*{clm:generalform-connected}).
 \\
 
 The relation of being a modular restriction is transitive (see \cite[Proposition 6.9.7]{Oxley}). Therefore, any $\PG(2, \F)$-restriction of $N'$ is modular in $M'$ by (\ref*{clm:generalform-modular}) and the fact that a $\PG(2, \F)$-restriction is modular in any $\F$-representable matroid.
 So $M'$ is vertically $4$-connected and has a modular $\PG(2, \F)$-restriction, and \autoref{thm:mainmodularpptheorem} implies that it is $\F$-representable.
 
 By the Fundamental Theorem of Projective Geometry, any $\F$-representation of $M'$ that extends an $\F$-representation of $N'$ can be transformed into one that extends $A'$ by row operations, scaling, and applying an automorphism of $\F$. 
 Thus there is an $\F$-representation of $M'$ that extends $A'$, and its restriction to $E(M)$ is an $\F$-representation of $M$ that extends $A$.
 
 Suppose that $D$ is an $\F$-representation of $M$ that extends $A$. Then there are matrices $C_1$ and $C_2$ such that
 \[ D = \bordermatrix{ & E(M) \setminus E(N) & E(N) \cr
                       &  C_1                 & 0  \cr
                       &  C_2                 & A     }. \]
 We let $A_1 = A' | (E(N') \setminus E(N))$, so that $A' = (A \; A_1)$. Then
 \[ D' = \bordermatrix{ & E(M) \setminus E(N) & E(N) & E(N') \setminus E(N) \cr
                   & C_1                 & 0    & 0                    \cr
                   & C_2                 & A    & A_1                     } \]
 is an $\F$-representation of $M'$.
 Hence every $\F$-representation $D$ of $M$ that extends $A$ can be extended to an $\F$-representation $D'$ of $M'$ that extends the representation $A'$ of $N'$. But \autoref{thm:modularflatstabilizes} implies that any two representations of $M'$ that extend $A'$ are equivalent. Therefore, any two representations of $M$ that extend $A$ are equivalent.
\end{proof}

Next, we prove \autoref{cor:representableorline}.

\representableorline*

\begin{proof}
 Suppose that $M$ is a vertically $4$-connected matroid with a $\PG(2, \F_q)$-restriction, $N$, and $M$ is not $\F_q$-representable. Then by \autoref{thm:mainmodularpptheorem}, $N$ is not modular in $M$.
 By \autoref{prop:equivalentmodularitydefinition}, $M$ has a $3$-connected minor $N'$ with an element $e$ such that $N' \d e = N$. Since every pair of lines in a projective plane intersect, $e$ lies in $\cl_{N'}(L)$ for at most one line $L$ of $N$. Hence $N' / e$ has at most one parallel class of size greater than one, so $\si(N' / e)$ is a simple rank-$2$ matroid with at least $|\PG(2, \F_q)| - q = q^2 + 1$ elements.
\end{proof}

\section{Excluded minors} \label{sec:excludedminors}

In this section we prove \autoref{cor:noexcludedminorhasapp}.
The rank-$n$ \defn{affine geometry} over a finite field $\F$, denoted $\AG(n-1, \F)$, is the matroid obtained from the projective geometry $\PG(n-1, \F)$ by deleting a hyperplane.
We use the fact that, for each finite field $\F$ and $n \geq 3$, the affine geometry $\AG(n-1, \F)$ is uniquely representable over $\F$. This is sometimes called the Fundamental Theorem of Affine Geometry; see \cite[Theorem 2.6.3]{Berger}. Note that this implies that any restriction of $\PG(2, \F)$ containing $\AG(2, \F)$ is uniquely representable over $\F$: each element of $\PG(2, \F)$ lies in the closures of two distinct lines of $\AG(2, \F)$ so its column in any representation is uniquely determined, up to scaling, by the representation of these lines.

\noexcludedminorhasapp*

\begin{proof}
 We let $M$ be an excluded minor for the $\F$-representable matroids and assume that $M$ has a $\PG(2, \F)$-restriction, $N_0$. Let $q$ be the order of $\F$.

 If $N_0$ is not modular in $M$, then by \autoref{prop:equivalentmodularitydefinition}, $M$ has a $3$-connected minor $N'$ with an element $e$ such that $N' \d e = N_0$. 
 As we saw in the proof of \autoref{cor:representableorline}, $N'$ has a $U_{2, q^2+1}$-minor.
 %  Since every pair of lines in a projective plane intersect and $e$ is not parallel to an element of $E(N_0)$ in $N'$, there is at most one line $X$ of $N_0$ such that $e \in \cl_{N'}(X)$. Hence $N' / e$ has at most one parallel class of size greater than one, so $\si(N' / e)$ is a simple rank-$2$ minor of $M$ with at least $|\PG(2, \F)| - q = q^2 + 1$ elements. 
 Since $U_{2, q^2+1}$ is not $\F$-representable, $M$ is not an excluded minor for the class of $\F$-representable matroids.
 Hence $N_0$ is a modular restriction of $M$.
 The class of $\F$-representable matroids is closed under direct sums and $2$-sums so $M$ is $3$-connected. It follows from \autoref{lem:keylemma} that $\lambda_M(E(N_0)) = 2$. Let $L$ be the line of $N_0$ contained in $\cl_M(E(M) \setminus E(N_0))$ and let $e \in E(N_0) \setminus L$. 
 Then $M | L$ is modular in $M \d (E(N_0) \setminus L)$ so $N_0 \d e$ is modular in $M \d e$, and $M$ is equal to the modular sum $(M \d e) \oplus_m N_0$. As $M$ is an excluded minor for $\F$-representability, $M \d e$ is $\F$-representable. We remarked above that $N_0 \d e$ is uniquely representable over $\F$, so it follows from \autoref{prop:modularsumpreservesrepresentability} that $M$ is $\F$-representable, a contradiction. 
\end{proof}

\end{document}